\patchcmd{\section}{\scshape}{\bfseries}{}{}
\renewcommand{\@secnumfont}{\bfseries}
\newcommand\testname{\textbf{ Abstract}}
\newenvironment{abs}{%
    \small
    \begin{center}%
        {\textsc \testname\vspace{-.2em}\vspace{\z@}}%
    \end{center}%
    \quote
    }
   {\endquote}
\DeclareMathOperator*{\Id}{Id}
\newcommand{\Div}{\mathrm{div}}
\newcommand{\ee}{\varepsilon}
\newcommand{\dd}{\mathrm{d}}
\newcommand{\pre}{ \mathrm{p}}
\newcommand{\uu}{ \mathbf{u}}
\newcommand{\ddd}{ \mathbf{d}}
\newcommand{\TT}{\mathbb{T}}
\newcommand{\J}{\mathcal{J}}
\newcommand{\I}{\mathcal{I}}
\newcommand{\RR}{\mathbb{R}}
\newcommand{\ZZ}{\mathbb{Z}}
\newcommand{\R}{\mathbb{R}}
\newcommand{\NN}{\mathbb{N}}
\newcommand{\Dd}{\Delta}
\newcommand{\Sd}{S}
\newcommand{\sdm}[1]{\Sd_{#1\rule[1pt]{3pt}{0.4pt}1}}
\newtheorem{theorem}{Theorem}[section]
\newtheorem{prop}[theorem]{Proposition}
\newtheorem{lemma}[theorem]{Lemma}
\newtheorem{definition}[theorem]{Definition}
\newtheorem{remark}[theorem]{Remark}
\DeclareFontFamily{OT1}{rsfs}{}
\DeclareFontShape{OT1}{rsfs}{m}{n}{ <-7> rsfs5 <7-10> rsfs7 <10-> rsfs10}{}
\DeclareMathAlphabet{\mycal}{OT1}{rsfs}{m}{n}
\definecolor{grey}{rgb}{0.85,0.85,0.85}
\date{}
\title{\Large 
	\textbf{
	\uppercase{Uniqueness of global weak solutions for the general Ericksen--Leslie system with Ginzburg--Landau penalization in $\mathbb{T}^2$
	}}}
\author{
Francesco De Anna$\,^1$, \quad Hao Wu$\,^2$
}
\affil{	\small{$\,^1$Institute of Mathematics, University of W\"urzburg, 97074 W\"{u}rzburg, Germany}\\
		\small\textnormal{francesco.deanna@mathematik.uni-wuerzburg.de}
		\vspace{0.2cm}}
\affil{	\small{$\,^2$School of Mathematical Sciences, Fudan University, 200433 Shanghai, China}\\
        \small{Key Laboratory of Mathematics for Nonlinear Sciences (Fudan University), Ministry of Education, China}\\
		\small\textnormal{haowufd@fudan.edu.cn}
		\vspace{0.2cm}}
\date{\today}		
\keywords {}
\begin{document}

\maketitle 
 
\begin{abs} 
 The Ericksen-Leslie system is a fundamental hydrodynamic model that describes the evolution of incompressible liquid crystal flows of nematic type. In this paper, we prove the uniqueness of global weak solutions to the general Ericksen-Leslie system with a Ginzburg-Landau type approximation in a two dimensional periodic domain. The proof is based on some delicate energy estimates for the difference of two weak solutions within a suitable functional framework that is less regular than the usual one at the natural energy level, combined with the Osgood lemma involving a specific double-logarithmic type modulus of continuity. We overcome the essential mathematical difficulties arising from those highly nonlinear terms in the Leslie stress tensor and in particular, the lack of maximum principle for the director equation due to the stretching effect of the fluid on the director field. Our argument makes full use of the coupling structure as well as the dissipative nature of the system, and relies on some techniques from harmonic analysis and paradifferential calculus in the periodic setting.
 \end{abs}
 
\medskip 
\noindent{\bf Keywords.}  
Ericksen-Leslie system, global weak solution, uniqueness, Osgood lemma, Littlewood-Paley Theory.

\medskip
\noindent{\bf AMS subject classification.} 
35A02, 35D30, 35Q35, 76A15, 76D05

\makeatother
\allowdisplaybreaks{}

\section{Introduction}
\setcounter{equation}{0}
 
Liquid crystals are materials that present a state of matter between crystalline solids and isotropic liquids. They can flow like viscous fluids, while possessing characteristics that are typical for solid crystals, for instance, regular periodic arrangement of molecules. The hydrodynamic theory of liquid crystal flows of nematic type was first established around 1960's by Ericksen and Leslie \cite{Ericksen1,Ericksen2,Les68}. Developments on mathematical analysis for the  associated system of partial differential equations were initiated in a series of works by F.-H. Lin and C. Liu in the 1990's \cite{MR1329830,linliu96,MR2449088}. Later on, this topic has attracted a lot of attentions from mathematicians, see for instance, \cite{MR2947543,MR3713532,hong,MR3238531,linlinwang,MR4163316,MR3460623} and the references therein. 

\subsection{The PDE system} 
The proposed model addresses a macroscopic description for the time evolution of liquid crystal molecules together with the underlying incompressible flow, through a parabolic-type system of partial differential equations, coupling the Navier-Stokes equations for the fluid velocity with a nonlinear convection-diffusion system for the molecular orientation. In this paper, we are concerned with a Ginzburg-Landau approximation of the general Ericksen-Leslie system, which was proposed in \cite{MR2449088} by F.-H. Lin and C. Liu and can be expressed by the following system of partial differential equations:
\begin{equation}\label{main_system_intro}
\left\{\hspace{0.2cm}
	\begin{alignedat}{2}
		&\,\partial_t\uu   + \uu\cdot \nabla \uu\, +\,\nabla \pre  =
			-\,\Div\, 
			\big(\nabla \ddd\odot\nabla \ddd\big)
			\,+\,
			\Div\,\tilde{\bm{\sigma}}
		\hspace{2.5cm}
																		&& \text{in}\  (0,T)\times\mathbb{T}^3, \vspace{0.1cm}\\
		&\,\Div\,\uu\,=\,0		
																		&& \text{in}\ (0,T)\times \mathbb{T}^3, \vspace{0.1cm}\\
		&\,	
		\partial_t \ddd	\,+\,\uu\cdot \nabla\ddd -\omega \ddd + \frac{\lambda_2}{\lambda_1}A \ddd
		\,=\,
		\,-\frac{1}{\lambda_1}\big(\Delta\ddd\,-\,\nabla_\ddd W(\ddd)\big) 
		&&\text{in}\ (0,T)\times \mathbb{T}^3, \vspace{0.1cm}\\			
		&\,	
		(\uu,\,\ddd)_{|t=0}
		\,=\,
		(\uu_0,\,\ddd_0)
		&&\hspace{1.30cm}\text{in}\ \mathbb{T}^3. 
	\end{alignedat}
	\right.
\end{equation}
Here for the sake of simplicity, we neglect possible boundary effects and impose the problem in a periodic domain $\mathbb{T}^3=[-\pi,\pi]^3$. Besides, $T\in (0,\infty]$ denotes a general final time. For any $(t,x)\in (0,T)\times  \mathbb{T}^3$, the velocity field of the flow is described by a vector $\uu = \uu(t,x) \in \mathbb{R}^3$, while the macroscopic average of nematic liquid crystal molecular orientations is described through the director field $\ddd = \ddd(t,x)\in \mathbb{R}^3$. The fluid is assumed to be homogeneous (e.g., the fluid density equals to one) and incompressible as depicted by the divergence free condition on $\uu$, which yields the hydrodynamic pressure $\pre$ as a Lagrange multiplier in the equations of momentum balance. The tensor $\nabla \ddd\odot \nabla \ddd$ is referred to as the (elastic) Ericksen stress tensor that denotes a $3\times 3$ symmetric matrix whose $(i,j)$-th entry is given by $\partial_i\ddd \cdot \partial_j \ddd$, for $1 \leq  i,j \leq 3$. On the other hand, the viscous Leslie stress tensor $\tilde{\bm{\sigma}}$ takes the following form:
\begin{equation*}
	\tilde{\bm{\sigma}}\,=\,
	\mu_1 (\ddd\otimes\ddd)\,(\ddd\cdot (A \ddd))\,+\mu_2 \,\mathcal{N}\otimes\ddd+
	\mu_3 \,\ddd\otimes \mathcal{N} \,+\,\mu_4 A +  \mu_5 (A\ddd)\otimes\ddd\,+\, \mu_6
	\ddd\otimes(A\ddd),
\end{equation*}
where $A=(\nabla \uu+ \nabla^{\mathrm{tr}} \uu)/2$ is the rate of strain tensor, $\omega = (\nabla \uu - \nabla^{\mathrm{tr}} \uu)/2$ is the skew-symmetric part of the strain rate, and the vector $\mathcal{N} = \partial_t \ddd+\uu\cdot \nabla\ddd - \omega \ddd$ stands for the corotational time derivative of the director  (i.e., the rigid rotation part of the changing rate of $\ddd$ by the fluid). The symbol $\otimes$ stands for the standard Kronecker product, that is $(\mathbf{a}\otimes\mathbf{b})_{ij}=\mathbf{a}_i\mathbf{b}_j$, for $1\leq i,j\leq 3$. 
The notation $\nabla_\ddd$ means the gradient with respect to the director $\ddd$, while $W(\ddd)$ denotes a double-well potential that penalises the deviation of the length $|\ddd|$ from $1$:
\begin{equation}\label{Ginzbourg-Landau-appx}
	W(\ddd)\,=
	\,\frac{1}{4\ee^2}
	\left(	|\ddd|^2-1 	\right)^2,
\end{equation}
for some small parameter $\ee>0$. The six parameters $\mu_1, \dots , \mu_6$ appearing in $\tilde{\bm{\sigma}}$ are called Leslie coefficients that may depend on  characteristics of the nematic material as well as the temperature. The other two parameters $\lambda_1$, $\lambda_2$ are material coefficients reflecting the molecular shape and the slippery property between the fluid and the particle. A typical example is determined by the following relations
\begin{align*}
    & \lambda_1= -1,\quad \lambda_2 = 2\alpha-1,\quad 
    \mu_1 = 0,\quad \mu_2 = -\alpha,\quad \mu_3 = 1-\alpha,\\ 
    & \mu_4 = \nu>0,\quad 
    \mu_5 = \alpha(2\alpha-1),\quad \,\mu_6 = (\alpha-1)(2\alpha-1),
\end{align*}
for some parameter $\alpha\in [0,1]$ that reflects the shape of the constitutive molecules of the liquid crystal: $\alpha = 0$ corresponds to molecules with the disk-like shape, $\alpha = 1$ addresses the rod-like shape, and for $\alpha\in (0,1)$ the molecules are modelled by means of ellipsoids (see \cite{SunLiu}).  For a general setting of parameters, the following relations between those coefficients are frequently introduced in the literature:
\begin{equation*}
    \lambda_1 = \mu_2 -\mu_3,\qquad \lambda_2 = \mu_5-\mu_6,\qquad \mu_2+\mu_3 = \mu_6-\mu_5.
\end{equation*}
Among them, the first two relations are  compatibility conditions in order to satisfy the equation of motion identically (cf. \cite{Les68}), while the third one, often referred to as Parodi's relation, was proposed by Parodi \cite{Pa} according to Onsanger's reciprocal relation between flows and forces in irreversible thermodynamic systems. Obviously, Parodi's relation reduces one degree of freedom for the choices of $\mu_i$, its role in mathematical analysis for the Ericksen-Leslie system has been discussed by several researchers (see e.g., \cite{MR3045633,MR3713532,wuxuliu2013}). 
When the general Ericksen-Leslie system with Ginzburg-Landau approximation \eqref{main_system_intro} is concerned, in \cite{wuxuliu2013} the authors illustrated the connection between Parodi's relation and the linear as well as nonlinear stability properties of the system. Later, for the same system, it was indicated in \cite{MR3045633} that Parodi's relation is a sufficient but not necessary condition for the existence of global weak solutions.

\subsection{A brief overview of related literature}
The mathematical treatment of the Ericksen-Leslie system has been extensively addressed in the literature during the past decades. To simplify the presentation, we shall place the overview here mainly about the system \eqref{main_system_intro}, that is, the Ginzburg-Landau penalised version of the original Ericksen-Leslie system. For further information, one may refer to the recent review papers \cite{HP18,linwang2014}. 

It is well known that the (static) configuration of liquid crystal molecular orientations is determined by the Oseen-Frank energy density (with unitary constraint $|\ddd|=1$):
\begin{align*}
W_{\text{OF}}(\ddd) 
=\frac{k_1}{2}(\nabla\cdot \ddd)^2 +\frac{k_2}{2}|\ddd\cdot (\nabla
\times \ddd)|^2 +\frac{k_3}{2}|\ddd\times(\nabla\times \ddd)|^2  + \frac{(k_2+k_4)}{2}\big[ {\rm
tr} (\nabla \ddd)^2-(\nabla \cdot \ddd)^2\big],
\end{align*}
which represents the splaying, twisting and bending distortions of the liquid crystal molecules. In the simplest setting such that $k_1=k_2=k_3=1$ and $k_4=0$ (i.e., the elastically isotropic
case), the energy density function $W_{\text{OF}}$ reduces to  $W_{\text{D}}(\ddd)=\frac12|\nabla \ddd|^2$
that is the classical Dirichet energy density. Motivated by previous works on the heat flow of harmonic map, a natural way to handle the strong nonlinearity due to the constraint $|\ddd|=1$ is to introduce a relaxation of Ginzbug-Landau type in the elastic energy, i.e., \eqref{Ginzbourg-Landau-appx} and then let $\ee\to 0^+$ (cf. e.g., \cite{hong,linlinwang}). On the other hand, the approximate system with penalty also has natural physical interpretations such that it can be attributed to the extensibility of liquid crystal molecules (see \cite{Les79}).

Since the structure of the general Ericksen-Leslie system \eqref{main_system_intro} is quite challenging, efforts have been made first in the literature for its simplified versions. The simplest case, that is, only keeping the term related to the fluid viscosity in $\tilde{\bm{\sigma}}$ and neglecting the rotation as well as stretching effects of the fluid, has been extensively analysed for its well-posedness, regularity of solutions, long-time behavior and the associated optimal control problems, see for instance, \cite{CRW17,MR2947543,dai12,grawu13,huwu,MR1329830,linliu96,wu10} and the references therein. Although this system seems to be over simplified, it somehow maintains the most important structure of the Ericksen-Leslie system such that the total energy given by 
\begin{align}
\mathcal{E}_{\text{total}}(t)=\mathcal{E}_{\text{total}}(\uu(t,x),\ddd(t,x)) =\underbrace{\int_\Omega \frac12|\uu(t,x)|^2\,\dd x}_{\text{kinetic energy}} + \underbrace{\int_\Omega \frac12|\nabla \ddd(t,x)|^2 + W(\ddd(t,x)) \,\dd x}_{\text{elastic energy}} \label{total}
\end{align}
is dissipative along time evolution. Here, $\Omega$ denotes a generic bounded domain in $\mathbb{R}^d$, $d=2,3$. More precisely, when the general system \eqref{main_system_intro} is concerned, its dissipative structure is presented by the following basic energy law (written here in a formal way for smooth solutions, see e.g.,  \cite{MR2449088,wuxuliu2013}):
\begin{align}
\frac{\dd}{\dd t}\mathcal{E}_{\text{total}}(t)
&= -\int_{\Omega} \Big(\mu_1|\ddd\cdot A\ddd|^2 +\frac{\mu_4}{2}|\nabla \uu|^2 +(\mu_5+\mu_6)|A\ddd|^2 - \lambda_1 |\mathcal{N}|^2 - (\lambda_2-\mu_2-\mu_3)\mathcal{N}\cdot(A\ddd) \Big)
\,\dd x.
\label{BELnoPa}
\end{align}
The first analysis result for system \eqref{main_system_intro} on a three dimensional smooth bounded domain subject to Dirichlet boundary conditions (i.e., $\uu=\mathbf{0}$ and $\ddd=\ddd_0$ on $\partial \Omega$) was obtained in \cite{MR2449088}. There the authors proved that any initial data with finite total energy generates a global-in-time weak solution, as long as the Leslie coefficients satisfy certain restrictions that ensure the dissipative structure of the system, i.e., $\frac{\dd }{\dd t}\mathcal{E}_{\text{total}}\leq -\mathcal{D}$ with a suitable energy dissipation functional $\mathcal{D}\geq 0$ (cf. \eqref{BELnoPa}). In particular, the result therein was achieved under the so-called corotational framework corresponding to the specific assumption $\lambda_2 = 0$. Physically, this assumption neglects the stretching effect of the macroscopic fluid on the director field $\ddd$ (i.e., the relative slip between the fluid and molecules), a scenario that is more feasible when treating liquid crystals with ``small'' constitutive molecules. From the mathematical point of view, this assumption yields a weak maximum principle such that if the initial and boundary datum $\ddd_0$ satisfies $|\ddd_0|\leq 1$, then $|\ddd|\leq 1$ holds in whole domain for all time  (see \cite[Theorem 3.1]{MR2449088}). This property implies a uniform bound of $\|\ddd\|_{L^\infty}$ that is crucial when dealing the highly nonlinear Leslie stress tensor \cite{MR2449088}. 

The general case $\lambda_2\neq 0$ turns out to be more involved, since the above mentioned maximum principle for $\ddd$ is no longer valid. In this aspect, the authors of \cite{MR1873808} made a first attempt by showing the existence and uniqueness of a local-in-time classical solution within the non-corotational setting $\lambda_1 = -1$, $\lambda_2 = 1$, $\mu_4=2\nu>0$, $\mu_i = 0$, $i=1,2,3,5,6$. However, there the total energy of the system is not dissipative and thus only a small-data global existence result could be obtained even in two dimensions. This problem was partially solved later in \cite{SunLiu}, where the following alternative setting of Leslie coefficients was taken:   
$\lambda_1 = -1$, $\lambda_2 = 1$, $\mu_1 = 0$, $\mu_2 = -1$, $\mu_3 = 0$, $\mu_4 =2\nu>0$, $\mu_5 = 1$ and $\mu_6 = 0$ so that the energy dissipation property can be guaranteed. In this thermodynamically consistent framework, the authors obtained the existence of global strong solutions with regularity properties $\uu \in L^\infty(0,T;H^1)\cap L^2(0,T;H^2)$, $\ddd \in L^\infty(0,T;H^2)\cap L^2(0,T;H^3)$, provided that the spatial dimension is two or the viscosity $\nu>0$ is sufficiently large with respect to the initial data in three dimensions. We refer to \cite{FR13,grawu11,wuxuliu2012} for further analysis results on global well-posedness and long-time behavior of the system in a more general setting involving the parameter $\alpha\in [0,1]$ on molecule shape described above. Under the same choice of coefficients, the authors of \cite{MR3023076} applied a suitable regularization procedure to prove the existence of global weak solutions with finite energy in three dimensions (under some physically meaningful boundary conditions for $\uu$ and $\ddd$), overcoming the difficulty due to the lack of maximum principle for the director equation. Concerning the full Ericksen-Leslie system \eqref{main_system_intro} with general choices of coefficients, in \cite{wuxuliu2013}, the authors proved global well-posedness (i.e., global strong solutions) and long-time behavior of the system under the assumption that the viscosity $\mu_4$ is sufficiently large; besides, with Parodi's relation, they showed the global well-posedness and Lyapunov stability for the system near local energy minimizers, both results were obtained in three dimensional periodic setting. Besides, local well-posedness of classical solutions under Parodi's relation and a blow-up criterion were obtained in \cite{MR3045633}. We also refer to \cite{dai15} for strong well-posedness of the general system with non-constant fluid density. It is worth mentioning that the possible loss of maximum principle for $\ddd$ did not prevent people obtaining results on strong solutions, because of the Sobolev embedding $H^2\hookrightarrow L^\infty$ that holds when the dimension of space is less or equal than three. However, it is essential when we deal with global weak solutions, even for its definition. To this end, in \cite{MR3045633}, the authors considered the initial boundary value problem for system \eqref{main_system_intro} in a three dimensional bounded domain, under some natural boundary conditions and suitable requirements on the Leslie coefficients that ensure the energy dissipation. In order to overcome the difficulty due to lack of maximum principle when $\lambda_2\neq 0$, they introduced a new formulation of weak solutions that took into account the low regularity of those highly nonlinear stress terms in $\tilde{\bm{\sigma}}$. The existence of global-in-time weak solutions was then achieved via a suitable double-level Faedo-Galerkin approximation scheme (cf. \cite{MR3023076,MR2947543}). The same result is also valid in the three dimensional periodic setting as in \eqref{main_system_intro}.  

Above we have just focused on mathematical results that are closely related to the Ericksen-Leslie system with Ginzburg-Landau approximation \eqref{main_system_intro}. Nevertheless, it should be pointed out that important progresses have been achieved for the original Ericksen-Leslie model (as well as its simplified versions), in which the penalisation energy density $W(\ddd)$ is removed and instead, a Lagrangian multiplier corresponding to the unitary constraint $|\ddd(t,x)|=1$ has to be introduced in the director equation. For instance, we refer the interested readers to \cite{chenyu,hong,honglixin,huang,MR4163316,linlinwang,linwang,WangWangZhang,xuzhang} for the simplified liquid crystal system involving the transported heat flow of harmonic map, to \cite{MR3713532,MR3238531,MR3460623,WWZ} for the general Ericksen-Leslie system as well as its non-isothermal extension \cite{deannaliu}, and to \cite{caiwang,jiang} for the case with inertial term in the director equation (i.e., the so-called hyperbolic Ericksen-Leslie system).

\subsection{Goal of this paper} 
For the general system \eqref{main_system_intro}, a fundamental  problem that remains open so far is the uniqueness of global weak solutions. Indeed, the uniqueness of weak solutions is still unknown for the simplified system studied in \cite{MR3023076,grawu11,SunLiu,wuxuliu2012} even when the spatial dimension is two. Since \eqref{main_system_intro} includes the Navier-Stokes equations as a subsystem, uniqueness of weak solutions in three dimensions seems to be out of reach for the moment. Hence, our aim in this paper is to partially answer the question by establishing the uniqueness of global weak solutions to the reduced problem of  \eqref{main_system_intro} in a two dimensional periodic domain $\mathbb{T}^2$ (see Theorem \ref{thm:uniqueness} below). 

To be more precise, in the two dimensional setting, the fluid velocity $\uu$ becomes a planar vector field in $\mathbb{R}^2$, and thus both $A$ and $\omega$ are horizontal $2\times 2$-matrices. On the other hand, the molecular director $\ddd$ can still take its value in $\mathbb{R}^3$. In order to get a closed and consistent system, we denote $\hat{\ddd}=(d_1,d_2)^{\mathrm{tr}}$ for any given  $\ddd=(d_1,d_2,d_3)^{\mathrm{tr}}$ and the system \eqref{main_system_intro} can be reformulated as follows (using the idea in \cite{MR3238531})
\begin{equation}\label{main_system_intro_2d}
\left\{\hspace{0.2cm}
	\begin{alignedat}{2}
		&\,\partial_t\uu   + \uu\cdot \nabla \uu\, +\,\nabla \pre  =
			-\,\Div\, 
			\big(\nabla \ddd\odot\nabla \ddd\big)
			\,+\,
			\Div\,\hat{\bm{\sigma}}
		\hspace{2.5cm}
																		&& \text{in}\  (0,T)\times\mathbb{T}^2, \vspace{0.1cm}\\
		&\,\Div\,\uu\,=\,0		
																		&& \text{in}\ (0,T)\times \mathbb{T}^2, \vspace{0.1cm}\\
		&\,	
		\partial_t \hat{\ddd}	\,+\,\uu \cdot \nabla \hat{\ddd} -\omega \hat{\ddd} + \frac{\lambda_2}{\lambda_1}A \hat{\ddd}
		\,=\,
		\,-\frac{1}{\lambda_1}\big(\Delta\hat{\ddd}\,-\,\frac{1}{\ee^2}
	(|\ddd|^2-1)\hat{\ddd}\big) 
		&&\text{in}\ (0,T)\times \mathbb{T}^2, \vspace{0.1cm}\\	
		&\,	
		\partial_t d_3	\,+\,\uu\cdot \nabla d_3 
		\,=\,
		\,-\frac{1}{\lambda_1}\big(\Delta d_3\,-\,\,\frac{1}{\ee^2}
	(|\ddd|^2-1)d_3\big) 
		&&\text{in}\ (0,T)\times \mathbb{T}^2, \vspace{0.1cm}\\	
		&\,	
		(\uu,\,\ddd)_{|t=0}
		\,=\,
		(\uu_0,\,\ddd_0)
		&&\hspace{1.30cm}\text{in}\ \mathbb{T}^2, 
	\end{alignedat}
	\right.
\end{equation}
where the reduced Leslie stress tensor is given by 
\begin{equation*}
	\hat{\bm{\sigma}}\,=\,
	\mu_1 (\hat{\ddd}\otimes\hat{\ddd})\,(\hat{\ddd}\cdot (A \hat{\ddd}))\,+\mu_2 \,\hat{\mathcal{N}}\otimes\hat{\ddd} +
	\mu_3 \,\hat{\ddd}\otimes \hat{\mathcal{N}} \,+\,\mu_4 A +  \mu_5 (A\hat{\ddd})\otimes\hat{\ddd}\,+\, \mu_6
	\hat{\ddd}\otimes(A\hat{\ddd}),
\end{equation*}
with $\hat{\mathcal{N}} = \partial_t \hat{\ddd} + \uu\cdot \nabla\hat{\ddd} - \omega \hat{\ddd}$.
Then it is straightforward to check that, by using the same approximating scheme as in \cite{MR3045633}, we are able to prove existence of global weak solutions to problem \eqref{main_system_intro_2d} subject to initial data $(\uu_0,\,\ddd_0)\in L^2(\mathbb{T}^2) \times H^1(\mathbb{T}^2)$ with $\mathrm{div} \uu_0=0$ under suitable assumptions on the coefficients 
\begin{align}
\begin{cases}
\ \lambda_1=\mu_2-\mu_3,\quad \lambda_2=\mu_5-\mu_6,\\
\ \lambda_1<0,\quad \mu_1\geq 0,\quad \mu_4>0,\quad \mu_5+\mu_6\geq 0,\\
\ \text{either assuming}\quad  \mu_2+\mu_3=\mu_6-\mu_5\ \ \text{with}\ \  \displaystyle{\frac{(\lambda_2)^2}{-\lambda_1} <
 \mu_5+\mu_6}\ \text{when}\ \lambda_2\neq 0,
 \smallskip \\
\ \text{or, removing Parodi's relation but taking} \quad |\lambda_2-\mu_2-\mu_3| <
2\sqrt{-\lambda_1}\sqrt{\mu_5+\mu_6},
\end{cases}\label{coeass}
\end{align}
which guarantee sufficient dissipation of the total energy (see \cite[Section 2]{wuxuliu2013}). 

A standard approach to prove the uniqueness of weak solutions is to evaluate the difference of two solutions through its estimates in the natural energy space. This method works well for the two dimensional Navier-Stokes equations for incompressible viscous fluids, and in particular, it can be applied to a simplified Ericksen-Leslie system for incompressible liquid crystal flows subject to unitary constraint $|\ddd(t,x)|=1$ in two dimensions (with some nontrivial modifications in the argument due to the heat flow of harmonic map, see \cite{LinWang2010}). Unfortunately, for problem \eqref{main_system_intro_2d} a similar argument would eventually lead to several challenging nonlinear terms that cannot be controlled when performing the usual energy estimates. The essential mathematical difficulty in achieving our goal on the uniqueness of global weak solutions to problem \eqref{main_system_intro_2d}, is exactly due to the lack of maximum principle for $\ddd$ when the stretching effect from the fluid is present (i.e., $\lambda_2\neq 0$). In particular, those nonlinear stress terms associated with $\mu_1$, $\mu_2$, $\mu_3$, $\mu_5$, $\mu_6$ involve the director field $\ddd$ with higher powers, which are very difficult to handle. Our basic strategy is to evaluate the difference between two weak solutions $(\uu_1-\uu_2,\,\ddd_1-\ddd_2)$ in proper  function spaces that are less regular than the natural energy space $L^2(\TT^2)\times H^1(\TT^2)$ (cf. the total energy \eqref{total}). Then we overcome the above mentioned difficulties by combining several techniques in analysis, among which the Osgood lemma with a specific double-logarithmic type modulus of continuity as well as some useful tools from harmonic analysis and paradifferential calculus plays a crucial role (see Section \ref{main_result} for details). We believe that our approach will be useful for the study of other problems with highly nonlinear structure.

\subsection{Plan of the paper}
The remaining part of this paper is organized  as follows. In Section \ref{main_result}, we state the main result Theorem \ref{thm:uniqueness} and sketch its proof. In Section \ref{sec:Littlewood-Paley}, we present some basic aspects and useful tools related to the Littlewood-Paley theory in the periodic setting. From Section \ref{sec:estT1} to Section \ref{sec:double-log-est}, we provide detailed proofs for those key estimates summarized in Proposition \ref{thm:ineq}. The hardest part of the proof is contained in Section \ref{sec:double-log-est}, where we handle the most challenging nonlinear term associated with $\mu_1$ in the Leslie stress tensor that involves the highest power of the director $\ddd$.

\section{Main Result}\label{main_result}
\setcounter{equation}{0}

In this section, we first state the main result of this paper on the uniqueness of global weak solutions in a two dimensional periodic setting (see Theorem \ref{thm:uniqueness} below) and then illustrate the key ideas of its proof. For the sake of a compact presentation, in what follows, we consider only a planar director field still denoted by $\mathbf{d}$ and take a specific ansatz for the Leslie coefficients such that  
\begin{equation}
    \mu_1 = 1,\quad \mu_2 = -1,\quad \mu_3 = 0,\quad \mu_4 = 2\nu>0,\quad \mu_5 = 3,\quad \mu_6 = 
    1, \label{choicea}
\end{equation}
which yields $\lambda_1=-1$ and $\lambda_2=2$. 
Besides, since the value of the penalizing parameter $\varepsilon>0$ has no influence on the subsequent analysis, we simply set it to be one. As a consequence, the resulting PDE system that we are going to study can be written as (cf. \eqref{main_system_intro_2d})
\begin{equation}\label{main_system}
\left\{\hspace{0.2cm}
	\begin{alignedat}{2}
		&\,\partial_t\uu   + \uu\cdot \nabla \uu  -\nu \Delta \uu  +\nabla \pre =
			-\,\Div\, 
			\big(\nabla \ddd\odot\nabla \ddd\big)
			\,+\,
			\Div\,\bm{\sigma}
		\hspace{2.5cm}
																		&& \text{in}\ (0,T)\times\mathbb{T}^2, \vspace{0.1cm}\\
		&\,\Div\,\uu\,=\,0		
																		&& \text{in}\ (0,T)\times \mathbb{T}^2, \vspace{0.1cm}\\
		&\,	
		\partial_t \ddd	\,+\,\uu\cdot \nabla\ddd 
		-\frac32(\nabla \uu)\ddd- \frac12(\nabla^{\mathrm{tr}} \uu) \ddd
		\,=\,
		\,\Delta\ddd\,-\,\nabla_\ddd W(\ddd) 
		&& \text{in}\ (0,T)\times \mathbb{T}^2, \vspace{0.1cm}\\			
		&\,	
		(\uu,\,\ddd)_{|t=0}
		\,=\,
		(\uu_0,\,\ddd_0)
		&&\hspace{1.30cm}\text{in}\ \mathbb{T}^2, 
	\end{alignedat}
	\right.
\end{equation}
where the stress tensor $\bm{\sigma}$ is now given by 
\begin{equation*}
	\bm{\sigma}\,=\,
	\ddd\otimes\ddd\,(\ddd\cdot (A \ddd))\,- \,\mathcal{N}\otimes\ddd\,+\, 3(A\ddd)\otimes\ddd\,+\, \ddd\otimes(A\ddd).
\end{equation*}
It follows from the director equation that  
$\mathcal{N} = 2A\ddd + \Delta \ddd - \nabla_\ddd W(\ddd)$. Using this relation, we can recast $\bm{\sigma}$ by means of 
\begin{equation}\label{sigma_intro}
	\bm{\sigma} = \ddd\otimes \ddd (\ddd\cdot (A \ddd)) + \ddd\otimes(A\ddd)
	\,+\, (A\ddd)\otimes\ddd
	\,- (\Delta \ddd - \nabla_\ddd W(\ddd))\otimes \ddd.
\end{equation}
Here, we choose to eliminate the co-rotational time flux $\mathcal{N}$ in the original form of $\bm{\sigma}$ and deal with \eqref{sigma_intro} instead, since it will bring us some convenience when deriving energy estimates for the difference of solutions (cf. \eqref{energy-identity}). This strategy also works for the general case \eqref{main_system_intro_2d}.

It is easy to verify that the particular choice of Leslie's coefficients in \eqref{choicea} fulfills the general assumptions given by \eqref{coeass}. In particular, it does not satisfy Parodi's relation $\mu_2+\mu_3 = \mu_6-\mu_5$, and the nonlinear term of the highest order associated to $\mu_1$ is kept. Thus, our main result (Theorem \ref{thm:uniqueness}) indicates that Parodi's relation is not a necessary condition for the uniqueness of global weak solutions. We also note that, to some extent, working with a planar director field $\mathbf{d}$ can reduce the complexity of subsequent arguments, since we do not need to treat the extra equation for the third component $d_3$ that indeed has a simpler structure. Nevertheless, this simplification does not avoid any essential difficulty in the subsequent  mathematical analysis.

\subsection{Statement of the main result}
We now introduce the definition of global weak solutions to the two dimensional Ericksen-Leslie system \eqref{main_system}, as an analogue of the three dimensional case studied in \cite{MR3045633}.
\begin{definition}[Global weak solutions]\label{def:weak-sol} 
Let $T>0$. A pair $(\uu,\,\ddd)$ is called a weak solution to the Ericksen-Leslie system \eqref{main_system} with initial data $(\uu_0,\,\ddd_0)\in L^2(\TT^2)\times H^1(\TT^2)$ satisfying $\mathrm{div}\uu_0=0$, if it satisfies
\begin{equation}\label{def:weak-sol-energy-space}
\begin{alignedat}{8}
	\uu 	&\in L^\infty(0,T; L^2(\TT^2))		&&\cap 	L^2(0,T; H^1(\TT^2))  	\quad\quad\text{with}\quad\quad  &&&&\partial_t \uu \in L^2(0,T;W^{-1,\frac{2}{1+\zeta}}(\TT^2)),\\
	\ddd	&\in L^\infty(0,T; H^1(\TT^2))		&&\cap L^2(0,T;  H^2(\TT^2))	\quad\quad\text{with}\quad\quad  &&&&\partial_t \ddd \in L^2(0,T;L^{\frac{2}{1+\zeta'}}(\TT^2)),
\end{alignedat}
\end{equation}
for any $\zeta, \zeta'\in (0,1)$,
as well as 
\begin{equation*}
	\ddd\cdot A\ddd \in L^2((0,T)\times \TT^2) \quad \text{and}\quad  A\ddd \in L^2((0,T)\times\TT^2).
\end{equation*}
Furthermore, the following equalities are satisfied 
\begin{equation*}
	\int_{\TT^2} \uu(t,x)\cdot \nabla \varphi(x)\,\dd x \,=\,0 \quad\quad\text{for almost all}\  t\in (0,T)\  \text{and any}\ \varphi \in H^1(\TT^2)
\end{equation*}
and
\begin{align*}
	& \left\langle \,\partial_t \uu(t, x),\, {\rm \bf v}(x) \right\rangle_{W^{-1, \frac{2}{1+\zeta}},\, W^{1,\frac{2}{1-\zeta}}} -\int_{\TT^2} \big[\uu(t, x) \otimes \uu(t, x)\big] :\nabla {\rm \bf v}(x)\,\dd x \\
	&\qquad =\,
	\int_{\TT^2} \big[ \nabla \ddd(t, x)\odot\nabla \ddd(t,x) \big] : \nabla  {\rm \bf v}(x)\,\dd x -\int_{\TT^2} \bm{\sigma}(t, x) :\nabla  {\rm \bf v}(x)\,\dd x,
\end{align*}
for almost all $t\in(0,T)$ and any ${\rm \bf v}\in W^{1,\frac{2}{1-\zeta}}(\TT^2)$ with $\Div\, {\rm \bf v} \,=\,0$,  
$\bm{\sigma}$ satisfies \eqref{sigma_intro} in $L^2(0,T;L^\frac{2}{1+\zeta}(\mathbb{T}^2))$,
and the equation for the director field $\ddd(t,x)\in \RR^2$ is satisfied in a stronger sense such that 
\begin{equation*}
	\partial_t \ddd	\,+\,\uu\cdot \nabla\ddd \,
		-\frac32(\nabla \uu)\ddd- \frac12(\nabla^{\mathrm{tr}} \uu) \ddd
	\,=\,\Delta\ddd\,-\,\nabla_\ddd W(\ddd) 
	\quad\quad\text{a.e. in}\quad (0,T)\times\TT^2.
\end{equation*}
Besides, the initial conditions are satisfied such that $\uu(0,\cdot ) = \uu_0$ and $\ddd(0,\cdot) = \ddd_0$ a.e. in $\TT^2$.
Finally, the following energy inequality holds
	\begin{equation}\label{def:energy-inequality-tot}
	\begin{aligned}
		\mathcal{E}_{\mathrm{total}}(t) +
		\int_0^t \mathcal{D}_{\mathrm{total}}(s) \,\dd s
		\,\leq\,
		 \mathcal{E}_{\mathrm{total}}(0),
	\end{aligned}
	\end{equation}
for any $t\in [0,T]$, where $\mathcal{E}_{\mathrm{total}}(t)$ is given by \eqref{total} with $\Omega=\mathbb{T}^2$, and $$ 
\mathcal{D}_{\mathrm{total}}(t)
=\int_{\TT^2}
		\Big(
		 \nu |\nabla \uu|^2+ |\ddd\cdot A\ddd |^2 +
		\frac32 |A\ddd|^2 	+ \frac12 |\,\Delta \ddd \, -\, \nabla_\ddd W(\ddd)\,|^2
		+ \frac12 |A\ddd + (\Delta \ddd \, -\, \nabla_\ddd W(\ddd))|^2
		\Big)(t,x)\,
		\dd x.
$$
\end{definition}
\noindent 
We note that the inequality \eqref{def:energy-inequality-tot} coincides with the basic energy law \eqref{BELnoPa} (with now the less than or equal sign for weak solutions), by recasting the co-rotational time flux such that $\mathcal{N} = 2A\ddd + \Delta \ddd - \nabla_{\ddd} W(\ddd)$.

Next, we present a theorem on the existence of global weak solutions to system \eqref{main_system} in $\mathbb{T}^2$. Its proof will be omitted here, since it essentially follows a similar argument as the one used in \cite{MR3045633}, in which the authors dealt with the more challenging case with a general three dimensional smooth bounded domain. 
\begin{theorem}[Existence of weak solutions]
\label{thm:existence}
	Assume that the initial data satisfy the following conditions
	\begin{equation}\label{initial-condition}
		(\uu_0,\ddd_0)\in L^2(\mathbb{T}^2)\times H^1(\mathbb{T}^2)\quad\text{with}\quad\Div\,\uu_0\,=\,0.
	\end{equation}	
	Then for any $T>0$, there exists a global weak solution $(\uu, \ddd)$ to the Ericksen-Leslie system \eqref{main_system} on $[0,T]$ in the sense of Definition \ref{def:weak-sol}. 
\end{theorem}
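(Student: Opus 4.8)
The plan is to follow the double-level Faedo--Galerkin scheme of \cite{MR3045633}, developed there for a three-dimensional smooth bounded domain, which transfers to the periodic domain $\TT^2$ with only notational changes (and, if anything, more comfortably, since in two dimensions the relevant estimates are critical rather than supercritical). First I would introduce an outer regularization of \eqref{main_system}: replace the potential $W$ by a truncated potential $W_R\ge0$ that agrees with $W$ on $\{|\ddd|\le R\}$ and grows quadratically beyond it, and correspondingly truncate the polynomial nonlinearities in $\bm{\sigma}$ by a smooth cut-off $\chi_R(|\ddd|)$, so that all nonlinear terms become globally Lipschitz in suitable norms. On this regularized system I would run a Galerkin approximation of dimension $m$, projecting $\uu$ onto the span of the first $m$ divergence-free trigonometric eigenfunctions of the Stokes operator on $\TT^2$ and $\ddd$ onto the span of the first $m$ eigenfunctions of $-\Delta$; since the resulting finite-dimensional ODE system has a smooth, at most sublinearly growing right-hand side, the Cauchy--Lipschitz theorem provides a local-in-time approximate solution $(\uu_{m,R},\ddd_{m,R})$.

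The next step is the basic energy estimate, uniform in $m$ and $R$. Testing the momentum equation with $\uu_{m,R}$ and the director equation with $-\Delta\ddd_{m,R}+\nabla_{\ddd}W_R(\ddd_{m,R})$ and adding, the coupling contributions from the Ericksen stress $\nabla\ddd\odot\nabla\ddd$ and from the $\mathcal{N}\otimes\ddd$, $(A\ddd)\otimes\ddd$ and $\ddd\otimes(A\ddd)$ parts of $\bm{\sigma}$ cancel exactly, by the structure of \eqref{main_system} and the incompressibility of $\uu$; what is left is a nonnegative dissipation which, because the coefficients \eqref{choicea} satisfy the assumptions \eqref{coeass} --- in particular $\mu_1\ge0$, $\mu_4>0$ and the quadratic relation between $\mu_5+\mu_6$, $\lambda_1$, $\lambda_2$ and $\mu_2+\mu_3$ that makes a certain quadratic form in $(\mathcal{N},A\ddd)$ positive --- reduces to $\mathcal{D}_{\mathrm{total}}$ (with $W$ replaced by $W_R$). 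This yields uniform bounds for $\uu_{m,R}$ in $L^\infty(0,T;L^2)\cap L^2(0,T;H^1)$, for $\ddd_{m,R}$ in $L^\infty(0,T;H^1)\cap L^2(0,T;H^2)$, and for $\ddd_{m,R}\cdot A\ddd_{m,R}$ and $A\ddd_{m,R}$ in $L^2((0,T)\times\TT^2)$; in particular the approximate solutions are global. Reading the two evolution equations backwards, and using the two-dimensional embeddings $H^1\hookrightarrow L^p$ for every $p<\infty$, $H^2\hookrightarrow L^\infty$, and the Ladyzhenskaya inequality $\|f\|_{L^4}\lesssim\|f\|_{L^2}^{1/2}\|f\|_{H^1}^{1/2}$, one also obtains the uniform bounds $\partial_t\uu_{m,R}\in L^2(0,T;W^{-1,2/(1+\zeta)})$ and $\partial_t\ddd_{m,R}\in L^2(0,T;L^{2/(1+\zeta')})$ appearing in \eqref{def:weak-sol-energy-space}.

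Passing first to the limit $m\to\infty$ and then $R\to\infty$, a diagonal extraction produces a sequence $(\uu_n,\ddd_n)$ enjoying the same uniform bounds, and the Aubin--Lions--Simon lemma then gives, along a further subsequence, $\uu_n\to\uu$ strongly in $L^2((0,T)\times\TT^2)$ and $\ddd_n\to\ddd$ strongly in $L^2(0,T;H^{2-\eta})\cap C([0,T];L^2)$ for small $\eta>0$, hence also $\ddd_n\to\ddd$ strongly in $L^q(0,T;L^\infty)$ for some $q>2$ (by interpolation between $L^\infty(0,T;H^1)$ and $L^2(0,T;H^2)$), while $A\ddd_n\rightharpoonup A\ddd$ and $\ddd_n\cdot A\ddd_n\rightharpoonup \ddd\cdot A\ddd$ weakly in $L^2$. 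These convergences suffice to identify the limits of the transport terms, of the Ericksen stress, of the penalization $\nabla_{\ddd}W(\ddd)=(|\ddd|^2-1)\ddd$, and, crucially, of the Leslie stress $\bm{\sigma}$ in the form \eqref{sigma_intro}: one does not expand $\bm{\sigma}$ in powers of $\ddd$, but keeps $\ddd\cdot(A\ddd)$ and $A\ddd$ as primitive $L^2$ quantities (controlled by $\mathcal{D}_{\mathrm{total}}$) multiplied by factors $\ddd\otimes\ddd$ or $\ddd$ that converge strongly in $L^q(0,T;L^\infty)$, so that products of the type (strongly convergent)$\times$(weakly $L^2$-convergent) pass to the limit when tested against $\nabla\mathbf{v}\in W^{1,2/(1-\zeta)}$. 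The energy inequality \eqref{def:energy-inequality-tot} then follows from the uniform energy identity for the approximations by weak lower semicontinuity of the norms, and the initial data are attained thanks to the $C([0,T];L^2)$ convergence together with the weak-in-time continuity inherited from the uniform time-derivative bounds.

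The main obstacle is precisely the passage to the limit in the highly nonlinear Leslie stress terms, above all $\ddd\otimes\ddd\,(\ddd\cdot A\ddd)$, which is quartic in $\ddd$, in the absence of any uniform $L^\infty$-bound on $\ddd$ --- the maximum principle for the director equation failing because $\lambda_2\ne0$. What resolves it is the interplay between the $L^2$-control of $A\ddd$ and $\ddd\cdot A\ddd$ furnished by the dissipation, the two-dimensional embeddings recalled above (which make $\ddd$ strongly convergent in $L^q(L^\infty)$ and bounded in $L^\infty(L^p)$ for all $p<\infty$), and the low-regularity weak formulation of Definition \ref{def:weak-sol}, in which no term of $\bm{\sigma}$ is ever required to have more than $L^{2/(1+\zeta)}$ integrability. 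All these ingredients are worked out in detail in \cite{MR3045633} (with an additional regularization level in the spirit of \cite{MR3023076,MR2947543}) for a three-dimensional bounded domain, and the argument applies with only minor modifications in the periodic setting \eqref{main_system}, which is why we have omitted the proof.
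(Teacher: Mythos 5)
Your proposal is correct and takes essentially the same route as the paper, which deliberately omits the proof because it reduces to the double-level Faedo--Galerkin scheme of \cite{MR3045633} (regularization plus Galerkin approximation, the basic energy law guaranteed by the coefficient choice \eqref{choicea} satisfying \eqref{coeass}, Aubin--Lions compactness, and passage to the limit in the Leslie stress by keeping $A\ddd$ and $\ddd\cdot A\ddd$ as weakly convergent $L^2$ quantities multiplied by strongly convergent factors, exactly as in Definition \ref{def:weak-sol}). Your outline matches this intended argument, with only cosmetic differences in how the approximation levels are set up.
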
 

Now we are in a position to state the main result of this paper, that is, the global weak solutions built in Theorem \ref{thm:existence} are indeed unique. 
\begin{theorem}[Uniqueness of weak solutions]\label{thm:uniqueness}
	Assume that the assumptions in Theorem \ref{thm:existence} are satisfied. Let $(\uu_1,\ddd_1)$ and $(\uu_2,\ddd_2)$ be two global weak solutions to the Ericksen-Leslie system \eqref{main_system} determined by Theorem \ref{thm:existence}, subject to the same initial data $(\uu_0,\ddd_0)$ and defined on $[0,T]$. Then we have 
	$(\uu_1(t),\ddd_1(t))=(\uu_2(t),\ddd_2(t))$  for all $t\in [0,T]$.
\end{theorem}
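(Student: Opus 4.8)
The plan is to fix two weak solutions $(\uu_1,\ddd_1)$ and $(\uu_2,\ddd_2)$ with the same initial datum, write $\uu = \uu_1-\uu_2$, $\ddd = \ddd_1 - \ddd_2$, and prove that a suitable ``weak'' norm of $(\uu,\ddd)$ vanishes identically on $[0,T]$. The central difficulty already flagged in the introduction is that estimating the difference at the natural energy level $L^2\times H^1$ forces one to control, say, $\nabla\ddd$ in $L^2$ and the highly nonlinear Leslie stress terms (especially the $\mu_1$ term $\ddd\otimes\ddd(\ddd\cdot A\ddd)$, cubic in $\ddd$) in that topology; the products simply do not close because two-dimensional weak solutions only provide $\ddd\in L^\infty_t H^1_x\cap L^2_t H^2_x$ and $\uu\in L^\infty_t L^2_x\cap L^2_t H^1_x$, with no $L^\infty$ bound on $\ddd$. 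So instead I would work \emph{one derivative below} the energy level: measure $\uu$ in (a space close to) $L^\infty_t H^{-1}_x\cap L^2_t L^2_x$ and $\ddd$ in $L^\infty_t L^2_x\cap L^2_t H^1_x$, i.e. test the momentum equation against $\Delta^{-1}\uu$ (periodic, mean-zero) and the director difference equation against $\ddd$ itself. Concretely, set
\begin{equation*}
Y(t) \;=\; \tfrac12\|\uu(t)\|_{\dot H^{-1}}^2 + \tfrac12\|\ddd(t)\|_{L^2}^2,
\end{equation*}
differentiate in time using the two systems for $(\uu_i,\ddd_i)$, and collect all the resulting terms.

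\textbf{Key steps, in order.}
First I would record the differences of the equations: the momentum difference has the form $\partial_t\uu + \uu_1\cdot\nabla\uu + \uu\cdot\nabla\uu_2 - \nu\Delta\uu + \nabla\pi = -\Div(\nabla\ddd_1\odot\nabla\ddd + \nabla\ddd\odot\nabla\ddd_2) + \Div(\bm\sigma_1 - \bm\sigma_2)$, with $\bm\sigma_i$ in the form \eqref{sigma_intro}; the director difference equation is $\partial_t\ddd + \uu_1\cdot\nabla\ddd + \uu\cdot\nabla\ddd_2 - \tfrac32(\nabla\uu_1)\ddd - \tfrac32(\nabla\uu)\ddd_2 - \tfrac12(\nabla^{\mathrm{tr}}\uu_1)\ddd - \tfrac12(\nabla^{\mathrm{tr}}\uu)\ddd_2 = \Delta\ddd - (\nabla_\ddd W(\ddd_1) - \nabla_\ddd W(\ddd_2))$. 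Second, pair the momentum difference with $\Delta^{-1}\uu$ and the director difference with $\ddd$, add, and integrate by parts; the good terms are $-\nu\|\uu\|_{L^2}^2$ from the viscosity, $-\|\nabla\ddd\|_{L^2}^2$ from the director Laplacian, and the dissipative pieces of the Leslie stress (the $\mu_4$, $\mu_5+\mu_6$, $\mu_1$ contributions produce, after the cancellations exploited in \eqref{energy-identity}, genuinely nonnegative quantities or quantities absorbable by the dissipation rate $\mathcal D_{\mathrm{total}}$ of the two solutions). Third — and this is the bulk of the work — bound every remaining term by $C\,h(Y(t))\,\big(1 + \mathcal D_1(t) + \mathcal D_2(t) + \|\ddd_1\|_{H^2}^2 + \|\ddd_2\|_{H^2}^2 + \|\uu_1\|_{H^1}^2+\|\uu_2\|_{H^1}^2\big)$, where $h$ is \emph{not} linear: the worst terms (transport terms like $\int\uu\cdot\nabla\ddd_2\cdot\ddd$, stretching terms $\int(\nabla\uu)\ddd_2\cdot\ddd$, Ericksen-stress terms $\int\nabla\ddd\odot\nabla\ddd_2:\nabla\Delta^{-1}\uu$, and the cubic $\mu_1$ stress term) will only be controllable after splitting via Littlewood--Paley/paraproduct decompositions and using logarithmic interpolation inequalities in $\TT^2$ of the type $\|f\|_{L^\infty}\lesssim \|f\|_{H^1}\log^{1/2}(e + \|f\|_{H^2}/\|f\|_{H^1})$, producing a modulus $h(r) = r\,\log(e+1/r)\,\log\log(e+1/r)$ of double-logarithmic type. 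Fourth, since $Y(0)=0$ and the time-integrated coefficient $\int_0^T(\mathcal D_1 + \mathcal D_2 + \ldots)\,\dd t$ is finite by the energy inequality \eqref{def:energy-inequality-tot}, apply the Osgood lemma with this $h$: because $\int_0^1 \dd r/h(r) = \infty$, the only solution is $Y\equiv 0$, hence $\uu_1 = \uu_2$ and $\ddd_1 = \ddd_2$ on $[0,T]$.

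\textbf{Main obstacle.}
The hardest point is the cubic Leslie term $\mu_1\,\ddd\otimes\ddd(\ddd\cdot A\ddd)$ (and to a lesser extent $\mu_5,\mu_6$): its difference is a sum of terms that are quartic in the solutions and contain $A\ddd_i = $ first derivatives of $\uu_i$, so at the regularity of weak solutions one cannot even place it in $L^1_{t,x}$ by brute force. The resolution I anticipate is to never expand naively but to exploit the specific algebraic structure — rewrite the difference so that it pairs against the dissipation terms $|\ddd\cdot A\ddd|^2$ and $|A\ddd|^2$ available from \eqref{def:energy-inequality-tot} for \emph{both} solutions — and then to decompose dyadically, estimating low-high and high-low interactions separately, paying a single logarithm at each step; the two logarithms (one from the $\ddd$ factors through $H^1\hookrightarrow BMO$-type bounds, one from the $\nabla\uu$ factor) combine into the double-logarithmic Osgood modulus, which is exactly why a plain Gronwall argument fails and the Osgood lemma with modulus $h(r)=r\log(e+r^{-1})\log\log(e+r^{-1})$ is the right tool. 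Verifying that all other (lower-order) terms — transport, the remaining stresses, and the polynomial nonlinearity $\nabla_\ddd W$ — fit under the same modulus is comparatively routine once the Littlewood--Paley machinery of Section \ref{sec:Littlewood-Paley} is in place.
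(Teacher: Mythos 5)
Your overall architecture — measure the difference below the energy level, run paradifferential estimates with Brezis--Gallou\"et-type frequency splittings, and close with an Osgood lemma for a double-logarithmic modulus — is indeed the paper's architecture, but at the two decisive points your proposal has genuine gaps. First, your mechanism for the worst terms is wrong as stated: you propose to handle the quartic $\mu_1$ term (and the symmetric $\mu_5,\mu_6$ terms) by ``pairing against the dissipation $|\ddd\cdot A\ddd|^2$, $|A\ddd|^2$ available from \eqref{def:energy-inequality-tot} for both solutions.'' The solutions' own dissipation is a fixed $L^1(0,T)$ function of time and can only ever enter as a coefficient multiplying $\mu(\Phi)$; it cannot absorb anything that is linear in the \emph{difference's} dissipation. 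What is actually needed, and what the paper does, is to manufacture new nonnegative dissipation terms for the difference itself: Bony-decomposing the stress makes the resonant (low-frequency-$\ddd_1$) parts appear quadratically with a favorable sign, producing the terms $\sum_{q}2^{-q}\int_{\TT^2}|\Dd_q\delta A\, S_{q-1}\ddd_1|^2\,\dd x$ and $\sum_{q}2^{-q}\int_{\TT^2}|\Dd_q\delta A: S_{q-1}(\ddd_1\otimes\ddd_1)|^2\,\dd x$ that are built into $\mathfrak{D}(t)$ in \eqref{def-Phi-D}; these, together with the exact cancellation between the resonant part of the stretching term and that of the stress $-(\Delta\delta\ddd)\otimes\ddd_1$ (which uses $\lambda_1=\mu_2-\mu_3$, $\lambda_2=\mu_5-\mu_6$ and the fact that the director equation is tested against $-\Delta\delta\ddd$ at the \emph{same} $H^{-\frac12}$ level), are what absorb the resonant contributions that no product estimate can control (cf. Propositions \ref{prop:T2}, \ref{prop:T3}, \ref{prop:last-ineq-main-thm}). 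Your plan contains no analogue of either device.

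Second, the functional level you chose is very likely too low. With $\dot H^{-1}\times L^2$ and the multipliers $(\Delta^{-1}\delta\uu,\delta\ddd)$ your only dissipation is $\nu\|\delta\uu\|_{L^2}^2+\|\nabla\delta\ddd\|_{L^2}^2$. A term such as $\int_{\TT^2}(\nabla\delta\uu)\ddd_1\cdot\delta\ddd\,\dd x$, or $\langle(A_2\delta\ddd)\otimes\ddd_1,\nabla\delta\uu\rangle$, then forces \emph{both} difference factors to sit exactly at the dissipation level, leaving a factor $\|\ddd_1\|_{L^\infty}$ that does not exist; replacing it by $\|S_N\ddd_1\|_{L^\infty}\lesssim\sqrt{N}\|\ddd_1\|_{H^1}$ produces a loss $\sqrt{N}\sim\sqrt{\ln(1/Y)}$ multiplying the full dissipation rather than $Y$, which cannot be absorbed and is useless for Osgood, while estimating $\ddd_1$ in $H^{1+\varepsilon}$ instead produces coefficients of the type $\|\nabla\uu_2\|_{L^2}^{4}$ or $\|\ddd_1\|_{H^2}^{2/(1-\varepsilon)}$ that are not integrable in time. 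This is exactly why the paper works only half a derivative down, in $H^{-\frac12}\times H^{\frac12}$, testing the director equation against both $\delta\ddd$ in $L^2$ and $-\Delta\delta\ddd$ in $H^{-\frac12}$: there each dangerous term costs only a fractional power of the difference's dissipation (e.g. via $\|\delta\ddd\|_{H^{3/4}}\leq\|\delta\ddd\|_{H^{1/2}}^{3/4}\|\delta\ddd\|_{H^{3/2}}^{1/4}$) and only $H^{3/4}$- or $H^1$-norms of $\ddd_1$, so every coefficient stays in $L^1(0,T)$ and the logarithmic losses attach to $\Phi$, as Osgood requires. Unless you either move to the paper's intermediate level with the extra multiplier, or identify at the $\dot H^{-1}$ level the same resonant cancellations and difference-level hidden dissipation, your scheme does not close.
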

\begin{remark}
(1) The existence and uniqueness results stated in Theorems \ref{thm:existence},  \ref{thm:uniqueness} can be easily extended to the more general setting as in \eqref{main_system_intro_2d} (subject to the structural conditions \eqref{coeass}) with only minor modifications in the corresponding proofs. (2) Several interesting issues for the general system \eqref{main_system_intro_2d} still remain open, for instance, the regularity of global weak solutions, and the existence of global strong solutions with arbitrary large initial data $(\uu_0,\ddd_0)$ in certain more regular spaces such as $H^1(\mathbb{T}^2)\times H^2(\mathbb{T}^2)$.
\end{remark}

Before ending this subsection, we make a  comment on the uniqueness result obtained in Theorem \ref{thm:uniqueness}. Let us focus on the velocity field $\uu$ and recall that a typical approach to address the uniqueness of weak solutions (\`a la Leray) for the Navier-Stokes equations is given by the so-called Serrin's criterion. More precisely, in any dimension $d\geq 2$, under the assumption that the velocity field $\uu(t,x)$ belongs to $
    \uu \in L^p(0,T; L^q(\mathbb{T}^d))$ with $1/p = 1/2- d/(2q)$, $p\in (2,\infty)$, $q\in (d, \infty)$, one then recovers the following continuity in time for the velocity field $\uu\in C([0,T]; L^2(\mathbb{T}^d))$. This relation develops further into the strong energy equality
    \begin{equation*}
        \| \uu(t) \|_{L^2}^2 + 2\nu\int_\tau^t \| \nabla \uu(s) \|_{L^2}^2\dd s = \| \uu(\tau) \|_{L^2}^2,
        \qquad 0\leq \tau \leq t <T,
    \end{equation*}
    and eventually leads to the uniqueness of these weak solutions. In particular when $d=2$, since Leray's solutions fulfill $\uu\in L^4((0,T)\times \mathbb{T}^2))$, then they satisfy the continuity property $\uu\in C([0,T]; L^2(\mathbb{T}^2))$ and thus are unique. However, the above mentioned relation is no longer valid when we deal with the Ericksen-Leslie system \eqref{main_system} (or \eqref{main_system_intro_2d}). In general, we are not able to obtain the continuity property of $\uu$ in $C([0,T]; L^2(\mathbb{T}^2))$, which makes our problem somehow non trivial. Indeed some energy of the system can be lost during the evolution of the flow, being transferred to high modes of the velocity field $\uu$. 
    The main reason for this lack of regularity should be related to some components of the Leslie stress tensor $\bm{\sigma}$ in \eqref{sigma_intro}, for instance, the stress tensor $\mu_1 \ddd\otimes \ddd (\ddd\cdot (A\ddd))$. This highly nonlinear term does not belong to $L^2((0,T)\times \mathbb{T}^2)$, because of the lack of a maximum principle for the director $\ddd$ under stretching effects induced by the fluid. Indeed, we can deduce that $\mu_1 \ddd\otimes \ddd (\ddd\cdot (A\ddd))\in L^2(0,T; L^p(\mathbb{T}^2))$ for any $p\in[1,2)$, which eventually yields $\uu \in C([0,T]; L^p(\mathbb{T}^2))$, but never reaching the threshold $p=2$. Therefore, in the two dimensional case, the energy inequality \eqref{def:energy-inequality-tot} for weak solutions can not be improved into a strong energy identity.
    
    On the other hand, we point out that the analysis of whether $\uu$ belongs $C([0,T]; L^2(\mathbb{T}^2))$ or not, relates more to the regularity property of weak solutions, which does not automatically affect its uniqueness. An analogous situation was encountered for instance in \cite{hong,linlinwang}, where the authors constructed global weak solutions for a simplified liquid crystal system that consists of the Navier-Stokes equations and the transported heat flow of harmonic map into sphere, i.e., with the unitary constraint $|\ddd(t,x)|=1$. In that system, certain different mechanism, i.e., a lack of regularity on the director $\ddd(t,x)$ leads to possible creations of singularity on the weak solutions for a finite amount of times. One can check that in the results therein, the velocity $\uu$ does not belong a-priori to  $C([0,T]; L^2(\Omega))$ (with either $\Omega\subset \mathbb{R}^2$ being  a smooth bounded domain or $\Omega =\mathbb{R}^2$), nevertheless, the uniqueness could be obtained, see \cite{LinWang2010} (see also \cite{xuzhang} for a different proof based on the Littlewood-Paley analysis). We refer to \cite{MR3460623,WangWangZhang} for further results on the uniqueness of global weak solutions for the general {E}ricksen-{L}eslie system with the (anisotropic) Oseen-Frank energy and unitary constraint $|\ddd(t,x)|=1$ in the whole space $\mathbb{R}^2$. 

\subsection{Proof of Theorem \ref{thm:uniqueness}: outlines and key idea.} 
In what follows, we present the main ingredients of the proof for Theorem \ref{thm:uniqueness} and postpone the detailed estimates that are rather involved to subsequent sections. 

Let $(\uu_1(t,x),\,\ddd_1(t,x))$ and $(\uu_2(t,x),\,\ddd_2(t,x))$ be two global weak solutions to problem \eqref{main_system} on $[0,T]$ determined by Theorem \ref{thm:existence} subject to the same initial data $(\uu_0(x),\ddd_0(x))$. For convenience, we introduce some notations related to the differences of these two solutions
\begin{align*}
 & \delta \uu =\uu_1\,-\,\uu_2,\quad \delta \ddd \,=\,\ddd_1\,-\,\ddd_2,\\ &\delta A=A_1-A_2=\frac{1}{2}(\nabla \uu_1+\nabla^{\mathrm{tr}} \uu_1)-\frac{1}{2}(\nabla \uu_2+\nabla^{\mathrm{tr}} \uu_2),\\
 &\delta \omega= \omega_1-\omega_2= \frac{1}{2}(\nabla \uu_1-\nabla^{\mathrm{tr}} \uu_1)-\frac{1}{2}(\nabla \uu_2-\nabla^{\mathrm{tr}} \uu_2).
\end{align*}
%
%
Instead of working in the natural energy spaces for problem \eqref{main_system} (recall \eqref{def:weak-sol-energy-space}), we shall perform estimates of  $(\delta\uu,\delta\ddd)$ in the following lower-order function spaces
\begin{equation}
\label{space-where-deltauanddeltad-are-defined}
	\begin{cases}
		\delta \uu\,\in\,C_b([0,T];\,H^{-\frac{1}{2}}(\TT^2)),\ \  
		\nabla \delta \uu\in 
		L^2(0,T;\,  H^{-\frac{1}{2}}(\TT^2)),\\ 
		\delta \ddd\,\in\,C_b([0,T];\,H^{\frac{1}{2}}(\TT^2)),\ \ 
		\nabla \delta \ddd \in 
		 L^2(0,T;\,H^{\frac{1}{2}}(\TT^2)),
	\end{cases}
\end{equation} 
where $H^{s}(\TT^2)$ stands for the usual Sobolev space with index of regularity $s\in\mathbb{R}$ (cf. Remark \ref{equinorm}). The above regularity properties and especially the continuity in time are well defined thanks to \eqref{def:weak-sol-energy-space}, since these function spaces arise as interpolations of $\delta \uu \in H^1([0,T]; W^{-1,p}(\mathbb{T}^2))\cap L^\infty(0,T;L^2(\mathbb{T}^2))$ and $\delta \ddd\in 
H^1([0,T];L^{p}(\mathbb{T}^2))\cap L^\infty(0,T;H^{1}(\mathbb{T}^2))$, with $1\leq p< 2$.

As a key ingredient to control the difference of weak solutions $(\delta \uu,\,\delta \ddd)$ within the framework given by \eqref{space-where-deltauanddeltad-are-defined}, we introduce a function reflecting its norm and another one related to the rate of energy dissipation:
\begin{equation}\label{def-Phi-D}
\begin{cases}
\begin{alignedat}{4}
	\,\Phi(t)
	&:=
	\frac{1}{2}
	\left(
	\|\delta \uu(t)\|_{H^{-\frac{1}{2}}}^2+\|\delta \ddd(t)\|_{H^{\frac{1}{2}}}^2
	\right)
	\ \in C([0,T])\quad \text{with}\quad \Phi(0)=0, \\
	\,\mathfrak{D}(t) 
	&:= 
	\nu \| \nabla \delta \uu (t) \|_{H^{-\frac{1}{2}}}^2 	+
	 \| \nabla \delta \ddd(t) \|_{H^\frac{1}{2}}^2		
	+ 2
	\sum_{q=-1}^\infty
	\!2^{-q}\! 
	\int_{\TT^2}\!
	|\Dd_q \delta A S_{q-1} \ddd_1 |^2(t,x)\,\dd x	\\
	&\quad\ 
	+
	\sum_{q=-1}^\infty
	\!2^{-q}\!
	\int_{\TT^2}\!
	|\Dd_q \delta A:S_{q-1} (\ddd_1\otimes \ddd_1) |^2(t,x)\,\dd x\geq 0.
\end{alignedat}
\end{cases}
\end{equation}
In the definition of $\mathfrak{D}(t)$, we have used some notations of the Littlewood-Paley theory. More precisely, $\Dd_q$ and $\Sd_{q-1}$ stand for the dyadic block operator and the low-frequency cut-off operator, respectively. A brief introduction of these operators will be given in Section  \ref{sec:Littlewood-Paley}. 

The proof of Theorem \ref{thm:uniqueness} relies on a suitable control of the function $\Phi(t)$ through an Osgood-type inequality.
To this end, we recall the so-called Osgood condition (see \cite[Chapter 3, Definition 3.1]{BCD}): 
\begin{definition}[Osgood condition] 
   Let $\mu:[0,\infty)\to [0,\infty)$ be a nondecreasing nonzero continuous  function such that $\mu(0) = 0$. We say that $\mu$ is an Osgood modulus of continuity, if 
   \begin{equation*}
       \int_0^1 \frac{\dd r}{\mu(r)} = \infty.
   \end{equation*}
\end{definition}
\noindent 
Then we state the following version of the Osgood lemma (see, e.g., \cite[Chapter 3, Lemma 3.4]{BCD}), which is a generalisation of the standard Gronwall's inequality.
\begin{lemma}[Osgood lemma]\label{lemma:osgood}
  Let $\Phi:[0,T]\to [0,\infty)$ be a measurable function and $F$ be a locally integrable function from $[0,T]$ to $[0,\infty)$. Assume that there exists an Osgood modulus of continuity $\mu : [0,\infty)\to [0,\infty)$ such that
  \begin{equation*}
      \Phi(t) \leq \int_0^t F(s)\mu(\Phi(s))\,\dd s,\qquad 
      \text{for almost all }t\in [0,T],
  \end{equation*}
  then $\Phi=0$ a.e. in $[0,T]$.
\end{lemma}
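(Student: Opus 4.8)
The plan is to run the classical comparison argument behind Osgood's lemma (cf.\ \cite[Chapter~3]{BCD}): I would convert the integral inequality into a differential inequality for a strictly positive majorant of $\Phi$, integrate it after dividing through by $\mu$, and then let an auxiliary constant $c>0$ shrink to zero, at which point the Osgood condition $\int_0^1\dd r/\mu(r)=\infty$ forces $\Phi$ to vanish. I may and will assume $\mu(r)>0$ for every $r>0$: this is the only substantive case, since if $\mu$ vanished on a nontrivial interval $[0,\delta_0]$ the conclusion would already follow by a direct bootstrap on the hypothesis. First set $I(t):=\int_0^t F(s)\,\mu(\Phi(s))\,\dd s$, which is finite for each $t$ (the integrand being tacitly assumed locally integrable, as required for the hypothesis to be meaningful), nonnegative and nondecreasing; by assumption $\Phi(t)\le I(t)$ for a.e.\ $t\in[0,T]$. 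For a fixed $c>0$ I would introduce the majorant $R_c(t):=c+I(t)$, which is absolutely continuous on $[0,T]$, satisfies $R_c(0)=c$ and $R_c(t)\ge c>0$, and obeys $R_c'(t)=F(t)\,\mu(\Phi(t))$ for a.e.\ $t$ by the fundamental theorem of calculus. Since $\mu$ is nondecreasing and $\Phi(t)\le I(t)<R_c(t)$ for a.e.\ $t$, this yields the pointwise a.e.\ differential inequality
\[
0\ \le\ R_c'(t)\ \le\ F(t)\,\mu\bigl(R_c(t)\bigr),\qquad t\in[0,T].
\]

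Next I would exploit the primitive of $1/\mu$. Define $\mathcal{M}(x):=\int_x^{1}\dd r/\mu(r)$ for $x\in(0,\infty)$; since $\mu$ is continuous and strictly positive on $(0,\infty)$, $\mathcal{M}$ is a real-valued, strictly decreasing $C^1$ function with $\mathcal{M}'=-1/\mu$. Because $R_c$ is absolutely continuous with range in the compact set $[c,R_c(T)]\subset(0,\infty)$, on which $\mathcal{M}$ is Lipschitz, the composition $t\mapsto\mathcal{M}(R_c(t))$ is absolutely continuous, and the chain rule together with the differential inequality above gives, for a.e.\ $t\in[0,T]$,
\[
\frac{\dd}{\dd t}\,\mathcal{M}\bigl(R_c(t)\bigr)\ =\ -\,\frac{R_c'(t)}{\mu\bigl(R_c(t)\bigr)}\ \ge\ -\,F(t).
\]
Integrating over $[0,t]$ and recalling $R_c(0)=c$ would then produce the key estimate
\[
\mathcal{M}\bigl(R_c(t)\bigr)\ \ge\ \mathcal{M}(c)\,-\,\int_0^t F(s)\,\dd s,\qquad t\in[0,T].
\]

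The last step is to send $c\to0^+$ for a fixed $t$. Suppose, towards a contradiction, that $I(t)>0$ for some $t\in[0,T]$. Then $R_c(t)=c+I(t)\to I(t)>0$ as $c\to0^+$, so by continuity of $\mathcal{M}$ at $I(t)$ the left-hand side of the key estimate converges to the finite value $\mathcal{M}(I(t))$. On the other hand, by monotone convergence and the Osgood condition, $\mathcal{M}(c)=\int_c^{1}\dd r/\mu(r)\to+\infty$ as $c\to0^+$, while $\int_0^t F(s)\,\dd s\le\|F\|_{L^1(0,T)}$ stays bounded; hence the right-hand side of the key estimate tends to $+\infty$. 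This contradiction forces $I(t)=0$ for every $t\in[0,T]$, and therefore $\Phi(t)\le I(t)=0$ for a.e.\ $t$, i.e.\ $\Phi=0$ a.e.\ in $[0,T]$, as claimed.

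I expect the only genuinely delicate point to be this passage $c\to0^+$: it is exactly the divergence of $\int_0^1\dd r/\mu(r)$ that prevents a nontrivial majorant from surviving, everything else being routine — the absolute continuity of $R_c$ and of $\mathcal{M}\circ R_c$, the chain rule for absolutely continuous functions (legitimate here because $\mathcal{M}$ is $C^1$ on a neighbourhood of the range of $R_c$), and the elementary fact that an a.e.\ bound $\Phi\le I$ together with $I\equiv0$ forces $\Phi=0$ a.e. As a consistency check, the choice $\mu(r)=r$ recovers Gronwall's inequality: there $\mathcal{M}(x)=-\ln x$, the key estimate reads $\ln R_c(t)\le\ln c+\int_0^t F(s)\,\dd s$, i.e.\ $R_c(t)\le c\,\exp\!\bigl(\int_0^t F(s)\,\dd s\bigr)$, and letting $c\to0^+$ again yields $\Phi=0$.
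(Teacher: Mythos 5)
Your proof is correct, and it is essentially the argument of the source the paper itself relies on: the paper gives no proof of this lemma but cites \cite[Chapter 3, Lemma 3.4]{BCD}, and your scheme (majorize by $R_c(t)=c+\int_0^t F(s)\mu(\Phi(s))\,\dd s$, integrate $-\mathcal{M}'=1/\mu$ along $R_c$, then let $c\to 0^+$ and invoke the divergence of $\int_0^1 \dd r/\mu(r)$) is exactly that classical comparison argument. Your two side remarks are also the right ones to make: the statement tacitly requires $F\,\mu(\Phi)$ to be locally integrable (automatic in the paper's application, where $\Phi$ is continuous and hence bounded), and the degenerate case in which $\mu$ vanishes on some interval $[0,\delta_0]$ does indeed follow by the bootstrap you indicate, using the continuity of $t\mapsto\int_0^t F(s)\mu(\Phi(s))\,\dd s$.
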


Returning to our uniqueness problem, we aim to show that the continuous function $\Phi(t)$ defined in \eqref{def-Phi-D} satisfies the following integral inequality: 
\begin{equation}\label{main-ineq}
	\Phi(t) + \gamma \int_0^t \mathfrak{D}(s)\,\dd s 
	\leq \int_0^t F(s)\mu(\Phi(s))\,\dd s,
\end{equation}
for some constant $\gamma\in (0,1)$, where the function $\mu:[0,\infty)\to [0,\infty)$ stands for the following Osgood modulus of continuity with a double logarithmic structure
\begin{equation}\label{muaa}
	\mu(r):=r \Big(1+\ln\Big(1+\frac{1}{r}\Big)\Big)\Big(1+\ln\Big(1+\ln\Big(1+\frac{1}{r}\Big)\Big)\Big),\qquad r\geq 0,
\end{equation}
and $F \in L^1(0,T)$ is a suitable non-negative integrable function that depends on both weak solutions $(\uu_1,\,\ddd_2)$ and $(\uu_2,\,\ddd_2)$. 
Once the inequality \eqref{main-ineq} can be achieved, as an immediate consequence of Lemma \ref{lemma:osgood}, we can deduce that the continuous function $\Phi(t)$ satisfies 
$$
\Phi(t)\equiv 0,\quad \forall\, t\in [0,T],
$$ 
 which yields that the two given global weak solutions $(\uu_1,\ddd_1)$,  $(\uu_2,\ddd_2)$ must coincide in $[0,T]\times \mathbb{T}^2$, and thus completes the proof of Theorem \ref{thm:uniqueness}.

Therefore, the remaining task is to derive the inequality \eqref{main-ineq}. To achieve this goal, we first observe that the difference between two weak solutions $(\delta \uu,\,\delta \ddd)$ satisfies the following system (understood in a weak sense, cf. Definition \ref{def:weak-sol}):
\begin{equation}\label{main_system2}
\left\{\hspace{0.2cm}
	\begin{alignedat}{2}
		&\,\partial_t\delta \uu + \delta \uu\cdot \nabla \uu_1+\uu_2\cdot \nabla \delta \uu -\nu\Delta\delta \uu+\nabla \delta  \pre   \\
		&\quad =
			-\Div\, 
			\big(\nabla \ddd_1\odot\nabla \delta  \ddd\big)
			-\,\Div\, 
			\big(\nabla \delta \ddd\odot\nabla \ddd_2\big)
			+
			\Div\,\delta \bm{\sigma}
		\hspace{0.4cm}
																		&& (0,T)\times\mathbb{T}^2, \vspace{0.1cm}\\
		&\,\Div\,\delta \uu\,=\,0		
																		&&(0,T)\times \mathbb{T}^2, \vspace{0.1cm}\\
		&\,	
		\partial_t \delta \ddd	\,+\,\uu_1\cdot \nabla\delta \ddd+\,\delta \uu\cdot \nabla\ddd_2 
		-\frac32(\nabla \delta \uu)\ddd_1
		-\frac32(\nabla \uu_2)\delta\ddd
		-\frac12(\nabla^{\mathrm{tr}} \delta\uu) \ddd_1
		-\frac12(\nabla^{\mathrm{tr}} \uu_2) \delta \ddd
		\\
		& \quad =\,
		\Delta\delta \ddd\,-\,\delta  \nabla_\ddd W(\ddd)\quad 
		&&(0,T)\times \mathbb{T}^2, \vspace{0.1cm}\\			
		&\,	
		(\delta \uu,\,\delta \ddd)_{|t=0}
		\,=\,
		(\mathbf{0},\,\mathbf{0})
		&&\hspace{1.27cm}\mathbb{T}^2, \vspace{0.1cm}
	\end{alignedat}
	\right.
\end{equation}
where the notation $\delta \nabla_\ddd W(\ddd)$ stands for the difference of potentials 
\begin{equation*}
	\delta \nabla_\ddd W(\ddd):=
	\nabla_\ddd W(\ddd_1)- \nabla_\ddd W(\ddd_2)
	=
	(|\ddd_1|^2-1)\delta \ddd
	\,+\,
	(\delta\ddd\cdot (\ddd_1+\ddd_2)) \ddd_2,
\end{equation*}
while the difference of stress tensors denoted by  $\delta \bm{\sigma} :=\bm{\sigma}(\uu_1,\,\ddd_1) - \bm{\sigma}(\uu_2,\,\ddd_2)$ satisfies
\begin{equation*}
\begin{aligned}
	\delta \bm{\sigma} = &\, 
	\delta \bm{\sigma}_1 
	+ \ddd_1 \otimes (\delta A \ddd_1) 
	+ \ddd_1 \otimes (A_2 \delta \ddd) 
	+ \delta \ddd\otimes (A_2 \ddd_2)  
	+ (\delta A\ddd_1)\otimes\ddd_1
	+ (A_2\delta \ddd)\otimes\ddd_1
	+ (A_2\ddd_2)\otimes\delta \ddd
	\\ 
	&-(\Delta \delta \ddd - \delta  \nabla_\ddd W(\ddd))\otimes \ddd_1 -
	(\Delta \ddd_2 - \nabla_\ddd W(\ddd_2))\otimes \delta \ddd 
\end{aligned}
\end{equation*}
such that $\delta \bm{\sigma}_1$ denotes the most challenging term to handle: 
\begin{equation}\label{delta-sigma1}
\begin{aligned}
	\delta \bm{\sigma}_1 = & \, (\ddd_1 \cdot (A_1\ddd_1))\ddd_1\otimes\ddd_1
	\,-\,(\ddd_2 \cdot (A\ddd_2))\ddd_2\otimes\ddd_2\\
	=&\,
	(\ddd_1 \cdot (\delta A\ddd_1))\ddd_1\otimes\ddd_1\,+\,
	(\ddd_1 \cdot (A_2\delta \ddd))\ddd_1\otimes\ddd_1\,+\,
	(\delta \ddd \cdot (A_2\ddd_2))\ddd_1\otimes\ddd_1\\
	&\,+ 
	(\ddd_2 \cdot (A_2\ddd_2))\delta \ddd\otimes\ddd_1\,+\,
	(\ddd_2 \cdot (A_2\ddd_2))\ddd_2\otimes \delta \ddd.
\end{aligned}
\end{equation}
In view of \eqref{def-Phi-D} and \eqref{main-ineq}, we shall estimate $(\delta \uu,\,\delta \ddd)$ within the framework  $H^{-1/2}(\TT^2)\times H^{1/2}(\TT^2)$. Under the regularity properties of weak solutions given by Definition \ref{def:weak-sol}, the following integral identity for $(\delta \uu,\,\delta \ddd)$ can be derived 
\begin{equation}\label{energy-identity}
\begin{aligned}
	&\frac{1}{2}
	\bigg(
	\|\delta \uu(t)	\|_{H^{-\frac{1}{2}}}^2
	+ \|\delta \ddd(t)\|_{L^2}^2+ 
	\| \nabla \delta \ddd(t)\|_{H^{-\frac{1}{2}}}^2
	\bigg)
	+
	\nu
	\int_0^t
	\|\nabla \delta \uu(s)\|_{H^{-\frac{1}{2}}}^2
	\, \dd s
	+
	\int_0^t
	\big(\| \nabla \delta \ddd(s)\|_{L^2}^2 +\|\Delta \delta \ddd(s)\|_{H^{-\frac{1}{2}}}^2\big)
	\,\dd s
	 \\
	&\qquad =
	\int_0^t
	\big(
	\mathcal{T}_1(s)
	+
	\mathcal{T}_2(s)
	+
	\mathcal{T}_3(s)
	+
	\mathcal{T}_4(s)
	\big)
	\,\dd s,\quad \text{for any}\ t\in (0,T).
\end{aligned}
\end{equation}
For the sake of convenience, here we group the reminder terms on the right-hand side of \eqref{energy-identity} into four terms given by 
\begin{align}
	\mathcal{T}_1	: =\ 
	& -
	  \langle 	 \delta \uu\cdot \nabla \uu_1,\, \delta \uu 	\rangle_{H^{-\frac{1}{2}}}
	-
	  \langle 	 \uu_2\cdot \nabla \delta  \uu,\, \delta \uu 	\rangle_{H^{-\frac{1}{2}}}		
	+
	  \langle \nabla \ddd_1\odot \nabla\delta \ddd ,\,\nabla \delta \uu 	\rangle_{H^{-\frac{1}{2}}}
	+ \langle \nabla \delta \ddd\odot \nabla \ddd_2 ,\,\nabla \delta \uu 	\rangle_{H^{-\frac{1}{2}}}	\notag
	\\
	& - 
	  \langle	\delta \ddd\otimes (A_2 \ddd_2),\,
	  \nabla \delta \uu \rangle_{H^{-\frac{1}{2}}} 
	  - \langle	 (A_2 \ddd_2)\otimes \delta \ddd,\,
		\nabla \delta \uu \rangle_{H^{-\frac{1}{2}}} 
	  - \langle \delta  \nabla_\ddd W(\ddd))\otimes \ddd_1,\,
		\nabla \delta \uu \rangle_{H^{-\frac{1}{2}}}\notag\\
	& + \langle
		(\Delta \ddd_2 - \nabla_\ddd W(\ddd_2))\otimes \delta \ddd,\,
		\nabla \delta \uu
	    \rangle_{H^{-\frac{1}{2}}}
	  - \int_{\TT^2} 
	  (\uu_1\cdot \nabla\delta \ddd)\cdot \delta \ddd\,\dd x
	  +
	  \langle \uu_1\cdot \nabla\delta \ddd ,\,\Delta \delta \ddd 	\rangle_{H^{-\frac{1}{2}}}	\notag		
	\\
	& -
	  \int_{\TT^2} 
	  (\delta \uu\cdot \nabla  \ddd_2)\cdot \delta \ddd\,\dd x
	  +
	  \langle \delta \uu\cdot \nabla  \ddd_2 ,\,\Delta 
	  \delta \ddd 	\rangle_{H^{-\frac{1}{2}}}	
	  - 
	  \int_{\TT^2} 
	  \delta \nabla_\ddd W(\ddd)\cdot \delta \ddd 	\,\dd x	
	  +
	  \langle \delta \nabla_\ddd W(\ddd),\,\Delta \delta \ddd 	\rangle_{H^{-\frac{1}{2}}},  
	    \label{Xi1} \smallskip \\
	\mathcal{T}_2 := \  
	& \frac32  \int_{\TT^2}
			((\nabla \delta \uu)\ddd_1)\cdot \delta\ddd  \,\dd x
			- 
			\frac32 \langle (\nabla \delta \uu)\ddd_1,\, \Delta \delta \ddd\rangle_{H^{-\frac{1}{2}}} 
			+ 
			\frac32 \int_{\TT^2}( (\nabla  \uu_2)\delta \ddd) \cdot \delta \ddd\,\dd x
			-
			\frac32 \langle  (\nabla  \uu_2)\delta \ddd,\, \Delta \delta \ddd\rangle_{H^{-\frac{1}{2}}} \notag \\
	& +\frac12  \int_{\TT^2}
		((\nabla^{\mathrm{tr}} \delta \uu)\ddd_1)\cdot \delta\ddd  \,\dd x
			- 
			\frac12 \langle (\nabla^{\mathrm{tr}} \delta \uu)\ddd_1,\, \Delta \delta \ddd\rangle_{H^{-\frac{1}{2}}} 
			+ 
			\frac12 \int_{\TT^2}( (\nabla^{\mathrm{tr}}  \uu_2)\delta \ddd) \cdot \delta \ddd\,\dd x
			 \notag
			\\
			& - 
			\frac12 \langle  (\nabla^{\mathrm{tr}}  \uu_2)\delta \ddd,\, \Delta \delta \ddd\rangle_{H^{-\frac{1}{2}}}
			+
			\langle  \Delta \delta \ddd \otimes \ddd_1,\,\nabla\delta \uu 	\rangle_{H^{-\frac{1}{2}}},    
			\label{Xi2}  \smallskip \\
	\mathcal{T}_3 := \ 
	& -	\langle
		\ddd_1 \otimes (\delta A \ddd_1),\,
		\nabla \delta \uu \rangle_{H^{-\frac{1}{2}}} 
	  - \langle 
		\ddd_1 \otimes (A_2 \delta \ddd),\, \nabla \delta \uu
    	\rangle_{H^{-\frac{1}{2}}}
      - \langle (\delta A \ddd_1)\otimes\ddd_1,\, \nabla \delta \uu
	    \rangle_{H^{-\frac{1}{2}}} 
	 \notag \\
	& - \langle 
		(A_2 \delta \ddd)\otimes \ddd_1,\, 	\nabla \delta \uu
	    \rangle_{H^{-\frac{1}{2}}},
	    \label{Xi3}  \smallskip \\
   \mathcal{T}_4 := \ 
   & - \langle 	\delta \bm{\sigma}_1,\,\nabla\delta \uu 
        \rangle_{H^{-\frac{1}{2}}},
   \label{Xi4}
\end{align}
where $\langle\cdot, \cdot\rangle_{H^{-1/2}}$ denotes the inner product in $H^{-1/2}(\mathbb{T}^2)$. 
At least formally, the identity \eqref{energy-identity} can be derived by summing the $H^{-1/2}$-inner product between $\delta \uu$ and the momentum equation for $\delta\uu$ in \eqref{main_system2}, the $L^2$-inner product between $\delta \ddd$ and the director equation for $\delta \ddd$, together with the $H^{-1/2}$-inner product between $-\Delta \delta \ddd$ and the director equation for $\delta \ddd$, and then integrating in time. Detailed calculations that lead to \eqref{energy-identity} will be performed in Appendix \ref{sec:integral-identity}. \smallskip

The following result provides estimates for each term on the right-hand side of \eqref{energy-identity}. 
\begin{prop}\label{thm:ineq}
   Denote $\widehat{\mathfrak{D}}(t)=\nu \| \nabla \delta \uu (t) \|_{H^{-\frac{1}{2}}}^2	+
	 \| \nabla \delta \ddd(t)\|_{L^2}^2 +\|\Delta \delta \ddd(t)\|_{H^{-\frac{1}{2}}}^2$.
    Then there exist four nonnegative functions $f_1$, $f_2$, $f_3$ and $f_4$ belonging to $L^1(0,T)$ that depend on the weak solutions $(\uu_1,\,\ddd_1)$ and $(\uu_2,\,\ddd_2)$ such that for any $\eta\in (0,1)$, we have 
		\begin{align*}
					& |\mathcal{T}_1(t)| \leq 
					C_\eta f_1(t) \mu(\Phi(t)) +15 \eta \widehat{\mathfrak{D}}(t),
					\\
					&|\mathcal{T}_2(t)|
					\leq C_\eta  f_2(t) \mu(\Phi(t))
					+
					5 \eta  \widehat{\mathfrak{D}}(t)+
					 \sum_{q=-1}^\infty 2^{-q} \int_{\TT^2}|\Dd_q \delta A\sdm{q}\ddd_1|^2(t,x)\,\dd x
		           +\frac14\|\Delta \delta \ddd(t)\|_{H^{-\frac12}}^2,
	             	\\
					&\Big|
 						\mathcal{T}_3(t)+
 					 2 \sum_{q=-1}^\infty 2^{-q}\int_{\TT^2}|\Dd_q\delta A \sdm{q}\ddd_1|^2(t,x)\, \dd x	
 					\Big|
					\leq C_\eta f_3(t) \mu(\Phi(t))
					+  30\eta \widehat{\mathfrak{D}}(t) + 
	10 \eta \sum_{q=-1}^\infty
	2^{-q}
	\int_{\TT^2}
	|\Dd_q \delta A\sdm{q}\ddd_1|^2\,\dd x,
	\\
					& \Big|
					\mathcal{T}_4(t) +
					\sum_{q=-1}^\infty 2^{-q}\int_{\TT^2}|\Dd_q\delta A : \sdm{q}(\ddd_1\otimes\ddd_1)|^2
					(t,x)\,\dd x 
					\Big|\\
					&\qquad 
					\leq  C_\eta  f_4(t)\mu( \Phi(t) )+  15\eta \widehat{\mathfrak{D}}(t) + 
	15 \eta \sum_{q=-1}^\infty 2^{-q}\int_{\TT^2}|\Dd_q\delta A : \sdm{q}(\ddd_1\otimes\ddd_1)|^2
					(t,x)\,\dd x,
		\end{align*}
	for almost all $t\in (0,T)$, where $\Phi(t)$ is given by \eqref{def-Phi-D}, $\mu$ takes the form of \eqref{muaa}, and $C_\eta$ is a positive constant depending on $\eta$, $L^2\times H^1$-norm of the initial data $(\uu_0,\ddd_0)$ and coefficients of the system.
\end{prop}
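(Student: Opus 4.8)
\noindent\textbf{Strategy of the proof of Proposition~\ref{thm:ineq}.}
The plan is to estimate the four groups $\mathcal{T}_1,\dots,\mathcal{T}_4$ one at a time (this is what Sections~\ref{sec:estT1}--\ref{sec:double-log-est} carry out), always inside the periodic Littlewood--Paley and paradifferential framework of Section~\ref{sec:Littlewood-Paley}. Each term in \eqref{Xi1}--\eqref{Xi4} is a bilinear or trilinear pairing whose two ``small'' factors come from the difference $(\delta\uu,\delta\ddd)$ while the remaining factors are furnished by the two fixed weak solutions $(\uu_i,\ddd_i)$, $i=1,2$. For $(\delta\uu,\delta\ddd)$ I would use three pieces of information simultaneously: the smallness encoded in $\Phi$ (the $H^{-1/2}\times H^{1/2}$ size), the boundedness $\delta\uu\in L^\infty_t L^2_x$ and $\delta\ddd\in L^\infty_t H^1_x$ inherited from the energy inequality, and the dissipation $\widehat{\mathfrak{D}}$, i.e.\ one extra derivative in the time-integrated sense. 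Having three interpolation data points straddling the critical two-dimensional regularity is precisely what allows one to use each borderline product law and each embedding such as $H^1(\TT^2)\hookrightarrow L^p(\TT^2)$ for $p<\infty$, or $H^{1/2}(\TT^2)\hookrightarrow L^4(\TT^2)$, with only a logarithmic loss; the background solutions then supply the coefficients $f_i\in L^1(0,T)$, assembled out of the integrable quantities $\|\uu_i\|_{H^1}^2$, $\|\ddd_i\|_{H^2}^2$, $\|\ddd_i\cdot A_i\ddd_i\|_{L^2}^2$, $\|A_i\ddd_i\|_{L^2}^2$, dressed at worst with very slowly growing (hence still $L^1_t$) logarithmic factors. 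Since the modulus $\mu$ of \eqref{muaa} satisfies $\mu(r)\ge r\,(1+\ln(1+1/r))$, a single-logarithm estimate already implies the claimed bounds for $\mathcal{T}_1$, $\mathcal{T}_2$, $\mathcal{T}_3$, and only $\mathcal{T}_4$ will genuinely require the double logarithm.

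For $\mathcal{T}_1$ I would decompose every product \`a la Bony. The Navier--Stokes-type transport pairing $\langle\uu_2\cdot\nabla\delta\uu,\delta\uu\rangle_{H^{-1/2}}$ cannot be treated naively since only the weaker dissipation $\|\nabla\delta\uu\|_{H^{-1/2}}$ is at our disposal; instead the constraint $\Div\uu_2=0$ lets one rewrite it, up to sign and a harmless factor, as $\langle[\Lambda^{-1},\uu_2]\delta\uu,\nabla\delta\uu\rangle_{L^2}$, whose commutator is estimated with one logarithmic loss. The term $\langle\delta\uu\cdot\nabla\uu_1,\delta\uu\rangle_{H^{-1/2}}$ is bounded through a two-dimensional product law using $\nabla\uu_1\in L^2_x$ together with a logarithmic interpolation of $\delta\uu$ between $L^\infty_t L^2_x$ and $C_b H^{-1/2}$, while the director transport contributions reduce to $-\int_{\TT^2}(\uu_1\cdot\nabla\delta\ddd)\cdot\delta\ddd\,\dd x=0$ (by $\Div\uu_1=0$) together with $\langle\uu_1\cdot\nabla\delta\ddd,\Delta\delta\ddd\rangle_{H^{-1/2}}$, the latter handled directly by a product estimate and interpolation. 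All the pairings that are linear in $\delta\ddd$ --- the Ericksen terms $\langle\nabla\ddd_1\odot\nabla\delta\ddd,\nabla\delta\uu\rangle_{H^{-1/2}}$ and their symmetric partners, the potential-difference terms, and the Leslie pieces $\delta\ddd\otimes(A_2\ddd_2)$, $(A_2\ddd_2)\otimes\delta\ddd$, $\delta\nabla_\ddd W(\ddd)\otimes\ddd_1$, $(\Delta\ddd_2-\nabla_\ddd W(\ddd_2))\otimes\delta\ddd$ --- are then controlled by sending the genuinely dissipative factor into $\eta\widehat{\mathfrak{D}}$ and everything else into $C_\eta f_1\mu(\Phi)$, with $H^{1/2}\hookrightarrow L^4$ and the Brezis--Gallouet inequality $\|\ddd_i\|_{L^\infty}\lesssim\|\ddd_i\|_{H^1}\log^{1/2}(e+\|\ddd_i\|_{H^2})$ as the main workhorses.

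The delicate cancellations live in $\mathcal{T}_2$, $\mathcal{T}_3$ and $\mathcal{T}_4$ and use the structure of the Leslie tensor. In $\mathcal{T}_3$ I would replace $\nabla\delta\uu$ by its symmetric part $\delta A$ (the skew part drops out by $\Div\delta\uu=0$ against the $\ddd_1\otimes(\cdot)$ and $(\cdot)\otimes\ddd_1$ pattern), localize at frequency $q$, and substitute each $\ddd_1$ by $\Sd_{q-1}\ddd_1$; the diagonal part of $\langle\ddd_1\otimes(\delta A\ddd_1),\nabla\delta\uu\rangle_{H^{-1/2}}+\langle(\delta A\ddd_1)\otimes\ddd_1,\nabla\delta\uu\rangle_{H^{-1/2}}$ is then precisely $2\sum_{q}2^{-q}\int_{\TT^2}|\Dd_q\delta A\,\Sd_{q-1}\ddd_1|^2$, which is why this sum is carried on the left-hand side of the estimate for $\mathcal{T}_3$, whereas the off-diagonal and remainder pieces together with the terms $\ddd_1\otimes(A_2\delta\ddd)$, $(A_2\delta\ddd)\otimes\ddd_1$ are absorbed into $C_\eta f_3\mu(\Phi)+\eta\widehat{\mathfrak{D}}+\eta\sum_{q}2^{-q}\int_{\TT^2}|\Dd_q\delta A\,\Sd_{q-1}\ddd_1|^2$. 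In $\mathcal{T}_2$, performing the same manoeuvre on $\langle\Delta\delta\ddd\otimes\ddd_1,\nabla\delta\uu\rangle_{H^{-1/2}}$ (after $\Delta\delta\ddd=\Div\nabla\delta\ddd$ and an integration by parts that regenerates a term of the form $\langle\nabla\delta\ddd\odot\nabla\ddd_1,\cdot\rangle$ plus a commutator) reproduces the same coercive sum, but now with the unfavourable sign, so it is kept as the $+\sum_{q}2^{-q}\int_{\TT^2}|\Dd_q\delta A\,\Sd_{q-1}\ddd_1|^2$ appearing on the right-hand side of the estimate for $\mathcal{T}_2$ and is designed to be compensated later by the factor $2\sum_{q}(\cdots)$ harvested from $\mathcal{T}_3$; the stretching terms involving $(\nabla\uu_2)\delta\ddd$ and $(\nabla^{\mathrm{tr}}\uu_2)\delta\ddd$, and the commutator error, are placed into $C_\eta f_2\mu(\Phi)+\eta\widehat{\mathfrak{D}}+\frac14\|\Delta\delta\ddd\|_{H^{-1/2}}^2$.

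Finally, $\mathcal{T}_4=-\langle\delta\bm{\sigma}_1,\nabla\delta\uu\rangle_{H^{-1/2}}$ with $\delta\bm{\sigma}_1$ as in \eqref{delta-sigma1} is the main obstacle and occupies Section~\ref{sec:double-log-est}. The cubic self-interaction $(\ddd_1\cdot(\delta A\ddd_1))\ddd_1\otimes\ddd_1$ paired with $\nabla\delta\uu$, after the same symmetrization and localization with $\Sd_{q-1}(\ddd_1\otimes\ddd_1)$, produces the coercive quantity $\sum_{q}2^{-q}\int_{\TT^2}|\Dd_q\delta A:\Sd_{q-1}(\ddd_1\otimes\ddd_1)|^2$ carried on the left-hand side of the estimate for $\mathcal{T}_4$. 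The four remaining terms of \eqref{delta-sigma1} are each linear in $\delta\ddd$ but carry $\ddd_1,\ddd_2$ to the second or third power together with the only-$L^2_t L^2_x$ factors $A_2\ddd_2$ or $\ddd_2\cdot(A_2\ddd_2)$, and here is exactly where the second logarithm is forced: the coefficient $\|\ddd_2\cdot A_2\ddd_2\|_{L^2}$ is \emph{not} bounded in time, so no power of $\widehat{\mathfrak{D}}$ may be spent on the $\delta\ddd$ slot, hence $\delta\ddd$ must enter only through $\|\delta\ddd\|_{H^{1/2}}\hookrightarrow L^4$ (pure $\Phi$), which forces $\ddd_1$ (or $\ddd_2$) into $L^\infty$ --- available only at the price of one Brezis--Gallouet logarithm --- while the borderline $H^{-1/2}$ multiplication in two dimensions costs a second; iterating this, and converting the resulting $\log(e+\widehat{\mathfrak{D}}/\Phi)$-type factor into a $\log\log(1/\Phi)$-type factor by means of $\int_0^t\widehat{\mathfrak{D}}(s)\,\dd s\lesssim 1$, is precisely what produces the double-logarithmic modulus \eqref{muaa}. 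The hardest and most bookkeeping-heavy point is then to verify that all the accumulated coefficients $f_i$ genuinely lie in $L^1(0,T)$ --- in particular that the slowly growing iterated logarithms of $\|\ddd_i\|_{H^2}$ and of $\widehat{\mathfrak{D}}$ do not spoil integrability --- and that, once the coercive sums coming from $\mathcal{T}_2$, $\mathcal{T}_3$, $\mathcal{T}_4$ are combined with the left-hand side of \eqref{energy-identity}, all the $\eta$-terms can be reabsorbed for $\eta$ small.
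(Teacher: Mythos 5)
Your overall architecture does match the paper's: Bony decompositions whose diagonal terms either cancel (the $\I_3+\J_3$ cancellation in $\mathcal{T}_2$) or produce the two hidden dissipation sums, the Cauchy--Schwarz term from $\mathcal{T}_2$ later compensated by the factor $2\sum_q 2^{-q}\int_{\TT^2}|\Dd_q\delta A\,\sdm{q}\ddd_1|^2$ harvested from $\mathcal{T}_3$, the coercive sum $\sum_q 2^{-q}\int_{\TT^2}|\Dd_q\delta A:\sdm{q}(\ddd_1\otimes\ddd_1)|^2$ extracted from the quartic term, and the final $\eta$-absorption. The genuine gap is in the mechanism you propose for producing the double logarithm and, with it, for securing $f_3,f_4\in L^1(0,T)$. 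You generate a factor of the type $\ln(e+\widehat{\mathfrak{D}}/\Phi)$ and then ``convert'' it into $\ln\ln(1/\Phi)$ by invoking $\int_0^T\widehat{\mathfrak{D}}(s)\,\dd s<\infty$. This is not a legitimate pointwise-in-time step in an Osgood inequality of the form $\Phi(t)\le\int_0^t F(s)\mu(\Phi(s))\,\dd s$: the best you can do is split $\ln(e+\widehat{\mathfrak{D}}/\Phi)\le\ln(e+\widehat{\mathfrak{D}})+\ln(e+1/\Phi)$ and push $\ln(e+\widehat{\mathfrak{D}}(s))$ into the coefficient $F(s)$, but then $F$ contains products such as $\|\ddd_2\cdot(A_2\ddd_2)\|_{L^2}^2\,\ln(e+\widehat{\mathfrak{D}})$ or $\|\ddd_1\|_{H^2}^2\,\ln(e+\widehat{\mathfrak{D}})$, i.e.\ a merely $L^1_t$ function times the logarithm of another merely $L^1_t$ function, which need not be integrable (take both comparable to $s^{-1}(\ln(1/s))^{-2}$ near $s=0$: the product behaves like $s^{-1}(\ln(1/s))^{-1}$). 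So the ``bookkeeping'' you defer is not bookkeeping: as described, the coefficients can fail to lie in $L^1(0,T)$ and Lemma \ref{lemma:osgood} cannot be applied. The paper avoids this entirely by never letting a logarithm of a solution norm enter the $f_i$: both logarithms are produced as functions of $\Phi(t)$ alone, by cutting frequencies at $N(t)\approx\log_2(1+1/\Phi(t))$ (so that $\|S_N\ddd_1\|_{L^\infty}\lesssim\sqrt{N}\|\ddd_1\|_{H^1}$, Lemma \ref{lemma:SN-infty}, yields the first log) and, inside the commutator pieces, by the quantified embedding $\|f\|_{L^{2/\ee}}\lesssim\ee^{-1/2}\|f\|_{H^{1-\ee}}$ of Lemma \ref{lemma:eps} with the time-dependent exponent $\ee(t)\sim 1/\ln N(t)$, whose optimisation leaves a factor $N/\ee\sim\ln(1+1/\Phi)\big(1+\ln\ln(1+1/\Phi)\big)$, i.e.\ exactly $\mu(\Phi)$, while the $f_i$ remain plain products of energy and dissipation norms of $(\uu_i,\ddd_i)$, manifestly in $L^1(0,T)$.

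A secondary inaccuracy: you locate the need for the double logarithm exclusively in $\mathcal{T}_4$ and attribute it to the $\ddd_2\cdot(A_2\ddd_2)$ terms. In the paper it is rather the opposite: those terms cost only a single logarithm (see \eqref{d2d2nablau2deltadd1-final-estimate}), whereas the off-diagonal/commutator pieces of $\ddd_1\otimes(\delta A\ddd_1)+(\delta A\ddd_1)\otimes\ddd_1$ in $\mathcal{T}_3$ --- quadratic in $\ddd_1$ against $\delta A$, with only $\eta\nu\|\nabla\delta\uu\|_{H^{-1/2}}^2$ available for absorption --- already force the double-logarithmic modulus (estimates of $\I\I\I_1$, $\I\I\I_2$ in Section \ref{sec:single-log-est}); with a fixed Lebesgue exponent one only reaches $\Phi\,(\ln(1/\Phi))^{1+c\ee}$, which is not an Osgood modulus, so the $\ee(t)\to0$ optimisation is unavoidable for $\mathcal{T}_3$ as well, and your envisaged single-log treatment there would not close. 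Your other deviations (e.g.\ the $[\Lambda^{-1},\uu_2]$ commutator for $\langle\uu_2\cdot\nabla\delta\uu,\delta\uu\rangle_{H^{-1/2}}$, where the paper simply uses the product law $H^{3/4}\times H^{-1/4}\to H^{-1/2}$ plus interpolation with no logarithmic loss) are harmless.
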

\smallskip \noindent 
Since the proof of Proposition \ref{thm:ineq} is rather involved, we postpone the details to Section \ref{sec:estT1} -- Section \ref{sec:double-log-est}.

Now we are in a position to finish the proof of Theorem \ref{thm:uniqueness}. Keeping in mind the equivalent relations 
$\|\delta \ddd\|_{L^2}^2+ 
	\| \nabla \delta \ddd\|_{H^{-1/2}}^2 \approx
	\|\delta \ddd\|_{H^{1/2}}^2
	$, $\| \nabla \delta \ddd\|_{L^2}^2 +\|\Delta \delta \ddd\|_{H^{-1/2}}^2\approx \| \nabla \delta \ddd\|_{H^{1/2}}^2$ (see Remark \ref{equinorm} below), 
	from the estimates for $\mathcal{T}_i$, $i=1,2,3,4$ obtained in Proposition \ref{thm:ineq} together with the integral identity \eqref{energy-identity} and the definition \eqref{def-Phi-D}, we deduce that for any given $\eta\in (0,1)$, it holds
\begin{equation}\label{double-log}
	\Phi(t) + \frac13 \int_0^t \mathfrak{D}(s)\,\dd s \leq  C_\eta \int_0^t  F(s)\mu(\Phi(s))\,\dd s + 
	100 \eta \int_0^t \mathfrak{D}(s)\,\dd s,
\end{equation}
for all $t\in [0,T]$, with $F(t) = f_1(t) + f_2(t)+f_3(t)+f_4(t) \in L^1(0,T)$. 
Choosing $\eta$ sufficiently small, e.g., $\eta=1/600$, we are able to absorb the last term  on the right-hand side of \eqref{double-log} that involves the dissipation function $\mathfrak{D}$ by the analogous term on the left-hand side. This immediately yields the desired inequality \eqref{main-ineq} with $\gamma=1/6$. As a consequence, we arrive at the conclusion of Theorem \ref{thm:uniqueness}.  \hfill $\square$
\medskip 

We conclude this section with some further comments on our uniqueness result and its proof.

\begin{remark}
    The argument described above does not imply the uniqueness of 
    weak solutions that exist in the lower order class $\uu\in L^\infty(0,T; H^{-1/2}(\mathbb{T}^2))$ and $\ddd \in L^\infty (0,T; H^{1/2}(\mathbb{T}^2))$. Indeed, the function $F:[0,T]\to \mathbb{R}$ that appears in 
    \eqref{double-log} depends on the norms of weak solutions in the function spaces given by Definition \ref{def:weak-sol}, namely, $\|\uu_i\|_{L^\infty(0,T;L^2(\mathbb{T}^2))}$, $\|\nabla  \uu_i \|_{L^2((0,T)\times \mathbb{T}^2)}$, $\|\ddd_i\|_{L^\infty(0,T;H^1(\mathbb{T}^2))}$ and $\|\Delta  \ddd_i \|_{L^2((0,T)\times \mathbb{T}^2)}$, for $i=1,2$. Hence, our uniqueness result is valid for those global weak solutions ``with finite energy'' as intended in Definition \ref{def:weak-sol}. 
\end{remark}

\begin{remark}
The idea to control the difference of weak solutions via energy estimates in certain function spaces of lower order than the natural energy level has been used in \cite{MR3460623} for the general Ericksen-Leslie system with the unitary constraint $|\ddd|=1$. For the same system, a different approach involving a suitable weaker metric on the difference of weak solutions was presented in \cite{WangWangZhang}, based on the Littlewood-Paley analysis. After exploring the algebraic structure of the system and making full use of the constraint $|\ddd|=1$, the authors of the above mentioned works were able to derive inequalities of Gronwall's type that eventually yield the uniqueness result. However, it is worth mentioning that the situation in our current case is quite different, since the essential mathematical difficulty is now due to the lack of maximum principle for $\ddd$ under the stretching effect from the fluid  (i.e., when $\lambda_2\neq 0$). Without the uniform a-priori estimate of $\|\ddd\|_{L^\infty}$, a standard Gronwall's type inequality for $\Phi(t)$  is no longer available. Indeed, the complexity expressed by the tensor $\delta \bm{\sigma}$ should be recognized as a consequence of those high order  nonlinearities due to the presence of the gradient of fluid velocities $(\nabla \uu_1,\,\nabla \uu_2)$ (here in terms of $(A_1,\,A_2)$) coupled with polynomials up to degree $4$ depending on the directors $(\ddd_1,\,\ddd_2)$. With the unitary constraints $|\ddd_1|=|\ddd_2|=1$, those tensors can be estimated in an $L^2$-setting. However, the Ginzburg-Landau relaxation \eqref{Ginzbourg-Landau-appx} does not allow such uniform $L^\infty$ bounds when the stretching effect on the director field is present.
\end{remark}

\begin{remark} \label{comm_proof}
 Finally, we point out some features of our proof for Theorem \ref{thm:uniqueness}. 
 
 (1) Construction of the inequality \eqref{double-log} and in particular, the Osgood type modulus of continuity $\mu$ as in \eqref{muaa}. We note that logarithmic type inequalities are widely used in the analysis of partial differential equations (cf. e.g., \cite{CRWX,MR2837493, MR3576270, MR3599422} for liquid crystal systems). A well known example related to our problem is given by the Brezis-Gallou\"et inequality (which we apply here to the difference of directors $\delta \ddd(t,\cdot)$ at a fixed time $t\in (0,T)$)
\begin{equation}\label{Brezis-Gall}
    \| \delta \ddd(t,\cdot) \|_{L^\infty(\mathbb{T}^2)}\leq C\| \delta \ddd(t,\cdot) \|_{H^1(\mathbb{T}^2)}
    \bigg( 
    1 + \sqrt{\ln \Big( 1+ \frac{\| \delta \ddd(t,\cdot) \|_{H^s(\mathbb{T}^2)}}{\| \delta \ddd(t,\cdot) \|_{H^1(\mathbb{T}^2)}}\Big)}
    \bigg),
\end{equation}
for some $s>1$. Being sharp for the Sobolev embedding $H^s(\mathbb{T}^2)\hookrightarrow L^\infty(\mathbb{T}^2)$, the Brezis-Gallou\"et inequality does not provide significant benefits to our uniqueness problem, since its right-hand side is not known to be uniformly bounded in $(0,T)$ for weak solutions given by Definition \ref{def:weak-sol}. Nevertheless, what is meaningful for us is the idea  behind the proof of \eqref{Brezis-Gall}: one decomposes the function $\delta \ddd$ into two components $\delta \ddd = \delta \ddd^l+\delta \ddd^h$, where the first term $\delta \ddd^l$ localises the low frequencies of $\delta \ddd$  within a radius $N>0$ (i.e., $\delta \ddd^l=S_N\delta \ddd$ with the notation of Section \ref{sec:Littlewood-Paley}), while the second one $\delta \ddd^h$ controls those high modes. A further development leads to
\begin{equation*}
    \| \delta \ddd^l \|_{L^\infty} \leq C \| \delta \ddd \|_{H^1}\sqrt{N},\quad \text{and}\quad
    \| \delta \ddd^h \|_{L^\infty}\leq 
    C_s \| \delta \ddd \|_{H^s} 2^{-N(s-1)},
\end{equation*}
which eventually imply \eqref{Brezis-Gall} by proportionally defining the radius of localisation $N \propto \ln(1+\| \delta \ddd \|_{H^s}/\| \delta \ddd \|_{H^1})$. Motivated by the above observation, in the forthcoming analysis we shall perform a similar argument when controlling and decomposing (in frequencies) those difficult nonlinearities of \eqref{energy-identity}, relating this time the radius of localisation $N$ with the value of the function $\Phi(t)$ (see for instance, \eqref{def-N-T3}, \eqref{to-cite-N-1} as well as \eqref{def-N-prop-deltasigma1-first} below). This approach, together with a rather involved analysis, finally leads to the required inequality \eqref{double-log} with $\mu$ of a double logarithmic structure.

(2) Importance of the structure:  intrinsic dissipation and specific coupling  (between the fluid velocity and the molecular director). Both of them play an essential role in the proof of uniqueness (for the general system \eqref{main_system_intro_2d}, these properties are presented by the conditions in \eqref{coeass}). We find that in the rate of energy dissipation $\mathfrak{D}(t)$ (cf. \eqref{def-Phi-D}), there are two higher order nonlinear dissipative terms involving the frequency cut-off operator $S_{q-1}$. This hidden dissipative mechanism is crucial for the derivation of \eqref{double-log} (cf. Proposition \ref{thm:ineq}) and can be revealed by a deep decomposition of those nonlinear tensors with symmetric structure in $\delta \bm{\sigma}$. Roughly speaking, the function $\mathfrak{D}(t)$ reflects an analogous energy dissipation like in \eqref{def:energy-inequality-tot}, within lower order Sobolev spaces for the difference of weak solutions. On the other hand, the compatibility conditions $\lambda_1=\mu_2-\mu_3$, $\lambda_2=\mu_5-\mu_6$ guarantee the validity of some crucial estimates for $\mathcal{T}_2$ and $\mathcal{T}_3$, which involve the low frequency part $S_{q-1}\ddd_1$ (see Section \ref{esti-T_2} and Section \ref{sec:single-log-est}).

\end{remark}

\section{Preliminaries: Littlewood-Paley Theory on a Periodic Domain}\label{sec:Littlewood-Paley}
\setcounter{equation}{0}

In this section we recall some basic facts about the Littlewood-Paley theory that will be frequently used in the forthcoming sections. Fourier analysis methods have been of great interest in the analysis of nonlinear partial differential equations. In particular, the Littlewood-Paley theory turns out to be a valuable tool to split rough solutions into sequences of spectrally well-localised smooth functions. We refer the interested reader to \cite{BCD} for a complete and exhaustive reference concerning the main aspects of this theory, when treating functions in the whole space $\mathbb{R}^d$. Below we shall address some results in a periodic setting, in order to deal with solutions defined on $(0,T)\times \mathbb{T}^d$ (in our specific case, $d=2$, we however treat a general dimension $d\geq 1$ in this section). We refer the reader to \cite[Chapter 3]{MR3445609} for some preliminary results about the Littlewood-Paley theory and Besov spaces on the torus $\mathbb{T}^d$. Nevertheless, to the best of our knowledge, there seems no complete investigation available in the literature, which transposes all the results for the Littlewood-Paley in the whole space $\mathbb{R}^d$ to the corresponding ones in the torus $\mathbb{T}^d$. Therefore, for the sake of completeness, we recall here some main aspects in the periodic domain and prove some useful lemmas that will play an essential role in the subsequent analysis.

\subsection{Dyadic decomposition and Besov spaces}
The Littlewood-Paley theory develops around the concept of a dyadic partition of unity. To present this framework, we first recall that every periodic distribution $f\in \mathcal{D}'(\TT^d)$ can be written in terms of its Fourier series:
\begin{equation*}
	f(x) := \sum_{n\in\mathbb{Z}^d} \hat{f}_n e^{\mathrm{i} n\cdot x},\quad \text{where}\quad 
	\hat{f}_n = \frac{1}{(2\pi)^d}\langle f,\,e^{-\mathrm{i} n\cdot x}\rangle_{\mathcal{D}',\, 
	\mathcal{D}}.
\end{equation*}
If $f$ is an integrable function on $\mathbb{T}^d$, then its Fourier coefficients coincide with $\hat{f}_n = \frac{1}{(2\pi)^d}\int_{\TT^d} f(x)\,e^{-\mathrm{i} n\cdot x}\,\dd x$. To set up the dyadic partition, 
we first fix a smooth radial function $\chi$ supported on the ball $B(0,4/3)\subset \RR^d $, equal to $1$ in $B(0,3/4)$ and such that $r\mapsto\chi(r\mathbf{e})$ is nonincreasing over $\R_+$ for all unitary vectors $\mathbf{e}\in\R^d$. Set
$\varphi\left(\xi\right)=\chi\left(\xi/2\right)-\chi\left(\xi\right)$ and
$\varphi_q(\xi):=\varphi(2^{-q}\xi)$ for all $q\geq 0$, $\xi \in \mathbb{R}^d$.
%
Denoting by $\tilde{\NN}= \NN\cup\{-1\}$, the (nonhomogeneous) dyadic blocks $(\Dd_q)_{q\in\tilde \NN}$ are defined by
$$
	\Dd_{\rule[1pt]{3pt}{0.4pt}1} f(x) := \sum_{n\in\mathbb{Z}^d} \chi(n)\hat{f}_n e^{\mathrm{i}n\cdot x}
	\quad \text{and}\quad  
	\Dd_q f(x) \,:=\,\sum_{n\in \ZZ^d} \varphi_q\left(n\right) \hat{f}_n e^{\mathrm{i} n\cdot x}\qquad\text{for}\quad 
	q\in\mathbb{N}.
$$ 
Each operator $\Dd_q$, for $q\in \mathbb{N}$ (resp. $q=-1$) localizes the frequencies of a distribution $f$ within a well defined annulus (resp. a ball), whose radii grow proportionally to $2^q$. Indeed, we observe that in the above definition, there is only a finite contribution to the sum for $|n|=\sqrt{n_1^2+...+n_d^2}\in [3\cdot 2^{q-2},\,2^{q+3}/3]\cap \mathbb{N}$. The dyadic blocks are regularizing operators, since they cut off the high frequencies of any distribution. Furthermore, the specific definition of the functions $\varphi_q(n)$ lead to the identity
\begin{equation*}
     f = \sum_{q=-1}^\infty \Dd_q f\qquad \text{in}\quad  \mathcal{D}'(\TT^d),
\end{equation*}
which translates the concept of splitting a general distribution into a sequence of spectrally well-localised smooth functions. For a more regular function $f$, the convergence of the series holds also in the corresponding function space. Besides, the dyadic blocks satisfy a quasi-orthogonal property, namely, $\Dd_q\Dd_{q'} f = 0$ for any distribution $f$ and any indexes $q,q'\in \tilde \NN$ that are sufficiently distant, for instance, $|q-q'|\geq 2$.

A further important operator in the Littlewood-Paley theory is the low frequency, homogeneous cut-off operator $S_q$, which takes into account the contribution of any dyadic block before $\Dd_q$:
\begin{equation} \label{eq:S_j}
\Sd_{q} f\,:=\,\sum_{k=-1}^{q-1}\Dd_{k}f,\quad \text{for } q\in \mathbb{N}.
\end{equation}
In other words, for every $q\in \mathbb{N}$, $S_q$ takes into account only the low frequencies of a distribution $f$, localising its Fourier coefficients within a ball of radius proportional to $2^q$. With an analogous procedure as for deriving \cite[Remark 2.11]{BCD}, we infer that the operators  $\Dd_q$ and $\Sd_q$ continuously map any Lebesgue space $L^p(\TT^d)$ to itself, for all $q\in \tilde \NN$ and all $p\in[1,+\infty]$, with a constant of embedding independent of the indexes $q$ and $p$. This fact will be frequently used in this paper.

Let us now introduce the so-called Besov spaces.
\begin{definition}\label{def:Besov-spaces}
   Let $s\in \mathbb{R}$ and $1\leq p,\,r\leq \infty$. The Besov space $B_{p,r}^s=B_{p,r}^s(\mathbb{T}^d)$ consists of all periodic distribution $f$ such that
   \begin{equation*}
       \| f \|_{B_{p,r}^s}:=
       \Big\| \big( 2^{qs}\| \Dd_q f \|_{L^p} \big)_{q\in \tilde \NN} \Big\|_{\ell^r(\tilde \NN)}
       <\infty.
   \end{equation*}
\end{definition}
\noindent 
Moreover, in the case of negative index of regularity, the Besov spaces can additionally be characterised in terms of the operators $S_q$ (cf. \cite[Proposition 2.79]{BCD}):
\begin{lemma}\label{lemma:replacing-Dq-with-Sq}
    Let $s<0$ and $1\leq p,r\leq \infty$. A distribution $f$ belongs to $B_{p,r}^s$ if and only if 
    \begin{equation*}
        \Big\| \big( 2^{qs}\| S_q f \|_{L^p} \big)_{q\in \NN} \Big\|_{\ell^r( \NN)}
       <\infty.
    \end{equation*}
    Furthermore, there exists a constant $C>0$ depending only on the dimension $d$ such that
    \begin{equation*}
        C^{s+1} \| f \|_{B_{p,r}^s}
        \leq \Big\| \big( 2^{qs}\| S_q f \|_{L^p} \big)_{q\in \NN} \Big\|_{\ell^r(\NN)} 
        \leq 
        C\Big( 1 - \frac{1}{s} \Big) \| f \|_{B_{p,r}^s}.
    \end{equation*}
\end{lemma}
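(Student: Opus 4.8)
The statement to prove is Lemma \ref{lemma:replacing-Dq-with-Sq}, characterizing Besov spaces of negative regularity index via the low-frequency cutoffs $S_q$ instead of the dyadic blocks $\Delta_q$.

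\textbf{Overall approach.} The plan is to transpose the proof of \cite[Proposition 2.79]{BCD} to the periodic setting. The two directions are quite different in character. For the lower bound (i.e., $\|f\|_{B^s_{p,r}}$ controlled by the $S_q$-quantity), one writes $\Delta_q f = S_{q+1} f - S_q f$ (a telescoping-type identity valid up to the low-frequency endpoint, with $\Delta_{-1} f = S_0 f$), so $\|\Delta_q f\|_{L^p} \le \|S_{q+1} f\|_{L^p} + \|S_q f\|_{L^p}$; multiplying by $2^{qs}$ and taking $\ell^r$ norms, using $2^{qs} \approx 2^{(q+1)s}$ with a constant comparable to $2^{|s|}$, gives one inequality with a constant of the form $C^{s+1}$ after tracking how the constant degenerates as $s \to 0^-$ and $s \to -\infty$. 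For the upper bound, one writes $S_q f = \sum_{k=-1}^{q-1} \Delta_k f$, hence $\|S_q f\|_{L^p} \le \sum_{k=-1}^{q-1} \|\Delta_k f\|_{L^p}$; then $2^{qs}\|S_q f\|_{L^p} \le \sum_{k < q} 2^{(q-k)s} 2^{ks}\|\Delta_k f\|_{L^p}$, and since $s < 0$ the sequence $(2^{ks})_{k \ge 0}$ — more precisely $2^{ks}$ restricted to $k-q \le -1$, i.e. $2^{(q-k)s}$ with $q-k \ge 1$ — is summable, so this is a convolution of the sequence $(2^{ks}\|\Delta_k f\|_{L^p})_k \in \ell^r$ with the $\ell^1$ sequence $(2^{js})_{j \ge 1}$. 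Young's inequality for sequences then yields $\|(2^{qs}\|S_q f\|_{L^p})_q\|_{\ell^r} \le \big(\sum_{j\ge 1} 2^{js}\big) \|f\|_{B^s_{p,r}} = \frac{2^s}{1-2^s}\|f\|_{B^s_{p,r}}$, and one checks $\frac{2^s}{1-2^s} \le C(1 - 1/s)$ for $s<0$ by examining the behavior at the two endpoints $s\to 0^-$ and $s\to-\infty$.

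\textbf{Periodic-specific points.} The only genuinely periodic ingredients are: (i) the $L^p(\mathbb{T}^d)$-boundedness of $\Delta_q$ and $S_q$ with constants uniform in $q$ (already recorded in the excerpt just before the lemma, via the analogue of \cite[Remark 2.11]{BCD}); (ii) the identity $f = \sum_{q\ge -1}\Delta_q f$ in $\mathcal{D}'(\mathbb{T}^d)$ and the telescoping relations among $\Delta_q$, $S_q$, which hold verbatim from the definitions \eqref{eq:S_j} and the construction of $\chi, \varphi$; and (iii) for the ``if'' direction, the fact that finiteness of the $S_q$-quantity forces $f$ to be a well-defined periodic distribution — this follows because $S_q f \to f$ in $\mathcal{D}'(\mathbb{T}^d)$ and the bound on $\|S_q f\|_{L^p}$ controls the partial sums. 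Since all Fourier multipliers here are exactly the same functions of the frequency as in $\mathbb{R}^d$ (just evaluated at integer frequencies $n \in \mathbb{Z}^d$), the combinatorial/summation structure of the proof is identical; no new harmonic-analytic input is needed beyond what the excerpt has already set up.

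\textbf{Expected main obstacle.} The routine estimates (telescoping, Young's inequality for $\ell^1 * \ell^r$) are straightforward. The slightly delicate bookkeeping is tracking the \emph{explicit} constants $C^{s+1}$ and $C(1-1/s)$ so that they match the statement: one must verify that the geometric sum $\sum_{j\ge1} 2^{js} = \frac{2^s}{1-2^s}$ behaves like $\frac{1}{-s\ln 2}$ as $s\to 0^-$ (hence is $\lesssim 1 - 1/s$) and is bounded as $s \to -\infty$, and symmetrically that the lower-bound constant, which comes from $\|(2^{qs}\|\Delta_q f\|)_q\|_{\ell^r} \le (1 + 2^{-s})\|(2^{qs}\|S_q f\|)_q\|_{\ell^r}$ plus handling the $q=-1$ term separately (where $\Delta_{-1}f = S_0 f$), degenerates no worse than $C^{-(s+1)}$, i.e. can be absorbed into a factor $C^{s+1}$ on the correct side. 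This is the part requiring care rather than ideas; conceptually the lemma is a direct periodic transcription of the Euclidean result.
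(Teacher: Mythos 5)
The paper gives no proof of this lemma at all: it is quoted as a known result, with a pointer to \cite[Proposition 2.79]{BCD}, and the only genuinely periodic ingredients (uniform $L^p$-boundedness of $\Dd_q$, $S_q$ and the identity $f=\sum_q\Dd_qf$ in $\mathcal D'(\TT^d)$) are set up elsewhere in Section \ref{sec:Littlewood-Paley}. Your argument is exactly the standard proof behind that citation — telescoping $\Dd_qf=S_{q+1}f-S_qf$ (with $\Dd_{-1}f=S_0f$) for the lower bound, and $S_qf=\sum_{k\le q-1}\Dd_kf$ together with Young's inequality $\ell^1\ast\ell^r\to\ell^r$ and the geometric sum $\sum_{j\ge1}2^{js}=\tfrac{2^s}{1-2^s}\lesssim 1-\tfrac1s$ for the upper bound — and it transposes to $\TT^d$ with no extra input, as you say. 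So in substance the proposal is correct and coincides with the route the paper relies on.

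One caveat, precisely on the point you flagged as ``the part requiring care'': your claim that the telescoping constant ``can be absorbed into a factor $C^{s+1}$'' cannot be made good, because the left-hand inequality with constant $C^{s+1}$ is not attainable for any $C$. Telescoping gives $\|f\|_{B^s_{p,r}}\le(1+2^{-s})\,\|(2^{qs}\|S_qf\|_{L^p})_q\|_{\ell^r}$, i.e. a left constant $\tfrac{2^s}{1+2^s}$; moreover, testing on a single Fourier mode $f=e^{\mathrm i n_0\cdot x}$ with $|n_0|$ chosen so that only the block $\Dd_{q_0}$ is active shows the sharp left constant is at most $2^s<1$ for every $s<0$ (there $\|f\|_{B^s_{\infty,\infty}}=2^{q_0s}$ while $\sup_q 2^{qs}\|S_qf\|_{L^\infty}=2^{(q_0+1)s}$). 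Since $C^{s+1}=1$ at $s=-1$ for every $C>0$ (and exceeds $1$ on a whole range of $s$ whatever $C$ is), the inequality $C^{s+1}\|f\|_{B^s_{p,r}}\le\|(2^{qs}\|S_qf\|_{L^p})_q\|_{\ell^r}$ fails as written; this is a transcription slip in the statement, whose left constant should read $C^{-(|s|+1)}=C^{s-1}$ as in \cite[Proposition 2.79]{BCD} (note $2^{s-1}\le\tfrac{2^s}{1+2^s}$ for $s<0$, so your derived bound does deliver that corrected form). So keep your argument, but state the constant you actually obtain, $\tfrac{2^s}{1+2^s}$ (or $C^{-(|s|+1)}$), rather than asserting the unattainable $C^{s+1}$.
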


\begin{remark}\label{equinorm}
We note that the Besov spaces generalise the elementary function spaces. In particular, for $s\in \mathbb{R}^+\setminus\{\mathbb{N}\}$, $B_{\infty, \infty}^s(\mathbb{T}^d)$ coincides with the H\"older space 
$C^{[s], s-[s]}(\mathbb{T}^d)$, while for any $s\in\mathbb{R}$, $B_{2,2}^s(\mathbb{T}^d)$ coincides with the Sobolev space $H^s(\mathbb{T}^d)$. Throughout this paper, we set (with an abuse of notation) any Sobolev space $H^s(\mathbb{T}^d)$ (for $s \neq 0$) with the equivalent inner product given by $B_{2,2}^s(\mathbb{T}^d)$, i.e.
\begin{equation*}
    \langle f,\,g\rangle_{H^s}= 
    \sum_{q=-1}^\infty 2^{2qs}\int_{\mathbb{T}^d} \Dd_qf(x) \overline{\Dd_qg(x)}\,\dd x.
\end{equation*}
For $s=0$, we however maintain the standard Lebesgue definition of $L^2(\mathbb{T}^d)$.
\end{remark}

\subsection{Some useful lemmas}
An important advantage that arises from the dyadic decomposition and the Besov formalism is reflected by the so-called Bernstein inequalities. These family of inequalities address the derivatives of localised distribution $\Dd_q f$ and provide some uniform bounds in terms of the index $q$.
\begin{lemma}[\textbf{Bernstein inequalities}]\label{prop:Bernstein}
    There exists a positive constant $C>0$ such that  for any $f\in \mathcal{D}'(\TT^d)$,
	\begin{equation}
	\begin{alignedat}{8}
		\mathrm{(i)}\quad &\| \Dd_q \partial^\alpha f\|_{L^r} &&\leq C^{k+1}2^{q\left(k+ d\left(\frac{1}{p}-\frac{1}{r}\right)\right)}
		\| \Dd_q f \|_{L^p},\quad &&&& q=-1,0,1,2,\dots,
		\\
		\mathrm{(ii)}\quad&
		\| S_q \partial^\alpha f  \|_{L^r} && \leq C^{k+1}2^{q\left(k+ d\left(\frac{1}{p}-\frac{1}{r}\right)\right)}
		\| S_q f \|_{L^p},\quad  &&&& q=0,1,2,\dots,\\
		\mathrm{(iii)}\quad&\| \Dd_q f \|_{L^p} &&\leq C2^{-q}
		\| \Dd_q \nabla  f \|_{L^p},\quad &&&& q=0,1,2,\dots,
	\end{alignedat}
	\end{equation}
	for any $p,\,r\in [1,\infty]$ with $r\geq p$, for any $k\in\mathbb{N}$ and any multi-index  $\alpha=(\alpha_1,...,\alpha_d)\in \mathbb{N}^d$ with $|\alpha|=k$, here, we denote $\partial^\alpha=\partial^{\alpha_1}_{x_1}...\partial^{\alpha_d}_{x_d}$.
\end{lemma}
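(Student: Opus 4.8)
The final statement to establish is the Bernstein inequalities on the torus $\TT^d$ (Lemma \ref{prop:Bernstein}). The plan is to reduce everything to the corresponding statement on $\RR^d$ — which is standard (see \cite{BCD}) — by exploiting the fact that a periodic distribution $f$ whose Fourier coefficients are supported in a fixed dyadic annulus (resp.\ ball) is, via its Fourier series, a trigonometric polynomial with spectrum in a ball of radius $\lesssim 2^q$. First I would fix a smooth radial bump $\psi\in C_c^\infty(\RR^d)$ which equals $1$ on the annulus $\mathcal{C}=\{3/4\le |\xi|\le 8/3\}$ and is supported in a slightly larger annulus $\mathcal{C}'$ disjoint from the origin, so that $\varphi = \psi\varphi$; similarly fix $\tilde\chi\in C_c^\infty(\RR^d)$ equal to $1$ on $B(0,4/3)$ so that $\chi=\tilde\chi\chi$. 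Then for $q\ge 0$ one has the reproduction identity
\begin{equation*}
	\Dd_q f(x) = \sum_{n\in\ZZ^d}\psi(2^{-q}n)\varphi(2^{-q}n)\hat f_n e^{\mathrm{i}n\cdot x}
	= \big(h_q * \Dd_q f\big)(x),
\end{equation*}
where $h_q(x) = 2^{qd} h(2^q x)$ and $h$ is the (rapidly decreasing) inverse Fourier transform on $\RR^d$ of $\psi$, the convolution being the one on $\TT^d$, i.e.\ $h_q*g(x)=\int_{\TT^d} \big(\sum_{k\in\ZZ^d} h_q(x-y+2\pi k)\big) g(y)\,\dd y$. For $q=-1$ the same identity holds with $\psi,h$ replaced by $\tilde\chi$ and its inverse Fourier transform.

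Next I would differentiate this identity: $\Dd_q\partial^\alpha f = (\partial^\alpha h_q)*\Dd_q f$, and estimate the periodized kernel $\sum_{k}\partial^\alpha h_q(\cdot+2\pi k)$ in $L^1(\TT^d)$. The key computation is a scaling one: $\partial^\alpha h_q(x) = 2^{q(d+|\alpha|)}(\partial^\alpha h)(2^q x)$, and since $\partial^\alpha h\in L^1(\RR^d)$ with a Schwartz tail, one gets
\begin{equation*}
	\Big\|\sum_{k\in\ZZ^d}\partial^\alpha h_q(\cdot+2\pi k)\Big\|_{L^1(\TT^d)}
	= \|\partial^\alpha h_q\|_{L^1(\RR^d)} = 2^{q|\alpha|}\|\partial^\alpha h\|_{L^1(\RR^d)} \le C^{k+1}2^{qk},
\end{equation*}
where $k=|\alpha|$ and the constant $C^{k+1}$ accounts for the growth of $\|\partial^\alpha h\|_{L^1}$ in $|\alpha|$ (one checks $\|\partial^\alpha h\|_{L^1(\RR^d)}\le C^{|\alpha|+1}$ using that $\psi$ is fixed and supported away from the origin — or, in the $q=-1$ case, that $\tilde\chi$ is fixed). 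Then the generalized Young inequality on $\TT^d$ applied to the convolution — whose proof is identical to the $\RR^d$ one (cf.\ the discussion around \cite[Remark 2.11]{BCD}) — gives
\begin{equation*}
	\|\Dd_q\partial^\alpha f\|_{L^r(\TT^d)}
	\le \Big\|\sum_k \partial^\alpha h_q(\cdot+2\pi k)\Big\|_{L^{\rho}(\TT^d)}\|\Dd_q f\|_{L^p(\TT^d)},
	\qquad \tfrac1r+1=\tfrac1\rho+\tfrac1p.
\end{equation*}
Taking $\rho$ determined by $1/\rho = 1 - 1/p + 1/r$ and interpolating between the $L^1$ bound above and the trivial $L^\infty$ bound $\|\partial^\alpha h_q\|_{L^\infty(\RR^d)}\le C^{k+1}2^{q(k+d)}$, one obtains $\|\sum_k\partial^\alpha h_q(\cdot+2\pi k)\|_{L^\rho}\le C^{k+1}2^{q(k+d(1-1/\rho))} = C^{k+1}2^{q(k+d(1/p-1/r))}$, which is exactly (i). Inequality (ii) follows by the same argument, replacing $\psi$ by a fixed $\tilde\chi\in C_c^\infty$ equal to $1$ on $B(0,1)$ (so that $\Sd_q f$, whose spectrum lies in $B(0,2^q)$, reproduces via $\tilde\chi(2^{-q}\cdot)$), noting $S_q$ is only defined for $q\ge 0$. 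Inequality (iii) is the "inverse Bernstein": for $q\ge 0$, $\Dd_q f$ has spectrum in an annulus bounded away from the origin, so one can write $\Dd_q f = \sum_{j}\, (\, 2^{-q}g_{j,q} * \Dd_q \partial_j f\,)$ using that $\xi\mapsto \mathrm{i}\xi_j|\xi|^{-2}\psi(\xi)$ (suitably chosen, supported in an annulus) is a fixed Schwartz multiplier, then repeat the kernel/Young estimate with the scaling factor $2^{-q}$; equivalently one cites the $\RR^d$ result \cite[Lemma 2.1]{BCD} applied to the trigonometric polynomial $\Dd_q f$.

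The one genuinely new point — and the part requiring care — is the \emph{passage from $\RR^d$ to $\TT^d$}, i.e.\ justifying that convolution against the periodized kernel $\sum_k h_q(\cdot+2\pi k)$ is legitimate and that its $L^1(\TT^d)$ norm equals $\|h_q\|_{L^1(\RR^d)}$. The subtlety is that one must know $\Dd_q f$ (resp.\ $S_q f$) is reproduced by $h_q$ on the Fourier side, which holds precisely because $\psi\equiv 1$ on the support of $\varphi$ (resp.\ $\tilde\chi\equiv 1$ on $B(0,1)$); and one must verify that summing $h_q(\cdot + 2\pi k)$ over $k\in\ZZ^d$ converges absolutely in $L^1(\TT^d)$, which follows from the Schwartz decay of $h$. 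I expect this bookkeeping — fixing the auxiliary cutoffs so that all reproduction identities hold simultaneously, controlling the $|\alpha|$-dependence of the constants to get the stated form $C^{k+1}$, and handling the $q=-1$ versus $q\ge 0$ distinction cleanly — to be the main (though routine) obstacle; the analytic content is entirely classical and contained in \cite{BCD} and \cite[Chapter 3]{MR3445609}. Once (i) is proved for general $\alpha$, statements (ii) and (iii) are immediate corollaries, and the constant's uniformity in $p,r$ is automatic from the Young-inequality proof since the kernel estimate does not see $p$ or $r$.
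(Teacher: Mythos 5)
Your proposal is correct and, for part (iii), is essentially the paper's own proof: the paper likewise writes $\Dd_q f$ as a torus convolution of $\Dd_q\nabla f$ against a kernel built from a cutoff $\tilde\varphi(2^{-q}\xi)\,\xi/|\xi|^2$ supported away from the origin, bounds that kernel in $L^1(\TT^d)$ by $C2^{-q}$ via the Poisson summation formula and scaling, and concludes with Young's inequality, while for (i)--(ii) the paper simply cites \cite[Lemma 7]{MR3445609}, so your self-contained periodization-plus-Young argument for those two parts is the only (harmless) divergence. Two trivial slips to fix: the $L^1(\TT^d)$ norm of the periodized kernel is bounded by, not equal to, $\|\partial^\alpha h_q\|_{L^1(\RR^d)}$, and the cutoff reproducing $S_q f$ must equal $1$ on $B(0,4/3)$ (as you correctly require at the start), since the spectrum of $S_qf$ lies in $B(0,\tfrac43 2^q)$ rather than $B(0,2^q)$.
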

\begin{proof}
  The inequalities (i) and (ii) follow directly from  \cite[Lemma 7]{MR3445609}, the third inequality (iii) can be shown with an analogous procedure as for proving its homologous version in the whole space (cf.  \cite[Lemma 2.1]{BCD}). Indeed, we first remark that, for $q> -1$, $\Dd_q f $ is spectrally supported away from zero. We hence set a cut-off function $\tilde \varphi \in \mathcal{D}(\RR^d)$, whose support is away from zero and is identically $1$ in the support of $\varphi$. Therefore:
  \begin{equation*}
      \Dd_q f (x) 
      = \sum_{n\in\mathbb{Z}^d} \varphi(2^{-q} n) \hat{f}_n e^{\mathrm{i} n\cdot x}
      = \sum_{n\in\mathbb{Z}^d} \frac{\tilde \varphi(2^{-q}n)}{|n|^2}(-\mathrm{i}n)\cdot \mathrm{i} n \,\varphi(2^{-q} n) \hat{f}_n  e^{\mathrm{i} n\cdot x}
      =(\psi_q \ast \Dd_q \nabla f)(x),
  \end{equation*}
  where the symbol $\ast$ denotes the convolution in $\mathbb{T}^d$ and $\psi_q(x) =
  -\sum_{n\in\mathbb{Z}^d}\frac{\tilde \varphi(2^{-q}n)}{|n|^2}\mathrm{i}n e^{\mathrm{i}n\cdot x}$. Then the inequality (iii) follows from Young's inequality and the estimate on the $L^1(\TT^d)$-norm of $\psi_q$. 
  To this end, we invoke the Poisson summation formula, which relates the Fourier series of a periodic function to values of the function's continuous Fourier transform (see \cite[Lemma 6]{MR3445609}):
  \begin{equation*}
      \psi_q (x) = \sum_{k\in \mathbb{Z}^d} 
      \mathcal{F}_{\mathbb{R}^d}^{-1}
      \Big( \frac{\tilde\varphi(2^{-q}\cdot )}{|\cdot |^2}\mathrm{i}\cdot \Big)(x-2k\pi), 
  \end{equation*}
  where $\mathcal{F}_{\mathbb{R}^d}^{-1}$ stands for the inverse Fourier transform in $\mathbb{R}^d$. We thus gather that
  \begin{align*}
      \|\psi_q(x)\|_{L^1}
      &\leq \sum_{k\in \mathbb{Z}^d}\int_{\mathbb{T}^d}\Big|\mathcal{F}_{\mathbb{R}^d}^{-1}\Big( \frac{\tilde\varphi(2^{-q}\cdot )}{|\cdot |^2}\mathrm{i}\cdot \Big)(x-2k\pi)\Big|\,\dd x
      \\
      &\leq  
      \int_{\mathbb{R}^d}\Big|\mathcal{F}_{\mathbb{R}^d}^{-1} \Big( \frac{\tilde\varphi(2^{-q}\cdot )}{|\cdot |^2}\mathrm{i}\cdot \Big)(y)\Big|\,\dd y   \\
      &\leq 
      2^{-q}
       \int_{\mathbb{R}^d}\Big|\mathcal{F}_{\mathbb{R}^d}^{-1} \Big( \frac{\tilde\varphi(2^{-q}\cdot )}{|2^{-q}\cdot |^2}\mathrm{i} 2^{-q}\cdot \Big)(y)\Big|\,\dd y
        \\
      &\leq 
      2^{-q}
      2^{qd } \int_{\mathbb{R}^d}\Big|\mathcal{F}_{\mathbb{R}^d}^{-1} \Big( \frac{\tilde\varphi(\cdot )}{|\cdot |^2}\mathrm{i} \cdot \Big)(2^q y)\Big|\,\dd y
      \leq C2^{-q},
  \end{align*}
  which completes the proof of this lemma.
\end{proof}
A straight consequence of the Bernstein inequalities is the following family of embeddings between Besov spaces.
\begin{prop}
    Let $1\leq p_1\leq p_2 \leq \infty$ and $1\leq r_1\leq r_2\leq \infty$. Then for any $s\in\mathbb{R}$, the space $B_{p_1,r_1}^s$ is continuously embedded in $B_{p_2,r_2}^{s-d(1/p_1-1/p_2)}$, with 
    \begin{equation*}
        \| f \|_{B_{p_2,r_2}^{s-d\left(\frac{1}{p_1}-\frac{1}{p_2}\right)}}
        \leq 
        C\| f \|_{B_{p_1,r_1}^s},
    \end{equation*}
    for any $f\in B_{p_1,r_1}^s$, where the constant $C$ does not depend on the parameters $(p_1,p_2,r_1,r_2,s)$.
\end{prop}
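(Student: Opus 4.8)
This is the periodic counterpart of the classical Besov embedding, and the plan is to reduce everything to the Bernstein inequalities of Lemma~\ref{prop:Bernstein} applied blockwise, combined with the elementary nesting of the discrete $\ell^r$ spaces. The key point is that no global oscillatory estimate is needed: it suffices to compare $\Dd_q f$ in $L^{p_2}$ and in $L^{p_1}$ for each fixed $q$ and then reassemble the dyadic series.

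First I would fix $f\in B_{p_1,r_1}^s$ and abbreviate $t:=s-d\big(\tfrac1{p_1}-\tfrac1{p_2}\big)$. For each $q\in\tilde{\NN}$ — including the low-frequency index $q=-1$, which is explicitly covered by the statement of Lemma~\ref{prop:Bernstein} — I apply inequality (i) of that lemma with no derivatives ($k=0$), source exponent $p=p_1$ and target exponent $r=p_2\geq p_1$, which gives
\begin{equation*}
\|\Dd_q f\|_{L^{p_2}}\leq C\,2^{qd\left(\frac1{p_1}-\frac1{p_2}\right)}\|\Dd_q f\|_{L^{p_1}},
\end{equation*}
where $C$ depends only on the dimension $d$ (the factor $C^{k+1}$ in Lemma~\ref{prop:Bernstein} collapses to $C$ at $k=0$). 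Multiplying by $2^{qt}$ turns this into $2^{qt}\|\Dd_q f\|_{L^{p_2}}\leq C\,2^{qs}\|\Dd_q f\|_{L^{p_1}}$ for every $q$.

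Then I would take the $\ell^{r_2}(\tilde{\NN})$ norm in $q$ of both sides and invoke the inclusion $\ell^{r_1}(\tilde{\NN})\hookrightarrow\ell^{r_2}(\tilde{\NN})$, which holds with constant $1$ because $r_1\leq r_2$. Recalling Definition~\ref{def:Besov-spaces}, this yields
\begin{equation*}
\|f\|_{B_{p_2,r_2}^{t}}=\bigl\|\bigl(2^{qt}\|\Dd_q f\|_{L^{p_2}}\bigr)_q\bigr\|_{\ell^{r_2}}\leq C\bigl\|\bigl(2^{qs}\|\Dd_q f\|_{L^{p_1}}\bigr)_q\bigr\|_{\ell^{r_2}}\leq C\bigl\|\bigl(2^{qs}\|\Dd_q f\|_{L^{p_1}}\bigr)_q\bigr\|_{\ell^{r_1}}=C\|f\|_{B_{p_1,r_1}^s},
\end{equation*}
which is precisely the claimed estimate and, in particular, shows $f\in B_{p_2,r_2}^{t}$. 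The constant $C$ produced in this way is just the dimensional constant of Lemma~\ref{prop:Bernstein}, hence independent of $p_1,p_2,r_1,r_2,s$, as required.

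I do not expect a genuine obstacle here; the proposition is included for completeness and the argument is short. The only two points that warrant attention are (a) using the Bernstein estimate in the form that already handles the ball-supported block $\Dd_{-1}$, not only the annulus-supported blocks $\Dd_q$ with $q\geq 0$, and (b) tracking the uniformity of $C$, which hinges on the exponent $k=0$ neutralising the growing factor $C^{k+1}$ and on the $\ell^{r_1}\hookrightarrow\ell^{r_2}$ inclusion having norm one.
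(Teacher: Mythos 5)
Your argument is correct and is exactly the route the paper intends: it states the proposition as a "straight consequence of the Bernstein inequalities" without writing out the proof, and your blockwise application of Lemma \ref{prop:Bernstein}(i) with $k=0$ followed by the $\ell^{r_1}\hookrightarrow\ell^{r_2}$ inclusion is that standard argument. Your two points of care (covering the $q=-1$ block, and uniformity of the constant via $k=0$ and the norm-one $\ell^r$ nesting) are the right ones and are handled correctly.
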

The next result is a further consequence of the Bernstein inequality in $\TT^d$. It is well-known that $H^\frac{d}{2}(\mathbb{T}^d)$ is not embedded in $L^\infty(\mathbb{T}^d)$. Nevertheless, the result holds as long as the function under consideration has Fourier coefficients that are compactly supported. In  particular, the following lemma allows us to recover an uniform bound that depends uniquely on the set where the spectrum is supported: 
\begin{lemma}\label{lemma:SN-infty}
	For any  $N\in \mathbb{N}\setminus\{0\}$, there exists a constant $C>0$ independent of $N$ such that 
	\begin{equation*}
		\| S_N f \|_{L^\infty} \leq C\sqrt{N}\| f \|_{H^\frac{d}{2}},\quad \forall\, f\in H^\frac{d}{2}(\TT^d).
	\end{equation*}
\end{lemma}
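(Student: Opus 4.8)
The plan is to decompose $S_N f$ dyadically and estimate each block in $L^\infty$ via the Bernstein inequality, then sum the resulting geometric-type series with care about the logarithmic loss. First I would write $S_N f = \sum_{q=-1}^{N-1} \Dd_q f$, so that $\| S_N f\|_{L^\infty} \leq \sum_{q=-1}^{N-1} \| \Dd_q f \|_{L^\infty}$. Applying Bernstein's inequality (Lemma \ref{prop:Bernstein}(i)) with $p=2$, $r=\infty$, $k=0$ gives $\| \Dd_q f \|_{L^\infty} \leq C 2^{qd/2} \| \Dd_q f \|_{L^2}$ with a constant independent of $q$ (and of $N$). Hence
\begin{equation*}
	\| S_N f \|_{L^\infty} \leq C \sum_{q=-1}^{N-1} 2^{qd/2} \| \Dd_q f \|_{L^2}
	= C \sum_{q=-1}^{N-1} \big( 2^{qd/2} \| \Dd_q f \|_{L^2} \big).
\end{equation*}

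Next I would recognize $2^{qd/2}\| \Dd_q f\|_{L^2}$ as exactly the $\ell^2$-sequence whose norm is $\| f \|_{B_{2,2}^{d/2}} = \| f \|_{H^{d/2}}$ (using the identification in Remark \ref{equinorm}). By the Cauchy--Schwarz inequality applied to the sum over the $N+1$ indices $q=-1,0,\dots,N-1$,
\begin{equation*}
	\sum_{q=-1}^{N-1} 2^{qd/2}\| \Dd_q f\|_{L^2}
	\leq \Big( \sum_{q=-1}^{N-1} 1 \Big)^{1/2} \Big( \sum_{q=-1}^{N-1} 2^{qd}\| \Dd_q f\|_{L^2}^2 \Big)^{1/2}
	\leq \sqrt{N+1}\, \| f \|_{H^{d/2}}.
\end{equation*}
Combining the two displays yields $\| S_N f\|_{L^\infty} \leq C\sqrt{N+1}\,\| f\|_{H^{d/2}} \leq C'\sqrt{N}\,\| f\|_{H^{d/2}}$ for $N\geq 1$, with $C'$ independent of $N$, which is the claim.

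This argument is essentially routine; there is no serious obstacle. The only points requiring a little attention are: (i) that the Bernstein constant in Lemma \ref{prop:Bernstein}(i) is genuinely uniform in $q$ (which is part of that lemma's statement, since $k=0$ makes the factor $C^{k+1}$ harmless), and (ii) keeping the bookkeeping of the index range clean so that one genuinely gets the factor $\sqrt{N}$ rather than something larger — this is exactly where Cauchy--Schwarz on a sum of $N$ terms produces the sharp $\sqrt{N}$ growth, reflecting the well-known logarithmic failure of the embedding $H^{d/2}(\mathbb{T}^d)\hookrightarrow L^\infty(\mathbb{T}^d)$. One could alternatively invoke the embedding $B_{2,2}^{d/2} \hookrightarrow B_{\infty,2}^0$ together with $\| S_N f\|_{L^\infty}\lesssim \sqrt{N}\,\| f\|_{B_{\infty,2}^0}$, but the direct Cauchy--Schwarz computation above is the most transparent route.
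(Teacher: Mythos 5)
Your proposal is correct and follows essentially the same route as the paper's proof: decompose $S_N f$ into dyadic blocks, apply the Bernstein inequality to bound each $\| \Dd_q f \|_{L^\infty}$ by $2^{qd/2}\| \Dd_q f\|_{L^2}$, and conclude with the Cauchy--Schwarz inequality over the $N+1$ indices, giving $\sqrt{N+1}\leq 2\sqrt{N}$. No issues to report.
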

\begin{proof}
	The result follows from a direct application of the definition of $S_N$ together with the Bernstein inequalities (ii) and (iii) in Lemma \ref{prop:Bernstein}: 
	\begin{align*}
		\| S_N f \|_{L^\infty} 
		& \leq 
		\sum_{q=-1}^{N-1} \| \Dd_q f \|_{L^\infty}
		\leq 
		\tilde C
		\sum_{q=-1}^{N-1}2^{\frac{d}{2}q} \| \Dd_q f \|_{L^2}
		\leq
		\tilde C
		\left(
			\sum_{q=-1}^{N-1}1
		\right)^\frac{1}{2}
		\left(
			\sum_{q=-1}^{N-1}2^{d q} \| \Dd_q f \|_{L^2}^2
		\right)^\frac{1}{2}\\
		& \leq 
		\tilde C
		\sqrt{1+N}\| f \|_{H^\frac{d}{2}}
		\leq 
		2\tilde C
		\sqrt{N}\| f \|_{H^\frac{d}{2}},
	\end{align*}
	where $\tilde{C}>0$ is a constant independent of $N$. Then we arrive at the conclusion with the choice $C=2\tilde C$.
\end{proof}

Although the Sobolev space  $H^\frac{d}{2}(\mathbb{T}^d)$ is not embedded in $L^\infty(\mathbb{T}^d)$, it is indeed included in $L^p(\mathbb{T}^d)$, for any $p\in [2,\infty)$ (thus of course also for $p\in [1,2)$). A further key tool to prove Proposition \ref{thm:ineq} is the  following lemma, which explicitly expresses the constant of the Sobolev embedding mentioned above. 
\begin{lemma}\label{lemma:eps}
	Let $s\in [0,d/2)$. There exists a constant $C_d>0$ depending only on $d$, such that
	\begin{equation*}
		\| f \|_{L^p}\leq C_d \sqrt{p} \| f \|_{H^s},\quad \text{with }\ p = \frac{2d}{d-2s},\quad \forall\, f\in H^s(\TT^d).
	\end{equation*}
	In particular, when $d=2$, the following interpolation inequality holds for any $\ee \in (0,1]$:
	\begin{equation*}
		\| f \|_{L^\frac{2}{\ee}}
		\leq \frac{C}{\sqrt{\ee}}\| f \|_{L^2}^\ee \| f \|_{H^1}^{1-\ee},\quad \forall\, f\in H^1(\TT^2).
	\end{equation*}
\end{lemma}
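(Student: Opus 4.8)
\textbf{Proof strategy for Lemma \ref{lemma:eps}.}

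The plan is to establish the first inequality via the Littlewood-Paley characterization of $H^s(\TT^d)$ together with the Bernstein inequalities of Lemma \ref{prop:Bernstein}, and then deduce the two-dimensional interpolation inequality by a careful choice of a splitting radius. For the first part, write $f = \sum_{q\geq -1} \Dd_q f$ and, since $p = 2d/(d-2s) \geq 2$, it is natural to interpolate the $L^p$-norm of each dyadic block between $L^2$ and $L^\infty$; more efficiently, I would apply Bernstein's inequality (i) with the exponents $2$ and $p$, which gives $\|\Dd_q f\|_{L^p} \lesssim 2^{qd(1/2 - 1/p)}\|\Dd_q f\|_{L^2} = 2^{qs}\|\Dd_q f\|_{L^2}$, using precisely that $d(1/2-1/p) = s$ for the critical exponent. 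The difficulty — and the reason the constant $\sqrt{p}$ appears — is that summing $f = \sum_q \Dd_q f$ in $L^p$ and then applying the triangle inequality would lose a factor growing like $p$ (or worse), so instead I would invoke the Littlewood-Paley square function estimate in $L^p(\TT^d)$: $\|f\|_{L^p} \lesssim \big\| \big(\sum_q |\Dd_q f|^2\big)^{1/2}\big\|_{L^p}$, whose constant on $\RR^d$ (hence on $\TT^d$ via the transference principle or the periodic Littlewood-Paley theory in \cite{MR3445609}) grows like $\sqrt{p}$ as $p \to \infty$. Combining this with Minkowski's integral inequality (valid since $p \geq 2$) to exchange the $\ell^2$ and $L^p$ norms, and then with the Bernstein bound above, yields
\begin{equation*}
    \|f\|_{L^p} \leq C_d\sqrt{p}\,\Big(\sum_{q\geq -1} \|\Dd_q f\|_{L^p}^2\Big)^{1/2}
    \leq C_d \sqrt{p}\,\Big(\sum_{q\geq -1} 2^{2qs}\|\Dd_q f\|_{L^2}^2\Big)^{1/2}
    = C_d\sqrt{p}\,\|f\|_{H^s}.
\end{equation*}
I expect the main obstacle to be pinning down the $\sqrt{p}$-dependence of the vector-valued Littlewood-Paley constant in the periodic setting; this can be handled either by transference from the known sharp bound on $\RR^d$ (due to the behavior of the Riesz projections / the Mikhlin multiplier norm) or by directly tracking constants through the Marcinkiewicz-type argument, but either way care is needed that no hidden dependence on $p$ enters through the number of overlapping dyadic annuli (which is bounded uniformly in $p$).

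For the two-dimensional interpolation statement, set $d=2$ and let $\ee \in (0,1]$, so that $p = 2/\ee$ corresponds to $s = 1-\ee \in [0,1)$ in the first inequality, giving $\|f\|_{L^{2/\ee}} \leq C_2 \sqrt{2/\ee}\,\|f\|_{H^{1-\ee}} \leq (C/\sqrt{\ee})\|f\|_{H^{1-\ee}}$. It then remains to interpolate $\|f\|_{H^{1-\ee}} \leq \|f\|_{L^2}^{\ee}\|f\|_{H^1}^{1-\ee}$, which is immediate from the Fourier-side expression $\|f\|_{H^{1-\ee}}^2 = \sum_n (1+|n|^2)^{1-\ee}|\hat f_n|^2$ together with Hölder's inequality with exponents $1/\ee$ and $1/(1-\ee)$ applied to $(1+|n|^2)^{1-\ee} = 1^{\ee}\cdot(1+|n|^2)^{1-\ee}$. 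Chaining the two bounds gives exactly $\|f\|_{L^{2/\ee}} \leq (C/\sqrt{\ee})\|f\|_{L^2}^{\ee}\|f\|_{H^1}^{1-\ee}$, with $C$ depending only on the dimension. The only point requiring a small verification is the boundary case $\ee = 1$, where $s = 0$, $p = 2$ and the inequality is trivial; and the excluded endpoint $s = d/2$ is exactly where the Sobolev embedding into $L^\infty$ fails (cf. Lemma \ref{lemma:SN-infty}), which is consistent with the constant $\sqrt{p} \to \infty$ blowing up as $s \uparrow d/2$.
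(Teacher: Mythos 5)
Your second step (the $d=2$ interpolation inequality) is fine: taking $s=1-\ee$, $p=2/\ee$ in the first estimate and then interpolating $\| f\|_{H^{1-\ee}}\leq \| f\|_{L^2}^{\ee}\| f\|_{H^1}^{1-\ee}$ on the Fourier side is exactly how this part follows. The gap is in the main step, and it sits precisely at the point the whole lemma is about, namely the $\sqrt{p}$. The reverse square-function inequality $\| f\|_{L^p}\lesssim \sqrt{p}\,\big\|\big(\sum_q|\Dd_q f|^2\big)^{1/2}\big\|_{L^p}$ is true, but it is \emph{not} a consequence of the tools you invoke. The Mikhlin/H\"ormander multiplier theorem, the Riesz projection bounds, and Marcinkiewicz-type constant tracking through Calder\'on--Zygmund theory all produce operator norms that grow \emph{linearly} in $p$ as $p\to\infty$ (e.g.\ by duality the reverse inequality inherits the forward square-function constant at $p'\to 1$, which blows up like $(p'-1)^{-1}=p-1$; interpolating $L^2$ with the $L^\infty\to BMO$ bound gives the same linear growth). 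The $O(\sqrt{p})$ constant for the reverse inequality is a genuinely sharper statement of Chang--Wilson--Wolff type (sharp good-$\lambda$/exponential-square estimates for square functions), i.e.\ a result at least as deep as the refined Sobolev embedding you are trying to prove, and it cannot be obtained "by transference of the Mikhlin norm". As written, your argument only justifies $\| f\|_{L^p}\leq C_d\, p\,\| f\|_{H^s}$, which is weaker than the stated lemma (your worry about overlapping annuli is a red herring; the loss is intrinsic to the interpolation/duality route).

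The paper proves the lemma by a different, self-contained and elementary route, following \cite[(1.30)]{BCD}: first $\| f\|_{B^{s-d/2}_{\infty,\infty}}\lesssim (d/2-s)^{-1/2}\| f\|_{H^s}$ (a direct Cauchy--Schwarz computation on Fourier coefficients), and then the interpolation $\| f\|_{L^p}\lesssim (p-2)^{-1/p}\| f\|_{B^{s-d/2}_{\infty,\infty}}^{1-2/p}\| f\|_{H^s}^{2/p}$, obtained by splitting $f$ at a $\lambda$-dependent frequency cutoff, observing that the low-frequency part never exceeds $\lambda/2$, and estimating the measure of the level set of the high-frequency part by Chebyshev in $L^2$; combining the two yields the $\sqrt{p}$ since $(d/2-s)^{-1/2}=\sqrt{p/d}$. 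If you want to keep your square-function strategy you must either cite a Chang--Wilson--Wolff type estimate explicitly, or replace that step by such a low/high-frequency level-set argument; otherwise the crucial constant is unjustified.
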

\noindent 
This lemma can be proven via an analogous procedure to the one used for the  inequality \cite[(1.30)]{BCD}. For the sake of completeness, we provide its proof in Appendix \ref{Appx:proof-lemma-eps}. 

At last, we address an useful lemma that concerns the commutator estimates. This result will be frequently used when we prove the estimates in Proposition \ref{thm:ineq}, since it allows us to gain one derivative by commutation between the operator $\Dd_q$ and the multiplication by a function with its  gradient in $L^p(\mathbb{T}^d)$. 
\begin{lemma}\label{prop:comm-est}
	There exists a positive constant $C$ such that for any Lipschitz function $f$ with its gradient in 
	$L^p(\TT^d)$ and for any function $g$ in $L^h(\TT^d)$, we have 
	\begin{align*}
		\| [\Dd_q,f]g\|_{L^r} 
		&\leq C2^{-q} 
		\| \nabla f\|_{L^p}\| g\|_{L^h},\\
		\| [S_N,f]g\|_{L^r} 
		&\leq C2^{-N} 
		\| \nabla f\|_{L^p}\| g\|_{L^h},
	\end{align*}	 
	for any $q\in\tilde{\mathbb{N}}$, $N\in\mathbb{N}$, and $r,p,h\geq 1$ satisfying $1/p +1/h = 1/r$.
\end{lemma}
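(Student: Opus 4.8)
\textbf{Proof plan for the commutator estimates (Lemma \ref{prop:comm-est}).}
The plan is to adapt the classical commutator argument of \cite[Lemma 2.97]{BCD} to the periodic setting, exploiting the convolution representation of the dyadic blocks established in the proof of Lemma \ref{prop:Bernstein} together with the Poisson summation formula. First I would recall that for $q\in\tilde{\NN}$ the operator $\Dd_q$ acts as a convolution on $\TT^d$: writing $h_q(x)=\sum_{n\in\ZZ^d}\varphi_q(n)e^{\mathrm{i}n\cdot x}$ (and the analogous kernel with $\chi$ when $q=-1$), one has $\Dd_q g=h_q\ast g$, where $h_q$ is the periodisation of a Schwartz function, so by the Poisson summation formula (cf. \cite[Lemma 6]{MR3445609}) its $L^1(\TT^d)$-norm is bounded uniformly in $q$, and moreover $\|\,|\cdot|\,h_q\|_{L^1(\TT^d)}\leq C2^{-q}$, exactly as in the estimate of $\|\psi_q\|_{L^1}$ carried out above. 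This is the one genuinely periodic ingredient; everything else is formal manipulation.

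Next, for a fixed $x$ I would write out the commutator explicitly:
\begin{equation*}
	[\Dd_q,f]g(x)=\int_{\TT^d}h_q(x-y)\big(f(y)-f(x)\big)g(y)\,\dd y
	=-\int_{\TT^d}h_q(x-y)\Big(\int_0^1\nabla f\big(x+\tau(y-x)\big)\,\dd\tau\Big)\cdot(x-y)\,g(y)\,\dd y,
\end{equation*}
using that $f$ is Lipschitz so that the fundamental theorem of calculus applies along the segment joining $x$ and $y$ (on the torus one works with a fixed representative; the segment is interpreted in the universal cover $\RR^d$ and the integrand is periodic, so this is legitimate). Introducing the kernel $K_q(z):=|z|\,|h_q(z)|$, which by the previous paragraph satisfies $\|K_q\|_{L^1}\leq C2^{-q}$, one bounds pointwise
\begin{equation*}
	\big|[\Dd_q,f]g(x)\big|\leq \int_0^1\!\!\int_{\TT^d}K_q(x-y)\,\big|\nabla f\big(x+\tau(y-x)\big)\big|\,|g(y)|\,\dd y\,\dd\tau.
\end{equation*}

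The final step is to take the $L^r$-norm in $x$ and estimate the right-hand side. For each fixed $\tau\in[0,1]$ the inner integral is, up to the change of variables adapted to the dilation $y\mapsto x+\tau(y-x)$, a convolution-type expression to which I would apply the generalised Young (Hölder--Young) inequality with exponents $1+1/r=1/1+1/p+1/h$: the kernel $K_q$ goes in $L^1$, the factor $\nabla f$ in $L^p$, and $g$ in $L^h$. Care must be taken with the argument $x+\tau(y-x)$ of $\nabla f$: one splits the double integral so that, after substituting $z=x-y$ and then translating, the $\nabla f$ factor is integrated against the fixed $L^1$ kernel, producing the bound $\|K_q\|_{L^1}\|\nabla f\|_{L^p}\|g\|_{L^h}\leq C2^{-q}\|\nabla f\|_{L^p}\|g\|_{L^h}$, uniformly in $\tau$, and then integrating in $\tau\in[0,1]$ costs nothing. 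The estimate for $[S_N,f]g$ follows by exactly the same argument, since $S_N=\sum_{k=-1}^{N-1}\Dd_k$ is itself a convolution operator with kernel $\sum_{k=-1}^{N-1}h_k$ whose weighted $L^1$-norm $\|\,|\cdot|\sum_{k=-1}^{N-1}h_k\|_{L^1}$ is dominated by $\sum_{k=-1}^{N-1}C2^{-k}\leq C'2^{-(-1)}$; a slightly sharper bookkeeping, anchoring the geometric sum at its largest term $k=N-1$, gives the factor $2^{-N}$ claimed (alternatively, note $S_N f=S_N f$ has spectrum in a ball of radius $\sim 2^N$ and rerun the kernel estimate with $\chi(2^{-N}\cdot)$ in place of $\varphi(2^{-q}\cdot)$). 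The main obstacle I anticipate is purely technical: making the segment-integral representation and the subsequent change of variables rigorous on $\TT^d$ rather than $\RR^d$, i.e. checking that periodisation commutes with the manipulations; this is handled cleanly by the Poisson summation formula as in the proof of Lemma \ref{prop:Bernstein}, so no essential difficulty remains.
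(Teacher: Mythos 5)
Your argument for the dyadic-block estimate is correct and is essentially the paper's proof: represent $\Dd_q$ as convolution with the periodic kernel $\Phi_q(x)=\sum_{n\in\ZZ^d}\varphi_q(n)e^{\mathrm{i}n\cdot x}$, use the Poisson summation formula to control the weighted kernel norm by $2^{-q}\int_{\RR^d}|w|\,|\mathcal{F}^{-1}_{\RR^d}\varphi(w)|\,\dd w$, write $f(y)-f(x)$ by the fundamental theorem of calculus along a segment, and conclude by Minkowski, H\"older with $1/r=1/p+1/h$ and translation invariance. One point you wave at ("segment in the universal cover") deserves one line of care: the factor $|x-y|$ produced by the mean-value representation must match the weight in your kernel $K_q(z)=|z|\,|h_q(z)|$. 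The clean fix is either to take the representative of $y-x$ in $[-\pi,\pi]^d$ (so the segment length is the torus distance, which is dominated by $|z+2k\pi|$ for every $k$), or, as the paper does, to expand $\Phi_q$ by Poisson summation first and apply the mean-value formula separately to each translate $f(y-2k\pi)-f(x)$, pairing $|x-y+2k\pi|$ with $\Psi_q(x-y+2k\pi)$. Either way the argument closes; this is a presentational difference, not a different method.

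There is, however, a genuine slip in your treatment of $[S_N,f]g$. Summing the block estimates gives $\sum_{k=-1}^{N-1}C2^{-k}$, and this geometric sum is dominated by its \emph{first} term $k=-1$, not by $k=N-1$: it is $O(1)$, not $O(2^{-N})$, so "anchoring the sum at its largest term $k=N-1$" is backwards and this route only yields the useless bound $C\|\nabla f\|_{L^p}\|g\|_{L^h}$. The parenthetical alternative you mention is the correct one and is what the paper means by "analogous procedure": since $\chi(n)+\sum_{k=0}^{N-1}\big(\chi(2^{-k-1}n)-\chi(2^{-k}n)\big)=\chi(2^{-N}n)$, the operator $S_N$ is itself a Fourier multiplier with symbol $\chi(2^{-N}\cdot)$, so you rerun the same kernel argument with the periodization of $2^{Nd}(\mathcal{F}^{-1}_{\RR^d}\chi)(2^N\cdot)$, whose weighted $L^1$ norm is $\leq C2^{-N}$. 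With that replacement (and the summation route deleted), your proof is complete and equivalent to the paper's.
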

	
\begin{proof}
The two inequalities can be seen as a direct consequence of the commutator estimates in \cite[Lemma 2.97]{BCD} in the framework of the whole space $\mathbb{R}^d$. For the sake of completeness, here we address the periodic case of $\mathbb{T}^d$ and provide their proofs, taking into account just the first inequality, since the second one can be derived with an analogous procedure.

	Recall that the commutator operator $[\Dd_q,f]g$ is defined by means of $[\Dd_q,\,f]g 
	= \Dd_q(fg)-f\Dd_q g= \Phi_q\ast (fg) - f(\Phi_q\ast g)$, with the kernel 
	$$ \Phi_q(x) = \sum_{n\in\ZZ^d} \varphi_q(n)e^{\mathrm{i}n\cdot x}. $$
	Therefore, it holds
	\begin{equation*}
		[\Dd_q,f]g(x) = \int_{\TT^d} \Phi_q(x-y)(f(y)-f(x))g(y)\,\dd y,
	\end{equation*}
	for almost all $x\in\mathbb{T}^d$. 
	Next, introducing the inverse Fourier transform in 
	$\mathbb{R}^d$ of $\varphi(\xi)$ and $\varphi_q(\xi)$ such that
	\begin{equation*}
		\Psi(x) := 
		\big(\mathcal{F}_{\RR^d}^{-1}\varphi \big)(x)
		=
		\frac{1}{(2\pi)^d}
		\int_{\RR^d} \varphi(\xi)e^{-\mathrm{i} x\cdot \xi}\,\dd \xi,\qquad 
		\Psi_q(x) := 
		\big(\mathcal{F}_{\RR^d}^{-1}\varphi_q \big)(x)
		=
		2^{qd} \Psi(2^q x),
	\end{equation*}		
	we invoke again the Poisson summation formula in \cite[Lemma 6]{MR3445609} to obtain
	\begin{align*}
		[\Dd_q,f]g(x) &= 
		\int_{\TT^d} 
		\Big(
		\sum_{k\in \mathbb{Z}^d} 
		\Psi_q(x-y+2k\pi)
		\Big)
		(f(y)-f(x))g(y)\,\dd y\\
		&=
		\int_{\TT^d} 
		\Big(
		\sum_{k\in \mathbb{Z}^d} 
		\Psi_q(x-y+2k\pi)
		(f(y-2k\pi)-f(x))
		\Big)
		g(y)\,\dd y.
	\end{align*}
	Since $f$ is Lipschitz, then for almost all $x,y\in \mathbb{T}^d$, it holds 
	$f(y-2k\pi)-f(x) = \int_0^1 \nabla f(x+s(y-2k\pi -x))\,\dd s \cdot (y-2k\pi-x)$. As a result, we see that 
	\begin{align*}
		\big| [\Dd_q,f]g(x) \big|
		&\leq 
		\int_{\TT^d}\int_0^1 
		\Big(
		\sum_{k\in \mathbb{Z}^d} |\Psi_q(x-y+2k\pi)||x-y+2k\pi||\nabla f(x+s(y-2k\pi-x))|
		\Big)|g(y)|\,\dd s \,
		\dd y\\
		&\leq 
		\int_{\TT^d}\int_0^1 
		\Big(
		\sum_{k\in \mathbb{Z}^d} |\Psi_q(z+2k\pi)||z+2k\pi||\nabla f(x-s(z+2k\pi))|
		\Big)|g(x-z)|\,\dd s \,
		\dd z\\
		&\leq 
		\int_{\TT^d}\int_0^1 
		\Big(
		\sum_{k\in \mathbb{Z}^d} |2^{qd}\Psi(2^q(z+2k\pi))||z+2k\pi||\nabla f(x-s(z+2k\pi))|
		\Big)|g(x-z)|\,\dd s \,
		\dd z.
	\end{align*}
	Now taking the $L^r$-norm of both sides in the above inequality, using the fact that the norm of an integral is less 
	than the integral of the norm, and H\"older's inequality, we get
	\begin{align*}
		\| [\Dd_q,f]g \|_{L^r}
		&\leq 
		\int_{\TT^d}\int_0^1 
		\Big(
		\sum_{k\in \mathbb{Z}^d} |2^{qd}\Psi(2^q(z+2k\pi))||z+2k\pi|
		\|\nabla f(\cdot -s(z+2k\pi))\|_{L^p}
		\Big)\|g(\cdot -z)\|_{L^h}\,\dd s\, 
		\dd z.
	\end{align*}
	The translation invariance of the Lebesgue measure then ensures that
	\begin{align*}
	\| [\Dd_q,f]g \|_{L^r}
		&
		\leq 
		\Big(
		2^{-q}
		\sum_{k\in \mathbb{Z}^d}
		\int_{\TT^d}
		 |2^{qd}\Psi(2^q(z+2k\pi))|2^q|z+2k\pi|\,
		 \dd z
		\Big)
		\|\nabla f\|_{L^p}
		\|g\|_{L^h} \\
		&\leq 
		2^{-q}
		\int_{\RR^d}
		|2^{qd}\Psi(2^qz)||2^qz|
		\,\dd z\,
		\|\nabla f\|_{L^p}
		\|g\|_{L^h}
		\leq 
		C2^{-q}
		\|\nabla f\|_{L^p}
		\|g\|_{L^h},
	\end{align*}
	which completes the proof of this lemma.
\end{proof}

\subsection{The Bony's decomposition}
We conclude this section by introducing the so-called Bony's product decomposition. The construction of paraproduct operators relies on the observation that, formally, any product of two distributions $f$ and $g$, may be decomposed into
\begin{equation*}
	f g = T_f g + T_g f + R(f,g),
\end{equation*}
where we define the paraproducts $T_f g,\,T_g f$ and the reminder $R(f,g)$ by means of
\begin{equation*}
	T_fg 	:=\sum_{q = -1}^\infty \sdm{q}f\Dd_q g,\qquad 
	T_gf 	:=\sum_{q = -1}^\infty \sdm{q}g\Dd_q f,\qquad 
	R(f,g)	:=\sum_{q = -1}^\infty \Dd_q f (\Dd_{q-1}g+\Dd_{q}g+\Dd_{q+1}g).
\end{equation*} 
The paraproduct $T_fg$ (and thus $T_gf$) is always a well-defined distribution.
Heuristically, $T_fg$ behaves at large frequencies like $g$ (and thus retains the same regularity), and $f$ provides only a frequency modulation of $g$. The only difficulty in constructing the product $f g$ for arbitrary distributions lies in handling the reminder term $R(f,g)$.

\smallskip
A first important result that arises from the analysis of the paraproducts and the reminder is the following theorem, concerning the continuity properties of the products and the remainder (whose proof is identical to the one of the homologous version for $\mathbb{R}^d$ in \cite[Theorem 2.82 and Theorem 2.85]{BCD}). 
\begin{theorem}\label{thm:continuity-paraproduct-reminder}
    There exists a positive constant $C$ such that the following inequalities hold:
    \begin{enumerate}[(i)]
        \item For couple  $(s_1,s_2) \in \mathbb{R}^2$ with $s_1$ negative and any $(p,r_1,r_2)$ in $[1,\infty]^3$,
        \begin{align*}
            \| T_f g \|_{B_{p,r}^{s_1+s_2}}\leq \frac{C^{|s_1+s_2|+1}}{|s_1|}\| f \|_{B_{\infty, r_1}^{s_1}} \| g \|_{B_{p, r_2}^{s_2}},
        \end{align*}
        for any $(f,g)\in B_{\infty,r_1}^{s_1}\times B_{p,r_2}^{s_2}$, with $ 1/r = \min\{1, 1/r_1+1/r_2\} $.
        \item For any couple $(s_1,s_2)\in \mathbb{R}^2$ with $s_1+s_2>0$ and for any $(p_1,\,p_2,\,r_1,\,r_2)\in [1,\infty]^4$, with 
        $1/p := 1/p_1+1/p_2\leq 1$ and $1/r = 1/r_1+1/r_2\leq 1$, 
        \begin{equation*}
            \| R(f,g) \|_{B_{p,r}^{s_1+s_2}} \leq \frac{C^{s_1+s_2+1}}{s_1+s_2} \| f \|_{B_{p_1,r_1}^{s_1}}
            \| g \|_{B_{p_2,r_2}^{s_2}},
        \end{equation*}
        for any $(f,g)\in B_{p_1,r_1}^{s_1}\times B_{p_2,r_2}^{s_2}$.
    \end{enumerate}
\end{theorem}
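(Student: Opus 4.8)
\emph{Proof sketch.} The plan is to follow Bony's classical argument for $\mathbb{R}^d$ (\cite[Theorems 2.82 and 2.85]{BCD}), transposing each ingredient to the torus with the tools already set up above. Both inequalities rest on the same three-step scheme: (a) each dyadic piece of a paraproduct (resp.\ of the remainder) is spectrally localized in a dilated annulus (resp.\ ball) of size $\sim 2^q$; (b) its $L^p$-norm is controlled by H\"older's inequality together with the Bernstein estimates of Lemma \ref{prop:Bernstein}; (c) the Besov norm of the whole series is recovered through a gluing lemma for sequences of spectrally localized periodic functions, which is precisely what produces the amplification factors $\frac{1}{|s_1|}$, resp.\ $\frac{1}{s_1+s_2}$. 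I would treat the two parts separately.

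For (i), I would first note that, since on $\mathbb{T}^d$ the Fourier coefficients of a product are the discrete convolution on $\mathbb{Z}^d$ of the individual coefficients, the spectrum of $S_{q-1}f\,\Dd_q g$ is contained in a fixed dilated annulus $2^q\widetilde{\mathcal C}$ (the low-frequency ball of $S_{q-1}f$ has radius strictly smaller than the inner radius of the annulus of $\Dd_q g$, by construction of $\chi$), so that $\Dd_j(T_f g)=\sum_{|j-q|\le N_0}\Dd_j(S_{q-1}f\,\Dd_q g)$ for some absolute $N_0$. Using the uniform $L^\infty$-boundedness of $\Dd_k$ recorded after \eqref{eq:S_j} and Young's inequality applied to the convolution of $(2^{ks_1}\|\Dd_k f\|_{L^\infty})_k$ with $(2^{js_1}\mathbf{1}_{j\ge 2})_j$ --- the latter summable precisely because $s_1<0$ --- one gets $\big\|(2^{qs_1}\|S_{q-1}f\|_{L^\infty})_q\big\|_{\ell^{r_1}}\le \frac{C}{|s_1|}\|f\|_{B^{s_1}_{\infty,r_1}}$. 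H\"older in the frequency index with $1/r=\min\{1,1/r_1+1/r_2\}$ then bounds $\big\|(2^{q(s_1+s_2)}\|S_{q-1}f\,\Dd_q g\|_{L^p})_q\big\|_{\ell^r}$ by $\frac{C}{|s_1|}\|f\|_{B^{s_1}_{\infty,r_1}}\|g\|_{B^{s_2}_{p,r_2}}$, and the annulus-gluing lemma on $\mathbb{T}^d$ (valid for every real regularity index, since the pieces stay away from the origin) upgrades this to the asserted bound for $\|T_f g\|_{B^{s_1+s_2}_{p,r}}$, the harmless factor $2^{N_0|s_1+s_2|}$ being absorbed into $C^{|s_1+s_2|+1}$.

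For (ii), the only genuine change is that $\Dd_q f\,(\Dd_{q-1}g+\Dd_q g+\Dd_{q+1}g)$ is now spectrally supported in a \emph{ball} $2^q\widetilde B$ rather than an annulus, which is what forces the hypothesis $s_1+s_2>0$. H\"older with $1/p=1/p_1+1/p_2$ gives $\|\Dd_q f\,(\Dd_{q-1}g+\Dd_q g+\Dd_{q+1}g)\|_{L^p}\le \|\Dd_q f\|_{L^{p_1}}\big(\|\Dd_{q-1}g\|_{L^{p_2}}+\|\Dd_q g\|_{L^{p_2}}+\|\Dd_{q+1}g\|_{L^{p_2}}\big)$; hence $2^{q(s_1+s_2)}$ times this quantity is the product of a sequence in $\ell^{r_1}$ of norm $\lesssim\|f\|_{B^{s_1}_{p_1,r_1}}$ and one in $\ell^{r_2}$ of norm $\lesssim\|g\|_{B^{s_2}_{p_2,r_2}}$, whose $\ell^r$-norm (with $1/r=1/r_1+1/r_2$) is controlled by the product of the two Besov norms via H\"older. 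Finally the ball-gluing lemma --- which needs $s_1+s_2>0$ and contributes the factor $\frac{1}{s_1+s_2}$ through the geometric series $\sum_{j\le q+N_0}2^{(j-q)(s_1+s_2)}$ --- yields the stated estimate for $\|R(f,g)\|_{B^{s_1+s_2}_{p,r}}$.

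The only point requiring real care --- the ``obstacle'', such as it is --- is that every ingredient above is stated for $\mathbb{R}^d$ in \cite{BCD}, so the work consists in checking that it transposes to $\mathbb{T}^d$: the spectral-support calculus is immediate (Fourier series turn products into discrete convolutions on $\mathbb{Z}^d$); the uniform $L^p\to L^p$ bounds for $\Dd_q$ and $S_q$ are already available; and the two gluing lemmas for series of spectrally localized periodic functions are the periodic counterparts of \cite[Lemmas 2.69 and 2.84]{BCD}, provable exactly as in the Euclidean case once one passes from the periodic kernels to the Euclidean ones via the Poisson summation formula \cite[Lemma 6]{MR3445609}, just as was done for Lemmas \ref{prop:Bernstein} and \ref{prop:comm-est}. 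Since no new phenomenon arises, I would present the statement without a separate detailed proof, exactly as the excerpt does.
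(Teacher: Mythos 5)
Your proposal is correct and matches the paper's treatment: the paper simply asserts that the proof is identical to the whole-space case in \cite[Theorems 2.82 and 2.85]{BCD}, and your sketch is precisely that argument (spectral localization of $S_{q-1}f\,\Dd_q g$ in dilated annuli and of the remainder blocks in dilated balls, H\"older plus the uniform $L^p$-bounds on $\Dd_q$, the $S_q$-characterization of negative-index Besov norms producing the $1/|s_1|$ factor, and the annulus/ball gluing lemmas producing the $1/(s_1+s_2)$ factor), together with the routine check that these ingredients transpose to $\mathbb{T}^d$ via discrete convolution of Fourier coefficients and Poisson summation. No gap to report.
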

\noindent 
In particular, the following lemma concerning the continuity of the product between Sobolev spaces holds true.
\begin{lemma}\label{lemma:product}
	Let $s,\,t\in\mathbb{R}$ be two constants satisfying $s+t>0$ and $s,t<1$. 
	For any  $f\in H^s(\TT^2)$ and $g\in H^t(\TT^2)$,  the product 
	$fg$ belongs to $H^{s+t-1}(\TT^2)$ and
	\begin{equation*}
		\| fg \|_{H^{s+t-1}}\leq C\| f\|_{H^s}\|g\|_{H^t},
	\end{equation*}
	for certain constant $C>0$ only depending on $s$ and $t$.
\end{lemma}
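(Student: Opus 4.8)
The statement is the standard product law in Sobolev spaces obtained from Bony's decomposition, so the plan is to split $fg = T_fg + T_gf + R(f,g)$ and estimate each piece via Theorem \ref{thm:continuity-paraproduct-reminder}, remembering the identification $H^\sigma(\TT^2)=B_{2,2}^\sigma(\TT^2)$ (Remark \ref{equinorm}) and the Besov embedding in dimension $d=2$. For the paraproduct $T_fg$: since $f\in H^s=B_{2,2}^s$ with $s<1$, the embedding $B_{2,2}^s\hookrightarrow B_{\infty,2}^{s-1}$ holds, and the index $s-1$ is negative, so part (i) of Theorem \ref{thm:continuity-paraproduct-reminder} applies with $s_1=s-1$, $s_2=t$, giving $\|T_fg\|_{B_{2,2}^{s+t-1}}\leq \frac{C}{|s-1|}\|f\|_{B_{\infty,2}^{s-1}}\|g\|_{B_{2,2}^t}\leq C\|f\|_{H^s}\|g\|_{H^t}$. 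Symmetrically, using $t<1$ so that $B_{2,2}^t\hookrightarrow B_{\infty,2}^{t-1}$ with $t-1<0$, part (i) bounds $\|T_gf\|_{B_{2,2}^{s+t-1}}\leq C\|g\|_{H^t}\|f\|_{H^s}$.

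For the remainder term $R(f,g)$, I would apply part (ii) of Theorem \ref{thm:continuity-paraproduct-reminder} with $s_1=s$, $s_2=t$ (the hypothesis $s+t>0$ is exactly what is needed), and with $p_1=p_2=2$, $r_1=r_2=2$, which gives $1/p=1$ and $1/r=1$, hence $R(f,g)\in B_{1,1}^{s+t}$ with $\|R(f,g)\|_{B_{1,1}^{s+t}}\leq \frac{C}{s+t}\|f\|_{H^s}\|g\|_{H^t}$. This is not yet in the target space, so the final move is the Besov embedding $B_{1,1}^{s+t}(\TT^2)\hookrightarrow B_{2,2}^{s+t-2/2}=B_{2,2}^{s+t-1}=H^{s+t-1}(\TT^2)$, which is precisely the proposition following Lemma \ref{lemma:SN-infty} in the excerpt (with $d=2$, $p_1=r_1=1$, $p_2=r_2=2$). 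Combining the three bounds and summing yields $\|fg\|_{H^{s+t-1}}\leq C\|f\|_{H^s}\|g\|_{H^t}$ with a constant depending only on $s,t$ through $|s-1|^{-1}$, $|t-1|^{-1}$, $(s+t)^{-1}$.

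The only points requiring care — and the nearest thing to an obstacle — are bookkeeping ones: checking that each invocation of Theorem \ref{thm:continuity-paraproduct-reminder} meets its index restrictions (negativity of $s_1$ in part (i), which is guaranteed by $s,t<1$ after the embedding into $B_{\infty,2}^{\cdot-1}$; positivity of $s_1+s_2$ in part (ii), which is the hypothesis $s+t>0$), and verifying that the Besov summability indices line up so that $r=2$ on each output (for the paraproducts, $1/r=\min\{1,1/2+1/2\}=1$ would give $B_{2,1}^{s+t-1}$, which embeds into $B_{2,2}^{s+t-1}$, so one may equally absorb this into a final harmless embedding $B_{2,1}\hookrightarrow B_{2,2}$). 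None of this is deep; the proof is essentially a three-line assembly once the correct embedding $H^s\hookrightarrow B_{\infty,2}^{s-1}$ in $\TT^2$ is noted.
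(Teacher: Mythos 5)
Your proof is correct and follows essentially the same route as the paper's: Bony's decomposition $fg=T_fg+T_gf+R(f,g)$, the paraproduct estimate of Theorem \ref{thm:continuity-paraproduct-reminder}(i) after embedding $H^s\hookrightarrow B^{s-1}_{\infty,\cdot}$ in dimension two, and the remainder estimate of part (ii) combined with $B_{1,1}^{s+t}\hookrightarrow B_{2,2}^{s+t-1}$. The only difference is cosmetic (you land in $B^{s-1}_{\infty,2}$ where the paper uses $B^{s-1}_{\infty,\infty}$), and your index bookkeeping is sound.
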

\begin{proof}
    Making use of the Bony's decomposition, we recast the product $fg$ into $T_fg + T_g f + R(f,g)$. Because of the 
    Besov embeddings $B_{2,1}^{s+t-1}\hookrightarrow B_{2,2}^{s+t-1}$, $B_{2,2}^t\hookrightarrow B_{\infty, \infty}^{t-1}$, we invoke Theorem \ref{thm:continuity-paraproduct-reminder}, part $(i)$ with $s_1 = s-1$ and $s_2 = t$, to remark that
    \begin{equation*}
        \| T_f g \|_{H^{s+t-1}}\leq C\| T_f g \|_{B_{2,1}^{s+t-1}}
        \leq 
        \frac{C^{|s+t-1|+1}}{|s-1|}\| f \|_{B_{\infty, \infty}^{s-1}} \| g \|_{B_{2, 2}^t}
        \leq 
        \frac{C^{|s+t-1|+1}}{|s-1|}\| f \|_{B_{2,2}^{s}} \| g \|_{B_{2, 2}^t}.
    \end{equation*}
    Similarly, we infer that the second paraproduct fulfills 
    \begin{equation*}
        \| T_g f \|_{H^{s+t-1}}\leq \frac{C^{|s+t-1|+1}}{|t-1|}\| g \|_{B_{2,2}^{t}} \| f \|_{B_{2, 2}^s}.
    \end{equation*}
    Finally, we recall the Besov embedding $B_{1,1}^{s+t}\hookrightarrow B_{2,2}^{s+t-1}$ and make use of Theorem \ref{thm:continuity-paraproduct-reminder}, part $(ii)$ with $s_1=s$, $s_2 = t$, $p=1$, $p_1=p_2=2$, $r=1$, $r_1=r_2=2$, to gather
    \begin{equation*}
        \| R(f,g) \|_{B_{2,2}^{s+t-1}} \leq C\| R(f,g) \|_{B_{1,1}^{s+t}} 
        \leq \frac{C^{s+t+1}}{s+t} \| f \|_{B_{2,2}^s} \| g \|_{B_{2,2}^t}.
    \end{equation*}
    This concludes the proof of the lemma.
\end{proof}

The analysis in the subsequent sections requires an additional development of the Bony's decomposition, when we apply a dyadic block $\Dd_q$ to the product. More precisely, applying a commutator between the operators $T_f$ and $\Dd_q$ we gather the following formula:
\begin{align*}
	\Dd_q(T_f g) &= [\Dd_q,\,T_f]g + T_f\Dd_q g\\
	& = 
	\sum_{|j-q|\leq 5}[\Dd_q,\,\sdm{j} f]\Dd_j g + 
	\sum_{|j-q|\leq 5}(\sdm{j}f-\sdm{q}f)\Dd_q\Dd_j g
	+\sum_{|j-q|\leq 5} \sdm{q}f\Dd_q \Dd_j g \\
	& = 
	\sum_{|j-q|\leq 5}[\Dd_q,\,\sdm{j} f]\Dd_j g + 
	\sum_{|j-q|\leq 5}(\sdm{j}f-\sdm{q}f)\Dd_q\Dd_j g
	+\sdm{q}f\Dd_q g,
\end{align*}
which finally leads to 
\begin{equation}\label{bony-decomp}
	\Dd_q(f g) = 
	\bigg(
	\sum_{|j-q|\leq 5} 
	[\Dd_q,\,\sdm{j} f ]\Dd_j g\bigg) +  
	\bigg(\sum_{|j-q|\leq 5} 
	(\sdm{j} -\sdm{q})f\Dd_q \Dd_j g \bigg)
	+ \sdm{q}f\Dd_qg + 
	\bigg(\sum_{j=q-5}^\infty \Dd_q(\Dd_j fS_{j+2}g)\bigg),
\end{equation}
for any $q\in \tilde{\mathbb{N}}$. 

Returning to our uniqueness problem, those most challenging nonlinear terms in the system \eqref{main_system} will be decomposed through the specific form \eqref{bony-decomp} (see, for instance,  \eqref{ineqT2-defIandJ}, \eqref{decomp_deltaAd1od1} and \eqref{def-I...IV-d1A2deltadd1d1} below). It is also worth mentioning that, in each of these decompositions, the third term $\sdm{q}f\Dd_qg$ in \eqref{bony-decomp} will sometimes either cancel out by taking into account the specific coupling structure of system \eqref{main_system}, or yield some additional dissipative terms, in particular, by considering certain physically meaningful combinations in the subsequent estimates (recall Remark \ref{comm_proof}).

\section{\bf Estimate of $\mathcal{T}_1$}\label{sec:estT1}
\setcounter{equation}{0}

Starting from this section, we proceed to prove Proposition \ref{thm:ineq}. In the subsequent analysis, we denote by $C$, $C_j$ generic positive constants that may vary from line to line. Specific dependence will be pointed out if necessary. Besides, we shall  simply write $A\lesssim B$ if $A \leq  CB$.

The proof of Proposition \ref{thm:ineq} will be carried out in Section \ref{sec:estT1} -- Section \ref{sec:double-log-est}. Our aim in this section is to derive estimates for the term $\mathcal{T}_1$ (recall \eqref{Xi1}), namely,
\begin{equation} \label{Xi1a} 
\begin{aligned}
	\mathcal{T}_1 =
	& -
	\langle 	 \delta \uu\cdot \nabla \uu_1,\, \delta \uu 	\rangle_{H^{-\frac{1}{2}}}
	-
	\langle 	 \uu_2\cdot \nabla \delta  \uu,\, \delta \uu 	\rangle_{H^{-\frac{1}{2}}}		
	+
	\langle \nabla \ddd_1\odot \nabla\delta \ddd ,\,\nabla \delta \uu 	\rangle_{H^{-\frac{1}{2}}}	
	+ \langle \nabla \delta \ddd\odot \nabla \ddd_2 ,\,\nabla \delta \uu 	\rangle_{H^{-\frac{1}{2}}} 
	\\
	& 
	- \langle	\delta \ddd\otimes (A_2 \ddd_2),\,
		\nabla \delta \uu \rangle_{H^{-\frac{1}{2}}} 
	-  \langle	(A_2 \ddd_2)\otimes \delta \ddd,\,
		\nabla \delta \uu \rangle_{H^{-\frac{1}{2}}} 
	- \langle
		\delta  \nabla_\ddd W(\ddd))\otimes \ddd_1,\,
		\nabla \delta \uu \rangle_{H^{-\frac{1}{2}}}
		\\
	& +
	\langle
		(\Delta \ddd_2 - \nabla_\ddd W(\ddd_2))\otimes \delta \ddd,\,
		\nabla \delta \uu
	\rangle_{H^{-\frac{1}{2}}}
	- \int_{\TT^2} 
	(\uu_1\cdot \nabla\delta \ddd)\cdot \delta \ddd\,\dd x
	+
	\langle \uu_1\cdot \nabla\delta \ddd ,\,\Delta \delta \ddd 	\rangle_{H^{-\frac{1}{2}}}			
	\\
	&-
	\int_{\TT^2} 
	(\delta \uu\cdot \nabla  \ddd_2)\cdot \delta \ddd\,\dd x
	+
	\langle \delta \uu\cdot \nabla  \ddd_2 ,\,\Delta 
	\delta \ddd 	\rangle_{H^{-\frac{1}{2}}}	
	-
	\int_{\TT^2} 
	\delta \nabla_\ddd W(\ddd)\cdot \delta \ddd \,\dd x		
	+
	\langle \delta \nabla_\ddd W(\ddd),\,\Delta \delta \ddd 	\rangle_{H^{-\frac{1}{2}}}. 
\end{aligned}
\end{equation}
From the technical point of view, the above terms contained in $\mathcal{T}_1$ are relatively ``easier'' to control, in the sense that they can be bounded by using some standard techniques such as the product rule between Sobolev spaces (see Lemma \ref{lemma:product}). 

More precisely, we have 
\begin{prop}
\label{prop:bound-for-xi1}
There exist a positive function $f_1\in L^1(0,T)$ such that for any $\eta\in (0,1)$, the following inequality holds 
	\begin{equation}\label{prop:T1-ineq}
		|\mathcal{T}_1(t) |
		\leq C_1 f_1(t)\Phi(t)  + 10 \eta\, \widehat{\mathfrak{D}}(t),
		\qquad \text{for almost all } t\in (0,T),
	\end{equation}
	where $C_1>0$ is a constant depending on $\eta^{-1}$.
\end{prop}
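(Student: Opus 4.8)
\textbf{Proof plan for Proposition \ref{prop:bound-for-xi1}.}
The plan is to estimate each of the fourteen terms appearing in $\mathcal{T}_1$ separately, bounding every one by a quantity of the form $C_\eta f(t)\Phi(t) + \eta\,\widehat{\mathfrak{D}}(t)$, and then summing. The key principle is that the gradient terms $\nabla\delta\uu$, $\nabla\delta\ddd$ and $\Delta\delta\ddd$ (appearing either explicitly or through the $H^{-1/2}$ pairings against $\nabla\delta\uu$) should always be placed in the dissipation $\widehat{\mathfrak{D}}$ via Young's inequality with weight $\eta$, while the remaining factors involving $\delta\uu$ (in $H^{-1/2}$) or $\delta\ddd$ (in $H^{1/2}$, equivalently $L^2$ plus $\nabla\delta\ddd$ in $H^{-1/2}$) feed into $\Phi(t)$, with the prefactor $f(t)$ collecting norms of the two weak solutions $(\uu_i,\ddd_i)$ that are integrable in time by Definition \ref{def:weak-sol}. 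The workhorse is Lemma \ref{lemma:product}: the product of an $H^s$ and an $H^t$ function lies in $H^{s+t-1}$ whenever $s+t>0$ and $s,t<1$, which in two dimensions is exactly the flexible substitute for the endpoint Sobolev multiplication that fails at $H^1$.

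First I would handle the Navier--Stokes-type terms $\langle\delta\uu\cdot\nabla\uu_1,\delta\uu\rangle_{H^{-1/2}}$ and $\langle\uu_2\cdot\nabla\delta\uu,\delta\uu\rangle_{H^{-1/2}}$. For the first, write it as $\langle\delta\uu\cdot\nabla\uu_1,\delta\uu\rangle\lesssim\|\delta\uu\otimes\uu_1\|_{H^{1/2}}\|\nabla\delta\uu\|_{H^{-1/2}}$ after integrating by parts (using $\Div\delta\uu=0$), then apply Lemma \ref{lemma:product} to $\delta\uu\in H^{-1/2}$ times $\uu_1$; since $\uu_1\in L^\infty(0,T;L^2)\cap L^2(0,T;H^1)$ one must interpolate — e.g. bound $\|\delta\uu\,\uu_1\|_{H^{?}}$ by $\|\delta\uu\|_{H^{-1/2}}^{1-\theta}\|\nabla\delta\uu\|_{H^{-1/2}}^{\theta}\|\uu_1\|_{H^1}$ and then split off the $\nabla\delta\uu$ factor with Young, leaving $f(t)\sim\|\uu_1\|_{H^1}^2$ which is in $L^1(0,T)$. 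The transport term with $\uu_2$ is similar. Next come the Ericksen-stress terms $\langle\nabla\ddd_1\odot\nabla\delta\ddd,\nabla\delta\uu\rangle_{H^{-1/2}}$ and its twin: here one factor $\nabla\delta\uu$ goes to $\widehat{\mathfrak{D}}$, and $\nabla\ddd_1\odot\nabla\delta\ddd$ is estimated in $H^{-1/2}$ by Lemma \ref{lemma:product} applied to $\nabla\ddd_1\in H^{s}$ (obtained from $\ddd_1\in H^2$, so $\nabla\ddd_1\in H^1$, but we only need it in a lower-index space after interpolation with $\ddd_1\in L^\infty(0,T;H^1)$) times $\nabla\delta\ddd\in H^{-1/2}$, producing a prefactor controlled by $\|\ddd_1\|_{H^2}\|\ddd_1\|_{H^1}$ or the like, times $\|\delta\ddd\|_{H^{1/2}}^2$ — again $L^1$ in time. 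The stress terms $\delta\ddd\otimes(A_2\ddd_2)$, $(A_2\ddd_2)\otimes\delta\ddd$, the potential term $\delta\nabla_\ddd W(\ddd)\otimes\ddd_1$, and $(\Delta\ddd_2-\nabla_\ddd W(\ddd_2))\otimes\delta\ddd$ are treated the same way: put $\nabla\delta\uu$ in $\widehat{\mathfrak{D}}$, estimate the remaining tensor in $H^{1/2}$ using that $A_2\ddd_2\in L^2((0,T)\times\TT^2)$, $\ddd_i\in L^\infty(0,T;H^1)$, $\Delta\ddd_2\in L^2(0,T;L^2)$, and the cubic structure of $\nabla_\ddd W$; here the Gagliardo--Nirenberg/interpolation inequality of Lemma \ref{lemma:eps} (the $d=2$ version $\|f\|_{L^{2/\ee}}\le C\ee^{-1/2}\|f\|_{L^2}^\ee\|f\|_{H^1}^{1-\ee}$) is needed to handle products like $|\ddd_1|^2\delta\ddd$ in the required negative-index space.

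Finally I would deal with the six director-equation terms: the pairs $\int(\uu_1\cdot\nabla\delta\ddd)\cdot\delta\ddd\,dx$ with $\langle\uu_1\cdot\nabla\delta\ddd,\Delta\delta\ddd\rangle_{H^{-1/2}}$, the analogous pair with $\delta\uu\cdot\nabla\ddd_2$, and the potential pair $\int\delta\nabla_\ddd W(\ddd)\cdot\delta\ddd\,dx$ with $\langle\delta\nabla_\ddd W(\ddd),\Delta\delta\ddd\rangle_{H^{-1/2}}$. For the first, note $\int(\uu_1\cdot\nabla\delta\ddd)\cdot\delta\ddd\,dx=0$ by incompressibility of $\uu_1$, and $\langle\uu_1\cdot\nabla\delta\ddd,\Delta\delta\ddd\rangle_{H^{-1/2}}$ is bounded by $\|\uu_1\cdot\nabla\delta\ddd\|_{H^{-1/2}}\|\Delta\delta\ddd\|_{H^{-1/2}}$, the second factor into $\widehat{\mathfrak{D}}$ and the first via Lemma \ref{lemma:product} on $\uu_1\in H^{?}$ times $\nabla\delta\ddd\in H^{-1/2}$, with an interpolation of $\uu_1$ between $L^2$ and $H^1$ yielding an $L^1(0,T)$ prefactor against $\Phi$. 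For the term with $\delta\uu\cdot\nabla\ddd_2$: $\|\delta\uu\cdot\nabla\ddd_2\|_{H^{-1/2}}\lesssim\|\delta\uu\|_{H^{-1/2}}^{?}\|\nabla\delta\uu\|_{H^{-1/2}}^{?}\|\nabla\ddd_2\|_{H^{?}}$, which after Young produces both a $\Phi$-contribution and an absorbable $\widehat{\mathfrak{D}}$-contribution. The potential pair uses the explicit cubic form of $\delta\nabla_\ddd W(\ddd)=(|\ddd_1|^2-1)\delta\ddd+(\delta\ddd\cdot(\ddd_1+\ddd_2))\ddd_2$; since $\ddd_i\in L^\infty(0,T;H^1)\hookrightarrow L^\infty(0,T;L^p)$ for all $p<\infty$, these cubic expressions are comfortably in $H^{1/2}$ after interpolation, with prefactor $\sim(1+\|\ddd_1\|_{H^1}^2+\|\ddd_2\|_{H^1}^2)$ which is in $L^\infty(0,T)\subset L^1(0,T)$.

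The main obstacle I expect is not any single term but the bookkeeping needed to ensure that every prefactor multiplying $\Phi(t)$ genuinely belongs to $L^1(0,T)$ rather than merely $L^{1+}$ — this forces careful interpolation choices so that whenever an $H^1$ norm of $\uu_i$ or an $H^2$/$L^2(\Delta)$ norm of $\ddd_i$ appears it occurs to at most the second power (matching the $L^2(0,T)$ membership), and whenever it would appear to a higher power one instead trades regularity into the $\nabla\delta\uu$ or $\Delta\delta\ddd$ factor that gets absorbed by $\eta\widehat{\mathfrak{D}}$. Concretely, the delicate balancing is in the terms $\langle\nabla\ddd_1\odot\nabla\delta\ddd,\nabla\delta\uu\rangle$ and $\langle\delta\uu\cdot\nabla\ddd_2,\Delta\delta\ddd\rangle$, where one has two ``bad'' derivative factors and must split a total of more than one derivative's worth between $\widehat{\mathfrak{D}}$ and a time-integrable weight; this is exactly where Lemma \ref{lemma:product}'s freedom in the indices $s,t$ (both allowed up to, but strictly below, $1$) is essential, and where the choice of the small parameter $\eta$ governs the constant $C_1$. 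Once all fourteen contributions are summed and the (finitely many) $\eta\widehat{\mathfrak{D}}$ terms are collected, one obtains \eqref{prop:T1-ineq} with $f_1$ a finite sum of $L^1(0,T)$ functions built from the norms listed in Definition \ref{def:weak-sol}. I note that at this level $\Phi(t)$ itself suffices (no logarithmic modulus is yet needed); the double-logarithmic $\mu$ only enters for the harder terms $\mathcal{T}_2,\mathcal{T}_3,\mathcal{T}_4$, and since $\Phi(t)\le\mu(\Phi(t))$ the bound \eqref{prop:T1-ineq} is consistent with the statement of Proposition \ref{thm:ineq}.
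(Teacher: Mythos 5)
Your plan follows essentially the same route as the paper's proof: term-by-term estimates of all fourteen contributions via the Sobolev product rule of Lemma \ref{lemma:product}, interpolation between the $L^\infty_tL^2_x$/$L^2_tH^1_x$ (resp. $L^\infty_tH^1_x$/$L^2_tH^2_x$) norms of the weak solutions, absorption of $\nabla\delta\uu$ and $\Delta\delta\ddd$ into $\eta\,\widehat{\mathfrak{D}}$ by Young's inequality, the cancellation $\int_{\TT^2}(\uu_1\cdot\nabla\delta\ddd)\cdot\delta\ddd\,\dd x=0$, and an $L^1(0,T)$ weight $f_1$ built from exactly these norms, with $\Phi$ itself (no logarithm) sufficing. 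One small correction: for $\langle\delta\uu\cdot\nabla\uu_1,\delta\uu\rangle_{H^{-1/2}}$ the bound "$\|\delta\uu\otimes\uu_1\|_{H^{1/2}}\|\nabla\delta\uu\|_{H^{-1/2}}$" is not admissible for the $H^{-1/2}$ inner product (the two regularity indices must sum to $-1$, and $\|\delta\uu\otimes\uu_1\|_{H^{1/2}}$ is anyway out of reach since $\uu_1\in H^1$ only); estimate this term directly as in the paper, $\|\delta\uu\cdot\nabla\uu_1\|_{H^{-1/2}}\|\delta\uu\|_{H^{-1/2}}\lesssim\|\nabla\delta\uu\|_{H^{-1/2}}\|\nabla\uu_1\|_{L^2}\|\delta\uu\|_{H^{-1/2}}$, or treat it after integration by parts exactly like the $\uu_2$-term with indices $(3/4,-1/4)$.
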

\begin{proof}[Proof of Proposition \ref{prop:bound-for-xi1}]
Let $\eta\in (0,1)$. We analyze the right-hand side of \eqref{Xi1a} term by term. For the first term, we invoke Lemma \ref{lemma:product} with $s=1/2$ and $t=0$, which states the continuity of the product between $H^{1/2}\times L^2$ and $H^{-1/2}$, to gather that 
\begin{align*}
	\big|
	\langle \delta  \uu \cdot \nabla \uu_1 , \delta \uu \rangle_{H^{-\frac{1}{2}}}
	\big|
	& \leq 
	\| \delta  \uu \cdot \nabla \uu_1 \|_{H^{-\frac{1}{2}}}
	\| \delta \uu \|_{H^{-\frac{1}{2}}}
	\lesssim
	\| \delta  \uu \|_{H^{\frac{1}{2}}} \| \nabla \uu_1 \|_{L^2}
	\| \delta \uu \|_{H^{-\frac{1}{2}}}\\
	&\lesssim
	\| \nabla \delta  \uu \|_{H^{-\frac{1}{2}}} 
	\| \nabla \uu_1 \|_{L^2}
	\| \delta \uu \|_{H^{-\frac{1}{2}}},
\end{align*}
where we use the relation $ \|\delta \uu\|_{H^{1/2}} \leq C\| \nabla \delta  \uu \|_{H^{-1/2}}$, since $\delta \uu(t)$ has null average in $\mathbb{T}^2$, for almost any $t\in (0,T)$. 
By Young's inequality, we deduce that for any $\eta\in (0,1)$, there exists a positive constant $C$ independent of $\eta$ such that 
\begin{align*}
	\big|
		\langle \,\delta  \uu \cdot \nabla \uu_1 ,\, \delta \uu \rangle_{H^{-\frac{1}{2}}}
	\big|	
	& \leq 
	C\eta^{-1} \| \nabla \uu_1 \|_{L^2}^2 
	\| \delta \uu \|_{H^{-\frac{1}{2}}}^2
	\,+\,
	\eta \nu\| \nabla \delta \uu \|_{H^{-\frac{1}{2}}}^2.
\end{align*}
Next, recalling the divergence free condition on $\uu_2$, we estimate the second term as follows
\begin{equation*}
	\big|
	\langle \, \uu_2 \cdot \nabla \delta  \uu ,\, \delta \uu \rangle_{H^{-\frac{1}{2}}}	
	\big|
	=
	\big|\langle \, \uu_2 \otimes \delta  \uu ,\,\nabla  \delta \uu \rangle_{H^{-\frac{1}{2}}}\big|
	\leq 
	\|  \uu_2 \otimes \delta  \uu\|_{H^{-\frac{1}{2}}}
	\| \nabla  \delta \uu \|_{H^{-\frac{1}{2}}}.
\end{equation*}
Then applying Lemma \ref{lemma:product} with $s= 3/4$ and $t= -1/4$ together with a standard interpolation, we obtain 
\begin{align*}
	\big| 
		\langle \, \uu_2 \cdot \nabla \delta  \uu ,\, \delta \uu \rangle_{H^{-\frac{1}{2}}}
	\big|
	&\lesssim 
	\|  \uu_2 \|_{H^\frac{3}{4}} \| \delta  \uu\|_{H^{-\frac{1}{4}}}
	\| \nabla  \delta \uu \|_{H^{-\frac{1}{2}}}\\
	&\lesssim 
	\|  \uu_2 \|_{L^2}^\frac{1}{4}
	\| \uu_2 \|_{H^1}^\frac{3}{4}	
	\| \delta  \uu\|_{H^{-\frac{1}{2}}}^\frac{3}{4}
	\| \nabla  \delta \uu \|_{H^{-\frac{1}{2}}}^\frac{5}{4}	
    \\
	&\leq 
	C{\eta}^{-\frac53}
	\|  \uu_2 \|_{L^2}^\frac{2}{3} \|  \uu_2 \|_{H^1}^2 
	\| \delta  \uu\|_{H^{-\frac{1}{2}}}^2\,+\,\eta \nu \| \nabla  \delta \uu \|_{H^{-\frac{1}{2}}}^2.
\end{align*}
With a similar procedure one can show that the following inequality holds:
\begin{align*}
	& \big| 
		\langle \nabla \ddd_1\odot \nabla\delta \ddd ,\,\nabla \delta \uu 	\rangle_{H^{-\frac{1}{2}}}\big| 							
	+\big| 
	\langle \nabla \delta \ddd\odot \nabla \ddd_2 ,\,\nabla \delta \uu 	\rangle_{H^{-\frac{1}{2}}}
	\big| \\
	&\quad 	\leq 
	C{\eta}^{-
	\frac53}
	 \big(1+\| \nabla\ddd_1 \|_{L^2}^\frac{2}{3}
	 \|\ddd_1\|_{H^2}^2 +\| \nabla\ddd_2 \|_{L^2}^\frac{2}{3} \| \ddd_2\|_{H^2}^2\big)
	 \|\nabla  \delta  \ddd\|_{H^{-\frac{1}{2}}}^2\,+\,\eta \nu \| \nabla  \delta \uu \|_{H^{-\frac{1}{2}}}^2.
\end{align*}
For the terms involving $\delta \ddd\otimes (A_2\ddd_2)$ and $ (A_2\ddd_2)\otimes \delta \ddd$, we apply Lemma \ref{lemma:product} with $s=0$ and $t=1/2$ to conclude that  
\begin{equation}
\begin{aligned}
	& \Big|
		\langle
			\delta \ddd\otimes(A_2\ddd_2)
			,\,
			\nabla \delta \uu
		\rangle_{H^{-\frac{1}{2}}}
	\Big|
	+ \Big|
		\langle
		(A_2\ddd_2) \otimes	\delta \ddd
			,\,
			\nabla \delta \uu
		\rangle_{H^{-\frac{1}{2}}}
	\Big|\\
	&\quad \lesssim
	\| (A_2\ddd_2)\otimes\delta \ddd \|_{H^{-\frac{1}{2}}}
	\| \nabla \delta \uu             \|_{H^{-\frac{1}{2}}}
	\lesssim
	\| A_2\ddd_2 \|_{L^2}\| \delta \ddd\|_{H^\frac{1}{2}}\| \nabla \delta \uu \|_{H^{-\frac{1}{2}}}\\
	&\quad \leq 
	C \eta^{-1}
	\| A_2\ddd_2 \|_{L^2}^2
	\| \delta \ddd\|_{H^\frac{1}{2}}^2
	+\eta \nu 
	\| \nabla \delta \uu \|_{H^{-\frac{1}{2}}}^2.\notag
\end{aligned}
\end{equation}
Recalling the definition of $\nabla_\ddd W$, we see that 
\begin{align*}
	\Big|
		\langle
			(\Delta \ddd_2 - \nabla_\ddd W(\ddd_2))\otimes\delta \ddd			,\,
			\nabla \delta \uu
		\rangle_{H^{-\frac{1}{2}}}
	\Big|
	&\lesssim
	\| (\Delta \ddd_2 - \nabla_\ddd W(\ddd_2))		\|_{L^2}
	\| 	\delta \ddd			\|_{H^\frac{1}{2}}
	\| \nabla \delta \uu	\|_{H^{-\frac{1}{2}}}\\
	&\lesssim
	\big(
	\| \Delta \ddd_2 \|_{L^2} +
    \| \ddd_2 \|_{L^6}^3 +
	\| \ddd_2 \|_{L^2}
	\big)
	\| 	\delta \ddd			\|_{H^\frac{1}{2}}
	\| \nabla \delta \uu	\|_{H^{-\frac{1}{2}}}\\
	&\leq 
	C\eta^{-1}
	\big(
	\| \ddd_2       \|_{H^1}^6+
	\| \ddd_2 		\|_{H^2}^2
	\big)
	\| 	\delta \ddd			\|_{H^\frac{1}{2}}^2
	+
	\eta\nu 
	\| \nabla \delta \uu	\|_{H^{-\frac{1}{2}}}^2.
\end{align*}
On the other hand, from the identity 
	\begin{align*}		
		&\langle
		\delta \nabla_\ddd W(\ddd)\otimes \ddd_1,\,
		\nabla \delta \uu
		\rangle_{H^{-\frac{1}{2}}}	\\
		&\quad =
		\langle 	(|\ddd_1|^2-1)\delta \ddd\otimes \ddd_1,\,\nabla 
		\delta \uu 	\rangle_{H^{-\frac{1}{2}}}		
		+ 	
		\langle  (\delta\ddd\cdot (\ddd_1+\ddd_2)) \ddd_2\otimes \ddd_1 ,\,
		\nabla \delta \uu 	\rangle_{H^{-\frac{1}{2}}}\\
		&\quad =
		-\langle 	\delta \ddd\otimes \ddd_1,\,\nabla 
		\delta \uu 	\rangle_{H^{-\frac{1}{2}}}	+
		\langle 	|\ddd_1|^2 (\delta \ddd \otimes \ddd_1),\,\nabla  
		\delta \uu 	\rangle_{H^{-\frac{1}{2}}}		
		+ 	
		\langle  (\delta\ddd\cdot (\ddd_1+\ddd_2)) \ddd_2 \otimes \ddd_1 ,\,
		\nabla  \delta \uu 	\rangle_{H^{-\frac{1}{2}}},
	\end{align*}
we deduce that 
\begin{align*}
	\big|\langle \delta \ddd \otimes \ddd_1,\,\nabla 
		\delta \uu 	\rangle_{H^{-\frac{1}{2}}}\big|
	& \lesssim
	\| \delta \ddd 			\|_{H^\frac{1}{2}}
	\| \ddd_1 				\|_{L^2}
	\| \nabla \delta \uu 	\|_{H^{-\frac{1}{2}}} \\
	& \leq 
	C\eta^{-1}
	\| \ddd_1 \|_{L^2}^2
	\| \delta \ddd 			\|_{H^\frac{1}{2}}^2
	+
	\eta \nu 
	\| \nabla \delta \uu 	\|_{H^{-\frac{1}{2}}}^2,
\end{align*}
	\begin{align*}
		\big|
		\langle 	
			|\ddd_1|^2(\delta \ddd \otimes \ddd_1),\,
			\nabla \delta \uu 	\rangle_{H^{-\frac{1}{2}}}
		\big|
	 	& \lesssim
	 	\| |\ddd_1|^2\delta \ddd \otimes \ddd_1 \|_{H^{-\frac{1}{2}}}
	 	\| \nabla \delta \uu \|_{H^{-\frac{1}{2}}}
	 	\lesssim
	  	\| 	|\ddd_1|^2\ddd_1	\|_{L^2} 
	  	\|	\delta \ddd			\|_{H^{\frac{1}{2}}}
	  	\| \nabla \delta \uu	\|_{H^{-\frac{1}{2}}}\\
	  	&\lesssim
	  	\| 	\ddd_1 			\|_{L^6}^3 
	  	\|	\delta \ddd			\|_{H^{\frac{1}{2}}}
	  	\| \nabla \delta \uu	\|_{H^{-\frac{1}{2}}}\\
	  	&\leq 
	  	C\eta^{-1} \| 	\ddd_1 				\|_{H^1}^6 
	  	\|	\delta \ddd			\|_{H^{\frac{1}{2}}}^2 +
	  	\eta\nu \| \nabla \delta \uu	\|_{H^{-\frac{1}{2}}}^2,
	\end{align*}
and in a similar manner, 
\begin{equation*}
	\big|
		\langle (\delta\ddd\cdot (\ddd_1+\ddd_2)) \ddd_2\otimes \ddd_1 ,\, \nabla \delta \uu 	\rangle_{H^{-\frac{1}{2}}}
	\big|
	\leq 
	C\eta^{-1}  
	\big( \|\ddd_1\|_{H^1}^6+\|\ddd_2 \|_{H^1}^6\big) 
	\|\delta \ddd\|_{H^\frac{1}{2}}^2 + \eta  \nu \|\nabla \delta \uu\|_{H^{-\frac{1}{2}}}^2.
\end{equation*}
As a consequence, it follows that 
\begin{equation*}
	  \big|
	  \langle
		\delta \nabla_\ddd W(\ddd)\otimes \ddd_1,\,
		\nabla \delta \uu
		\rangle_{H^{-\frac{1}{2}}}
	  \big|
	  \,\leq\,
	  C\eta^{-1} 
	  \big( 1+ \| \ddd_1 \|_{H^1}^6+\| \ddd_2 \|_{H^1}^6\big) 
	  \|\delta \ddd\|_{H^\frac{1}{2}}^2 + 3 \eta \nu \|\nabla \delta \uu \|_{H^{-\frac{1}{2}}}^2. 
\end{equation*}
We now tackle those terms associated to the director equations. 
It easily follows from the fact $\Div \uu_1=0$ and integration by parts that 
$\int_{\TT^2} (\uu_1\cdot \nabla\delta \ddd) \cdot \delta \ddd 
		\,\dd x=0$. 
Next, we infer that  
	\begin{align*}
		\big| \langle \uu_1\cdot \nabla\delta \ddd ,\,\Delta 
		\delta \ddd 	\rangle_{H^{-\frac{1}{2}}}\big| 	
		& = 
		\big| \langle\Div (\uu_1\otimes \delta \ddd) ,\,
					\Delta \delta \ddd 	
		\rangle_{H^{-\frac{1}{2}}}	\big| 
		\lesssim 
		\| \Div(\uu_1\otimes \delta \ddd)\|_{H^{-\frac{1}{2}}} 
		\| \Delta \delta \ddd \|_{H^{-\frac{1}{2}}}\\
		&\lesssim 
		\| \uu_1\otimes \delta \ddd \|_{H^{\frac{1}{2}}} 
		\| \Delta \delta \ddd \|_{H^{-\frac{1}{2}}},
	\end{align*}
from which, applying Lemma \ref{lemma:product} with $s=t=3/4$, we get
	\begin{align*}
		\big|\langle \uu_1\cdot \nabla\delta \ddd ,\,\Delta \delta \ddd 	\rangle_{H^{-\frac{1}{2}}}	\big|
		&\lesssim
		\| \uu_1 				\|_{H^{\frac{3}{4}}} 
		\| \delta \ddd	 		\|_{H^\frac{3}{4}}
		\| \Delta \delta \ddd 	\|_{H^{-\frac{1}{2}}}\\
		&\lesssim
		\| \uu_1 \|_{L^2}^\frac{1}{4}\|\uu_1 \|_{H^1}^\frac{3}{4} 
		\| \delta \ddd\|_{H^{\frac{1}{2}}}^\frac{3}{4}
		\| \delta \ddd\|_{H^{\frac{3}{2}}}^\frac{1}{4}
		\| \Delta \delta \ddd 	\|_{H^{-\frac{1}{2}}}\\
		&\leq 
        C{\eta}^{-\frac53}
		\| \uu_1 \|_{L^2}^\frac{2}{3}
		\| \uu_1 	\|_{H^1}^2
		\| 	\delta \ddd		\|_{H^{\frac{1}{2}}}^2
		+
		C\eta^{-1}\| \uu_1 \|_{L^2}^\frac{1}{2}
		\| \uu_1 	\|_{H^1}^\frac{3}{2}
		\| 	\delta \ddd		\|_{H^{\frac{1}{2}}}^2
		+
		\eta
		\| \Delta \delta \ddd 	\|_{H^{-\frac{1}{2}}}^2\\
		&\leq C{\eta}^{-\frac53}\big(1+
		\| \uu_1 \|_{L^2}^\frac{2}{3}
		\| \uu_1 	\|_{H^1}^2\big)
		\| 	\delta \ddd		\|_{H^{\frac{1}{2}}}^2
		+
		\eta
		\| \Delta \delta \ddd 	\|_{H^{-\frac{1}{2}}}^2.
	\end{align*}
In a similar manner, we have 
	\begin{align*}
		\Big|
			\int_{\TT^2} (\delta \uu \cdot \nabla  \ddd_2) \cdot \delta \ddd 
		\,\dd x\Big|
		&\lesssim \|\delta \uu \cdot \nabla  \ddd_2\|_{H^{-\frac12}}\|\delta \ddd\|_{H^\frac12}
		\lesssim \|\delta \uu\|_{H^\frac12}\|\nabla  \ddd_2\|_{L^2}\|\delta \ddd\|_{H^\frac12}
		\\
		&\leq C\|\delta \uu\|_{H^{-\frac12}}^2+
	C(1+\eta^{-1})
		\| \nabla  \ddd_2 \|_{L^2}^2
		\| \delta \ddd \|_{H^\frac{1}{2}}^2+
		\eta \nu 
		\| \nabla \delta \uu \|_{H^{-\frac{1}{2}}}^2,
	\end{align*}
and
\begin{align*}
	\big|
		\langle \delta \uu\cdot \nabla  \ddd_2 ,\,\Delta 
		\delta \ddd 	\rangle_{H^{-\frac{1}{2}}}	
	\big|
	&\lesssim 
	    \|\delta \uu\cdot  \nabla \ddd_2 \|_{H^{-\frac12}}
	    \|\Delta \delta \ddd \|_{H^{-\frac12}}
		\lesssim 
		\|\delta \uu\|_{H^{-\frac{1}{4}}}
		\| \nabla \ddd_2\|_{H^{\frac 34}}
		\|\Delta \delta \ddd \|_{H^{-\frac12}}
		\\
	&
	\lesssim 
		\|\delta \uu\|_{H^{-\frac{1}{2}}}^\frac{3}{4}
		\|\delta \uu\|_{H^{\frac{1}{2}}}^\frac{1}{4}
		\| \nabla \ddd_2\|_{L^2}^\frac{1}{4}
		\| \nabla \ddd_2\|_{H^1}^\frac{3}{4}
		\|\Delta \delta \ddd \|_{H^{-\frac12}}\\
	&\leq 
			C{\eta}^{-\frac53}\big(1+
	    \| \ddd_2 \|_{H^1}^\frac{2}{3}
		\| \ddd_2 \|_{H^2}^2\big)
		\| \delta \uu \|_{H^{-\frac{1}{2}}}^2
		+
		\eta 
		\| \Delta \delta \ddd \|_{H^{-\frac{1}{2}}}^2
		+
		\eta \nu 
		\| \nabla \delta \uu \|_{H^{-\frac{1}{2}}}^2.
\end{align*}
Concerning the terms involving $\delta \nabla_\ddd W(\ddd)$, we see that 
\begin{align*}
	\int_{\TT^2}\delta \nabla_\ddd W(\ddd)\cdot \delta \ddd\,\dd x
	=
	\int_{\TT^2}(|\ddd_1|^2-1)|\delta \ddd|^2
	\,\dd x +
	\int_{\TT^2} (\delta\ddd\cdot (\ddd_1+\ddd_2))(\ddd_2\cdot \delta \ddd)\,\dd x,
\end{align*}
where 
\begin{align*}
	\Big| 
		\int_{\TT^2}(|\ddd_1|^2-1)|\delta \ddd|^2\,\dd x
	\Big|
	\leq 
	\Big(\int_{\TT^2}(|\ddd_1|^2-1)^2\,\dd x\Big)^\frac{1}{2}
	\| \delta \ddd \|_{L^4}^2
	\lesssim
	\big(1+\|\ddd_1\|_{H^1}^2 \big) \| \delta \ddd \|_{H^\frac{1}{2}}^2,
\end{align*}
and similarly
\begin{align*}
	\Big| 
		\int_{\TT^2} (\delta\ddd\cdot (\ddd_1+\ddd_2))(\ddd_2\cdot \delta \ddd)\,\dd x
	\Big|
	\lesssim 
	\big( 
		\| \ddd_1 \|_{L^4}^2 + \| \ddd_2 \|_{L^4}^2
	\big)
	\| \delta \ddd \|_{L^4}^2
	\lesssim 
	\big( 
		\| \ddd_1 \|_{H^1}^2 + \| \ddd_2 \|_{H^1}^2
	\big)
	\| \delta \ddd \|_{H^\frac{1}{2}}^2.
\end{align*}
It remains to control the last term of  $\mathcal{T}_1$ in \eqref{Xi1a}, that is
	\begin{align*}		
		\langle \delta \nabla_\ddd W(\ddd) ,\,\Delta \delta \ddd 	\rangle_{H^{-\frac{1}{2}}}		
		&=
		\langle 	(|\ddd_1|^2-1)\delta \ddd,\,\Delta 
		\delta \ddd 	\rangle_{H^{-\frac{1}{2}}}		
		+ 	
		\langle (\delta\ddd\cdot (\ddd_1+\ddd_2)) \ddd_2 ,\,
		\Delta \delta \ddd 	\rangle_{H^{-\frac{1}{2}}}\\
		&=
		 \| \nabla \delta \ddd \|_{H^{-\frac{1}{2}}}^2+
		\langle 	|\ddd_1|^2\delta \ddd,\,\Delta 
		\delta \ddd 	\rangle_{H^{-\frac{1}{2}}}		
		+ 	
		\langle  (\delta\ddd\cdot (\ddd_1+\ddd_2)) \ddd_2,\,
		\Delta \delta \ddd 	\rangle_{H^{-\frac{1}{2}}}.	
	\end{align*}
We notice that
	\begin{align*}
		|
		\langle 	
			|\ddd_1|^2\delta \ddd,\,
			\Delta \delta \ddd 	\rangle_{H^{-\frac{1}{2}}}
		|
	 	& \lesssim
	 	\| |\ddd_1|^2\delta \ddd \|_{H^{-\frac{1}{2}}}
	 	\| \Delta \delta \ddd \|_{H^{-\frac{1}{2}}}
	 	 \lesssim
	  	\| 	\ddd_1\|_{L^4}^2 
	  	\|	\delta \ddd			\|_{H^{\frac{1}{2}}}
	  	\| \Delta \delta \ddd 	\|_{H^{-\frac{1}{2}}}\\
	  	&\lesssim
	  	\| 	\ddd_1 \|_{H^1}^2 
	  	\|	\delta \ddd			\|_{H^{\frac{1}{2}}}
	  	\| \Delta \delta \ddd 	\|_{H^{-\frac{1}{2}}}\\
	  	&\leq C\eta^{-1}  
	  \| \ddd_1 \|_{H^1}^4 
	  \|\delta \ddd\|_{H^\frac{1}{2}}^2 + \eta  \|\Delta \delta \ddd\|_{H^{-\frac{1}{2}}}^2,
	\end{align*}
and with a similar procedure, we get
\begin{equation*}
	|\langle  (\delta\ddd\cdot (\ddd_1+\ddd_2)) \ddd_2,\,\Delta \delta \ddd 	\rangle_{H^{-\frac{1}{2}}}
	| \leq 
	C\eta^{-1}    
	\big( \|\ddd_1\|_{H^1}^4+\|\ddd_2\|_{H^1}^4\big) 
	\|\delta \ddd\|_{H^\frac{1}{2}}^2 + \eta  \|\Delta \delta \ddd\|_{H^{-\frac{1}{2}}}^2.
\end{equation*}
Collecting the above estimates together, we can choose the function 
\begin{align*}
f_1(t)&= (1 + \| \uu_1 \|_{L^2}+ \|  \uu_2 \|_{L^2})(\| \uu_1 \|_{H^1}^2 +\| \uu_2 \|_{H^1}^2) + (1+ \|\ddd_1 \|_{H^1} + \|\ddd_2 \|_{H^1})(\|\ddd_1\|_{H^2}^2 
	 +\|\ddd_2\|_{H^2}^2)
		\\
	 & \quad   + \| \ddd_1 \|_{H^1}^6 +	\| \ddd_2 \|_{H^1}^6 + \|A_2\ddd_2\|_{L^2}^2+1,
\end{align*}
which satisfies $f_1(t)\in L^1(0,T)$ due to the definition of $(\uu_i, \ddd_i)$, $i=1,2$. Using Young's inequality, it is straightforward to check that the required inequality \eqref{prop:T1-ineq} holds. The proof of Proposition \ref{prop:bound-for-xi1} is complete. 
\end{proof}

\section{\bf Estimate of $\mathcal{T}_2$}\label{esti-T_2}
\setcounter{equation}{0}

\noindent 
In this section, we analyze the term $\mathcal{T}_2$ given by \eqref{Xi2}, that is  
\begin{equation}\label{Xi2b}
	\begin{aligned}
	\mathcal{T}_2 = \  
	& \frac32  \int_{\TT^2}
			((\nabla \delta \uu)\ddd_1)\cdot \delta\ddd  \,\dd x
			- 
			\frac32 \langle (\nabla \delta \uu)\ddd_1,\, \Delta \delta \ddd\rangle_{H^{-\frac{1}{2}}} 
			+ 
			\frac32 \int_{\TT^2}( (\nabla  \uu_2)\delta \ddd) \cdot \delta \ddd\,\dd x
			-
			\frac32 \langle  (\nabla  \uu_2)\delta \ddd,\, \Delta \delta \ddd\rangle_{H^{-\frac{1}{2}}}  \\
	& +\frac12  \int_{\TT^2}
		((\nabla^{\mathrm{tr}} \delta \uu)\ddd_1)\cdot \delta\ddd  \,\dd x
			- 
			\frac12 \langle (\nabla^{\mathrm{tr}} \delta \uu)\ddd_1,\, \Delta \delta \ddd\rangle_{H^{-\frac{1}{2}}} 
			+ 
			\frac12 \int_{\TT^2}( (\nabla^{\mathrm{tr}}  \uu_2)\delta \ddd) \cdot \delta \ddd\,\dd x
			\\
			& - 
			\frac12 \langle  (\nabla^{\mathrm{tr}}  \uu_2)\delta \ddd,\, \Delta \delta \ddd\rangle_{H^{-\frac{1}{2}}}
			+
			\langle  \Delta \delta \ddd \otimes \ddd_1,\,\nabla\delta \uu 	\rangle_{H^{-\frac{1}{2}}}.
	\end{aligned}
\end{equation}
Comparing with the previous section, the estimate for $\mathcal{T}_2$ turns out to be more involved. More precisely, we have 
\begin{prop}
\label{prop:T2}
There exists a positive function $ f_2 \in L^1(0,T)$ such that 
for any $\eta\in (0,1)$, the following inequality holds 
 \begin{equation}\label{prop:T2-ineq}
	|\mathcal{T}_2(t)| \leq 
	C_2  f_2(t) \Phi(t) +  5\eta \widehat{\mathfrak{D}}(t)+  \sum_{q=-1}^\infty 2^{-q} \int_{\TT^2}|(\Dd_q \delta A)\sdm{q}\ddd_1|^2\,\dd x
		+\frac14\|\Delta \delta \ddd\|_{H^{-\frac12}}^2,
 \end{equation}
 for almost all $t\in (0,T)$, where $C_2>0$ is a constant depending on $\eta^{-1}$.
\end{prop}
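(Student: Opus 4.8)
The plan is to decompose the ten terms in $\mathcal{T}_2$ into two groups: the six integral terms over $\TT^2$ (which are comparatively harmless) and the four terms involving $\langle\cdot,\Delta\delta\ddd\rangle_{H^{-1/2}}$ together with $\langle\Delta\delta\ddd\otimes\ddd_1,\nabla\delta\uu\rangle_{H^{-1/2}}$, which carry the full difficulty. For the integral terms such as $\int_{\TT^2}((\nabla\delta\uu)\ddd_1)\cdot\delta\ddd\,\dd x$, I would use Lemma \ref{lemma:product}: write the integrand as a pairing in $H^{1/2}\times H^{-1/2}$, bound $\|(\nabla\delta\uu)\ddd_1\|_{H^{-1/2}}\lesssim\|\nabla\delta\uu\|_{H^{-1/2}}\|\ddd_1\|_{H^{s}}$ for a suitable $s<1$ (using that $\ddd_1\in H^1$ gives room), pay a factor of $\|\delta\ddd\|_{H^{1/2}}$, and absorb via Young's inequality into $\eta\nu\|\nabla\delta\uu\|_{H^{-1/2}}^2$ plus a term $f_2(t)\Phi(t)$, where $f_2$ collects norms like $\|\ddd_i\|_{H^1}^2$, $\|\uu_i\|_{H^1}^2$ which are in $L^1(0,T)$ by Definition \ref{def:weak-sol}. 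The terms with $(\nabla\uu_2)\delta\ddd$ are handled the same way with $\uu_2$ playing the role previously played by $\delta\uu$.

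\textbf{The delicate cancellation.} The heart of the matter is the pair
$-\tfrac32\langle(\nabla\delta\uu)\ddd_1,\Delta\delta\ddd\rangle_{H^{-1/2}} -\tfrac12\langle(\nabla^{\mathrm{tr}}\delta\uu)\ddd_1,\Delta\delta\ddd\rangle_{H^{-1/2}} +\langle\Delta\delta\ddd\otimes\ddd_1,\nabla\delta\uu\rangle_{H^{-1/2}}$.
I would first symmetrize: $\tfrac32\nabla\delta\uu+\tfrac12\nabla^{\mathrm{tr}}\delta\uu = \delta A + (\nabla\delta\uu + \tfrac12(\nabla\delta\uu-\nabla^{\mathrm{tr}}\delta\uu))$... more cleanly, write $\tfrac32\nabla\delta\uu+\tfrac12\nabla^{\mathrm{tr}}\delta\uu = 2\,\delta A - \tfrac12(\nabla^{\mathrm{tr}}\delta\uu-\nabla\delta\uu)= 2\delta A + \delta\omega$ — wait, $\delta A+\delta\omega=\nabla\delta\uu$ and $\delta A-\delta\omega=\nabla^{\mathrm{tr}}\delta\uu$, so $\tfrac32\nabla\delta\uu+\tfrac12\nabla^{\mathrm{tr}}\delta\uu = 2\delta A+\delta\omega$. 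Thus these three terms combine, after recognizing $\langle(\nabla\delta\uu)\ddd_1,\Delta\delta\ddd\rangle = \langle\nabla\delta\uu,\Delta\delta\ddd\otimes\ddd_1\rangle$ in the appropriate tensor pairing, into $-\langle(2\delta A+\delta\omega)\ddd_1,\Delta\delta\ddd\rangle_{H^{-1/2}} +\langle\Delta\delta\ddd\otimes\ddd_1,\nabla\delta\uu\rangle_{H^{-1/2}}$, and the $\delta A$-part of the first pairing is precisely what will (after the Littlewood-Paley decomposition) produce the dissipative term $\sum_q 2^{-q}\int|(\Dd_q\delta A)S_{q-1}\ddd_1|^2$ appearing on the right-hand side of \eqref{prop:T2-ineq}. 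The $\delta\omega$ contribution is skew and should be paired against $\Delta\delta\ddd$ directly, estimated by $\|\delta\omega\|_{H^{-1/2}}\|\ddd_1\|_{\ast}\|\Delta\delta\ddd\|_{H^{-1/2}}$ with a product rule, absorbed into $\eta\widehat{\mathfrak D}$.

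\textbf{Littlewood-Paley realization.} Concretely I would expand $\langle(\delta A)\ddd_1,\Delta\delta\ddd\rangle_{H^{-1/2}} = \sum_{q\geq-1}2^{-q}\int_{\TT^2}\Dd_q((\delta A)\ddd_1)\cdot\Dd_q\Delta\delta\ddd\,\dd x$ and apply Bony's decomposition \eqref{bony-decomp} to $\Dd_q((\delta A)\ddd_1)$. The "good" piece $S_{q-1}\ddd_1\,\Dd_q\delta A$ pairs against $\Dd_q\Delta\delta\ddd$; after integration by parts in the $\Delta$ (moving it onto $S_{q-1}\ddd_1\Dd_q\delta A$) and using Bernstein, one extracts a coercive square $\sum_q 2^{-q}\int|(\Dd_q\delta A)S_{q-1}\ddd_1|^2$ with a favorable sign, which is exactly why it appears as a retained term in \eqref{prop:T2-ineq} rather than being absorbed — this mirrors the "hidden dissipation" emphasized in Remark \ref{comm_proof}. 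The remaining Bony pieces — the commutator $[\Dd_q,S_j\ddd_1]\Dd_j\delta A$ (estimated by Lemma \ref{prop:comm-est} gaining a $2^{-q}$), the telescoping term $(S_j-S_q)\ddd_1\Dd_q\Dd_j\delta A$, and the high-high paraproduct/remainder $\sum_{j\geq q-5}\Dd_q(\Dd_j\ddd_1 S_{j+2}\delta A)$ — are each bounded in $\ell^2$ over $q$ using the Bernstein inequalities and Lemma \ref{lemma:product}, producing contributions of the form $C_\eta f_2(t)\Phi(t)+\eta\widehat{\mathfrak D}(t)$. The main obstacle will be bookkeeping the interplay between the two derivative losses (one from $\Delta\delta\ddd$, one potentially from a high-frequency factor of $\ddd_1$ which only lives in $H^1$, i.e. $\Delta\ddd_1\in L^2$): one must carefully balance the frequency weights so that the $\|\Delta\delta\ddd\|_{H^{-1/2}}^2$ term appears with a coefficient exactly $1/4$ as stated (this is the threshold that, combined with the $\tfrac12\|\Delta\delta\ddd\|^2$ on the left of \eqref{energy-identity}, still leaves a positive remainder), and so that $\Phi(t)$ — and not a higher power of $\Phi$ — multiplies the $L^1$ weight. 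A standard interpolation $\|\delta\ddd\|_{H^{3/4}}\lesssim\|\delta\ddd\|_{H^{1/2}}^{3/4}\|\delta\ddd\|_{H^{3/2}}^{1/4}$ together with Young's inequality with exponents $(8/7,8)$ handles the sublinear loss, as in the treatment of $\mathcal{T}_1$.
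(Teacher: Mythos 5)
Your overall architecture is close to the paper's (isolate the six harmless terms, symmetrize the remaining three, run Bony's decomposition \eqref{bony-decomp}, and retain exactly $\sum_q 2^{-q}\int|(\Dd_q\delta A)\sdm{q}\ddd_1|^2+\tfrac14\|\Delta\delta\ddd\|_{H^{-1/2}}^2$ from the diagonal $\delta A$ piece via Cauchy--Schwarz), but there is a genuine gap at the crucial step. Your reduction rests on the identity $\langle(\nabla\delta\uu)\ddd_1,\Delta\delta\ddd\rangle_{H^{-1/2}}=\langle\Delta\delta\ddd\otimes\ddd_1,\nabla\delta\uu\rangle_{H^{-1/2}}$, which is only true for the $L^2$ pairing: with the Littlewood--Paley inner product of Remark \ref{equinorm}, the blocks $\Dd_q$ hit the two products $(\nabla\delta\uu)\ddd_1$ and $\Delta\delta\ddd\otimes\ddd_1$ in different groupings, so the two pairings differ by commutator/paraproduct remainders. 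The paper therefore decomposes \emph{both} pairings with \eqref{bony-decomp} and uses only the cancellation of the diagonal pieces ($\I_3+\J_3=0$), after which six off-diagonal terms ($\I_1,\I_2,\I_4,\J_1,\J_2,\J_4$) must still be estimated (they are, because each gains a derivative). Without this, your leftover is the full $\delta\omega$ (and in fact full $\nabla\delta\uu$) contribution, and your proposed direct bound $\|(\delta\omega)\ddd_1\|_{H^{-1/2}}\lesssim\|\delta\omega\|_{H^{-1/2}}\|\ddd_1\|_{\ast}$ is not available: there is no product law $H^{-1/2}\times H^1\to H^{-1/2}$ in two dimensions (this is precisely the missing $L^\infty$ control of $\ddd_1$ that drives the whole paper), and any softened version ($\|\delta\omega\|_{H^{-1/4}}\|\ddd_1\|_{H^{3/4}}$, or $\|\delta\omega\|_{H^{-1/2}}\|\ddd_1\|_{H^{1+\epsilon}}$) produces after Young either a coefficient on $\|\nabla\delta\uu\|_{H^{-1/2}}^2$ that is merely $L^1$ or $L^2$ in time (hence not absorbable by $\eta\nu\|\nabla\delta\uu\|_{H^{-1/2}}^2$), or a term without the factor $\Phi(t)$, which destroys the Osgood argument. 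The paper explicitly credits $\I_3+\J_3=0$ with avoiding exactly these ``bad'' terms depending on $\sdm{q}\ddd_1$ and on $\nabla\delta\uu$, $\Delta\delta\ddd$.

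Two further points of bookkeeping. First, your algebra $\tfrac32\nabla\delta\uu+\tfrac12\nabla^{\mathrm{tr}}\delta\uu=2\delta A+\delta\omega$ is correct, but if the gradient parts do not cancel exactly against $\langle\Delta\delta\ddd\otimes\ddd_1,\nabla\delta\uu\rangle_{H^{-1/2}}$, you are left with a factor $2$ on the $\delta A$ pairing; Cauchy--Schwarz then yields $2\sum_q2^{-q}\int|(\Dd_q\delta A)\sdm{q}\ddd_1|^2+\tfrac12\|\Delta\delta\ddd\|_{H^{-1/2}}^2$, which exceeds the budget: the hidden dissipation revealed in the $\mathcal{T}_3$ estimate provides exactly $2\sum_q2^{-q}\int|(\Dd_q\delta A)\sdm{q}\ddd_1|^2$ in $\mathfrak{D}(t)$, so the coefficients $1$ and $\tfrac14$ in \eqref{prop:T2-ineq} are not cosmetic but needed for the final absorption in \eqref{double-log}. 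Second, no ``favorable sign'' is extracted inside $\mathcal{T}_2$: the retained square enters as a cost (via $|\mathcal{K}_3|\le ab\le a^2+\tfrac14 b^2$), and the compensating negative square only appears later, from the symmetric terms $\I\I\I_3+\I\I\I_3'$ in the treatment of $\mathcal{T}_3$.
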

\begin{proof}[Proof of Proposition \ref{prop:T2}]
	Let $\eta\in (0,1)$.  We begin with the following estimates 
	\begin{equation}\label{prop:T2-ineq1}
	\begin{aligned}
		&  \left| \frac32 \int_{\TT^2}( (\nabla  \delta\uu)\ddd_1)\cdot \delta \ddd\,\dd x\right|
		+ 
		\left| \frac12
		\int_{\TT^2}
		((\nabla^{\mathrm{tr}} \delta \uu)\ddd_1)\cdot \delta\ddd  \,\dd x
		\right|\\
		&\qquad \lesssim 
		\|\ddd_1\otimes \delta \ddd\|_{H^\frac12} (\|\nabla \delta\uu\|_{H^{-\frac12}} + 
		\|\nabla^{\mathrm{tr}} \delta\uu\|_{H^{-\frac12}})\\
		&\qquad \lesssim
		\| \ddd_1 \|_{H^\frac{3}{4}}
		\| \delta \ddd \|_{H^\frac{3}{4}}
		\| \nabla \delta \uu \|_{H^{-\frac{1}{2}}}\\
		&\qquad \lesssim
		\| \ddd_1 \|_{H^1}
		\| \delta \ddd \|_{H^\frac{1}{2}}^\frac{3}{4}
		\| \delta \ddd \|_{H^\frac{3}{2}}^\frac{1}{4}
		\| \nabla \delta \uu \|_{H^{-\frac{1}{2}}}\\
		&\qquad \lesssim
		\| \ddd_1 \|_{H^1}
		\| \delta \ddd \|_{H^\frac{1}{2}}^\frac{3}{4}
		\big( 
			\| \delta \ddd \|_{H^\frac{1}{2}}+  
			\| \Delta \delta \ddd \|_{H^{-\frac{1}{2}}}
		\big)^\frac{1}{4}
		\| \nabla \delta \uu \|_{H^{-\frac{1}{2}}}\\
		&\qquad \lesssim
		\| \ddd_1 \|_{H^1}
		\| \delta \ddd \|_{H^\frac{1}{2}}
		\| \nabla \delta \uu \|_{H^{-\frac{1}{2}}}+
		\| \ddd_1 \|_{H^1}
		\| \delta \ddd \|_{H^\frac{1}{2}}^\frac{3}{4}
		\| \Delta \delta \ddd \|_{H^{-\frac{1}{2}}}^\frac{1}{4}		
		\| \nabla \delta \uu \|_{H^{-\frac{1}{2}}}\\
		&\qquad \lesssim 
		\big(\eta^{-1}\| \ddd_1 \|_{H^1}^2+ \eta^{-\frac53}\| \ddd_1 \|_{H^1}^\frac{8}{3}\big)
		\| \delta \ddd \|_{H^\frac{1}{2}}^2 + \eta \| \Delta \delta \ddd \|_{H^{-\frac{1}{2}}}^2+ 
		\eta \nu \| \nabla \delta \uu \|_{H^{-\frac{1}{2}}}^2\\
		&\qquad \lesssim \eta^{-\frac53} \big(1+\| \ddd_1 \|_{H^1}^\frac{8}{3}\big)
		\| \delta \ddd \|_{H^\frac{1}{2}}^2 + \eta \| \Delta \delta \ddd \|_{H^{-\frac{1}{2}}}^2+ 
		\eta \nu \| \nabla \delta \uu \|_{H^{-\frac{1}{2}}}^2, 
	\end{aligned}
	\end{equation}
	\begin{equation}\label{prop:T2-ineq1a}
	\begin{aligned}
		&\left|	\frac32 \int_{\TT^2}((\nabla \uu_2) \delta \ddd)\cdot \delta\ddd\,\dd x
		\right|
		+	 \left| \frac12 \int_{\TT^2}( (\nabla^{\mathrm{tr}}  \uu_2)\delta \ddd) \cdot \delta \ddd\,\dd x \right|\\
		&\qquad 
		\lesssim \| \delta \ddd  \|_{L^4}^2 (\| \nabla \uu_2 \|_{L^2} + \| \nabla^{\mathrm{tr}} \uu_2 \|_{L^2})
		\lesssim \| \nabla \uu_2 \|_{L^2} \| \delta \ddd \|_{H^\frac{1}{2}}^2,
	\end{aligned}
	\end{equation}
and
	\begin{equation}\label{prop:T2-ineq2}
	\begin{aligned}
	\left|	\frac32	
		\langle (\nabla \uu_2) \delta \ddd ,\,\Dd \delta \ddd \rangle_{H^{-\frac{1}{2}}}
	\right| 
	+
	 \left| \frac12 \langle  (\nabla^{\mathrm{tr}}  \uu_2)\delta \ddd,\, \Delta \delta \ddd\rangle_{H^{-\frac{1}{2}}}\right|
	&\lesssim 
	(\| \nabla \uu_2	\|_{L^2}+\| \nabla^{\mathrm{tr}} \uu_2 \|_{L^2})
	\| \delta \ddd 	\|_{H^\frac{1}{2}}
	\| \Delta \delta \ddd \|_{H^{-\frac{1}{2}}}\\
	&\leq 
	C{\eta}^{-1}
	\| \nabla \uu_2 \|_{L^2}^2 
	\| \delta \ddd \|_{H^\frac{1}{2}}^2
	+\eta 
	\| \Delta \delta \ddd \|_{H^{-\frac{1}{2}}}^2,
	\end{aligned}
	\end{equation}	
	where we have used Lemma \ref{lemma:product}, the Sobolev embedding theorem $H^{1/2}(\TT^2)\hookrightarrow L^4(\TT^2)$ and Young's inequality. 
	
	In order to complete the proof of Proposition \ref{prop:T2}, 
	we shall show that the remaining terms in \eqref{Xi2b} satisfy the following inequality:
	\begin{equation}\label{prop:T2-ineq3}
	\begin{aligned}
		&\left| -\frac32\langle  (\nabla \delta \uu)\ddd_1,\,\Delta \delta \ddd 	\rangle_{H^{-\frac{1}{2}}}
		- \frac12 \langle (\nabla^{\mathrm{tr}} \delta \uu)\ddd_1,\, \Delta \delta \ddd\rangle_{H^{-\frac{1}{2}}} 
			  +\langle  \Delta \delta \ddd \otimes \ddd_1,\,\nabla\delta \uu \rangle_{H^{-\frac{1}{2}}}
		\right|  \\
		&\qquad 
		\leq 
		C_\eta
		\big(1+\|\ddd_1	\|_{H^1}^2 \big)
		\|	\ddd_1	\|_{H^2}^2 
		\| \delta \uu \|_{H^{-\frac{1}{2}}}^2 
		+ 3
		\eta \nu \| \nabla \delta \uu \|_{H^{-\frac{1}{2}}}^2 + 
		3 \eta\| \Delta \delta \ddd \|_{H^{-\frac{1}{2}}}^2\\
		&\qquad \quad  +  \sum_{q=-1}^\infty 2^{-q} \int_{\TT^2}|(\Dd_q \delta A)\sdm{q}\ddd_1|^2\,\dd x
		+\frac14 \|\Delta \delta \ddd\|_{H^{-\frac12}}^2,
	\end{aligned}
	\end{equation}
	for some constant $C_\eta>0$ depending on $\eta$. If this is the case, then the conclusion of Proposition \ref{prop:T2} is a direct consequence of \eqref{prop:T2-ineq1}--\eqref{prop:T2-ineq3} with the following choice 
	\begin{equation*}
	f_2(t):= 1 + \| \ddd_1(t) \|_{H^1}^3 + \| \nabla \uu_2 (t) 
	\|_{L^2}^2 
	+\big(1+\| \ddd_1(t)\|_{H^1}^2\big)	\| \ddd_1(t)\|_{H^2}^2 
	\in L^1(0,T).
	\end{equation*}
	To obtain \eqref{prop:T2-ineq3}, we make a reformulation of the three terms above as follows 
	\begin{align*}
	 &-\frac32\langle  (\nabla \delta \uu)\ddd_1,\,\Delta \delta \ddd 	\rangle_{H^{-\frac{1}{2}}}
		- \frac12 \langle (\nabla^{\mathrm{tr}} \delta \uu)\ddd_1,\, \Delta \delta \ddd\rangle_{H^{-\frac{1}{2}}} 
			  +\langle  \Delta \delta \ddd \otimes \ddd_1,\,\nabla\delta \uu \rangle_{H^{-\frac{1}{2}}}\\
	 &\quad =  -\langle  (\nabla \delta \uu)\ddd_1,\,\Delta \delta \ddd 	\rangle_{H^{-\frac{1}{2}}}
		 +\langle  \Delta \delta \ddd \otimes \ddd_1,\,\nabla\delta \uu \rangle_{H^{-\frac{1}{2}}} 
		 - \langle ( \delta A)\ddd_1,\, \Delta \delta \ddd\rangle_{H^{-\frac{1}{2}}}. 
	\end{align*}
    Then we perform the estimates by using the specific type of Bony's decomposition given in \eqref{bony-decomp}:
	\begin{equation}\label{ineqT2-defIandJ}
	\begin{aligned}
		- \langle (\nabla \delta \uu)\ddd_1,\,\Delta \delta \ddd 	\rangle_{H^{-\frac{1}{2}}}
		&= \I_1\,+ \I_2 \,+ \I_3 \,+ \I_4, \\
		\langle  \Delta \delta \ddd \otimes \ddd_1,\,\nabla\delta \uu 	\rangle_{H^{-\frac{1}{2}}}=
		\langle  \ddd_1\otimes \Delta \delta \ddd,\,\nabla^{\mathrm{tr}} \delta \uu 	\rangle_{H^{-\frac{1}{2}}}
		&= 
		\J_1 + \J_2 + \J_3 + \J_4,\\
		- \langle ( \delta A)\ddd_1,\, \Delta \delta \ddd\rangle_{H^{-\frac{1}{2}}}
		&=
		\mathcal{K}_1+\mathcal{K}_2+\mathcal{K}_3+\mathcal{K}_4,
	\end{aligned}
	\end{equation}
	where
	\begin{align*}	
		\I_1&:= -\sum_{q=-1}^\infty\sum_{|j-q|\leq 5} 2^{-q} 
		\int_{\TT^2} ([ \Dd_q,\,\sdm{j}\ddd_1]\cdot \Dd_j \nabla \delta \uu)\cdot 
		\Dd_q\Delta \delta \ddd\,\dd x,\\
		\I_2&:=-\sum_{q=-1}^\infty \sum_{|j-q|\leq 5} 2^{-q} 
		\int_{\TT^2}(  (\Dd_q\Dd_j  \nabla \delta \uu)  (\sdm{j}-\sdm{q})\ddd_1 )
		\cdot \Dd_q\Delta \delta \ddd\,\dd x,\\
		\I_3&:=-\sum_{q=-1}^\infty 2^{-q} \int_{\TT^2}((\Dd_q \nabla \delta \uu)\sdm{q}\ddd_1)\cdot 
		\Dd_q\Delta \delta \ddd\,\dd x,\\
		\I_4 &:=
		 -\sum_{q=-1}^\infty \sum_{j = q - 5}^\infty 2^{-q} 
		 \int_{\TT^2}\Dd_q( (\Sd_{j+2} \nabla \delta \uu)\Dd_j \ddd_1)
		 \cdot \Dd_q\Delta \delta \ddd\,\dd x,
	\end{align*}
	\begin{align*}	
		\J_1&:= \sum_{q=-1}^\infty\sum_{|j-q|\leq 5} 2^{-q} 
		\int_{\TT^2} ([ \Dd_q,\,\sdm{j}\ddd_1]\otimes \Dd_j \Delta \delta \ddd): 
		\Dd_q\nabla^{\mathrm{tr}} \delta \uu\,\dd x,\\
		\J_2&:=\sum_{q=-1}^\infty \sum_{|j-q|\leq 5} 2^{-q} 
		\int_{\TT^2}((\sdm{j}-\sdm{q})\ddd_1\otimes \Dd_q\Dd_j  \Delta \delta \ddd )
		: \Dd_q\nabla^{\mathrm{tr}} \delta \uu\,\dd x,\\
		\J_3&:=\sum_{q=-1}^\infty 2^{-q} \int_{\TT^2}(\sdm{q}\ddd_1\otimes \Dd_q \Delta \delta \ddd): 
		\Dd_q\nabla^{\mathrm{tr}} \delta \uu\,\dd x,\\
		\J_4 &:=
		 -\sum_{q=-1}^\infty \sum_{j = q - 5}^\infty 2^{-q} 
		 \int_{\TT^2}\Dd_q( \Dd_j \ddd_1\otimes \Sd_{j+2} \Delta \delta \ddd )
		 \cdot \Dd_q\nabla^{\mathrm{tr}} \delta \uu\,\dd x,
	\end{align*}
	\begin{align*}	
		\mathcal{K}_1&:= -\sum_{q=-1}^\infty\sum_{|j-q|\leq 5} 2^{-q} 
		\int_{\TT^2} ([ \Dd_q,\,\sdm{j}\ddd_1]\cdot \Dd_j \delta A)\cdot 
		\Dd_q\Delta \delta \ddd\,\dd x,\\
		\mathcal{K}_2&:=-\sum_{q=-1}^\infty \sum_{|j-q|\leq 5} 2^{-q} 
		\int_{\TT^2}( (\Dd_q\Dd_j  \delta A)(\sdm{j}-\sdm{q})\ddd_1)
		\cdot \Dd_q\Delta \delta \ddd\,\dd x,\\
		\mathcal{K}_3&:=-\sum_{q=-1}^\infty 2^{-q} \int_{\TT^2}((\Dd_q \delta A)\sdm{q}\ddd_1)\cdot 
		\Dd_q\Delta \delta \ddd\,\dd x,\\
		\mathcal{K}_4 &:=
		 -\sum_{q=-1}^\infty \sum_{j = q - 5}^\infty 2^{-q} 
		 \int_{\TT^2}\Dd_q( (\Sd_{j+2} \delta A)\Dd_j \ddd_1 )
		 \cdot \Dd_q\Delta \delta \ddd\,\dd x.
	\end{align*}

First, we observe that 
$$ \I_3+\J_3 = 0,$$ 
	which is somehow a reminiscent of the intrinsic cancellations we 
	encounter when deriving the energy inequality \eqref{def:energy-inequality-tot} of the system \eqref{main_system}. This allows us to avoid estimating some bad terms that involve the higher order derivatives of the difference of solutions $\nabla \delta \uu$, $\Delta \delta \ddd$ and moreover, strongly depend on the low frequencies of $\ddd_1$, i.e., $\sdm{q}\ddd_1$. 
	For the same reason, there seems no good way to handle $\mathcal{K}_3$ but just by the Cauchy-Schwarz inequality:
	\begin{equation} \label{K3aa}
	\begin{aligned}
	 |\mathcal{K}_3| 
	 &\leq   \sum_{q=-1}^\infty 2^{-q} \int_{\TT^2}|(\Dd_q \delta A)\sdm{q}\ddd_1|^2\,\dd x
		+
		\frac14 \sum_{q=-1}^\infty 2^{-q} \int_{\TT^2}|\Dd_q\Delta \delta \ddd|^2\,\dd x \\
		&=   \sum_{q=-1}^\infty 2^{-q} \int_{\TT^2}|(\Dd_q \delta A)\sdm{q}\ddd_1|^2\,\dd x
		+\frac14 \|\Delta \delta \ddd\|_{H^{-\frac12}}^2.
	\end{aligned}
	\end{equation}
The right-hand side of \eqref{K3aa} cannot be chosen as small as possible (e.g., via a small parameter $\eta\in (0,1)$) and have to be controlled by using certain hidden dissipation of the system (see Proposition \ref{prop:T3} in the next section).

Next, we treat the other nine terms. The basic idea is, thanks to the absence of the low frequencies of $\ddd_1$, we are able to transfer some derivatives to suitable components of the product. To this end, we apply the commutator estimate (Lemma \ref{prop:comm-est}) and Lemma \ref{prop:Bernstein} to $\I_1$, $\J_1$ and $\mathcal{K}_1$, to obtain  
	\begin{align*}
		& |\I_1|+|\J_1|+|\mathcal{K}_1|\\
		&\qquad \lesssim \sum_{q=-1}^\infty\sum_{|j-q|\leq 5} 
		2^{-q}
			\Big(
				\| [\Dd_q,\,\sdm{j}\ddd_1 ]\cdot \Dd_j \nabla \delta \uu 	\|_{L^2}
				\| \Dd_q \Delta \delta \ddd 								\|_{L^2}+
				\| [\Dd_q,\,\sdm{j}\ddd_1 ]\otimes\Dd_j\Delta \delta \ddd	\|_{L^2}
				\| \Dd_q \nabla \delta \uu	 								\|_{L^2}
			\Big)\\
			&\qquad \qquad 
			+ \sum_{q=-1}^\infty\sum_{|j-q|\leq 5} 
		2^{-q}
			\| [\Dd_q,\,\sdm{j}\ddd_1 ]\cdot \Dd_j \delta A	\|_{L^2}
				\| \Dd_q \Delta \delta \ddd 								\|_{L^2}\\
		&\qquad \lesssim \sum_{q=-1}^\infty\sum_{|j-q|\leq 5} 2^{-q}    
			\Big(
				\| \sdm{j}\nabla \ddd_1			\|_{L^4} 
				\| \Dd_j \delta \uu				\|_{L^4}
				\| \Dd_q \Delta \delta \ddd 	\|_{L^2}
				+
				\| \sdm{j}\nabla \ddd_1			\|_{L^4} 
				\| \Dd_j \nabla \delta \ddd		\|_{L^4}
				\| \Dd_q \nabla \delta \uu 		\|_{L^2}
			\Big).
	\end{align*}
	Let us recall that the uniform bound 
	$\| \sdm{j}\nabla \ddd_1 \|_{L^4}\leq C\| \nabla \ddd_1 \|_{L^4}$ holds for any 
	$j\in\mathbb{N}\cup \{-1\}$, for a suitable constant $C>0$. As a consequence, we have
	\begin{equation}\label{ineq:T2-I1J1-middle}
	\begin{aligned}
		|\I_1|+|\J_1|+|\mathcal{K}_1|		
		&\lesssim 
		\|  \nabla \ddd_1 \|_{L^4}
		\sum_{q=-1}^\infty\sum_{|j-q|\leq 5} 2^{-q} 
		\Big( 
		\| \Dd_j \delta \uu					\|_{L^4}
		\| \Dd_q \Delta \delta \ddd 		\|_{L^2} + 
		\| \Dd_j \nabla \delta \ddd			\|_{L^4}
		\| \Dd_q \nabla \delta  \uu		 	\|_{L^2}
		\Big)\\
     	&\lesssim 
		\|  \nabla \ddd_1 \|_{L^4}
		\sum_{q=-1}^\infty\sum_{|j-q|\leq 5} 2^{-q}  
		\Big(\| \Dd_j \delta \uu					\|_{H^\frac{1}{2}}
		\| \Dd_q \Delta \delta \ddd 		\|_{L^2} + 
		\| \Dd_j \nabla \delta \ddd		\|_{H^\frac{1}{2}}
		\| \Dd_q \nabla \delta \uu \|_{L^2}
		\Big),
	\end{aligned}
	\end{equation}
thanks to the Sobolev embedding $H^{1/2}(\TT^2)\hookrightarrow L^4(\TT^2)$. Here we remark that both $\Dd_j\delta \uu$ and $\Dd_j\nabla \delta \ddd$ 
	have null averages for any $j\in \mathbb{N}\cup \{-1\}$, 
	hence $\| \Dd_j \delta \uu\|_{H^\frac{1}{2}}\leq \| \Dd_j \delta \uu \|_{L^2}^\frac{1}{2}
	\| \Dd_j \nabla \delta \uu \|_{L^2}^\frac{1}{2}$ and 
	$\| \Dd_j \nabla \delta \ddd\|_{H^\frac{1}{2}}\leq \| \Dd_j \nabla \delta \ddd \|_{L^2}^\frac{1}{2}
	\| \Dd_j \Delta \delta \ddd \|_{L^2}^\frac{1}{2}$. 
	This observation together with the condition 
	$|j-q|\leq 5$ yields that
	\begin{align*}
		|\I_1|+|\J_1|+|\mathcal{K}_1|
		&\lesssim 
		\|  \nabla \ddd_1 \|_{L^2}^\frac{1}{2}
		\|  \nabla \ddd_1 \|_{H^1}^\frac{1}{2}\\
		&\qquad \times 
		\sum_{q=-1}^\infty\sum_{|j-q|\leq 5} 2^{-q} \Big(  
		\| \Dd_j \delta \uu					\|_{L^2}^\frac{1}{2}
		\| \Dd_j \nabla \delta \uu			\|_{L^2}^\frac{1}{2}
		\| \Dd_q \Delta \delta \ddd 		\|_{L^2} +
		\| \Dd_j \nabla \delta \ddd 	\|_{L^2}^\frac{1}{2}
		\| \Dd_j \Delta \delta \ddd 	\|_{L^2}^\frac{1}{2}
		\| \Dd_q \nabla\delta  \uu 		\|_{L^2}\Big)\\
		&\lesssim 
		\|  \nabla \ddd_1 \|_{L^2}^\frac{1}{2}
		\|  \nabla \ddd_1 \|_{H^1}^\frac{1}{2}
		\bigg[
		\Big(
		\sum_{j=-1}^\infty 
		2^{-j}
		\| \Dd_j \delta \uu					\|_{L^2}^2
		\Big)^\frac{1}{4} 
		\Big(
		\sum_{j=-1}^\infty
		2^{-j}
		\| \Dd_j \nabla \delta \uu			\|_{L^2}^2
		\Big)^\frac{1}{4} 
		\Big(
		\sum_{q=-1}^\infty
		2^{-q}
		\| \Dd_q \Delta \delta \ddd			\|_{L^2}^2
		\Big)^\frac{1}{2}    \\ 
		&\hspace{3cm}+
		\Big(
		\sum_{j=-1}^\infty 
		2^{-j}
		\| \Dd_j \nabla \delta \ddd					\|_{L^2}^2
		\Big)^\frac{1}{4} 
		\Big(
		\sum_{j=-1}^\infty
		2^{-j}
		\| \Dd_j \Delta \delta \ddd			\|_{L^2}^2
		\Big)^\frac{1}{4} 
		\Big(
		\sum_{q=-1}^\infty
		2^{-q}
		\| \Dd_q \nabla \delta \uu			\|_{L^2}^2
		\Big)^\frac{1}{2}
		\bigg]
		\\
		&\lesssim 
		\|  \nabla \ddd_1 			\|_{L^2}^\frac{1}{2}
		\|  \nabla  \ddd_1 			\|_{H^1}^\frac{1}{2}
		\Big(
		\|	\delta \uu	  			\|_{H^{-\frac{1}{2}}}^\frac{1}{2}
		\|	\nabla \delta \uu	  	\|_{H^{-\frac{1}{2}}}^\frac{1}{2}
		\|	\Delta \delta \ddd	  	\|_{H^{-\frac{1}{2}}}
		+
		\|	\nabla \delta \ddd	  			\|_{H^{-\frac{1}{2}}}^\frac{1}{2}
		\|	\Delta \delta \ddd	  	\|_{H^{-\frac{1}{2}}}^\frac{1}{2}
		\|	\nabla \delta \uu	  	\|_{H^{-\frac{1}{2}}}
		\Big),
	\end{align*}
	which finally leads to
	\begin{equation}\label{ineqT1-I1J1}
		|\I_1|+|\J_1|+|\mathcal{K}_1|
		\leq 
		C\eta^{-3}
		\| \nabla \ddd_1	\|_{L^2}^2
		\| \nabla  \ddd_1	\|_{H^1}^2 
		\big(\| \delta \uu 		\|_{H^{-\frac{1}{2}}}^2 +
		\|\delta\ddd\|_{H^\frac12}^2\big)
		+ \eta \nu \| \nabla \delta \uu \|_{H^{-\frac{1}{2}}}^2 + 
		\eta\| \Delta \delta \ddd \|_{H^{-\frac{1}{2}}}^2.
	\end{equation}	
	
	Let us now turn to the estimate of $\I_2$,  $\J_2$ and $\mathcal{K}_2$. It follows that   
	\begin{align*}
		|\I_2|+|\J_2|+|\mathcal{K}_2|
		&\lesssim \sum_{q=-1}^\infty\sum_{|j-q|\leq 5} 2^{-q}  
			\| (\Dd_q\Dd_j \nabla \delta \uu) (\sdm{j}-\sdm{q})\ddd_1 		\|_{L^2}
				\| \Dd_q \Delta \delta \ddd 										\|_{L^2} \\
					&\qquad 
				+\sum_{q=-1}^\infty\sum_{|j-q|\leq 5} 2^{-q} 	\| (\sdm{j}-\sdm{q})\ddd_1 \otimes \Dd_q\Dd_j \Delta\delta \ddd 	\|_{L^2}
				\| \Dd_q \nabla \delta \uu	 										\|_{L^2}
			\\
			&\qquad 
			+ \sum_{q=-1}^\infty\sum_{|j-q|\leq 5} 2^{-q}  
			\| (\Dd_q\Dd_j  \delta A) (\sdm{j}-\sdm{q})\ddd_1 		\|_{L^2}
				\| \Dd_q \Delta \delta \ddd 										\|_{L^2}\\ 
		&\lesssim \sum_{q=-1}^\infty\sum_{|j-q|\leq 5} 2^{-q}  
			\Big(
				\|(\sdm{j}-\sdm{q})\ddd_1		\|_{L^4} 
				\| \Dd_q\Dd_j \nabla\delta \uu	\|_{L^4}
				\| \Dd_q \Delta \delta \ddd 	\|_{L^2}\\
		&\qquad \qquad 	\qquad \qquad
		+ \|(\sdm{j}-\sdm{q})\ddd_1			\|_{L^4} 
				\| \Dd_q\Dd_j \Delta \delta \ddd	\|_{L^4}
				\| \Dd_q \nabla \delta \uu 			\|_{L^2}
			\Big).
	\end{align*}
	Since the indexes $j\in \mathbb{N}\cup \{-1\}$ and $q\in \mathbb{N}\cup \{-1\}$ satisfy $|j-q|\leq 5$, 
	the Fourier transform of the difference  $\sdm{j}\ddd_1 - \sdm{q}\ddd_1$ between low frequencies 
	is localised on an annulus whose radius is proportional to $2^q$. Therefore, it holds 
	$\| \sdm{j}\ddd_1 - \sdm{q}\ddd_1\|_{L^4}\leq C2^{-q}
	\| \sdm{j}\nabla\ddd_1 - \sdm{q}\nabla\ddd_1\|_{L^4}$, which implies that
	\begin{align*}
		|\I_2|+|\J_2|+|\mathcal{K}_2|
		&\lesssim
		\sum_{q=-1}^\infty\sum_{|j-q|\leq 5} 2^{-q}  
			\Big(
				2^{-q}
				\|(\sdm{j}-\sdm{q})\nabla \ddd_1		\|_{L^4} 
				2^q
				\| \Dd_q\Dd_j \delta \uu	\|_{L^4}
				\| \Dd_q \Delta \delta \ddd 	\|_{L^2}
				\\
				&\hspace{3cm}+
				2^{-q}
				\|(\sdm{j}-\sdm{q})\nabla \ddd_1		\|_{L^4} 
				2^q
				\| \Dd_q \Dd_j \nabla \delta \ddd		\|_{L^4}
				\| \Dd_q \nabla \delta \uu 		\|_{L^2}
			\Big)\\
		&\lesssim
		\|\nabla \ddd_1		\|_{L^4} 
		\sum_{q=-1}^\infty\sum_{|j-q|\leq 5} 2^{-q}  
			\Big(
				\| \Dd_j \delta \uu	\|_{L^4}
				\| \Dd_q \Delta \delta \ddd 	\|_{L^2}
				+
				\| \Dd_j \nabla \delta \ddd		\|_{L^4}
				\| \Dd_q \nabla \delta \uu 		\|_{L^2}
			\Big)	,
	\end{align*}
	thanks to the Bernstein inequalities in Lemma \ref{prop:Bernstein}. 
	We note that the right-hand side of the 
	above inequality coincides with the one of inequality \eqref{ineq:T2-I1J1-middle}. Hence, proceeding as for 
	the proof of \eqref{ineqT1-I1J1}, we deduce that 
	\begin{equation}\label{ineq:T1-I2J2}
		|\I_2|+|\J_2|+|\mathcal{K}_2|
		\leq   
		C\eta^{-3}
		\| \nabla \ddd_1	\|_{L^2}^2
		\| \nabla  \ddd_1	\|_{H^1}^2 
		\big(\| \delta \uu 		\|_{H^{-\frac{1}{2}}}^2 +
		\|\delta\ddd\|_{H^\frac12}^2\big) 
		+ \eta \nu \| \nabla \delta \uu \|_{H^{-\frac{1}{2}}}^2 + 
		\eta\| \Delta \delta \ddd \|_{H^{-\frac{1}{2}}}^2.
	\end{equation}
	
	At last, we address the estimate for $\I_4$, $\J_4$ and $\mathcal{K}_4$. Using the following inequality that is a direct consequence of Lemma \ref{prop:Bernstein}: 
	$\| \Dd_q f \|_{L^2}\leq 2^q \| \Dd_q f \|_{L^1}$ for any $q\in\mathbb{N}\cup \{-1\}$, we deduce that 
	\begin{align*}
		&|\I_4|+|\J_4|+|\mathcal{K}_4|\\
		&\qquad \lesssim 
		\sum_{q=-1}^\infty\sum_{j= q - 5}^\infty 2^{-q}  
		\Big(
		\| \Dd_q( \Sd_{j+2}\nabla \delta \uu \Dd_j \ddd_1)\|_{L^2} \| \Dd_q \Delta \delta\ddd \|_{L^2}
		+
		\| \Dd_q(\Dd_j \ddd_1\otimes 
		\Sd_{j+2}\Delta \delta \ddd)\|_{L^2} \| \Dd_q \nabla\delta\uu \|_{L^2}
		\Big)
		\\
		&\qquad \qquad 
		+ \sum_{q=-1}^\infty\sum_{j= q - 5}^\infty 2^{-q}  
		\| \Dd_q( \Sd_{j+2}\delta A \Dd_j \ddd_1 )\|_{L^2} \| \Dd_q \Delta \delta\ddd \|_{L^2}\\
		&\qquad \lesssim
		\sum_{q=-1}^\infty\sum_{j= q - 5}^\infty 
		2^{-q}
		\Big(
		2^q
		\| \Dd_q( \Sd_{j+2}\nabla \delta \uu \Dd_j \ddd_1)\|_{L^1} 
		\| \Dd_q \Delta\delta \ddd \|_{L^2}
		+
		2^q
		\| \Dd_q(\Dd_j \ddd_1\otimes \Sd_{j+2}\Delta \delta \ddd )\|_{L^1} 
		\| \Dd_q \nabla\delta \uu \|_{L^2}
		\Big)\\
		&\qquad \qquad 
		+\sum_{q=-1}^\infty\sum_{j= q - 5}^\infty 
		2^{-q}
		\Big(2^q
		\| \Dd_q( \Sd_{j+2}\delta A \Dd_j \ddd_1 )\|_{L^1} 
		\| \Dd_q \Delta\delta \ddd \|_{L^2}
		\Big)\\
		&\qquad \lesssim
		\sum_{q=-1}^\infty\sum_{j=q - 5}^{\infty}
		\Big(\|\Dd_j \ddd_1\|_{L^2}
		\|\Sd_{j+2}\nabla \delta \uu\|_{L^2}
		2^q \| \Dd_q \nabla\delta \ddd \|_{L^2}
		+ \|\Dd_j \ddd_1\|_{L^2} \|\Sd_{j+2}\Delta \delta \ddd\|_{L^2}
		2^q \| \Dd_q \delta \uu \|_{L^2} 
		\Big)\\
		&\qquad \lesssim
		\sum_{q=-1}^\infty\sum_{j=q - 5}^\infty 2^{\frac{3}{2}(q-j)}2^{-\frac{j}{2}}
		\| \Sd_{j+2}\nabla \delta \uu \|_{L^2} 
		2^{2j}
		\|\Dd_j \ddd_1\|_{L^2}
		2^{-\frac{q}{2}} \| \Dd_q \nabla\delta \ddd \|_{L^2}\\
		&\qquad \qquad 
		+ \sum_{q=-1}^\infty\sum_{j= q - 5}^\infty  2^{\frac{3}{2}(q-j)}2^{-\frac{j}{2}}
		\| \Sd_{j+2}\Delta \delta \ddd \|_{L^2} 
		2^{2j}
		\|\Dd_j \ddd_1\|_{L^2}
		2^{-\frac{q}{2}} \| \Dd_q \delta \uu \|_{L^2} 
		\\
		&\qquad \lesssim
		\Big(\sum_{q=-1}^\infty\sum_{j= q - 5}^\infty 
		2^{3(q-j)}2^{-j}
		\| \Sd_{j+2}\nabla \delta \uu \|_{L^2}^2
		\|\ddd_1\|_{H^2}^2
		\Big)^\frac{1}{2}
		\Big(
		\sum_{q=-1}^\infty
			2^{-q} 
			\| \Dd_q \nabla\delta \ddd \|_{L^2}^2
		\Big)^\frac{1}{2}\\
		&\qquad\qquad  
		+\Big(\sum_{q=-1}^\infty\sum_{j= q - 5}^\infty 
		2^{3(q-j)}2^{-j}
		\| \Sd_{j+2}\Dd \delta \ddd \|_{L^2}^2
		\|\ddd_1\|_{H^2}^2
		\Big)^\frac{1}{2}
		\Big(
		\sum_{q=-1}^\infty
			2^{-q} 
			\| \Dd_q \delta \uu \|_{L^2}^2
		\Big)^\frac{1}{2}
		\\
		&\qquad \lesssim
		\| \ddd_1\|_{H^2}
		\Big(\sum_{q=-\infty}^\infty\sum_{j=-\infty}^\infty 2^{3(q-j)}\mathfrak{1}_{(-\infty,5]}(q-j)
		2^{-j}
		\| \Sd_{j+2}\nabla \delta \uu \|_{L^2}^2
		\mathfrak{1}_{[-6,\infty)}(j)
		\Big)^\frac{1}{2}
		\| \nabla \delta \ddd \|_{H^{-\frac{1}{2}}}\\
		&\qquad \qquad +
		\| \ddd_1\|_{H^2}
		\Big(\sum_{q=-\infty}^\infty\sum_{j=-\infty}^\infty 2^{3(q-j)}\mathfrak{1}_{(-\infty,5]}(q-j)
		2^{-j}
		\| \Sd_{j+2}\Dd \delta \ddd \|_{L^2}^2
		\mathfrak{1}_{[-6,\infty)}(j)
		\Big)^\frac{1}{2}
		\| \delta \uu \|_{H^{-\frac{1}{2}}}.
	\end{align*}
	At this stage we can invoke Young's inequality for series to conlude that 
	\begin{equation}\label{ineqT1-I4J4}
	\begin{aligned} 
		|\I_4|+|\J_4|+|\mathcal{K}_4|
		&\lesssim  
		 \| \ddd_1\|_{H^2}\| \nabla \delta \uu \|_{H^{-\frac{1}{2}}}\| \delta \ddd \|_{H^\frac{1}{2}}
		 +\| \ddd_1\|_{H^2} \| \Dd \delta \ddd \|_{H^{-\frac{1}{2}}} \| \delta \uu \|_{H^{-\frac{1}{2}}}\\
		& \leq
		C\eta^{-1} \| \ddd_1\|_{H^2}^2
		( \| \delta \uu \|_{H^{-\frac{1}{2}}}^2 + \| \delta \ddd \|_{H^\frac{1}{2}}^2 \big) +
		\eta \nu \| \nabla \delta \uu \|_{H^{-\frac{1}{2}}}^2
		+\eta \| \Dd \delta \ddd \|_{H^{-\frac{1}{2}}}^2.
		\end{aligned} 
	\end{equation}
	Collecting the inequalities
	\eqref{K3aa}, \eqref{ineqT1-I1J1}, \eqref{ineq:T1-I2J2} and \eqref{ineqT1-I4J4}, we obtain the required inequality \eqref{prop:T2-ineq3}. This completes the proof of Proposition \ref{prop:T2}.
\end{proof}

\section{\bf Estimate of $\mathcal{T}_3$}\label{sec:single-log-est}
\setcounter{equation}{0}

In this section, we proceed to estimate the term $\mathcal{T}_3$ given by \eqref{Xi3}, that is 
\begin{equation}\label{def_T3}
\begin{aligned}
	\mathcal{T}_3 & :=
	-
	\langle
		\ddd_1 \otimes (\delta A \ddd_1)
		,\,
		\nabla \delta \uu
	\rangle_{H^{-\frac{1}{2}}} 
	-
	\langle 
		\ddd_1 \otimes (A_2 \delta \ddd),\,
		\nabla \delta \uu
	\rangle_{H^{-\frac{1}{2}}}
	-    \langle
		(\delta A \ddd_1)\otimes \ddd_1  
		,\,
		\nabla \delta \uu
	\rangle_{H^{-\frac{1}{2}}} 
	\\
	&\quad\  -	\langle  (A_2 \delta \ddd)\otimes \ddd_1 ,\,
		\nabla \delta \uu
	\rangle_{H^{-\frac{1}{2}}}.
\end{aligned}
\end{equation}
As we shall see below, some further efforts have to be made in order to derive estimates for these inner products. Indeed, we observe that they depend on nonlinear terms like $\ddd_1 \otimes (\delta A \ddd_1)$ and $\ddd_1 \otimes (A_2 \delta \ddd)$ from the momentum equation, which are of higher order comparing with the ones that we have encountered in the previous sections. This feature translates into a major difficulty in the subsequent estimates, which we will overcome by seeking a double-logarithmic type inequality as expressed in the modulus of continuity $\mu(\Phi(t))$ (recall \eqref{muaa}). 

More precisely, we have 
\begin{prop}\label{prop:T3}
There exists a positive function $f_3 \in L^1(0,T)$  such that for any $\eta\in (0,1)$, the following inequality holds
\begin{equation}\label{prop:T3-ineq}
 \begin{aligned}
 	&\Big|
 	\mathcal{T}_3(t)+
 	2
 	\sum_{q=-1}^\infty 2^{-q}\int_{\TT^2}|\Dd_q\delta A \sdm{q}\ddd_1|^2 \,\dd x	
 	\Big| \\
 	&\qquad \leq 
	C_3 f_3(t) \mu(\Phi(t)) 
    +  30\eta\, \widehat{\mathfrak{D}}(t) + 
	10 \eta \sum_{q=-1}^\infty
	2^{-q}
	\int_{\TT^2}
	|\Dd_q \delta A\sdm{q}\ddd_1|^2\,\dd x,
 \quad \text{for almost all } t\in (0,T),
 \end{aligned}
 \end{equation}
 where $\mu$ is defined as in \eqref{muaa} and $C_3>0$ is a constant depending on $\eta^{-1}$.
\end{prop}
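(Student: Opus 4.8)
The plan is to estimate each of the four inner products in $\mathcal{T}_3$ in \eqref{def_T3} by exploiting Bony's decomposition \eqref{bony-decomp} applied to the products $\ddd_1\otimes(\delta A\ddd_1)$ and $(\delta A\ddd_1)\otimes\ddd_1$ (and similarly to the lower-order terms $\ddd_1\otimes(A_2\delta\ddd)$, $(A_2\delta\ddd)\otimes\ddd_1$), localising at a dyadic level $q$, and isolating the ``central'' paraproduct piece $\Sd_{q-1}\ddd_1\,\Dd_q(\delta A\ddd_1)$. The key structural observation -- just as in the proof of Proposition \ref{prop:T2}, where $\I_3+\J_3=0$ -- is that the two symmetric tensors $\ddd_1\otimes(\delta A\ddd_1)$ and $(\delta A\ddd_1)\otimes\ddd_1$ contribute central pieces whose sum, after integrating against $\nabla\delta\uu$ and using $\Div\,\delta\uu=0$ together with the symmetry of $\delta A$, reconstructs precisely the positive quadratic quantity $2\sum_q 2^{-q}\int_{\TT^2}|\Dd_q\delta A\,\Sd_{q-1}\ddd_1|^2\,\dd x$. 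This is why that term appears with a sign on the left-hand side of \eqref{prop:T3-ineq}: it is a genuine hidden dissipation hidden inside $\mathcal{T}_3$, not an error term, and it can then be absorbed by the corresponding term in $\mathfrak{D}(t)$ (cf. \eqref{def-Phi-D}) when all four estimates of Proposition \ref{thm:ineq} are combined. Here is where the compatibility relation $\lambda_2=\mu_5-\mu_6$ (encoded in the coefficient choice \eqref{choicea}, which yields the factor multiplying $(A\ddd)\otimes\ddd$ and $\ddd\otimes(A\ddd)$) enters: it guarantees that the two central contributions combine with the correct sign.

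After extracting the central piece, the remaining terms of Bony's decomposition -- the commutator term $[\Dd_q,\Sd_{j}\ddd_1]\Dd_j(\delta A\ddd_1)$, the ``gap'' term $(\Sd_j-\Sd_q)\ddd_1\,\Dd_q\Dd_j(\delta A\ddd_1)$, and the high-high interaction $\Dd_q(\Dd_j\ddd_1\,\Sd_{j+2}(\delta A\ddd_1))$ -- must be controlled. These are handled much as in Section \ref{esti-T_2}: the commutator estimate (Lemma \ref{prop:comm-est}) together with the Bernstein inequalities (Lemma \ref{prop:Bernstein}) let us trade one derivative onto $\Sd_j\nabla\ddd_1$, and the absence of low frequencies in the gap term lets us do the same there. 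The new difficulty, compared with $\mathcal{T}_2$, is that $\delta A\ddd_1$ is itself a product: one must first estimate $\delta A\ddd_1$ in a suitable negative Sobolev norm using Lemma \ref{lemma:product}, and since $\delta A=\nabla\delta\uu$ (symmetrised) lives only in $H^{-1/2}$ while $\ddd_1$ sits in $H^1$, the product $\delta A\ddd_1$ barely fails to have enough regularity to close a naive Gronwall estimate -- this is the source of the logarithmic loss. I would therefore split $\ddd_1$ (or $\delta A$) into low and high frequencies at a cutoff $N$ (à la Brezis--Gallou\"et, cf. Remark \ref{comm_proof}), bound the low part using Lemma \ref{lemma:SN-infty} (which costs $\sqrt{N}$) and the high part using decay in $N$ together with the $L^2$-type energy bounds from Definition \ref{def:weak-sol}, and then optimise by choosing $N\sim\ln(1+1/\Phi(t))$. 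Combined with the interpolation inequality of Lemma \ref{lemma:eps}, which supplies the $\sqrt{p}$ growth of the Sobolev constant, this yields a modulus of continuity of the form $r(1+\ln(1+1/r))$ -- i.e. the single-logarithmic part of \eqref{muaa}; the second logarithm is produced by a further, more delicate iteration of the same mechanism.

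More concretely, the steps in order are: (1) write $\mathcal{T}_3 = \mathcal{T}_3^{A} + \mathcal{T}_3^{\ddd}$, separating the two $\delta A\ddd_1$-terms from the two $A_2\delta\ddd$-terms; (2) for $\mathcal{T}_3^{\ddd}$, which is of lower order, apply Lemma \ref{lemma:product} and Young's inequality directly -- using $\|A_2\delta\ddd\|_{H^{-1/2}}\lesssim \|A_2\|_{L^2}\|\delta\ddd\|_{H^{1/2}}$ wait, more carefully one interpolates as in the proof of Proposition \ref{prop:bound-for-xi1} -- to get a bound of the form $C_\eta f(t)\Phi(t)+\eta\widehat{\mathfrak{D}}(t)$; (3) for $\mathcal{T}_3^{A}$, apply \eqref{bony-decomp} to each of $\ddd_1\otimes(\delta A\ddd_1)$ and $(\delta A\ddd_1)\otimes\ddd_1$, label the four pieces, and show the two central ($\Sd_{q-1}$) pieces sum (using $\Div\,\delta\uu=0$) to $-2\sum_q 2^{-q}\int|\Dd_q\delta A\,\Sd_{q-1}\ddd_1|^2$ up to a remainder controlled by $\eta\sum_q 2^{-q}\int|\Dd_q\delta A\,\Sd_{q-1}\ddd_1|^2$; (4) estimate the commutator, gap, and high-high pieces, introducing the frequency cutoff $N=N(t)$ where the low-regularity bottleneck appears and optimising it against $\Phi(t)$ to produce $\mu(\Phi(t))$; (5) collect everything, choosing the auxiliary small parameters so that the central dissipation survives on the left with a definite sign, and define $f_3$ as the resulting sum of $L^1(0,T)$ quantities built from $\|\uu_i\|_{L^\infty_t L^2}$, $\|\nabla\uu_i\|_{L^2_{t,x}}$, $\|\ddd_i\|_{L^\infty_t H^1}$, $\|\Delta\ddd_i\|_{L^2_{t,x}}$. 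The main obstacle is step (4): the term where $\delta A$ meets $\ddd_1$ through the high-frequency paraproduct $T_{\delta A}\ddd_1$ (equivalently, the $\Sd_{j+2}\delta A\,\Dd_j\ddd_1$ piece) does not admit an $L^2$-in-time bound because $\delta A\in H^{-1/2}$ only, so the usual Gronwall scheme breaks down and the Brezis--Gallou\"et-type frequency splitting, with the cutoff tuned to $\Phi(t)$, is essential; keeping careful track of the constants so that the logarithm that emerges is exactly of the form in \eqref{muaa}, and not something worse, is the delicate point.
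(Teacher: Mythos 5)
Your overall architecture is the paper's: split $\mathcal{T}_3$ into the $\delta A\,\ddd_1$ pair and the $A_2\,\delta\ddd$ pair, extract a hidden dissipation from the symmetric pair via Bony's decomposition \eqref{bony-decomp}, and use a Brezis--Gallou\"et-type frequency cutoff tied to $\Phi(t)$. However, two of your steps, as stated, would fail. First, one application of \eqref{bony-decomp} does not isolate the square: the central pieces are $-\sum_q 2^{-q}\int_{\TT^2}\big(\sdm{q}\ddd_1\otimes\Dd_q(\delta A\,\ddd_1)\big):\Dd_q\nabla\delta\uu\,\dd x$ and its transpose, and $\Dd_q(\delta A\,\ddd_1)\neq \Dd_q\delta A\,\sdm{q}\ddd_1$. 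The paper must apply a second, nested Bony decomposition to $\Dd_q(\delta A\,\ddd_1)$; only the central piece of that inner decomposition, summed with its mirror and using that $\Dd_q\delta A$ is symmetric while $\Dd_q\delta\omega$ is skew (the mechanism is the symmetric/antisymmetric split of $\nabla\delta\uu$, not $\Div\,\delta\uu=0$), yields $-2\sum_q 2^{-q}\int_{\TT^2}|\Dd_q\delta A\,\sdm{q}\ddd_1|^2\,\dd x$. The inner remainders are \emph{not} absorbable by $\eta\sum_q 2^{-q}\int|\Dd_q\delta A\,\sdm{q}\ddd_1|^2$: the inner commutator and gap pieces ($\I\I\I_1,\I\I\I_2$ in \eqref{def-III1234}) carry the extra factor $\sdm{q}\ddd_1$, and it is exactly there that the double logarithm is produced, via $\ddd_1=S_N\ddd_1+(\Id-S_N)\ddd_1$, Lemma \ref{lemma:SN-infty}, the $L^{2/\ee}$ estimate of Lemma \ref{lemma:eps}, and the choices $N\sim\log_2(1+1/\Phi)$, $\ee\sim(1+\ln N)^{-1}$ (cf. \eqref{ineq-due-to-no-parodi-1}); whereas the inner high--high piece $\Sd_{j+2}\delta A\,\Dd_j\ddd_1$ -- the term you identify as the source of the logarithm -- is handled with no logarithm at all, by pairing the two symmetric terms (this is where the compatibility conditions enter) and Cauchy--Schwarz against the dissipation just extracted, at the price of $\eta\sum_q 2^{-q}\int|\Dd_q\delta A\,\sdm{q}\ddd_1|^2$. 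So your step (3) misses the nested decomposition and your step (4) mislocates the double-logarithmic loss.

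Second, the $A_2\,\delta\ddd$ terms are not of Gronwall type. Since $H^1(\TT^2)\not\hookrightarrow L^\infty(\TT^2)$, the field $\ddd_1$ is not a multiplier of $H^{-1/2}$; Lemma \ref{lemma:product} requires $s,t<1$, so $\|\ddd_1\otimes(A_2\delta\ddd)\|_{H^{-1/2}}\lesssim\|\ddd_1\|_{H^1}\|A_2\|_{L^2}\|\delta\ddd\|_{H^{1/2}}$ is unavailable, and an interpolation in the style of Proposition \ref{prop:bound-for-xi1} only closes with a constant blowing up in the interpolation parameter, i.e.\ it again produces a logarithm after optimisation. The paper treats these terms by the same $S_N$/$(\Id-S_N)$ split with $N\sim\log_2(1+1/\Phi)$, paying $\sqrt N$ on the low frequencies and $2^{-3N/4}$ on the high ones, which yields the single-logarithmic bound \eqref{decomp_deltaA2dod1_1}; this is harmless for \eqref{prop:T3-ineq} because it is dominated by $\mu(\Phi)$, but your direct estimate in step (2) would not go through as written and must be replaced by this argument.
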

\begin{proof}[Proof of Proposition \ref{prop:T3}]
Let $\eta\in (0,1)$. We start the proof by handling the first and third inner products in \eqref{def_T3}, namely,
\begin{align*} 
	&-\langle
		\ddd_1 \otimes (\delta A \ddd_1)
		,\,
		\nabla \delta \uu
	\rangle_{H^{-\frac{1}{2}}}	-    \langle
		(\delta A \ddd_1)\otimes \ddd_1  
		,\,
		\nabla \delta \uu
	\rangle_{H^{-\frac{1}{2}}} \\
	&\quad 
	=
	-\sum_{q=-1}^\infty2^{-q}
	\int_{\TT^2}\Dd_q(	\ddd_1 \otimes (\delta A \ddd_1)):\Dd_q \nabla \delta \uu \, \mathrm{d}x
	-\sum_{q=-1}^\infty2^{-q}
	\int_{\TT^2}\Dd_q(	(\delta A \ddd_1)\otimes \ddd_1 ):\Dd_q \nabla \delta \uu \, \mathrm{d}x.
	\end{align*}
Our aim is to show that
\begin{equation}\label{new-est-T3-n1}
\begin{aligned}
    &	-\langle
		\ddd_1 \otimes (\delta A \ddd_1)
		,\,
		\nabla \delta \uu
	\rangle_{H^{-\frac{1}{2}}} 
	-    \langle
		(\delta A \ddd_1)\otimes \ddd_1  
		,\,
		\nabla \delta \uu
	\rangle_{H^{-\frac{1}{2}}}\\
	&\quad \leq 
	- 2\sum_{q=-1}^\infty 2^{-q}\int_{\TT^2}|\Dd_q\delta A \sdm{q}\ddd_1|^2 \mathrm{d}x
	+ C\eta^{-3} \widetilde{f_3}(t) \mu(\Phi(t))  + 28\eta\, \widehat{\mathfrak{D}}(t) +10 \eta \sum_{q=-1}^\infty 2^{-q}\int_{\TT^2}|\Dd_q\delta A \sdm{q}\ddd_1|^2 \mathrm{d}x,
\end{aligned}
\end{equation}
where 
\begin{equation} \label{f3_1}
\begin{aligned}
 \widetilde{f_3}(t) &=  \big(1+ \| \ddd_1 (t)\|_{H^1}^6 \big) \| \ddd_1(t) \|_{H^2}^2
	+
	\big(\| \uu_1 (t)\|_{L^2}^2 + \| \uu_2 (t)\|_{L^2}^2\big)\big( \| \nabla \uu_1 (t)\|_{L^2}^2 + \| \nabla \uu_2(t) \|_{L^2}^2\big).
\end{aligned}
\end{equation}
satisfying $\widetilde{f_3}(t)\in L^1(0,T)$.

The argument relies on Bony's decomposition \eqref{bony-decomp} as in the proof for Proposition \ref{prop:T2}, however, with some additional technical difficulties to overcome. To this end, we choose $f=\ddd_1$ and $g=\delta A\ddd_1$ in  \eqref{bony-decomp}, which allow us to split the first inner product into four terms such that 
\begin{equation}\label{decomp_deltaAd1od1}
	-\langle
		\ddd_1 \otimes (\delta A \ddd_1)
		,\,
		\nabla \delta \uu
	\rangle_{H^{-\frac{1}{2}}}
	=
	\I+\I\I+\I\I\I+\I\mathcal{V},
\end{equation}
where
\begin{align*}
	\I
	&:=
	-
	\sum_{q=-1}^\infty
	\sum_{|j-q|\leq 5}
	2^{-q}
	\int_{\TT^2}\big([\Dd_q,\,\sdm{j}\ddd_1] \otimes \Dd_j(\delta A \ddd_1)\big):\Dd_q \nabla \delta \uu\,\dd x,\\
	\I\I
	&:=
	-
	\sum_{q=-1}^\infty
	\sum_{|j-q|\leq 5}
	2^{-q}
	\int_{\TT^2}\big((\sdm{j}\ddd_1-\sdm{q}\ddd_1) \otimes \Dd_q\Dd_j(\delta A \ddd_1)\big):\Dd_q \nabla \delta \uu\,\dd x,\\
	\I\I\I
	&:=
	-
	\sum_{q=-1}^\infty
	2^{-q}
	\int_{\TT^2}\big( \sdm{q}\ddd_1 \otimes \Dd_q(\delta A \ddd_1)\big) 
	: \Dd_q(\nabla \delta \uu) \,\dd x,\\
	\I\mathcal{V}
	&:=
	-
	\sum_{q=-1}^\infty
	\sum_{j= q-5}^\infty
	2^{-q}
	\int_{\TT^2} \Dd_q\big(\Dd_{j}\ddd_1\otimes S_{j+2}(\delta A \ddd_1)\big): \Dd_q \nabla \delta \uu\,\dd x.
\end{align*}
The key point is that, later for each of the terms $\I$, $\I\I$, $\I\I\I$ and $\I\mathcal{V}$, we have to make use of the Bony's decomposition one more time. In a similar manner, we denote 
\begin{equation}\label{decomp_deltaAd1od1aa}
	-\langle
		(\delta A \ddd_1)\otimes \ddd_1 
		,\,
		\nabla \delta \uu
	\rangle_{H^{-\frac{1}{2}}}
	=
	\I'+\I\I'+\I\I\I'+\I\mathcal{V}',
\end{equation}
where
\begin{align*}
	\I'
	&:=
	-
	\sum_{q=-1}^\infty
	\sum_{|j-q|\leq 5}
	2^{-q}
	\int_{\TT^2}\big( \Dd_j(\delta A \ddd_1)\otimes [\Dd_q,\,\sdm{j}\ddd_1] \big):\Dd_q \nabla \delta \uu\,\dd x,\\
	\I\I'
	&:=
	-
	\sum_{q=-1}^\infty
	\sum_{|j-q|\leq 5}
	2^{-q}
	\int_{\TT^2}\big( \Dd_q\Dd_j(\delta A \ddd_1) \otimes (\sdm{j}\ddd_1-\sdm{q}\ddd_1) \big):\Dd_q \nabla \delta \uu\,\dd x,\\
	\I\I\I'
	&:=
	-
	\sum_{q=-1}^\infty
	2^{-q}
	\int_{\TT^2}\big(  \Dd_q(\delta A \ddd_1) \otimes \sdm{q}\ddd_1 \big) 
	: \Dd_q(\nabla \delta \uu) \,\dd x,\\
	\I\mathcal{V}'
	&:=
	-
	\sum_{q=-1}^\infty
	\sum_{j= q-5}^\infty
	2^{-q}
	\int_{\TT^2} \Dd_q\big( S_{j+2}(\delta A \ddd_1) \otimes \Dd_{j}\ddd_1\big): \Dd_q \nabla \delta \uu\,\dd x.
\end{align*}

Let us begin with the analysis of $\I\I\I$ and $\I\I\I'$, from which some additional dissipative contributions will appear. This fact will be crucial to control the ``bad'' term $\mathcal{K}_3$ in the previous section (see \eqref{K3aa}). We decompose the term $\Dd_q(\delta A \ddd_1)$ by using \eqref{bony-decomp} and obtain 
\begin{equation*}
	\I\I\I = \I\I\I_1+ \I\I\I_2+ \I\I\I_3+ \I\I\I_4,
\end{equation*}
where
\begin{equation}\label{def-III1234}
\begin{aligned}
	\I\I\I_1
	&:=
	-\sum_{q=-1}^\infty
	\sum_{|j-q|\leq 5}
	2^{-q}
	\int_{\TT^2}\big([\Dd_q,\, \sdm{j}\ddd_1]\cdot \Dd_j \delta A\big) \cdot \big(\Dd_q (\nabla^{\mathrm{tr}} \delta \uu)\sdm{q}\ddd_1\big)\,\dd x,\\
	\I\I\I_2
	&:=
	-\sum_{q=-1}^\infty
	\sum_{|j-q|\leq 5}
	2^{-q}
	\int_{\TT^2}\big((\sdm{j}-\sdm{q})\ddd_1\cdot \Dd_q\Dd_j \delta A\big) \cdot \big(\Dd_q (\nabla^{\mathrm{tr}} \delta \uu)\sdm{q}\ddd_1\big)\,\dd x,\\
	\I\I\I_3
	&:=
	-
	\sum_{q=-1}^\infty
	2^{-q}
	\int_{\mathbb{T}^2} \big(\sdm{q}\ddd_1 \otimes (\Dd_q\delta A \sdm{q} \ddd_1)\big) :\Dd_q(\nabla \delta \uu)\,\dd x,
	\\
	\I\I\I_4
	&:=
	-\sum_{q=-1}^\infty
	\sum_{j=q-5}^\infty 
	2^{-q}
	\int_{\TT^2}\Dd_q(S_{j+2} \delta A\Dd_{j}\ddd_1)\cdot\big(\Dd_q (\nabla^{\mathrm{tr}} \delta \uu)\sdm{q}\ddd_1\big)\,\dd x.
\end{aligned}
\end{equation}
In a similar manner, we write 
\begin{equation*}
	\I\I\I' = \I\I\I_1'+ \I\I\I_2'+ \I\I\I_3'+ \I\I\I_4',
\end{equation*}
where
\begin{equation}\label{def-III1234aa}
\begin{aligned}
	\I\I\I_1'
	&:=
	-\sum_{q=-1}^\infty
	\sum_{|j-q|\leq 5}
	2^{-q}
	\int_{\TT^2}\big([\Dd_q,\, \sdm{j}\ddd_1]\cdot \Dd_j \delta A\big) \cdot \big(\Dd_q (\nabla \delta \uu)\sdm{q}\ddd_1\big)\,\dd x,\\
	\I\I\I_2'
	&:=
	-\sum_{q=-1}^\infty
	\sum_{|j-q|\leq 5}
	2^{-q}
	\int_{\TT^2}\big((\sdm{j}-\sdm{q})\ddd_1\cdot \Dd_q\Dd_j \delta A\big) \cdot \big(\Dd_q (\nabla \delta \uu)\sdm{q}\ddd_1\big)\,\dd x,\\
	\I\I\I_3'
	&:=
	-
	\sum_{q=-1}^\infty
	2^{-q}
	\int_{\mathbb{T}^2} \big((\Dd_q\delta A \sdm{q} \ddd_1)
	\otimes \sdm{q}\ddd_1\big) : \Dd_q(\nabla \delta \uu)\,\dd x,
	\\
	\I\I\I_4'
	&:=
	-\sum_{q=-1}^\infty
	\sum_{j=q-5}^\infty 
	2^{-q}
	\int_{\TT^2}\Dd_q(S_{j+2} \delta A\Dd_{j}\ddd_1)\cdot\big(\Dd_q (\nabla \delta \uu)\sdm{q}\ddd_1\big)\,\dd x.
\end{aligned}
\end{equation}
First, we observe that the term $\I\I\I_3$ together with $\I\I\I_3'$ provides the excepted  additional dissipation. Indeed, it follows that
\begin{align*}
 \I\I\I_3 +  \I\I\I_3'
 &= -
	\sum_{q=-1}^\infty
	2^{-q}
	\int_{\mathbb{T}^2} \big(\sdm{q}\ddd_1 \otimes (\Dd_q\delta A \sdm{q} \ddd_1)
	+ (\Dd_q\delta A \sdm{q} \ddd_1)
	\otimes \sdm{q}\ddd_1\big) : \Dd_q(\delta A+\delta \omega)\,\dd x\\
&=- 2
	\sum_{q=-1}^\infty
	2^{-q}
	\int_{\mathbb{T}^2}|\Dd_q\delta A \sdm{q} \ddd_1|^2 \,\dd x,	
\end{align*}    
since $\Dd_q\delta A\Dd_q\delta \omega = (\Dd_q \nabla\delta \uu)^2/4-(\Dd_q \nabla^{\mathrm{tr}} \delta \uu)^2/4$ is skew adjoint and $\Delta_q\delta A$ is symmetric. 

For $\I\I\I_1$, we introduce two general parameters $N=N(t)\in\mathbb{N}$ and $\ee=\ee(t)\in [0,1/2)$ that will be  
explicitly formulated in the subsequent proof. Then we decompose the director $\ddd_1$ by separating its low and high frequencies such that $\ddd_1 = S_N\ddd_1 + (\Id-S_N)\ddd_1$. Thus, we can split $\I\I\I_1$ into the following form:
\begin{align*}
    \I\I\I_1 = 
    &-\sum_{q=-1}^\infty
	\sum_{|j-q|\leq 5}
	2^{-q}
	\int_{\TT^2}\big([\Dd_q,\, \sdm{j}\ddd_1]\cdot \Dd_j \delta A\big) \cdot \big(\Dd_q (\nabla^{\mathrm{tr}} \delta \uu)\sdm{q}S_N\ddd_1\big)\,\dd x\\
	&-\sum_{q=-1}^\infty
	\sum_{|j-q|\leq 5}
	2^{-q}
	\int_{\TT^2} \big([\Dd_q,\, \sdm{j}\ddd_1]\cdot \Dd_j \delta A\big) \cdot \big(\Dd_q (\nabla^{\mathrm{tr}} \delta \uu)\sdm{q}(\Id-S_N)\ddd_1\big)\,\dd x =:\I\I\I_1^{(1)}+\I\I\I_1^{(2)}.
\end{align*}
We first address the low frequencies part  $\I\I\I_1^{(1)}$ involving $S_N\ddd_1$. Invoking the commutator estimate in Lemma  \ref{prop:comm-est}, we see that 
$\|[\Dd_q,\,\sdm{j}\ddd_1]\cdot \Dd_j\delta A \|_{L^2}
\leq  C 2^{-q}\| \sdm{j} \nabla \ddd_1 \|_{L^{2/\ee}}\| \Dd_j \nabla \delta \uu \|_{L^{2/(1-\ee)}}$. As a consequence, it holds 
\begin{align*}
	|\I\I\I_1^{(1)}|
	&\lesssim 
	\sum_{q=-1}^\infty
	\sum_{|j-q|\leq 5}
	2^{-2q}
	\|\sdm{j}\nabla \ddd_1	        \|_{L^\frac{2}{\ee}}
	\| \Dd_j \nabla \delta \uu	  	\|_{L^\frac{2}{1-\ee}}
	\| \Dd_q \nabla \delta \uu      \|_{L^2}
	\|\sdm{q}S_N\ddd_1              \|_{L^\infty}\\
	&\lesssim
	\|  \nabla \ddd_1	        \|_{L^\frac{2}{\ee}}
	\sum_{q=-1}^\infty
	\sum_{|j-q|\leq 5}
	2^{-2q}
	\| \Dd_j \nabla \delta \uu	  	\|_{L^\frac{2}{1-\ee}}
	\| \Dd_q \nabla \delta \uu      \|_{L^2}
	\|\sdm{q}S_N\ddd_1              \|_{L^\infty},
\end{align*}
where we have used the inequality  $\| \sdm{j}\nabla \ddd_1 \|_{L^{2/\ee}}\lesssim \| \nabla \ddd_1 \|_{L^{2/\ee}}$, for any $j\in \mathbb N\cup \{-1\}$. Next, we make use of the Sobolev embeddings $\| S_N \ddd_1 \|_{L^\infty} \leq C\sqrt{N} \| \ddd_1 \|_{H^1}$ in Lemma \ref{lemma:SN-infty},  
$\| \nabla \ddd_1 \|_{L^{2/\ee}}\leq C\| \nabla \ddd_1 \|_{H^{1-\ee }}/\sqrt{\ee}$ in Lemma \ref{lemma:eps} and the Bernstein inequalities in Lemma \ref{prop:Bernstein}, to gather that	
\begin{align*}
    |\I\I\I_1^{(1)}|
	&\lesssim 
	\sqrt{\frac{N}{\ee }}
	\|		\nabla \ddd_1  \|_{H^{1-\ee }}
	\|  \ddd_1              \|_{H^1}
	\sum_{q=-1}^\infty
	\sum_{|j-q|\leq 5}
	2^{-q}
	\| \Dd_j  \delta \uu	   \|_{L^2}^{1-\ee }
	\| \Dd_j  \delta \uu       \|_{L^\infty}^\ee 
	\| \Dd_q \nabla \delta \uu \|_{L^2}
	\\
	&\lesssim 
	\sqrt{\frac{N}{\ee }}
	\|		\nabla \ddd_1  \|_{H^1}^{1-\ee}
	\|      \ddd_1         \|_{H^1}^{1+\ee}
	\sum_{q=-1}^\infty
	\sum_{|j-q|\leq 5}
	2^{-q}
	\| \Dd_j  \delta \uu	        \|_{L^2}^{1-\ee }
	\| \Dd_j  \nabla \delta \uu     \|_{L^2}^\ee 
	\| \Dd_q \nabla \delta \uu      \|_{L^2}\\
	&\lesssim 
	\sqrt{\frac{N}{\ee }}
	\|		\nabla \ddd_1  \|_{H^1}^{1-\ee}
	\|      \ddd_1         \|_{H^1}^{1+\ee}
    \|      \delta \uu	        \|_{H^{-\frac{1}{2}}}^{1-\ee }
	\|      \nabla \delta \uu   \|_{H^{-\frac{1}{2}}}^{1+\ee }
	\\
	&\leq 
	C\eta^{-\frac{1+\ee}{1-\ee}}
	\|		\nabla \ddd_1		    \|_{H^1}^2
	\|      \ddd_1                  \|_{H^1}^{\frac{2(1+\ee)}{1-\ee}}
	\| \delta \uu                   \|_{H^{-\frac{1}{2}}}^2
	\Big(
	    \frac{N}{\ee}
	\Big)^{\frac{1}{1-\ee}}
	+
	\eta \nu 
	\| \nabla \delta \uu \|_{H^{-\frac{1}{2}}}^2.
\end{align*}
Therefore, it holds 
\begin{equation}\label{ineq-due-to-no-parodi-1}
    |\I\I\I_1^{(1)}|
    \leq  
    C\eta^{-\frac{1+\ee}{1-\ee}}
    \Big(\frac{N}{\ee }\Big)^{\frac{\ee}{1-\ee}}
    \|\nabla \ddd_1			\|_{H^1}^2
	\|		\ddd_1			\|_{H^1}^{\frac{2(1+\ee)}{1-\ee}}
	\| \delta \uu \|_{H^{-\frac{1}{2}}}^2
	\Big(\frac{N}{\ee }\Big)
	+
	\eta\nu 
	\| \nabla \delta \uu \|_{H^{-\frac{1}{2}}}^2.
\end{equation}
 We address now the high frequencies part   $\I\I\I_1^{(2)}$ involving $(\Id-S_N)\ddd_1$ such that
\begin{align*}
    |\I\I\I_1^{(2)}| 
    &\leq 
    	\sum_{q=-1}^\infty
	\sum_{|j-q|\leq 5}
	2^{-2q}
	\|\sdm{j}\nabla \ddd_1	    \|_{L^4}
	\| \Dd_j \nabla \delta \uu	 \|_{L^4}
	\| \Dd_q \nabla \delta \uu \|_{L^2}
	\|\sdm{q}(\Id-S_N)\ddd_1 \|_{L^\infty}\\
	&\lesssim 
	\| \nabla \ddd_1	 \|_{L^4}
    \sum_{q=-1}^\infty
	\sum_{|j-q|\leq 5}
	\| \Dd_j \delta \uu	 \|_{L^4}
	\| \Dd_q \delta \uu  \|_{L^2}
	\|\sdm{q}(\Id-S_N)\ddd_1    \|_{L^\infty}\\
	&\lesssim 
	\| \nabla \ddd_1	    \|_{H^1}^\frac{1}{2}
	\| \nabla \ddd_1	    \|_{L^2}^\frac{1}{2}
    \sum_{q=-1}^\infty
	\sum_{|j-q|\leq 5}
	\| \Dd_j \delta \uu	  	        \|_{L^2}^\frac{1}{2}
	\| \Dd_j \nabla \delta \uu	  	\|_{L^2}^\frac{1}{2}
	\| \Dd_q \delta \uu             \|_{L^2}^\frac{1}{2}
	\| \Dd_q \nabla \delta \uu      \|_{L^2}^\frac{1}{2}
	\|(\Id-S_N)\ddd_1 \|_{L^\infty}\\
	&\lesssim  
	\| \nabla \ddd_1	    \|_{H^1}^\frac{1}{2}
	\| \nabla \ddd_1	    \|_{L^2}^\frac{1}{2}
	\|  \delta \uu	        \|_{L^2}
	\|  \nabla \delta \uu	\|_{L^2}
	\|(\Id-S_N)\ddd_1   \|_{L^\infty}.
\end{align*}
To achieve an uniform bound on $\|(\Id-S_N)\ddd_1\|_{L^\infty}$, we use the fact $(\Id-S_N)\ddd_1 = \sum_{q\geq N}\Dd_q\ddd_1$ and apply the Bernstein inequality $\| \Dd_q \ddd_1 \|_{L^\infty}\leq C2^q\| \Dd_q\ddd_1 \|_{L^2}$ as in Lemma \ref{prop:Bernstein} to conclude  
\begin{align*}
    \|(\Id-S_N)\ddd_1   \|_{L^\infty}
    &\leq 
    \sum_{q= N}^\infty \| \Dd_q \ddd_1 \|_{L^\infty}
    \leq 
    \sum_{q=N}^\infty 2^q\| \Dd_q \ddd_1 \|_{L^2}
    \leq 
    \Big(\sum_{q=N}^\infty 2^{-q}\Big)^{\frac{1}{2}}
    \Big(
    \sum_{q=-1}^\infty
    2^{3q}\| \Dd_q \ddd_1 \|_{L^2}^2
    \Big)^\frac{1}{2}\\
    &\leq 
    C2^{-\frac{N}{2}}\| \ddd_1 \|_{H^\frac{3}{2}}
    \leq 
    C2^{-\frac{N}{2}}
    \| \ddd_1 \|_{H^1}^\frac{1}{2}
    \| \ddd_1 \|_{H^2}^\frac{1}{2}.
\end{align*}
Then we obtain  the estimate
\begin{equation}
     |\I\I\I_1^{(2)}| \leq 
     C
     \| \ddd_1	\|_{H^1}
     \| \ddd_1  \|_{H^2}
	 \| \delta \uu \|_{L^2}
	 \| \nabla \delta \uu \|_{L^2}
	 2^{-\frac{N}{2}},\notag 
\end{equation}
which together with \eqref{ineq-due-to-no-parodi-1} implies that
\begin{align*}
    |\I\I\I_1| & \leq 
   C\eta^{-\frac{1+\ee}{1-\ee}}
    \Big(\frac{N}{\ee }\Big)^{\frac{\ee}{1-\ee}}
    \|\nabla \ddd_1			\|_{H^1}^2
	\|		\ddd_1			\|_{H^1}^{\frac{2(1+\ee)}{1-\ee}}
	\| \delta \uu \|_{H^{-\frac{1}{2}}}^2
	\Big(\frac{N}{\ee }\Big)
	+
    \eta\nu 
	\| \nabla \delta \uu \|_{H^{-\frac{1}{2}}}^2\\
	&\quad +C
     \| \ddd_1	\|_{H^1}
     \| \ddd_1  \|_{H^2}
	 \| \delta \uu \|_{L^2}
	 \| \nabla \delta \uu \|_{L^2}
	 2^{-\frac{N}{2}}.
\end{align*}
We have to explicitly set the parameters $N\in\mathbb{N}$ and $\ee\in (0,1/2)$ in the above estimate. As a first remark, we see that if $\Phi(t) = 0$, then the right-hand side of the previous inequality is identically equal to $0$, hence no further investigation is required. 
If $\Phi(t)>0$, we set $N(t)\in \mathbb{N}$  satisfying 
\begin{align} \label{def-N-T3}
N(t) = 2\Big\lfloor \log_2\Big(1+\frac{1}{\Phi(t)}\Big)\Big\rfloor + 2
\end{align} 
and then 
\begin{align}
\ee(t):=\frac{1}{2+2\ln N(t)}\in \Big(0,\frac12\Big).
\end{align} 
Under this choice, we have 
$2^{-N(t)/2}\leq \Phi(t)$ and 
\begin{equation} \label{esNee1}
    \begin{aligned}
	\left(\frac{N}{\ee}\right)^\frac{\ee}{1-\ee}
	&= \exp\Big( \frac{\ee}{1-\ee}\ln \frac{N}{\ee}\Big)
	=\exp\Big( \frac{1}{1+2\ln N}\ln\Big( 2N\big(1+\ln N\big)\Big)\Big)\\
	&\leq \exp\Big(\frac{\ln N + \ln e + (1+\ln N)}{1+\ln N}\Big)
	\leq e^2.
\end{aligned}
\end{equation}
The above facts and $(1+\ee)/(1-\ee)\in (1,3)$ allow us to conclude that
\begin{align*}
    |\I\I\I_1|& \leq  C\eta^{-3}
    \|\nabla \ddd_1	\|_{H^1}^2
	\big(1+\| \ddd_1\|_{H^1}^{6}\big)
	\Phi(t)
	\Big(1+ \ln\Big(1+\frac{1}{\Phi(t)}\Big)\Big)
	\Big( 1+ \ln\Big(1+\ln\Big(1+\frac{1}{\Phi(t)}\Big)\Big)\Big)
	\\
	&\quad +C
     \| \ddd_1 \|_{H^1}
     \| \ddd_1 \|_{H^2}
	 \big(\| \uu_1\|_{L^2}+\|\uu_2\|_{L^2}\big)        \big(\| \nabla \uu_1\|_{L^2}+\|\nabla \uu_2\|_{L^2}\big)
	\Phi(t)
	 +
	\eta \nu  
	\| \nabla \delta \uu \|_{H^{-\frac{1}{2}}}^2.
\end{align*}
Concerning  $\I\I\I_2$, we observe that the term $(\sdm{q}-\sdm{j})\ddd_1\cdot \Dd_q\Dd_j\delta A$, satisfies a similar property like for the commutator $[\Dd_q,\,\sdm{j}\ddd_1]\cdot \Dd_j\delta A$ such that 
\begin{equation*}
    \|(\sdm{j}\ddd_1-\sdm{q}\ddd_1)\cdot \Dd_q \Dd_j\delta A \|_{L^2}
    \leq 
    C2^{-q}
    \| (\sdm{j}-\sdm{q})\nabla \ddd_1 \|_{L^\frac{2}{\ee}}
    \| \Dd_j \nabla \delta \uu  \|_{L^\frac{2}{1-\ee}}.
\end{equation*}
Then by an analogous procedure for $\I\I\I_1$, we can derive the following inequality:
\begin{align*}
    |\I\I\I_2|
    &\leq  C\eta^{-3}
    \|\nabla \ddd_1 \|_{H^1}^2
	\big(1+\| \ddd_1 \|_{H^1}^{6}\big)
	\Phi(t)
	\Big(1+ \ln\Big(1+\frac{1}{\Phi(t)}\Big)\Big)
	\Big(1+ \ln\Big( 1+\ln\Big(1+\frac{1}{\Phi(t)}\Big)\Big)\Big)
	\\
	&\quad +C
     \| \ddd_1 \|_{H^1}
     \| \ddd_1 \|_{H^2}
	 \big(\| \uu_1\|_{L^2}+\|\uu_2\|_{L^2}\big)        \big(\| \nabla \uu_1\|_{L^2}+\|\nabla \uu_2\|_{L^2}\big)
	\Phi(t)
	 +
	\eta \nu  
	\| \nabla \delta \uu \|_{H^{-\frac{1}{2}}}^2.
\end{align*}

By symmetry, the terms $\I\I\I_i'$, $i=1,2$, can be handled in the same way as for $\I\I\I_i$. Hence, it remains to control $\I\I\I_4$ and $\I\I\I_4'$. We note that these two terms have to  and can be estimated together thanks to the compatibility conditions $\lambda_1=\mu_2-\mu_3$, $\lambda_2=\mu_5-\mu_6$ (cf. Remark \ref{comm_proof}). Recalling the Bernstein inequality $\| \Dd_q f \|_{L^2}\leq C2^q\| \Dd_q f \|_{L^1}$  as depicted in Lemma \ref{prop:Bernstein}, we infer that
\begin{align*}
	|\I\I\I_4+ \I\I\I_4' |
	&= \left| - 2\sum_{q=-1}^\infty
	\sum_{j=q-5}^\infty 
	2^{-q}
	\int_{\TT^2}\Dd_q(S_{j+2} \delta A\Dd_{j}\ddd_1)\cdot\big(\Dd_q \delta A \sdm{q}\ddd_1\big)\,\dd x \right| \\
	&\lesssim
	\sum_{q=-1}^\infty
	\sum_{j=q-5}^\infty 
	2^{-q}
	\| \Dd_q(S_{j+2} \delta A\Dd_{j}\ddd_1)	\|_{L^2}
	\| \Dd_q \delta A \sdm{q}\ddd_1\|_{L^2}
	\\
	&\lesssim
	\sum_{q=-1}^\infty
	\sum_{j=q-5}^\infty 
	2^{\frac q2 }
	\| \Dd_q(S_{j+2} \delta A \Dd_{j}\ddd_1)	\|_{L^1}
	2^{-\frac{q}{2}}
	 \| \Dd_q \delta A\sdm{q}\ddd_1	\|_{L^2}
	\\
	&\lesssim
	\sum_{q=-1}^\infty
	\sum_{j=q-5}^\infty
	2^{\frac q2 }
	\| S_{j+2} \nabla \delta \uu \|_{L^2} 
	\| \Dd_j\ddd_1	\|_{L^2}
	2^{-\frac{q}{2}}
	\| \Dd_q \delta A \sdm{q}\ddd_1	\|_{L^2}\\
	&\lesssim
	\sum_{q=-1}^\infty
	\sum_{j=q-5}^\infty 
	2^{\frac{q-j}{2}}
	2^{-\frac{j}{2}}
	\| S_{j+2}  \delta \uu \|_{L^2}
	2^{2j} 
	\| \Dd_j\ddd_1	\|_{L^2}
	2^{-\frac{q}{2}}
	\| \Dd_q \delta A\sdm{q}\ddd_1	\|_{L^2}\\
	&\lesssim
	\| \ddd_1 		\|_{H^2}
	\| \delta \uu 	\|_{H^{-\frac{1}{2}}}
	\Big( \sum_{q=-1}^\infty 2^{-q}	\| \Dd_q \delta A\sdm{q}\ddd_1\|_{L^2}^2 \Big)^\frac{1}{2}\\
	&\leq C\eta^{-1}
	\| \ddd_1 \|_{H^2}^2
	\| \delta \uu \|_{H^{-\frac{1}{2}}}^2
	+
	\eta \sum_{q=-1}^\infty
	2^{-q}
	\int_{\TT^2}
	|\Dd_q \delta A\sdm{q}\ddd_1|^2\,\dd x.
\end{align*}
Collecting  the above estimates, we infer the following bound: 
\begin{equation}\label{decomp_deltaAd1od1_1}
\begin{aligned}
 &	\Big| \I\I\I  +   \I\I\I'
		+ 2 \sum_{q=-1}^\infty
		2^{-q} \int_{\TT^2}
		|\Dd_q \delta A\sdm{q}\ddd_1|^2\,\dd x
	\Big|\\
	&\quad \leq 
	 C\eta^{-3}
    \|\nabla \ddd_1	\|_{H^1}^2
	\big(1+\| \ddd_1 \|_{H^1}^{6}\big)
	\Phi(t)
	\Big(1+ \ln\Big(1+\frac{1}{\Phi(t)}\Big)\Big)
	\Big(1+ \ln\Big(1+\ln\Big(1+\frac{1}{\Phi(t)}\Big)\Big)\Big)
	\\
	&\qquad +C
     \| \ddd_1 \|_{H^1}
     \| \ddd_1 \|_{H^2}
	 \big(\| \uu_1\|_{L^2}+\|\uu_2\|_{L^2}\big)        \big(\| \nabla \uu_1\|_{L^2}+\|\nabla \uu_2\|_{L^2}\big)
	\Phi(t) + 
	C\eta^{-1}	\| \ddd_1 \|_{H^2}^2 \Phi(t)
	 \\ 
	& \qquad 
	+
	4\eta \nu  
	\| \nabla \delta \uu \|_{H^{-\frac{1}{2}}}^2
	+
	\eta
	\sum_{q=-1}^\infty
	2^{-q}
	\int_{\TT^2}
	|\Dd_q \delta A\sdm{q}\ddd_1|^2\,\dd x.
\end{aligned}
\end{equation}

Next, we estimate the term $\I$ in \eqref{decomp_deltaAd1od1}. Applying Bony's decomposition \eqref{bony-decomp} to $\Dd_j(\delta A\ddd_1)$, we get 
\begin{equation*}
	\I=\I_1+\I_2+\I_3+\I_4,
\end{equation*}
where
\begin{align*}
	\I_1
	&:=
	-
	\sum_{q=-1}^\infty
	\sum_{|j-q|\leq 5}
	\sum_{|l-j|\leq 5}
	2^{-q}
	\int_{\TT^2}\big([\Dd_q,\,\sdm{j}\ddd_1] \otimes 
	([\Dd_j,\,\sdm{l}\ddd_1]\cdot \Dd_l\delta A)\big): \Dd_q \nabla \delta \uu\,\dd x,\\
	\I_2
	&:=-
	\sum_{q=-1}^\infty
	\sum_{|j-q|\leq 5}
	\sum_{|l-j|\leq 5}
	2^{-q}
	\int_{\TT^2}\big([\Dd_q,\,\sdm{j}\ddd_1] \otimes 
	((\sdm{l}-\sdm{j})\ddd_1\cdot \Dd_j\Dd_l\delta A)\big): \Dd_q \nabla \delta \uu\,\dd x,\\
	\I_3
	&:=-
	\sum_{q=-1}^\infty
	\sum_{|j-q|\leq 5}
	2^{-q}
	\int_{\TT^2}\big([\Dd_q,\,\sdm{j}\ddd_1] \otimes (\Dd_j\delta A\,\sdm{j}\ddd_1)\big): \Dd_q \nabla \delta \uu\,\dd x,\\
	\I_4
	&:=-
	\sum_{q=-1}^\infty
	\sum_{|j-q|\leq 5}
	\sum_{l=j-5}^\infty 
	2^{-q}
	\int_{\TT^2}\big([\Dd_q,\,\sdm{j}\ddd_1] \otimes \Dd_j(S_{l+2}\delta A\, \Dd_l\ddd_1)\big): \Dd_q \nabla \delta \uu\,\dd x.
\end{align*}
Applying the commutator estimates in Lemma  \ref{prop:comm-est}, we see that 
\begin{align*}
	\big| \I_1 \big| 
	&\leq 
	\sum_{q=-1}^\infty
	\sum_{|j-q|\leq 5}
	\sum_{|l-j|\leq 5}
	2^{-2q}
	\| \sdm{j} \nabla \ddd_1 	\|_{L^\infty}
	2^{-j}
	\| \sdm{l} \nabla \ddd_1	\|_{L^\infty}
	\| \Dd_l \delta A 	  	\|_{L^2}
	\| \Dd_q \nabla \delta \uu 			\|_{L^2}\\
	&\leq 
	\sum_{q=-1}^\infty
	\sum_{|j-q|\leq 5}
	\sum_{|l-j|\leq 5}
	2^{-2q}
	\| \sdm{j} \nabla \ddd_1 			\|_{L^\infty}
	2^{-j}
	\| \sdm{l} \nabla \ddd_1			\|_{L^\infty}
	\| \Dd_l\nabla \delta \uu 	\|_{L^2}
	\| \Dd_q \nabla \delta \uu			\|_{L^2},
\end{align*}
which together with the Bernstein inequality $\| \sdm{j}\nabla \ddd_1\|_{L^\infty}\leq 2^\frac{j}{2}\| 
 \sdm{j}\nabla \ddd_1\|_{L^4}$ as depicted in Lemma \ref{prop:Bernstein} yields 
\begin{align*}
	|\I_1|&\leq 
	\sum_{q=-1}^\infty
	\sum_{|j-q|\leq 5}
	\sum_{|l-j|\leq 5}
	2^{-2q}2^{\frac{j}{2}}
	\| \sdm{j} \nabla \ddd_1 	\|_{L^4}
	2^{-j}2^{\frac{l}{2}}
	\| \sdm{l} \nabla \ddd_1	\|_{L^4}
	\| \Dd_l \nabla \delta \uu  	  	\|_{L^2}
	2^q
	\| \Dd_q  \delta \uu 			\|_{L^2}\\
	&\lesssim 
	\| \nabla \ddd_1 	\|_{L^4}^2
	\sum_{q=-1}^\infty
	\sum_{|j-q|\leq 5}
	\sum_{|l-j|\leq 5}
	2^{-\frac{l}{2}}	
	2^{l-j}
	\| \Dd_l \nabla \delta \uu  	  	\|_{L^2}
	2^{-\frac{q}{2}} 2^{\frac{j-q}{2}}
	\| \Dd_q  \delta \uu \|_{L^2}
	\\
	&\lesssim 
	\| \nabla \ddd_1 \|_{L^2}
	\| \nabla  \ddd_1 \|_{H^1}
	\|	\delta \uu 				\|_{H^{-\frac{1}{2}}}
	\| \nabla	\delta \uu 		\|_{H^{-\frac{1}{2}}}\\
	&\lesssim 
	C\eta^{-1}
	\| \nabla \ddd_1 \|_{L^2}^2
	\| \nabla  \ddd_1 \|_{H^1}^2
	\|	\delta \uu 				\|_{H^{-\frac{1}{2}}}^2
	+
	\eta \nu 
	\| \nabla	\delta \uu 		\|_{H^{-\frac{1}{2}}}^2.
\end{align*}
The argument for the estimate of $\I_2$ is analogous to the one for $\I_1$. Hence, we have 
\begin{equation*}
	\big| \I_2 \big| 
	\leq 
	C\eta^{-1}
	\| \nabla \ddd_1 \|_{L^2}^2
	\| \nabla \ddd_1 \|_{H^1}^2
	\|	\delta \uu 				\|_{H^{-\frac{1}{2}}}^2
	+
	\eta\nu  
	\| \nabla	\delta \uu 		\|_{H^{-\frac{1}{2}}}^2.
\end{equation*}
Concerning $\I_3$, it follows from the commutator estimates in Lemma \ref{prop:comm-est} that
\begin{align*}
	|\I_3|
	&\lesssim 
	\sum_{q=-1}^\infty
	\sum_{|j-q|\leq 5}
	2^{-2q}
	\| \sdm{j} \nabla \ddd_1		 \|_{L^4}
	\| \Dd_j\delta A\,\sdm{j}\ddd_1 \|_{L^2} 
	\|\Dd_q \nabla \delta \uu 		 \|_{L^4}\\
	&\lesssim
	\sum_{q=-1}^\infty
	\sum_{|j-q|\leq 5}
	2^{-2q}
	\| \sdm{j} \nabla \ddd_1		 \|_{L^4}
	\| \Dd_j\delta A\,\sdm{j}\ddd_1 \|_{L^2}
	2^q 
	\|\Dd_q \delta \uu 		 \|_{L^4}\\
	&\lesssim
	\| \nabla \ddd_1		 \|_{L^4}
	\sum_{q=-1}^\infty
	\sum_{|j-q|\leq 5}
	2^{-q}
	\| \Dd_j\delta A\,\sdm{j}\ddd_1 \|_{L^2} 
	\|\Dd_q  \delta \uu 		 	 \|_{L^2}^\frac{1}{2}
	\|\Dd_q \nabla \delta \uu 		 \|_{L^2}^\frac{1}{2}\\
	&\lesssim
	\| \nabla \ddd_1		 \|_{L^2}^\frac{1}{2}
	\| \nabla \ddd_1		 \|_{H^1}^\frac{1}{2}
	\sum_{q=-1}^\infty
	\sum_{|j-q|\leq 5}
	2^{\frac{j-q}{2}} 2^{-\frac{j}{2}}
	\| \Dd_j\delta A\,\sdm{j}\ddd_1 \|_{L^2} 
	\big(2^{-\frac{q}{2}}\|\Dd_q  \delta \uu 		 	 \|_{L^2}\big)^\frac{1}{2}
	\big(2^{-\frac{q}{2}}\|\Dd_q \nabla \delta \uu 	 \|_{L^2}\big)^\frac{1}{2}\\
	&\lesssim
	\| \nabla \ddd_1		 \|_{L^2}^\frac{1}{2}
	\| \nabla \ddd_1		 \|_{H^1}^\frac{1}{2}
	\Big(
	\sum_{j=-1}^\infty
	2^{-j}
	\| \Dd_j\delta A\,\sdm{j}\ddd_1 \|_{L^2}^2
	\Big)^\frac{1}{2}
	\| \delta \uu 		 	 \|_{H^{-\frac{1}{2}}}^\frac{1}{2}
	\| \nabla \delta \uu	 \|_{H^{-\frac{1}{2}}}^\frac{1}{2}\\
	&\leq 
	C\eta^{-3}
	\| \nabla \ddd_1 \|_{L^2}^2
	\| \nabla \ddd_1 \|_{H^1}^2
	\| \delta \uu 		 	 \|_{H^{-\frac{1}{2}}}^2
	+
	\eta \nu 
	\| \nabla \delta \uu 	\|_{H^{-\frac{1}{2}}}^2 
	+ \eta   \sum_{j=-1}^\infty 2^{-j}
	\int_{\TT^2}| \Dd_j\delta A\,\sdm{j}\ddd_1|^2\,\dd x.
\end{align*}
Then we estimate the last term $\I_4$: 
\begin{align*}
	|\I_4|
	&\lesssim
	\sum_{q=-1}^\infty
	\sum_{|j-q|\leq 5}
	\sum_{l=j-5}^\infty 
	2^{-2q}
	\| \sdm{j} \nabla \ddd_1 					\|_{L^\infty} 
	\| \Dd_j(S_{l+2}\delta A\, \Dd_l\ddd_1)	\|_{L^2} 
	\| \Dd_q \nabla \delta \uu \|_{L^2}\\
	&\lesssim
	\sum_{q=-1}^\infty
	\sum_{|j-q|\leq 5}
	\sum_{l=j-5}^\infty 
	2^{-q}
	\| \sdm{j} \nabla \ddd_1 					\|_{L^2} 
	\| \Dd_j(S_{l+2}\delta A\, \Dd_l\ddd_1)	\|_{L^2} 
	\| \Dd_q\nabla  \delta \uu 				 	\|_{L^2}
	\\
	&\lesssim
	\sum_{q=-1}^\infty
	\sum_{|j-q|\leq 5}
	\sum_{l=j-5}^\infty 
	2^{-\frac q2 }
	\| \sdm{j} \nabla \ddd_1 					\|_{L^2} 
	2^j
	\| \Dd_j(S_{l+2}\delta A\, \Dd_l\ddd_1)	\|_{L^1} 
	2^{-\frac q2 }
	\| \Dd_q 	 \delta \uu 				 	\|_{L^2}
	\\
	&\lesssim
	\| 			\nabla \ddd_1 					\|_{L^2} 
	\sum_{q=-1}^\infty
	\sum_{|j-q|\leq 5}
	\sum_{l=j-5}^\infty 
	2^{\frac{j-q}{2}}
	2^{\frac{j-l}{2}}
	2^{-\frac{l}{2}}
	\| S_{l+2}\delta A 							\|_{L^2}
	2^{l}
	\|\Dd_l\ddd_1 								\|_{L^2} 
	2^{-\frac q2 }
	\| \Dd_q 	 \delta \uu 				 	\|_{L^2}
	\\
	&\lesssim
	\| \nabla \ddd_1 		\|_{L^2}
	\| \ddd_1 				\|_{H^1}
	\| \delta \uu			\|_{H^{-\frac{1}{2}}}
	\| \nabla \delta \uu 	\|_{H^{-\frac{1}{2}}}\\
	& \leq 
	C\eta^{-1}\| \ddd_1 				\|_{H^1}^4
	\| \delta \uu			\|_{H^{-\frac{1}{2}}}^2 
	+ \eta \nu 
	\| \nabla \delta \uu			\|_{H^{-\frac{1}{2}}}^2.
\end{align*}
As a consequence, we infer from the above estimates that 
\begin{align}
	|\I| & \leq C\eta^{-3} \| \ddd_1 \|_{H^1}^2\| \ddd_1 \|_{H^2}^2
	\| \delta \uu			\|_{H^{-\frac{1}{2}}}^2 +
	4 \eta\nu 
	\| \nabla \delta \uu			\|_{H^{-\frac{1}{2}}}^2 
	+ \eta \sum_{j=-1}^\infty 2^{-j}
	\int_{\TT^2}| \Dd_j\delta A\,\sdm{j}\ddd_1|^2\,\dd x. 
	\label{decomp_deltaAd1od1_2}
\end{align}
With a similar procedure, we can show that $\I\I$ satisfies
\begin{equation}\label{decomp_deltaAd1od1_3}
	|\I\I|\leq C\eta^{-1}\| \ddd_1 \|_{H^1}^2\| \ddd_1 \|_{H^2}^2 
	\| \delta \uu			\|_{H^{-\frac{1}{2}}}^2 +
	4 \eta \nu 
	\| \nabla \delta \uu			\|_{H^{-\frac{1}{2}}}^2 
	+ \eta  \sum_{j=-1}^\infty 2^{-j}
	\int_{\TT^2}| \Dd_j\delta A\,\sdm{j}\ddd_1|^2\,\dd x.
\end{equation}
Again by symmetry, $\I'$, $\I\I'$ can be handled in the same way as for $\I$ and $\I\I$, respectively. 

It remains to analyse $\I\mathcal{V}$ as well as $\I\mathcal{V}'$. Decomposing the term $\I\mathcal{V}$ by means of the Bony's decomposition \eqref{bony-decomp}, we obtain 
\begin{align*}
	\I\mathcal{V} 
	= 
	&
	-
	\sum_{q=-1}^\infty
	\sum_{j=q-5}^\infty 
	2^{-q}
	\int_{\TT^2}\Dd_q\big(\Dd_{j}\ddd_1\otimes S_{j+2}(\delta A \ddd_1)\big):\Dd_q \nabla \delta \uu \,\dd x
	\\
	=
	&-
	\sum_{q=-1}^\infty
	\sum_{j=q-5}^\infty 
	\sum_{l=-1}^{j+6}
	2^{-q}
	\int_{\TT^2}\Dd_q \big( \Dd_{j}\ddd_1 \otimes S_{j+2} \big(\Dd_l(\delta A \ddd_1)\big)\big) :\Dd_q \nabla \delta \uu \,\dd x
	\\
	=
	&
	-\sum_{q=-1}^\infty
	\sum_{j=q-5}^\infty 
	\sum_{l=-1}^{j+6}
	\sum_{|t-l|\leq 5}
	2^{-q}
	\int_{\TT^2}\Dd_q \big(\Dd_{j}\ddd_1 \otimes S_{j+2}\big([\Dd_l,\, \sdm{t}\ddd_1]\cdot\Dd_t \delta A\big)\big):
	\Dd_q \nabla \delta \uu \,\dd x \\
	&-
	\sum_{q=-1}^\infty
	\sum_{j=q-5}^\infty 
	\sum_{l=-1}^{j+6}
	\sum_{|t-l|\leq 5}
	2^{-q}
	\int_{\TT^2}\Dd_q \big(\Dd_{j}\ddd_1 \otimes S_{j+2}\big((\sdm{t}-\sdm{l})\ddd_1\cdot\Dd_l\Dd_t \delta A\big)\big) :
	\Dd_q \nabla \delta \uu \,\dd x\\
	&-\sum_{q=-1}^\infty
	\sum_{j=q-5}^\infty 
	\sum_{l=-1}^{j+6}
	2^{-q}
	\int_{\TT^2}\Dd_q \big(\Dd_{j}\ddd_1 \otimes S_{j+2}\big(\Dd_l \delta A \sdm{l}\ddd_1\big)\big):
	\Dd_q \nabla \delta \uu \,\dd x\\
	&-
	\sum_{q=-1}^\infty
	\sum_{j=q-5}^\infty 
	\sum_{l=-1}^{j+6}
	\sum_{t=l-5}^\infty 
	2^{-q}
	\int_{\TT^2}\Dd_q \big(\Dd_{j}\ddd_1 \otimes S_{j+2}\big(\Dd_l(\Dd_{t}\ddd_1\cdot S_{t+2} \delta A)\big)\big):
	\Dd_q \nabla \delta \uu \,\dd x \\
	=: &\  
	\I\mathcal{V}_1+\I\mathcal{V}_2+\I\mathcal{V}_3+\I\mathcal{V}_4.
\end{align*}
We begin with the estimate for $\I\mathcal{V}_1$:
\begin{align*}
	|\I\mathcal{V}_1|
	&\lesssim 
	\sum_{q=-1}^\infty
	\sum_{j=q-5}^\infty
	\sum_{l=-1}^{j+6}
	\sum_{|t-l|\leq 5}
	2^{-q}
	\|\Dd_q(\Dd_{j}\ddd_1\otimes S_{j+2}([\Dd_l,\, \sdm{t}\ddd_1]\cdot\Dd_t \delta A))\|_{L^2}
	\|\Dd_q \nabla \delta \uu \|_{L^2}
	\\
	&\lesssim
	\sum_{q=-1}^\infty
	\sum_{j=q-5}^\infty 
	\sum_{l=-1}^{j+6}
	\sum_{|t-l|\leq 5}
	\| \Dd_q(\Dd_{j}\ddd_1\otimes S_{j+2}([\Dd_l,\, \sdm{t}\ddd_1]\cdot\Dd_t \delta A))\|_{L^1}
	\| \Dd_q \nabla \delta \uu \|_{L^2}
	\\
	&\lesssim
	\sum_{q=-1}^\infty
	\sum_{j=q-5}^\infty 
	\sum_{l=-1}^{j+6}
	\sum_{|t-l|\leq 5}
	\| \Dd_{j}\ddd_1 \|_{L^2}
	\| S_{j+2}([\Dd_l,\, \sdm{t}\ddd_1]\cdot\Dd_t \delta A))\|_{L^2}
	2^q
	\| \Dd_q  \delta \uu \|_{L^2}
	\\
	&\lesssim
	\sum_{q=-1}^\infty
	\sum_{j=q-5}^\infty 
	\sum_{l=-1}^{j+6}
	\sum_{|t-l|\leq 5}
	\| \Dd_{j}\ddd_1 \|_{L^2}
	2^{-l}
	\| \sdm{t}\nabla \ddd_1\|_{L^\infty} \| \Dd_t \delta A \|_{L^2}
	2^q
	\| \Dd_q  \delta \uu \|_{L^2}
	\\
	&\lesssim
	\sum_{q=-1}^\infty
	\sum_{j=q-5}^\infty 
	\sum_{t=-1}^{j+11}
	2^{\frac{3}{2}(q-j)}
	2^{\frac{3}{2}j}
	\| \Dd_{j} \ddd_1 \|_{L^2}
	\| \sdm{t}\nabla \ddd_1\|_{L^2}
	2^{\frac{t}{2}} 
	2^{-\frac{t}{2}}
	\| \Dd_t \nabla \delta \uu \|_{L^2}
	2^{-\frac{q}{2}}
	\| \Dd_q  \delta \uu \|_{L^2}
	\\
	&\lesssim
	\| \nabla \ddd_1\|_{L^2}
	\sum_{q=-1}^\infty
	\sum_{j=q-5}^\infty 
	2^{\frac{3}{2}(q-j)}
	2^{\frac{3}{2}j}
	\| \Dd_{j} \ddd_1 \|_{L^2}
	\Big(
	\sum_{t=-1}^{j+11}
	2^{t}
	\Big)^\frac{1}{2}
	\Big( \sum_{t=-1}^{j+11}
		2^{-t} \| \Dd_t \nabla \delta \uu \|_{L^2}^2 \Big)^\frac{1}{2}
	2^{-\frac{q}{2}}
	\| \Dd_q  \delta \uu \|_{L^2}
	\\
	&\lesssim
	\| \nabla \ddd_1\|_{L^2}
	\| \nabla \delta \uu \|_{H^{-\frac{1}{2}}}
	\sum_{q=-1}^\infty
	\sum_{j=q-5}^\infty 
	2^{2j}
	\| \Dd_{j} \ddd_1 \|_{L^2}
	2^{-\frac{q}{2}}
	\| \Dd_q  \delta \uu \|_{L^2}
	\\
	&\lesssim
	\| \nabla \ddd_1\|_{L^2}
	\| \nabla \delta \uu \|_{H^{-\frac{1}{2}}}
	\|   \ddd_1\|_{H^2}
	\|  \delta \uu \|_{H^{-\frac{1}{2}}}\\
	&\leq C\eta^{-1} \| \nabla \ddd_1\|_{L^2}^2
	\|   \ddd_1\|_{H^2}^2
	\|  \delta \uu \|_{H^{-\frac{1}{2}}}^2
	+\eta\nu \| \nabla \delta \uu \|_{H^{-\frac{1}{2}}}^2.
\end{align*}
With a similar procedure, we can deduce the estimate for $\I\mathcal{V}_2$: 
\begin{equation*}
	|\I\mathcal{V}_2|\leq C\eta^{-1} \| \nabla \ddd_1\|_{L^2}^2
	\|   \ddd_1\|_{H^2}^2
	\|  \delta \uu \|_{H^{-\frac{1}{2}}}^2
	+ \eta\nu   \| \nabla \delta \uu \|_{H^{-\frac{1}{2}}}^2.
\end{equation*}
Next, we handle $\I\mathcal{V}_3$. It follows that 
\begin{align*}
	|\I\mathcal{V}_3|
	&\lesssim
	\sum_{q=-1}^\infty
	\sum_{j=q-5}^\infty 
	\sum_{l=-1}^{j+6}
	2^{-q}
	\|\Dd_q(\Dd_{j}\ddd_1\otimes S_{j+2}(\Dd_l \delta A \sdm{l}\ddd_1))\|_{L^2}
	\| \Dd_q \nabla \delta \uu \|_{L^2}\\
	&\lesssim
	\sum_{q=-1}^\infty
	\sum_{j=q-5}^\infty 
	\sum_{l=-1}^{j+6}
	\|\Dd_q(\Dd_{j}\ddd_1\otimes S_{j+2}(\Dd_l \delta A \sdm{l}\ddd_1))\|_{L^1}
	\| \Dd_q \nabla \delta \uu \|_{L^2}\\
	&\lesssim
	\sum_{q=-1}^\infty
	\sum_{j=q-5}^\infty 
	\sum_{l=-1}^{j+6}
	\|\Dd_{j}\ddd_1 							\|_{L^2}
	\| S_{j+2}(\Dd_l \delta A \sdm{l}\ddd_1)	\|_{L^2}
	\| \Dd_q \nabla \delta \uu 					\|_{L^2}\\
	&\lesssim
	\sum_{q=-1}^\infty
	\sum_{j=q-5}^\infty 
	\sum_{l=-1}^{j+6}
	2^{\frac{3}{2}(q-j)}
	2^{\frac 32 j}
	\|\Dd_{j}\ddd_1 						\|_{L^2}
	2^{\frac{l}{2}}\big(
	2^{-\frac{l}{2}}
	\| \Dd_l \delta A 	\sdm{l}\ddd_1		\|_{L^2}\big)
	2^{-\frac{q}{2}}
	\| \Dd_q \delta \uu 					\|_{L^2}\\
	&\lesssim
	\sum_{q=-1}^\infty
	\sum_{j=q-5}^\infty 
	2^{\frac{3}{2}(q-j)}
	2^{\frac 32 j}
	\|\Dd_{j}\ddd_1 						\|_{L^2}
	\Big( \sum_{l=-1}^{j+6}
		2^{l} \Big)^\frac{1}{2}
	\Big( \sum_{l=-1}^{j+6}
		2^{-l} \| \Dd_l \delta A 	\sdm{l}\ddd_1 \|_{L^2}^2 \Big)^\frac{1}{2}	
	2^{-\frac{q}{2}}
	\| \Dd_q \delta \uu 					\|_{L^2}\\
	&\lesssim
	\Big(
		\sum_{l=-1}^{+\infty}
		2^{-l} \| \Dd_l \delta A 	\sdm{l}\ddd_1 \|_{L^2}^2
	\Big)^\frac{1}{2}
	\sum_{q=-1}^\infty
	\sum_{j=q-5}^\infty 
	2^{2j}
	\|\Dd_{j}\ddd_1 						\|_{L^2}
	2^{-\frac{q}{2}}
	\| \Dd_q \delta \uu 					\|_{L^2}\\
	&\lesssim
	\Big(
		\sum_{l=-1}^{+\infty}
		2^{-l}
		\| \Dd_l \delta A \sdm{l}\ddd_1		\|_{L^2}^2
	\Big)^\frac{1}{2}
	\| \ddd_1 \|_{H^2}
	\| \delta \uu \|_{H^{-\frac{1}{2}}}\\
	&\leq 
	C\eta^{-1}
	\| \ddd_1 \|_{H^2}^2
	\| \delta \uu \|_{H^{-\frac{1}{2}}}^2
	+
	\eta \sum_{q=-1}^{+\infty}
		2^{-q} \int_{\TT^2}| \Dd_q \delta A 	\sdm{q}\ddd_1|^2.
\end{align*}
Concerning the last term $\I\mathcal{V}_4$, we have 
\begin{align*}
	|\I\mathcal{V}_4|
	&\lesssim
	\sum_{q=-1}^\infty
	\sum_{j=q-5}^\infty 
	\sum_{l=-1}^{j+6}
	\sum_{t=l-5}^\infty 
	2^{-q}
	\| 
		\Dd_q(\Dd_{j}\ddd_1\otimes S_{j+2}(\Dd_l(\Dd_{t}\ddd_1\cdot S_{t+2} \delta A)))\|_{L^2}
	\| \Dd_q \nabla \delta \uu \|_{L^2}\\
	&\lesssim
	\sum_{q=-1}^\infty
	\sum_{j=q-5}^\infty 
	\sum_{l=-1}^{j+6}
	\sum_{t=l-5}^\infty 
	\| \Dd_q(\Dd_{j}\ddd_1\otimes S_{j+2}(\Dd_l(\Dd_{t}\ddd_1\cdot S_{t+2} \delta A)))\|_{L^1}
	\| \Dd_q \nabla \delta \uu \|_{L^2}\\
	&\lesssim
	\sum_{q=-1}^\infty
	\sum_{j=q-5}^\infty 
	\sum_{l=-1}^{j+6}
	\sum_{t=l-5}^\infty 
	2^{q}
	\| \Dd_{j}\ddd_1 \|_{L^2}
	\|	\Dd_l(\Dd_{t}\ddd_1\cdot S_{t+2} \delta A)\|_{L^2}
	\| \Dd_q \delta \uu \|_{L^2}\\
	&\lesssim
	\sum_{q=-1}^\infty
	\sum_{j=q-5}^\infty 
	\sum_{l=-1}^{j+6}
	\sum_{t=l-5}^\infty 
	2^{q}
	\| \Dd_{j}\ddd_1 \|_{L^2}
	2^l
	\|	\Dd_l(\Dd_{t}\ddd_1\cdot S_{t+2} \delta A)\|_{L^1}
	\| \Dd_q \delta \uu \|_{L^2}\\
	&\lesssim
	\sum_{q=-1}^\infty
	\sum_{j=q-5}^\infty 
	\sum_{l=-1}^{j+6}
	\sum_{t=l-5}^\infty 
	2^{q}
	\| 	\Dd_{j}\ddd_1 \|_{L^2}
	2^l
	\|	\Dd_{t}\ddd_1 \|_{L^2}
	\| 	S_{t+2} \delta A \|_{L^2}
	\| \Dd_q \delta \uu \|_{L^2}
	\\
	&\lesssim
	\sum_{q=-1}^\infty
	\sum_{j=q-5}^\infty 
	2^{q}
	\| 	\Dd_{j}\ddd_1 \|_{L^2}
	\| \Dd_q \delta \uu \|_{L^2}
	\sum_{l=-1}^{j+6}
	2^\frac{l}{2}
	\sum_{t=l-5}^\infty 
	2^{\frac{1}{2}(l-t)}\mathbf{1}_{(-\infty,5]}(l-t)
	2^{t}
	\|	\Dd_{t}\ddd_1 \|_{L^2}
	2^{-\frac{t}{2}}
	\| 	S_{t+2} \delta A \|_{L^2}
	\\
	&\lesssim 
	\sum_{q=-1}^\infty
	\sum_{j=q-5}^\infty 
	2^{q}
	\| 	\Dd_{j}\ddd_1 \|_{L^2}
	\| \Dd_q \delta \uu \|_{L^2}
	\Big( \sum_{l=-1}^{j+6}
		2^{l} \Big)^\frac{1}{2}
	\Big( \sum_{l=-1}^{\infty}
		\Big| \sum_{t=1}^\infty
			2^{\frac{1}{2}(l-t)}\mathbf{1}_{(-\infty,5]}(l-t)
			2^{ t}
			\| \Dd_{t}\ddd_1 \|_{L^2}
			2^{-\frac{t}{2}}
			\| S_{t+2} \delta A \|_{L^2}
		\Big|^2 
	\Big)^\frac{1}{2}\\
	&\lesssim
	\|	\ddd_1	\|_{H^1}
	\| \nabla \delta \uu	\|_{H^{-\frac{1}{2}}}
	\sum_{q=-1}^\infty
	\sum_{j=q-5}^\infty 
	2^{q} 2^\frac{j}{2}
	\| 	\Dd_{j}\ddd_1 \|_{L^2}
	\| \Dd_q \delta \uu \|_{L^2}\\
	&\lesssim
	\|	\ddd_1 \|_{H^1}
	\| \nabla \delta \uu	\|_{H^{-\frac{1}{2}}}
	\sum_{q=-1}^\infty
	\sum_{j=q-5}^{\infty}
	2^{\frac 32 (q-j)}
	2^{2j}
	\| 	\Dd_{j}\ddd_1 \|_{L^2}
	2^{-\frac{q}{2}}
	\| \Dd_q \delta \uu \|_{L^2}\\
	&\lesssim
	\|	\ddd_1	\|_{H^1}
	\| 	\ddd_1	\|_{H^2}
	\| \nabla \delta \uu	\|_{H^{-\frac{1}{2}}}
	\| \delta \uu 			\|_{H^{-\frac{1}{2}}}\\
	&\leq C\eta^{-1}
	\|	\ddd_1	\|_{H^1}^2
	\| 	\ddd_1	\|_{H^2}^2
	\| \delta \uu 			\|_{H^{-\frac{1}{2}}}^2
	+ \eta\nu \| \nabla \delta \uu	\|_{H^{-\frac{1}{2}}}^2.
\end{align*}
Collecting the above estimates, we conclude that 
\begin{equation}\label{decomp_deltaAd1od1_4}
	|\I\mathcal{V}|\leq C\eta^{-1}
	(1+\|	\ddd_1				\|_{H^1}^2)
	\| \ddd_1 		\|_{H^2}^2
	\| \delta \uu 	\|_{H^{-\frac{1}{2}}}^2
	+
	\eta 
	\sum_{q=-1}^{+\infty}
		2^{-q}
	\int_{\TT^2}| \Dd_q \delta A 	\sdm{q}\ddd_1|^2\,\dd x
	+3\eta \nu\| \nabla \delta \uu	\|_{H^{-\frac{1}{2}}}^2.
\end{equation}
Again, the term $\I\mathcal{V}'$ can be estimated in the same way as for $\I\mathcal{V}.$ 

Hence, combining the above estimates  \eqref{decomp_deltaAd1od1_1}, \eqref{decomp_deltaAd1od1_2}, \eqref{decomp_deltaAd1od1_3} and \eqref{decomp_deltaAd1od1_4}, we finally deduce the inequality \eqref{new-est-T3-n1}. 

Now we turn to estimate the second term of $\mathcal{T}_3$, namely, $-\langle  \ddd_1\otimes (A_2 \delta \ddd),\,\nabla \delta \uu \rangle_{H^{-1/2}}$. We anticipate that the estimate of this term depends on the logarithm of the function $\Phi(t)$, as in \eqref{def-Phi-D}. Like before, if $\Phi(t)=0$ then it is identically null and no further discussion is needed. Thus, we address the scenario with  $\Phi(t)>0$. We begin with splitting the low and high frequencies of $\ddd_1 = S_N\ddd_1 + (\Id-S_N)\ddd_1$, where the parameter $N\in \mathbb{N}$ will be chosen later. Then we have  
\begin{equation}\label{A2deltadotimesd1-first}
\begin{aligned}
	\big|\langle
		\ddd_1\otimes (A_2\delta \ddd)
		,\,
		\nabla \delta \uu
	\rangle_{H^{-\frac{1}{2}}}
	\big|
	&\leq 
	\big|
	\langle
		S_N\ddd_1\otimes (A_2\delta \ddd)
		,\,
		\nabla \delta \uu
	    \rangle_{H^{-\frac{1}{2}}}
	\big|+
	\big|
	\langle
		(\Id- S_N)\ddd_1\otimes (A_2\delta \ddd)
		,\,
		\nabla \delta \uu
	\rangle_{H^{-\frac{1}{2}}}
	\big|.
\end{aligned}
\end{equation}
We first handle the low frequencies part involving  $S_N\ddd_1$. Invoking Lemma \ref{lemma:product} about the continuity of the product in $L^2\times H^{1/2}\to H^{-1/2}$ and Lemma \ref{lemma:SN-infty} about the Sobolev embedding $\| S_N \ddd_1 \|_{L^\infty} \leq C \sqrt{N} \| \ddd_1 \|_{H^1}$, we see that 
\begin{equation}\label{A2deltadotimesd1-second}
\begin{aligned}
	\big|
	\langle
		S_N\ddd_1\otimes (A_2\delta \ddd)
		,\,
		\nabla \delta \uu
	\rangle_{H^{-\frac{1}{2}}}
	\big|
	&\lesssim
	\| (A_2\delta \ddd)\otimes S_N\ddd_1  \|_{H^{-\frac{1}{2}}}
	\| \nabla \delta \uu	\|_{H^{-\frac{1}{2}}}\\
	&\lesssim
	\| A_2 	\|_{L^2}
	\| S_N\ddd_1 \|_{L^\infty} 
	\| \delta \ddd 			\|_{H^{\frac{1}{2}}} 
	\| \nabla \delta \uu	\|_{H^{-\frac{1}{2}}}\\
	&\lesssim
	\| \nabla\uu_2 	\|_{L^2}
	\sqrt{N}
	\| \ddd_1 \|_{H^1}
	\| \delta \ddd 			\|_{H^{\frac{1}{2}}} 
	\| \nabla \delta \uu	\|_{H^{-\frac{1}{2}}}
	\\
	&\leq 
	C\eta^{-1}
	\| \nabla\uu_2 	\|_{L^2}^2
	\| \ddd_1 		\|_{H^1}^2	
	\| \delta \ddd 			\|_{H^{\frac{1}{2}}}^2
	N
	+\eta 
	\| \nabla \delta \uu	\|_{H^{-\frac{1}{2}}}^2.
\end{aligned}
\end{equation}
Next, we control the high frequency part. Keeping in mind Lemma \ref{lemma:product} about the continuity of the product between $L^2\times H^{1/4}$ and $H^{-3/4}$, we deduce that
\begin{align*}
	&\big|\langle
		(\Id- S_N)\ddd_1\otimes (A_2\delta \ddd)
		,\,
		\delta A
	\rangle_{H^{-\frac{1}{2}}}\big|\\
	&\quad \lesssim
	\|(A_2\delta \ddd)\otimes(\Id- S_N)\ddd_1 \|_{H^{-\frac{3}{4}}} 
	\| \nabla \delta \uu \|_{H^{-\frac{1}{4}}}\\
	&\quad \lesssim
	\|A_2 \delta \ddd   \|_{L^2}
	\|(\Id- S_N)\ddd_1	\|_{H^\frac{1}{4}}
	\| \delta \uu 		\|_{H^\frac{3}{4}}\\
	&\quad \lesssim
	\|	\nabla\uu_2		\|_{L^2}
	\| \delta \ddd 		\|_{L^\infty}
	\|(\Id- S_N)\ddd_1	\|_{H^\frac{1}{4}}
	\| \delta \uu 		\|_{L^2}^\frac{1}{4}
	\| \nabla \delta \uu\|_{L^2}^\frac{3}{4}
	\\
	&\quad \lesssim
	\big(\| \uu_1\|_{L^2}^\frac14 + \|\uu_2  \|_{L^2}^\frac{1}{4}\big)
	\|\nabla \uu_2\|_{L^2}
	\big(\|	\nabla\uu_1\|_{L^2}^\frac34 + \|\nabla\uu_2  \|_{L^2}^\frac{3}{4}\big)
	\big(\| \ddd_1\|_{H^\frac{5}{4}} +\|\ddd_2 \|_{H^\frac{5}{4}}\big)
	\|(\Id- S_N)\ddd_1\|_{H^\frac{1}{4}}. 
\end{align*}
Using the interpolation 
$\|\ddd_i\|_{H^{5/4}}\lesssim 
\|\ddd_i\|_{H^1}^{3/4}
\|\ddd_i\|_{H^2}^{1/4}$, $i=1,2$,  
and the following estimate
\begin{equation*}
    \|(\Id- S_N)\ddd_1	\|_{H^\frac{1}{4}}
    \lesssim \Big( \sum_{q = N-5}^\infty 2^{\frac{q}{2}}\| \Dd_q \ddd_1 \|_{L^2}^2 \Big)^\frac{1}{2}
    \lesssim 
    2^{-\frac{3}{4}N} \|\ddd_1 \|_{H^1},
\end{equation*}
we infer that 
\begin{equation}\label{A2deltadotimesd1-third}\begin{aligned} 
    & \big|\langle
		(\Id- S_N)\ddd_1\otimes  (A_2\delta \ddd),\, \delta A
	\rangle_{H^{-\frac{1}{2}}}\big|\\
	&\quad 	\lesssim
	(\|\uu_1\|_{L^2}^\frac14
	  +\|\uu_2\|_{L^2}^\frac{1}{4})
	(\|\ddd_1\|_{H^1}^\frac74 +\|\ddd_2 		      \|_{H^1}^\frac{7}{4})
	\big(\|	\nabla\uu_1\|_{L^2}^\frac74 + \|\nabla\uu_2  \|_{L^2}^\frac{7}{4}\big)
	(\|\ddd_1\|_{H^2}^\frac14+\|\ddd_2 		      \|_{H^2}^\frac{1}{4})
	2^{-\frac{3}{4}N}.
\end{aligned} 
\end{equation}
For any time $t\in (0,T)$, we take the integer $N= N(t)\in\mathbb{N}$ as in \eqref{def-N-T3},  namely, 
$N(t) = 2\big\lfloor \log_2\big(1+\Phi(t)^{-1}\big)\big\rfloor + 2$,
which implies 
\begin{equation}\label{to-cite-N-1}
2^{-\frac{3}{4}N(t)} \lesssim  \frac{\Phi(t)^\frac32}{(1+\Phi(t))^\frac32} \lesssim  \Phi(t).
\end{equation}
Thus, combining the estimates  \eqref{A2deltadotimesd1-first}, \eqref{A2deltadotimesd1-second} and  \eqref{A2deltadotimesd1-third}, 
we finally gather that
\begin{equation} \label{decomp_deltaA2dod1_1}
\begin{aligned}
	&\Big|
		\langle
		\ddd_1\otimes (A_2\delta \ddd)
		,\,
		\nabla \delta \uu
	\rangle_{H^{-\frac{1}{2}}}
	\Big|\\
	&\quad  \lesssim 
	C\eta^{-1}
	\| \nabla\uu_2 	\|_{L^2}^2
	\| \ddd_1 		\|_{H^1}^2	
	\Phi(t)
	\Big(
		1+
		\ln\left(1+\frac{1}{\Phi(t)}\right)
	\Big)
	+\eta \nu 
	\| \nabla \delta \uu	\|_{H^{-\frac{1}{2}}}^2\\
	&\qquad +
	(\|\uu_1\|_{L^2}^\frac14
	  +\|\uu_2\|_{L^2}^\frac{1}{4})
	(\|\ddd_1\|_{H^1}^\frac74 +\|\ddd_2 		      \|_{H^1}^\frac{7}{4})
	\big(\|	\nabla\uu_1\|_{L^2}^\frac74 + \|\nabla\uu_2  \|_{L^2}^\frac{7}{4}\big)
	(\|\ddd_1\|_{H^2}^\frac14+\|\ddd_2 		      \|_{H^2}^\frac{1}{4})
	\Phi(t).
\end{aligned}
\end{equation}
By symmetry, the fourth term in $\mathcal{T}_3$ can be treated in a similar manner for \eqref{decomp_deltaA2dod1_1}.

Hence, in view of \eqref{new-est-T3-n1} and \eqref{decomp_deltaA2dod1_1}, using Young's inequality, we can choose function 
\begin{align*}
f_3(t) & = \big(1+\|\uu_1\|_{L^2}^2
	  +\|\uu_2\|_{L^2}^2+\| \ddd_1 \|_{H^1}^{6}\big) \big(\|\ddd_1 \|_{H^2}^2 +\|\ddd_2 \|_{H^2}^2\big)
      \\
      &\quad + 
	 \big(\| \uu_1\|_{L^2} ^2 +\|\uu_2\|_{L^2}^2 +\| \ddd_1 \|_{H^1}^2 +\| \ddd_2 \|_{H^1}^2 \big) 
	 \big(\| \nabla \uu_1\|_{L^2}^2 +\|\nabla \uu_2\|_{L^2}^2 \big)+1,
\end{align*}
such that $f_3(t)\in L^1(0,T)$. This eventually leads to the conclusion \eqref{prop:T3-ineq}. The proof of Proposition \ref{prop:T3} is complete.
\end{proof}

\section{\bf Estimate of $\mathcal{T}_4$}
\label{sec:double-log-est}
\setcounter{equation}{0}

In this section, we deal with the final term $\mathcal{T}_4=-
	\langle 	\delta \bm{\sigma}_1,\,\nabla\delta \uu 	\rangle_{H^{-\frac{1}{2}}}$ of the highest order, which involves the Leslie stress tensor
\begin{equation}\label{delta-sigma}
\begin{aligned}
	\delta \bm{\sigma}_1 =&\, 
	(\ddd_1\cdot (\delta A \ddd_1 	))\ddd_1\otimes\ddd_1 + 
	(\ddd_1\cdot (A_2 \delta\ddd	))\ddd_1\otimes\ddd_1 + 
	(\delta\ddd	\cdot (A_2 \ddd_2	))\ddd_1\otimes\ddd_1  \\
	& 
	+(\ddd_2	\cdot ( A_2 \ddd_2	))\delta \ddd\otimes\ddd_1
	+(\ddd_2	\cdot (A_2 \ddd_2	))\ddd_2\otimes \delta \ddd.
\end{aligned}
\end{equation}
We recall that in the previous section, the lower order nonlinear terms like $\ddd_1 \otimes (\delta A \ddd_1)$ has already brought us many difficulties. 
Now the mathematical difficulties related to $\delta \bm{\sigma}_1$ are even harder, for instance, in the term $(\ddd_1\cdot ( \delta A \ddd_1 )) \ddd_1\otimes\ddd_1$, the velocity gradient $\nabla \delta \uu$ is multiplied by $\ddd_1$ to the power of four. This is very difficult to handle due to the loss of control on the $L^\infty$ norm of the director. The key idea is to carefully explore the nonlinear structure and establish an inequality still being possible with a double-logarithmic structure. To this end, we divide the estimate of $\delta \bm{\sigma}_1$ into two parts and prove the following result:
\begin{prop}\label{prop:last-ineq-main-thm}
There exist two functions $g_1,\,g_2 \in L^1(0,T)$ such that for any $\eta\in (0,1)$, the following inequalities hold 
\begin{equation*}
\begin{alignedat}{4}
	&\mathrm{(a)}\quad
	\Big|\big\langle \delta \bm{\sigma}_1(t) - [(\ddd_1\cdot (\delta A \ddd_1  ))\ddd_1\otimes\ddd_1](t),\,\nabla\delta\uu(t)\big\rangle_{H^{-\frac{1}{2}}} \Big| \\ 
	&\qquad \qquad  \leq  C_4 g_1(t)\mu( \Phi(t) ) + 6\eta\nu \|\nabla \delta \uu\|_{H^{-\frac12}}^2 + 2 \eta 	\sum_{q=-1}^\infty \!\!2^{-q}\!\!\int_{\TT^2}|\sdm{q}(\ddd_1\otimes\ddd_1):\Dd_q\delta A|^2(t,x)\, \dd x,
	\\
	&\mathrm{(b)}\quad
	\Big|\big\langle [(\ddd_1\cdot (\delta A\ddd_1)) \ddd_1\otimes\ddd_1](t),\,
	\nabla\delta\uu(t)\big\rangle_{H^{-\frac{1}{2}}}
	- \sum_{q=-1}^\infty \!\!2^{-q}\!\!\int_{\TT^2}|\sdm{q}(\ddd_1\otimes\ddd_1):\Dd_q\delta A|^2(t,x)\, \dd x	\Big|\\
	&\qquad \qquad 	\leq C_4'g_2(t)\mu( \Phi(t) )+ 8\eta\nu \|\nabla \delta \uu\|_{H^{-\frac12}}^2 + 10 \eta 	\sum_{q=-1}^\infty \!\!2^{-q}\!\!\int_{\TT^2}|\sdm{q}(\ddd_1\otimes\ddd_1):\Dd_q\delta A|^2(t,x)\, \dd x,
\end{alignedat}
\end{equation*}
for almost all time $t\in (0,T)$, where where $\mu$ is defined as in \eqref{muaa} and $C_4, C_4'>0$ are constants depending on $\eta^{-1}$.
\end{prop}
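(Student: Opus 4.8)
\textbf{Proof proposal for Proposition \ref{prop:last-ineq-main-thm}.}

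The plan is to treat the two parts in a parallel way, following the template already established for $\mathcal{T}_3$ in Proposition \ref{prop:T3}, but pushing the frequency decomposition one step deeper because every term in $\delta\bm\sigma_1$ carries up to four powers of the directors. For part $(a)$, the tensor $\delta\bm\sigma_1 - (\ddd_1\cdot(\delta A\ddd_1))\ddd_1\otimes\ddd_1$ consists of four terms; the three terms $(\ddd_1\cdot(A_2\delta\ddd))\ddd_1\otimes\ddd_1$, $(\ddd_2\cdot(A_2\ddd_2))\delta\ddd\otimes\ddd_1$, $(\ddd_2\cdot(A_2\ddd_2))\ddd_2\otimes\delta\ddd$ do \emph{not} involve $\delta A$, so they are genuinely lower order: each one is a product of the type (quadratic or cubic polynomial in $\ddd_1,\ddd_2$)$\times A_2\times\delta\ddd$ paired against $\nabla\delta\uu$ in $H^{-1/2}$. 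For these I would split $\ddd_1 = S_N\ddd_1 + (\mathrm{Id}-S_N)\ddd_1$ (and similarly the extra director factors), control the low-frequency part by Lemma \ref{lemma:SN-infty} (gaining a factor $\sqrt N$), the high-frequency part by the Bernstein-type decay $\|(\mathrm{Id}-S_N)\ddd_1\|_{L^\infty}\lesssim 2^{-N/2}\|\ddd_1\|_{H^1}^{1/2}\|\ddd_1\|_{H^2}^{1/2}$, and then choose $N(t)$ as in \eqref{def-N-T3} so that $2^{-N/2}\lesssim\Phi(t)$ while the $\sqrt N$ loss only produces a single logarithm — exactly as in \eqref{A2deltadotimesd1-first}--\eqref{decomp_deltaA2dod1_1}. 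The only term in part $(a)$ containing $\delta A$ is $(\ddd_1\cdot(\delta A\ddd_1))\ddd_1\otimes\ddd_1$ paired against $\nabla\delta\uu$ \emph{minus} the same tensor paired differently — wait, no: in part $(a)$ that term has been subtracted off, so part $(a)$ is entirely free of $\delta A$ in a quartic configuration, and hence only requires the single-logarithm machinery plus Lemma \ref{lemma:product} and Lemma \ref{lemma:eps}; the dissipative remainder on the right-hand side appears only because of a Cauchy--Schwarz split used to keep the $\eta$-dependence uniform.

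Part $(b)$ is the heart of the matter and the main obstacle. Here one must extract from $\langle(\ddd_1\cdot(\delta A\ddd_1))\ddd_1\otimes\ddd_1,\nabla\delta\uu\rangle_{H^{-1/2}}$ a positive dissipative quantity $\sum_q 2^{-q}\int|\sdm{q}(\ddd_1\otimes\ddd_1):\Dd_q\delta A|^2$, analogous to how $\I\I\I_3+\I\I\I_3'$ produced the dissipation in Proposition \ref{prop:T3}. The plan is to apply Bony's decomposition \eqref{bony-decomp} to the product $(\ddd_1\otimes\ddd_1)(\ddd_1\cdot(\delta A\ddd_1))$ — or rather, iteratively: first decompose with $f$ one copy of the director tensor and $g$ the rest, obtaining the four pieces $\mathrm{I},\mathrm{II},\mathrm{III},\mathrm{IV}$; the term $\mathrm{III}=\sum_q 2^{-q}\int\sdm{q}(\ddd_1\otimes\ddd_1):\Dd_q\big[(\ddd_1\cdot(\delta A\ddd_1))\,(\text{remaining director factor})\big]:\Dd_q\nabla\delta\uu$ must itself be decomposed by \eqref{bony-decomp} a second time so that its own "third piece" becomes $\sum_q 2^{-q}\int|\sdm{q}(\ddd_1\otimes\ddd_1):\Dd_q\delta A|^2$ up to controllable errors, after using $\Dd_q\delta A\,\Dd_q\delta\omega$ skew-adjoint and $\Dd_q\delta A$ symmetric exactly as in the identity $\I\I\I_3+\I\I\I_3' = -2\sum 2^{-q}\int|\Dd_q\delta A\,\sdm{q}\ddd_1|^2$. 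All the commutator pieces $\mathrm{I},\mathrm{II},\mathrm{IV}$ and the cross pieces from the second decomposition are estimated by Lemma \ref{prop:comm-est}, Lemma \ref{prop:Bernstein}, Lemma \ref{lemma:SN-infty} and Lemma \ref{lemma:eps}, again splitting $\ddd_1 = S_N\ddd_1 + (\mathrm{Id}-S_N)\ddd_1$ in the factors that sit "outside" the commutator; the parameters $N(t)$ and $\varepsilon(t)$ are chosen as in \eqref{def-N-T3} and \eqref{esNee1}, and because here there are \emph{two} places where an $L^\infty$-bound on a director is needed (two separate $\sqrt N$ losses coming from the two "extra" copies of $\ddd_1$), the resulting modulus of continuity is $r(1+\ln(1+1/r))(1+\ln(1+\ln(1+1/r)))$, i.e. the double logarithm $\mu$ of \eqref{muaa}. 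This accumulation of two logarithms is precisely why $\mu$ must be double-logarithmic and is the subtle point that distinguishes $\mathcal{T}_4$ from $\mathcal{T}_3$.

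Once both parts are assembled, I would collect all error terms: the genuinely small ones into $\eta\,\widehat{\mathfrak D}(t) = \eta(\nu\|\nabla\delta\uu\|_{H^{-1/2}}^2 + \|\nabla\delta\ddd\|_{L^2}^2 + \|\Delta\delta\ddd\|_{H^{-1/2}}^2)$ and $\eta\sum_q 2^{-q}\int|\sdm{q}(\ddd_1\otimes\ddd_1):\Dd_q\delta A|^2$ (absorbed later in \eqref{double-log}), and the remaining ones into coefficients of $\mu(\Phi(t))$, defining
\begin{align*}
g_1(t) &= 1 + \big(\|\uu_1\|_{L^2}^2 + \|\uu_2\|_{L^2}^2 + \|\ddd_1\|_{H^1}^8 + \|\ddd_2\|_{H^1}^8\big)\big(\|\ddd_1\|_{H^2}^2 + \|\ddd_2\|_{H^2}^2 + \|\nabla\uu_1\|_{L^2}^2 + \|\nabla\uu_2\|_{L^2}^2\big),\\
g_2(t) &= 1 + \big(1 + \|\ddd_1\|_{H^1}^{12}\big)\|\ddd_1\|_{H^2}^2 + \big(\|\uu_1\|_{L^2}^2 + \|\uu_2\|_{L^2}^2 + \|\ddd_1\|_{H^1}^8\big)\big(\|\nabla\uu_1\|_{L^2}^2 + \|\nabla\uu_2\|_{L^2}^2\big),
\end{align*}
both of which lie in $L^1(0,T)$ by the regularity in Definition \ref{def:weak-sol}. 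A final application of Young's inequality (to redistribute the fractional powers of norms, exactly as in \eqref{esNee1} and the lines following it) yields the stated bounds with $C_4,C_4'$ depending on $\eta^{-1}$ and the energy of the data. The main obstacle throughout is bookkeeping the \emph{double} frequency-splitting in part $(b)$ while ensuring that the high-frequency remainders of \emph{both} director factors decay like $2^{-N/2}\lesssim\Phi(t)$ and that the two $\sqrt N$ losses compound into only a double — not triple — logarithm; this is why one must split each extra director factor separately rather than bounding their product.
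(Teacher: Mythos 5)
You have correctly identified the overall architecture for part (b) — a double application of Bony's decomposition \eqref{bony-decomp}, with the symmetric paraproduct piece producing $\sum_{q}2^{-q}\int_{\TT^2}|\sdm{q}(\ddd_1\otimes\ddd_1):\Dd_q\delta A|^2$ after replacing $\nabla\delta\uu$ by $\delta A$ via symmetry, and commutator/remainder estimates for the rest — and this matches the paper's proof. However, your treatment of part (a) has a genuine gap. The term $(\ddd_1\cdot(A_2\delta\ddd))\ddd_1\otimes\ddd_1$ (and you also dropped $(\delta\ddd\cdot(A_2\ddd_2))\ddd_1\otimes\ddd_1$ from your list) is not ``lower order'' in the sense you use: it carries three copies of $\ddd_1$, none of which is bounded in $L^\infty$, so localizing them with $S_N$ and Lemma \ref{lemma:SN-infty} costs a factor $N$ or $N^{3/2}$, and after Young against $\eta\nu\|\nabla\delta\uu\|_{H^{-\frac12}}^2$ this produces $N^2$ or $N^3$ multiplying $\Phi(t)$, i.e. $\Phi\ln^2(1+1/\Phi)$ or worse — which is \emph{not} an Osgood modulus, so the uniqueness argument would collapse. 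In the paper this term is attacked exactly like part (b): Bony's decomposition with $\ddd_1\otimes\ddd_1$ as the low-frequency coefficient (see \eqref{def-I...IV-d1A2deltadd1d1}), Cauchy--Schwarz on the paraproduct piece producing both the $\eta$-multiplied dissipative sum and $\|\ddd_1\cdot(A_2\delta\ddd)\|_{H^{-\frac12}}^2$, and the $L^{2/\ee}$-machinery on the commutator pieces; the resulting bound \eqref{double-loc-est-deltad} is already double-logarithmic, so part (a) cannot be closed by the product lemma plus single-log frequency splitting as you propose.

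The second problem is your explanation of the double logarithm, which is the quantitative crux. It does \emph{not} arise from ``two separate $\sqrt N$ losses from the two extra copies of $\ddd_1$'': two such losses give $N^2\sim\ln^2(1+1/\Phi)$ after Young, and $r\ln^2(1+1/r)$ fails the Osgood condition, so that route cannot work at all. In the paper there is exactly one $\sqrt N$ loss (Lemma \ref{lemma:SN-infty}) in each critical estimate; the second logarithm comes from the explicit constant $\ee^{-1/2}$ in the embedding $\|\nabla\ddd_1\|_{L^{2/\ee}}\lesssim \ee^{-1/2}\|\nabla\ddd_1\|_{H^{1-\ee}}$ of Lemma \ref{lemma:eps}, combined with arranging the exponent of $\|\nabla\delta\uu\|_{H^{-\frac12}}$ to be only $\ee$, so that Young's inequality amplifies $N/\ee$ merely by $(N/\ee)^{\ee/(1-\ee)}\le e^2$ (the computation \eqref{esNee1}); the choice $\ee(t)\sim 1/\ln N(t)$ then yields $N/\ee\sim\ln(1+1/\Phi)\,\ln\ln(1+1/\Phi)$, i.e. the modulus $\mu$ of \eqref{muaa}. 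Without this $(N,\ee)$ mechanism — and an auxiliary bound in the spirit of Lemma \ref{lemma:juve16} for $\|\ddd_1\otimes\ddd_1:\delta A\|_{H^{-\frac12}}$, which your sketch does not supply — the error terms in both parts cannot be closed within the Osgood class, so the proposal as written does not prove the proposition.
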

\begin{remark}
Once Proposition \ref{prop:last-ineq-main-thm} is proven, we can easily verify that the last inequality of Proposition \ref{thm:ineq} holds, by setting $f_4(t):=g_1(t)+g_2(t)$ for $t\in (0,T)$.
\end{remark}
\begin{proof}[Proof of Proposition \ref{prop:last-ineq-main-thm}, Part $\mathrm{(a)}$] Let $\eta\in (0,1)$. 
We begin with the estimates involving the last two terms of \eqref{delta-sigma}, namely, $\ddd_2\cdot(A_2 \ddd_2)\delta \ddd\otimes \ddd_2$ and  $\ddd_2\cdot(A_2 \ddd_2)\ddd_1\otimes \delta \ddd$.
Let $N\geq 1$ be a positive integer that will be chosen later, depending on the value of function $\Phi(t)$ (see  \eqref{def-N-prop-deltasigma1-first} below). We treat the low frequencies $S_N\ddd_i$ and the high frequencies $(\Id-S_N)\ddd_i$ of both  director fields $\ddd_i$, $i=1,2$ separately such that
\begin{align*}
	&\langle 
		(\ddd_2	\cdot (A_2 \ddd_2	))(\delta \ddd\otimes\ddd_1
		+\ddd_2\otimes \delta \ddd)
		,\,
		\nabla \delta \uu
	\rangle_{H^{-\frac{1}{2}}}\\ 
	&\quad =\, 
	\langle 
		(\ddd_2	\cdot (A_2 \ddd_2	))(\delta \ddd\otimes S_N\ddd_1+
		S_N\ddd_2\otimes \delta \ddd
		,\,
		\nabla \delta \uu
	\rangle_{H^{-\frac{1}{2}}}  \\
	&\qquad\ +
	\langle 
		(\ddd_2	\cdot (A_2 \ddd_2	))(\delta \ddd\otimes (\Id- S_N)\ddd_1+
		(\Id- S_N)\ddd_2\otimes \delta \ddd
		,\,
		\nabla \delta \uu
	\rangle_{H^{-\frac{1}{2}}}.
\end{align*}
At this stage, we invoke Lemma \ref{lemma:SN-infty} about the Sobolev embedding $\| S_N\ddd_1 \|_{L^\infty}\leq C\sqrt{N}\| \ddd_1 \|_{H^1}$ and Lemma \ref{lemma:product} about the continuity of the product between $H^{1/2} \times L^2$ and $H^{-1/2}$. Then the inner product involving the low frequency part $S_N\ddd_1$ can be controlled as follows:
\begin{equation}\label{ineq_SNdAddd1}
\begin{aligned}
	\big| 
		\langle 
		(\ddd_2	\cdot (A_2 \ddd_2	))\delta \ddd\otimes S_N\ddd_1
		,\,
		\nabla \delta \uu
		\rangle_{H^{-\frac{1}{2}}}
	\big|
	&\leq 
	\| (\ddd_2	\cdot (A_2 \ddd_2	))\delta \ddd\otimes S_N\ddd_1
	\|_{H^{-\frac{1}{2}}}
	\| \nabla \delta\uu \|_{H^{-\frac{1}{2}}}\\
	&\lesssim 
	\| (\ddd_2	\cdot (A_2 \ddd_2)) S_N\ddd_1 \|_{L^2}
	\| \delta \ddd \|_{H^\frac{1}{2}}
	\| \nabla \delta \uu \|_{H^{-\frac{1}{2}}}\\
	&\lesssim 
	\| \ddd_2	\cdot (A_2 \ddd_2) \|_{L^2}
	\| S_N \ddd_1 \|_{L^\infty}
	\| \delta \ddd \|_{H^\frac{1}{2}}
	\| \nabla \delta \uu \|_{H^{-\frac{1}{2}}}\\
	&\lesssim 
	\| \ddd_2	\cdot (A_2 \ddd_2) \|_{L^2}
	\| \ddd_1 \|_{H^1}\sqrt{N}
	\| \delta \ddd \|_{H^\frac{1}{2}}
	\| \nabla \delta \uu \|_{H^{-\frac{1}{2}}}
	\\
	&\leq
	C\eta^{-1} 
	\| \ddd_2	\cdot (A_2 \ddd_2) \|_{L^2}^2
	\| \ddd_1 \|_{H^1}^2
	\| \delta \ddd \|_{H^\frac{1}{2}}^2
	N
	+
	\eta \nu \| \nabla \delta \uu \|_{H^{-\frac{1}{2}}}^2.
\end{aligned}
\end{equation}
With an analogous procedure, we have 
\begin{equation}\label{ineq_SNdAddd2}
	\big| 
		\langle 
		(\ddd_2	\cdot (A_2 \ddd_2	))S_N\ddd_2\otimes \delta \ddd
		,\,
		\nabla \delta \uu
		\rangle_{H^{-\frac{1}{2}}}
	\big|
	\leq 
	C\eta^{-1} 
	\| \ddd_2	\cdot (A_2 \ddd_2) \|_{L^2}^2
	\| \ddd_2 \|_{H^1}^2
	\| \delta \ddd \|_{H^\frac{1}{2}}^2
	N
	+
	\eta \nu  \| \nabla \delta \uu \|_{H^{-\frac{1}{2}}}^2.
\end{equation}
Now let us turn to the estimate of the high frequencies part involving  $(\Id-S_N)\ddd_1$ and $(\Id-S_N)\ddd_2$, which are the dominant ones. 
First, from Lemma \ref{lemma:product} we infer the continuity of the product between $H^{-1/4} \times H^{3/4}$ and $H^{-1/2}$ and get  
\begin{align*}
	\big| 
		\langle 
		(\ddd_2	\cdot (A_2 \ddd_2	))\delta \ddd\otimes (\Id -S_N)\ddd_1
		,\,
		\nabla \delta \uu
		\rangle_{H^{-\frac{1}{2}}}
	\big|
	&\leq
	\| (\ddd_2	\cdot (A_2 \ddd_2	))\delta \ddd\otimes (\Id -S_N)\ddd_1
	\|_{H^{-\frac{1}{2}}}
	\| \nabla \delta \uu \|_{H^{-\frac{1}{2}}}\\
	&\lesssim
	\| (\ddd_2	\cdot (A_2 \ddd_2))  \delta \ddd  \|_{H^{-\frac 14}}
	\| 	 (\Id -S_N)\ddd_1 \|_{H^\frac{3}{4}}
	\|  \delta \uu \|_{L^2}^\frac{1}{2}\|  \nabla \delta \uu \|_{L^2}^\frac{1}{2}.
\end{align*}
From the definition of $(\Id-S_N)\ddd_1 = \sum_{q=N}^\infty \Dd_q\ddd_1$, it follows  that 
$\|  (\Id -S_N)\ddd_1 \|_{H^\frac{3}{4}} \leq 2^{-N/4} \| \ddd_1 \|_{H^1}$.  Therefore, we obtain
\begin{equation}\label{ineq_SNdAddd3}
\begin{aligned}
	&\big| 
		\langle 
		(\ddd_2	\cdot (A_2 \ddd_2	))\delta \ddd\otimes (\Id -S_N)\ddd_1
		,\,
		\nabla \delta \uu
		\rangle_{H^{-\frac{1}{2}}}
	\big|\\
	&\quad \lesssim
	\| \ddd_2	\cdot (A_2 \ddd_2	)\|_{L^2}
	\| \delta \ddd   \|_{H^{\frac 34}}
	\| 	 \ddd_1  \|_{H^1}2^{-\frac{N}{4}}
	\|  \delta \uu \|_{L^2}^\frac{1}{2}\|  \nabla \delta \uu \|_{L^2}^\frac{1}{2}
	\\
	&\quad \leq
	C
	\| \ddd_2 \cdot (A_2 \ddd_2) \|_{L^2}
	\big(\|\uu_1\|_{L^2}+ \|\uu_2\|_{L^2}\big)^\frac{1}{2} 
	\big(\| \nabla \uu_1\|_{L^2}+ \|\nabla \uu_2 \|_{L^2}\big)^\frac{1}{2}
	\| \ddd_1 \|_{H^1} \big(\| \ddd_1 \|_{H^1} + \| \ddd_2 \|_{H^1} \big)
	2^{-\frac{N}{4}}.
\end{aligned}
\end{equation}
With an analogous procedure, one can estimate the term containing high frequencies of $(\Id-S_N)\ddd_2$ through
\begin{equation}\label{ineq_SNdAddd4}
\begin{aligned} 
	&\big|
	\langle 
		(\ddd_2	\cdot (A_2 \ddd_2	))
		((\Id- S_N)\ddd_2\otimes \delta \ddd)
		,\,
		\nabla \delta \uu
	\rangle_{H^{-\frac{1}{2}}}
	\big|\\
	&\quad \leq
	C
	\| \ddd_2	\cdot (A_2 \ddd_2	)	\|_{L^2}
	\big(\|\uu_1\|_{L^2}+ \|\uu_2\|_{L^2}\big)^\frac{1}{2} 
	\big(\| \nabla \uu_1\|_{L^2}+ \|\nabla \uu_2 \|_{L^2}\big)^\frac{1}{2}
	\| \ddd_2 \|_{H^1} \big(\| \ddd_1 \|_{H^1} + \| \ddd_2 \|_{H^1} \big)
	2^{-\frac{N}{4}}.
\end{aligned} 
\end{equation}
Combining \eqref{ineq_SNdAddd1}--\eqref{ineq_SNdAddd4}, we finally deduce that
\begin{equation}\label{d2d2nablau2deltadd1d2-final}
\begin{aligned}
	&\big|
	\langle 
		(\ddd_2	\cdot (A_2 \ddd_2	))(\delta \ddd\otimes\ddd_1
		\!+\!\ddd_2\otimes \delta \ddd)
		,
		\nabla \delta \uu
	\rangle_{H^{-\frac{1}{2}}}\!
	\big|\\
	&\quad \leq\  
	C\eta^{-1} 
	\| \ddd_2	\cdot (A_2 \ddd_2) \|_{L^2}^2
	\big( \| \ddd_1 \|_{H^1}^2 +\| \ddd_2 \|_{H^1}^2\big)
	\| \delta \ddd \|_{H^\frac{1}{2}}^2  N
	+ 
	2 \eta \nu \| \nabla \delta \uu \|_{H^{-\frac{1}{2}}}^2  
	\\
	&\qquad\ +C
	\| \ddd_2	\cdot (A_2 \ddd_2	)	\|_{L^2}
	\big(\|\uu_1\|_{L^2}+ \|\uu_2\|_{L^2}\big)^\frac{1}{2} 
	\big(\| \nabla \uu_1\|_{L^2}+ \|\nabla \uu_2 \|_{L^2}\big)^\frac{1}{2}
	\big(\| \ddd_1 \|_{H^1}^2 + \| \ddd_2 \|_{H^1}^2 \big)
	2^{-\frac{N}{4}},
\end{aligned}
\end{equation}
where the positive constant $C$ does not depend on $N$ and $\eta$. 
Like before, if $\Phi(t)=0$ then the right-hand side is identically null. We hence focus to the case of $\Phi(t)>0$ and  set 
\begin{equation}\label{def-N-prop-deltasigma1-first}
	N(t) = \Big\lfloor 4\log_2\Big(1+\frac{1}{\Phi(t)}\Big)\Big\rfloor +4.
\end{equation}
Thus, we see that $N(t)/4>\log_2(1+1/\Phi(t))$, which implies 
\begin{equation}\label{N-Phi}
	2^{-\frac{N(t)}{4}} \leq 2^{-\log_2\big(1+\frac{1}{\Phi(t)}\big)}
	= \frac{\Phi(t)}{1+\Phi(t)}\leq \Phi(t).
\end{equation}
Summarizing, we infer from the above estimates that 
\begin{equation}\label{d2d2nablau2deltadd1-final-estimate}
\begin{aligned}
	& \big|
	\langle 
		(\ddd_2	\cdot (A_2 \ddd_2	))(\delta \ddd\otimes\ddd_1
		+\ddd_2\otimes \delta \ddd)
		,\, \nabla \delta \uu \rangle_{H^{-\frac{1}{2}}}
	\big|\\
	&\quad \leq 
	C\eta^{-1} 
	\| \ddd_2	\cdot (A_2 \ddd_2) \|_{L^2}^2
	\big( \| \ddd_1 \|_{H^1}^2 +\| \ddd_2 \|_{H^1}^2\big)
	\Phi(t)
	\Big(1+\ln\Big(1+\frac{1}{\Phi(t)}\Big)\Big)\\
	&\qquad  
	+C
	\| \ddd_2	\cdot (A_2 \ddd_2	)	\|_{L^2}
	\big(\|\uu_1\|_{L^2}+ \|\uu_2\|_{L^2}\big)^\frac{1}{2} 
	\big(\| \nabla \uu_1\|_{L^2}+ \|\nabla \uu_2 \|_{L^2}\big)^\frac{1}{2}
	\big(\| \ddd_1 \|_{H^1}^2 + \| \ddd_2 \|_{H^1}^2 \big) 
	\Phi(t)\\
	&\qquad + 2\eta \nu \| \nabla \delta \uu \|_{H^{-\frac{1}{2}}}^2.
\end{aligned}
\end{equation}

Now we proceed to derive estimate for the inner product involving the second term in $\delta \bm{\sigma}_1$, i.e., $(\ddd_1\cdot (A_2 \delta\ddd	))\ddd_1\otimes\ddd_1$ in \eqref{delta-sigma}. To achieve this goal, we make use of the Bony's decomposition  \eqref{bony-decomp} to split the inner product as follows
\begin{equation*}
    \langle 
		(\ddd_1\cdot  (A_2\delta \ddd))\ddd_1\otimes \ddd_1,\,\nabla \delta \uu
	\rangle_{H^{-\frac{1}{2}}} = 
	\sum_{q=-1}^\infty
	2^{-q}
	\int_{\mathbb{T}^2}
	\Dd_q(\ddd_1 \cdot (A_2 \delta \ddd) \ddd_1 \otimes \ddd_1):\Dd_q \nabla \delta \uu\,\dd x
	=:
	\I + \I\I + \I\I\I + \I\mathcal{V},
\end{equation*}
where the four terms $\I$, $\I\I$, $\I\I\I$ and $\I\mathcal{V}$ on the right-hand side are defined by
\begin{equation}\label{def-I...IV-d1A2deltadd1d1}
\begin{aligned}
    \I
	&=
	\sum_{q=-1}^\infty
	\sum_{|j-q|\leq 5}
	2^{-q}
	\int_{\mathbb{T}^2}
	[\Dd_q,\,\sdm{j}(\ddd_1 \otimes \ddd_1)]\Dd_j(\ddd_1 \cdot (A_2 \delta \ddd)):\Dd_q \nabla \delta \uu\,\dd x,\\
	\I\I &=
	\sum_{q=-1}^\infty
	\sum_{|j-q|\leq 5}
	2^{-q}
	\int_{\mathbb{T}^2}
	(\sdm{j}-\sdm{q})(\ddd_1\otimes \ddd_1)
	\Dd_q\Dd_j(\ddd_1 \cdot (A_2 \delta \ddd)):\Dd_q \nabla \delta \uu\,\dd x,\\
	\I\I\I &=
	\sum_{q=-1}^\infty
	2^{-q}
	\int_{\mathbb{T}^2}
	\Dd_q(\ddd_1 \cdot (A_2 \delta \ddd))\sdm{q}(\ddd_1\otimes \ddd_1):\Dd_q \nabla \delta \uu\,\dd x,
	\\
	\I\mathcal{V} &=
	\sum_{q=-1}^\infty
	\sum_{j=q-5}^\infty
	2^{-q}
	\int_{\mathbb{T}^2}
	\Dd_q(S_{j+2}(\ddd_1 \cdot (A_2 \delta \ddd))\Dd_j(\ddd_1\otimes \ddd_1)):\Dd_q \nabla \delta \uu\,\dd x.
	\end{aligned}
\end{equation}

First, using the symmetry and skew symmetry of the tensors $\ddd_1\otimes\ddd_1$ and $\delta \omega$, respectively, we can handle the third term $\I\I\I$ in the following way
\begin{align*}
    |\I\I\I| 
    & 
    =\left|	\sum_{q=-1}^\infty
	2^{-q}
	\int_{\mathbb{T}^2}
	\Dd_q(\ddd_1 \cdot (A_2 \delta \ddd))\sdm{q}(\ddd_1\otimes \ddd_1):\Dd_q (\delta A+\delta\omega)\,\dd x\right|
	\\
	& 
    =\left|	\sum_{q=-1}^\infty
	2^{-q}
	\int_{\mathbb{T}^2}
	\Dd_q(\ddd_1 \cdot (A_2 \delta \ddd))\sdm{q}(\ddd_1\otimes \ddd_1):\Dd_q \delta A\,\dd x\right|
	\\
    &\leq 
    \sum_{q=-1}^\infty
	2^{-q}
	\|  \Dd_q(\ddd_1 \cdot (A_2 \delta \ddd)) \|_{L^2}
	\| \sdm{q}(\ddd_1\otimes \ddd_1):\Dd_q  \delta A \|_{L^2}\\
	&\leq 
	\Big(
	    \sum_{q=-1}^\infty
	    2^{-q}
	    \|  \Dd_q(\ddd_1 \cdot (A_2 \delta \ddd)) \|_{L^2}^2
	\Big)^\frac{1}{2}
	\Big( 
	     \sum_{q=-1}^\infty
	    2^{-q}
	    \| \sdm{q}(\ddd_1\otimes \ddd_1):\Dd_q  \delta A \|_{L^2}^2
	\Big)^\frac{1}{2}\\
	&\leq  C\eta^{-1} \| \ddd_1 \cdot (A_2 \delta \ddd) \|_{H^{-\frac{1}{2}}}^2+ \eta 	\sum_{q=-1}^\infty \!\!2^{-q}\!\!\int_{\TT^2}|\sdm{q}(\ddd_1\otimes\ddd_1):\Dd_q\delta A|^2(t,x)\, \dd x.
\end{align*}
 With an analogous procedure as the one used in the previous estimate, we separately analyse the low and high frequencies of the vector $\ddd_1$. More precisely, for a certain  positive integer $N\in \mathbb{N}$ that will be determined later on, we make the  decomposition $\ddd_1 = S_N \ddd_1 + (\Id- S_N )\ddd_1$ and obtain 
\begin{equation}\label{d1A2ddH12}
    \begin{aligned}
	\| \ddd_1\cdot( A_2 \delta \ddd) \|_{H^{-\frac{1}{2}}}^2
	&\lesssim 
	\| S_N \ddd_1 \cdot (A_2 \delta \ddd) \|_{H^{-\frac{1}{2}}}^2 +
	\| (\Id -S_N)\ddd_1 \cdot (A_2 \delta \ddd) \|_{H^{-\frac{1}{2}}}^2\\
	&\lesssim
	\| A_2(S_N \ddd_1) \|_{L^2}^2\| \delta \ddd \|_{H^\frac{1}{2}}^2+
	\| A_2 (\Id -S_N)\ddd_1 \|_{H^{-\frac{1}{4}}}^2 
	\| \delta \ddd \|_{H^{\frac{3}{4}}}^2
	\\
	&\lesssim
	\| S_N \ddd_1 		\|_{L^\infty}^2
	\| \nabla \uu_2 	\|_{L^2}^2
	\| \delta \ddd \|_{H^\frac{1}{2}}^2+
	\|   (\Id -S_N)\ddd_1 \|_{H^{\frac{3}{4}}}^2
	\| A_2 				\|_{L^2}^2
	\big(\|\ddd_1\|_{H^1}^2+\|\ddd_2\|_{H^{1}}^2\big)\\
	&\lesssim
	\|	\ddd_1 		\|_{H^1}^2
	\| \nabla \uu_2 	\|_{L^2}^2
	\| \delta \ddd \|_{H^\frac{1}{2}}^2N+
	\| \ddd_1 						\|_{H^{1}}^2 
	\big(\|\ddd_1\|_{H^1}^2+\|\ddd_2\|_{H^{1}}^2\big) 
	\| \nabla \uu_2 				\|_{L^2}^2
	2^{-\frac{N}{2}},
\end{aligned}
\end{equation}
which eventually leads to
\begin{equation}\label{III-d1A2deltad}
\begin{aligned}
    |\I\I\I| 
    &\leq 
    C\eta^{-1}
    \|	\ddd_1 		\|_{H^1}^2
	\| \nabla \uu_2 	\|_{L^2}^2
	\| \delta \ddd \|_{H^\frac{1}{2}}^2N
	+
	C\eta^{-1}
	\big(\|\ddd_1\|_{H^1}^4+\|\ddd_2\|_{H^{1}}^4\big) 
	\| \nabla \uu_2 				\|_{L^2}^2
	2^{-\frac{N}{2}}\\
	&\quad + \eta 	\sum_{q=-1}^\infty \!\!2^{-q}\!\!\int_{\TT^2}|\sdm{q}(\ddd_1\otimes\ddd_1):\Dd_q\delta A|^2(t,x)\, \dd x.
	\end{aligned}
\end{equation}

Let us turn to the estimate of the first term $\I$ in \eqref{def-I...IV-d1A2deltadd1d1}. We separately analyse the low and high frequencies by splitting the tensor $\ddd_1\otimes \ddd_1 = 
S_N (\ddd_1\otimes \ddd_1) + (\Id-S_N)(\ddd_1\otimes \ddd_1)$, so that $\I$ can be decomposed as follows
\begin{equation}\label{I-d1d1d1A2deltad}
\begin{aligned}
    \I &= 
    \sum_{q=-1}^\infty
	\sum_{|j-q|\leq 5}
	2^{-q}
	\int_{\mathbb{T}^2}
	[\Dd_q,\,\sdm{j}S_N(\ddd_1 \otimes \ddd_1)]\Dd_j(\ddd_1 \cdot (A_2 \delta \ddd)):\Dd_q \nabla \delta \uu \,\dd x
	\\
	&\quad +
	\sum_{q=-1}^\infty
	\sum_{|j-q|\leq 5}
	2^{-q}
	\int_{\mathbb{T}^2}
	[\Dd_q,\,\sdm{j}(\Id-S_N)(\ddd_1 \otimes \ddd_1)]\Dd_j(\ddd_1 \cdot (A_2 \delta \ddd)):\Dd_q \nabla \delta \uu\,\dd x \\
	& =: \I_1 + \I_2.
\end{aligned}
\end{equation}
The term $\I_2$ that involves the high frequencies can be estimated by using the commutator estimate of Lemma \ref{prop:comm-est} and the Bernstein inequality, namely, 
\begin{align*}
    |\I_2 |
    &\leq 
    \sum_{q=-1}^\infty
	\sum_{|j-q|\leq 5}
	2^{-q}
	2^{-q}
	\| \sdm{j}(\Id-S_N)\nabla (\ddd_1 \otimes \ddd_1)\|_{L^\infty}
	\| \Dd_j(\ddd_1 \cdot (A_2 \delta \ddd)) \|_{L^2}
	\| \Dd_q \nabla \delta \uu \|_{L^2}\\
	&\lesssim 
    \sum_{q=-1}^\infty
	\sum_{|j-q|\leq 5}
	2^{-2q}
	2^{j}
	\| \sdm{j}(\Id-S_N)(\nabla \ddd_1 \otimes \ddd_1)\|_{L^2}
	\| \Dd_j(\ddd_1 \cdot (A_2 \delta \ddd)) \|_{L^2}
	\| \Dd_q \nabla \delta \uu \|_{L^2}
	\\
	&\lesssim 
    \sum_{q=-1}^\infty
	\sum_{|j-q|\leq 5}
	2^{2(j-q)} 
	2^{- \frac{j}{2}}
	\| \sdm{j}(\Id-S_N)(\nabla \ddd_1 \otimes \ddd_1)\|_{L^2}
	2^{-\frac{j}{2}}
	\| \Dd_j(\ddd_1 \cdot (A_2 \delta \ddd)) \|_{L^2}
	\| \Dd_q \nabla \delta \uu \|_{L^2}\\
	&\lesssim 
	\| (\Id-S_N)(\nabla \ddd_1 \otimes \ddd_1 ) \|_{H^{-\frac{1}{2}}}
	\| \ddd_1 \cdot (A_2 \delta \ddd   )         \|_{H^{-\frac{1}{2}}}
	\| \nabla \delta \uu                        \|_{L^2}
	\\
	&\lesssim 
	2^{-\frac{N}{4}}
	\| (\Id-S_N)(\nabla \ddd_1 \otimes \ddd_1 ) \|_{H^{-\frac{1}{4}}}
	\| \ddd_1 \otimes \delta \ddd               \|_{H^\frac{1}{2}} 
	\| A_2                                      \|_{L^2}
	\| \nabla \delta \uu                        \|_{L^2}
	\\
	&\lesssim 
	2^{-\frac{N}{4}}
	(\| \nabla \ddd_1    \|_{L^2} 
    \|  \ddd_1          \|_{H^{\frac{3}{4}}})
	(\| \ddd_1           \|_{H^{\frac{3}{4}}}
	\| \delta \ddd      \|_{H^\frac{3}{4}}) 
	\| \nabla \uu_2            \|_{L^2}
	\big(\| \nabla \uu_1\|_{L^2}+\|\nabla \uu_2  \|_{L^2}\big)\\
	&\leq C
	\|\ddd_1\|_{H^1}^3\big(\| \ddd_1\|_{H^1}+\|\ddd_2\|_{H^1}\big)
	\| \nabla \uu_2            \|_{L^2}
	\big(\| \nabla \uu_1\|_{L^2}+\|\nabla \uu_2  \|_{L^2}\big)
	2^{-\frac{N}{4}},
\end{align*}
where the constant $C>0$ is indpendent of $N$. Next, concerning the low frequencies part, namely, the term $\I_1$ in \eqref{I-d1d1d1A2deltad}, applying the commutator estimate of Lemma \ref{prop:comm-est}, we deduce that 
\begin{align*}
    |\I_1 |
    &\leq
    \sum_{q =-1}^\infty 
    \sum_{|j-q|\leq 5}
    2^{-q}2^{-q}
    \| \sdm{j} S_N \nabla (\ddd_1\otimes \ddd_1) \|_{L^\frac{2}{\ee}}
    \| \Dd_j(\ddd_1\cdot (A_2 \delta\ddd)) \|_{L^2}
    \| \Dd_q \nabla \delta \uu \|_{L^\frac{2}{1-\ee}}\\
    &\lesssim
    \sum_{q =-1}^\infty 
    \sum_{|j-q|\leq 5}
    2^{-q}
    \| \sdm{j} S_N(\nabla \ddd_1\otimes \ddd_1) \|_{L^\frac{2}{\ee}}
    \| \Dd_j(\ddd_1\cdot (A_2 \delta\ddd)) \|_{L^2}
    \| \Dd_q \delta \uu \|_{L^\frac{2}{1-\ee}}\\
    &\lesssim 
    \sum_{q =-1}^\infty 
    \sum_{|j-q|\leq 5}
    2^{-q}
    \| \sdm{j} ([S_N,\,\nabla \ddd_1]\otimes \ddd_1) \|_{L^\frac{2}{\ee}}
    \| \Dd_j(\ddd_1\cdot (A_2 \delta\ddd)) \|_{L^2}
    \| \Dd_q \delta \uu \|_{L^\frac{2}{1-\ee}}
    \\
    &\quad +
    \sum_{q =-1}^\infty 
    \sum_{|j-q|\leq 5}
    2^{-q}
    \| \sdm{j} (\nabla \ddd_1\otimes S_N\ddd_1) \|_{L^\frac{2}{\ee}}
    \| \Dd_j(\ddd_1\cdot (A_2 \delta\ddd)) \|_{L^2}
    \| \Dd_q \delta \uu \|_{L^\frac{2}{1-\ee}}\\
    &=:
    \I_1^{(1)} + \I_1^{(2)}.
\end{align*}
The term $\I_1^{(1)}$ that involves the commutator for $S_N$ can be handled by
\begin{align*}
    \I_1^{(1)} 
    &\lesssim 
    \sum_{q =-1}^\infty 
    \sum_{|j-q|\leq 5}
    2^{-q}
    2^{j\big(\frac{3}{2}-\ee\big)}
    \| \sdm{j} ([S_N,\,\nabla \ddd_1]\otimes \ddd_1) \|_{L^\frac{4}{3}}
    \| \Dd_j(\ddd_1\cdot (A_2 \delta\ddd)) \|_{L^2}
    2^{q\ee}
    \| \Dd_q \delta \uu \|_{L^2}\\
    &\lesssim 
    \| [S_N,\,\nabla \ddd_1]\otimes \ddd_1 \|_{L^\frac{4}{3}}
    \sum_{q =-1}^\infty 
    \sum_{|j-q|\leq 5} 2^{\frac74{(j-q)}}2^{(q-j)\ee} 
    2^{-\frac{j}{4}}
    \| \Dd_j(\ddd_1\cdot (A_2 \delta\ddd)) \|_{L^2}
    2^{\frac{3}{4}q}
    \| \Dd_q \delta \uu \|_{L^2}\\
    &\lesssim 
    \| [S_N,\,\nabla \ddd_1]\otimes \ddd_1 \|_{L^\frac{4}{3}}
    \sum_{q =-1}^\infty 
    \sum_{|j-q|\leq 5} 
    2^{-\frac{j}{4}}
    \| \Dd_j(\ddd_1\cdot (A_2 \delta\ddd)) \|_{L^2}
    2^{\frac{3}{4}q}
    \| \Dd_q \delta \uu \|_{L^2},
\end{align*}
where we implicitly use the fact $\ee\in(0,1/2]$ (see below).  
On one hand, by definition of the commutator, it holds 
\begin{align*}
    \| [S_N,\,\nabla \ddd_1]\otimes \ddd_1 \|_{L^\frac{4}{3}}
    &\leq  
    \| S_N(\nabla \ddd_1 \otimes \ddd_1) \|_{L^\frac{4}{3}}+
    \| \nabla \ddd_1 \otimes S_N \ddd_1  \|_{L^\frac{4}{3}}\\
    &\lesssim 
    \| \nabla \ddd_1\otimes \ddd_1 \|_{L^\frac43}+
    \| \nabla \ddd_1 \|_{L^2} \|S_N \ddd_1 \|_{L^4}\\
    &\lesssim   
    \| \nabla \ddd_1 \|_{L^2}
    \| \ddd_1 \|_{L^4},
\end{align*}
and on the other hand, it follows from the commutator estimate in Lemma \ref{prop:comm-est} that 
\begin{align*}
    \| [S_N,\,\nabla \ddd_1]\otimes \ddd_1 \|_{L^\frac{4}{3}}
    \leq  
    C2^{-N}
    \| \nabla  \ddd_1 \|_{H^1}
    \| \ddd_1 \|_{L^4}.
\end{align*}
Then a standard interpolation leads to
\begin{equation}
    \| [S_N,\,\nabla \ddd_1]\otimes \ddd_1 \|_{L^\frac{4}{3}}
    \leq  
    C2^{-\frac N4}
    \| \ddd_1 \|_{L^4}
    \| \nabla \ddd_1 \|_{L^2}^\frac{3}{4}
    \| \nabla \ddd_1 \|_{H^1}^\frac{1}{4}. 
    \label{interSN43}
\end{equation}
Therefore, we can deduce that  
\begin{align*}
    \I_1^{(1)}
    &\lesssim 
    2^{-\frac N4}
    \| \ddd_1 \|_{L^4}
    \| \nabla \ddd_1 \|_{L^2}^\frac{3}{4}
    \| \nabla  \ddd_1 \|_{H^1}^\frac{1}{4}
    \sum_{q =-1}^\infty 
    \sum_{|j-q|\leq 5}
    2^{-\frac{j}{4}}
    \| \Dd_j(\ddd_1\cdot (A_2 \delta\ddd)) \|_{L^2}
    2^{\frac{3}{4}q}
    \| \Dd_q \delta \uu \|_{L^2}\\
    &\lesssim 
    2^{-\frac N4}
    \| \ddd_1 \|_{H^1}^\frac{7}{4}
    \| \nabla \ddd_1 \|_{H^1}^\frac{1}{4}
    \| \ddd_1\cdot (A_2 \delta\ddd) \|_{H^{-\frac{1}{4}}}
    \|  \delta \uu                  \|_{H^\frac{3}{4}}\\
    &\lesssim 
    2^{-\frac N4}
    \| \ddd_1 \|_{H^1}^\frac{7}{4}
    \| \nabla \ddd_1 \|_{H^1}^\frac{1}{4}
    \| \ddd_1 \otimes \delta \ddd    \|_{H^\frac{3}{4}}
    \| A_2  \|_{L^2}
    \|  \delta \uu                   \|_{L^2}^\frac{1}{4}
    \|  \nabla \delta \uu                   \|_{L^2}^\frac{3}{4}\\
    &\lesssim 
    \| \ddd_1 \|_{H^1}^\frac{7}{4}
    \|\ddd_1\|_{H^\frac78}
    \|\delta \ddd\|_{H^\frac78}
    \| \nabla  \ddd_1 \|_{H^1}^\frac{1}{4}
    \big(\|\uu_1\|_{L^2}+\|\uu_2                  \|_{L^2}\big)^\frac{1}{4}
    \big(\| \nabla \uu_1\|_{L^2}+\|\nabla \uu_2 \|_{L^2}\big)^\frac{3}{4}
    \| \nabla \uu_2  \|_{L^2}
    2^{-\frac N4}
    \\
    &\lesssim 
    \|\ddd_1\|_{H^1}^\frac{11}{4}
    \big(\|\ddd_1\|_{H^1}+\|\ddd_2\|_{H^1}\big)
    \big(\|\uu_1\|_{L^2}+\|\uu_2                  \|_{L^2}\big)^\frac{1}{4}\| \nabla  \ddd_1 \|_{H^1}^\frac{1}{4}
    \big(\| \nabla \uu_1\|_{L^2}+\|\nabla \uu_2 \|_{L^2}\big)^\frac{3}{4}
    \| \nabla \uu_2  \|_{L^2}
    2^{-\frac N4}.
\end{align*}
Next, for $\I_1^{(2)}$, we infer from Lemma \ref{lemma:SN-infty} and Lemma \ref{lemma:eps} that 
\begin{align*}
\I_1^{(2)}
&\leq 
\sum_{q =-1}^\infty \sum_{|j-q|\leq 5}
2^{-q}
\| \nabla \ddd_1 \|_{L^\frac{2}{\ee}}
\| S_N \ddd_1 \|_{L^\infty}
\| \Dd_j (\ddd_1\cdot (A_2\delta \ddd))\|_{L^2}
\| \Dd_q \delta \uu \|_{L^\frac{2}{1-\ee}}\\
&\lesssim 
\sum_{q =-1}^\infty \sum_{|j-q|\leq 5}
2^{-q}
\frac{1}{\sqrt{\ee}}
\| \nabla \ddd_1 \|_{L^2}^{\ee}
\| \nabla \ddd_1 \|_{H^1}^{1-\ee}
\sqrt{N}
\| \ddd_1 \|_{H^1}
\| \Dd_j (\ddd_1\cdot (A_2\delta \ddd))\|_{L^2}
\| \Dd_q \delta \uu \|_{L^2}^{1-\ee}
\| \Dd_q \delta \uu \|_{L^\infty}^{\ee}\\
&\lesssim 
\sqrt{\frac{N}{\ee}}
\| \nabla \ddd_1 \|_{L^2}^{\ee}
\| \nabla \ddd_1 \|_{H^1}^{1-\ee}
\| \ddd_1 \|_{H^1}
\sum_{q =-1}^\infty \sum_{|j-q|\leq 5}
2^{-q}
\| \Dd_j (\ddd_1\cdot (A_2\delta \ddd))\|_{L^2}
\| \Dd_q \delta \uu \|_{L^2}^{1-\ee}
(2^q\| \Dd_q \delta \uu \|_{L^2})^{\ee}\\
&\lesssim 
\sqrt{\frac{N}{\ee}}
\| \ddd_1 \|_{H^1}^{1+\ee}
\| \nabla \ddd_1 \|_{H^1}^{1-\ee}
\| \ddd_1\cdot (A_2\delta \ddd))\|_{H^{-\frac{1}{2}}}
\| \delta \uu \|_{H^{-\frac{1}{2}}}^{1-\ee}
\| \nabla \delta \uu \|_{H^{-\frac{1}{2}}}^{\ee}\\
    &\leq C\eta^{-\frac{\ee}{1-\ee}}\left(\frac{N}{\ee}\right)^\frac{1}{1-\ee}   
    \|\ddd_1\|_{H^1}^\frac{2(1+\ee)}{1-\ee}
    \|\ddd_1\|_{H^2}^{2}\|\delta \uu \|_{H^{-\frac12}}^2
    + \|\ddd_1\cdot (A_2 \delta\ddd)\|_{H^{-\frac12}}^2
    + \eta \nu \|\nabla \delta \uu \|_{H^{-\frac12}}^{2},
\end{align*}
where the constant $C>0$ is independent of $\ee$ and $N$. Keeping in mind that we will take $\eta\in (0,1)$ and $\ee\in (0,1/2]$ later, then it follows that 
\begin{align*}
\I_1^{(2)} \leq
C\eta^{-1}\left(\frac{N}{\ee}\right)^\frac{\ee}{1-\ee}   
    \|\ddd_1\|_{H^1}^\frac{2(1+\ee)}{1-\ee}
    \|\ddd_1\|_{H^2}^{2}\|\delta \uu \|_{H^{-\frac12}}^2\left(\frac{N}{\ee}\right)  
    + \|\ddd_1\cdot (A_2 \delta\ddd)\|_{H^{-\frac12}}^2
    + \eta \nu \|\nabla \delta \uu \|_{H^{-\frac12}}^{2}.
\end{align*}
Collecting the above estimates and using \eqref{d1A2ddH12}, we arrive at 
\begin{equation}\label{I-d1A2deltad}
\begin{aligned}
\I&\leq C
\big(\| \ddd_1\|_{H^1}^4+\|\ddd_2\|_{H^1}^4+
\|\uu_2\|_{L^2}^4+\|\uu_2\|_{L^2}^4\big)
	\big(\|\nabla \ddd_1\|_{H^1}^2+ \| \nabla \uu_1\|_{L^2}^2+\|\nabla \uu_2  \|_{L^2}^2\big)
	2^{-\frac{N}{4}}\\
	 &\quad + C\|	\ddd_1 		\|_{H^1}^2
	\| \nabla \uu_2 	\|_{L^2}^2
	\| \delta \ddd \|_{H^\frac{1}{2}}^2N
	 + C\eta^{-1}\left(\frac{N}{\ee}\right)^\frac{\ee}{1-\ee}   
    \|\ddd_1\|_{H^1}^\frac{2(1+\ee)}{1-\ee}
    \|\ddd_1\|_{H^2}^{2}\|\delta \uu \|_{H^{-\frac12}}^2\left(\frac{N}{\ee}\right)  \\
    &\quad  + \eta \nu \|\nabla \delta \uu \|_{H^{-\frac12}}^{2}.
\end{aligned}
\end{equation}

Now concerning the term $\I\I$, we observe that 
\begin{align*}
    \I\I &=
	\sum_{q=-1}^\infty
	\sum_{|j-q|\leq 5}
	2^{-q}
	\int_{\mathbb{T}^2}
	(\sdm{j}-\sdm{q})S_N(\ddd_1\otimes \ddd_1)
	\Dd_q\Dd_j(\ddd_1 \cdot (A_2 \delta \ddd)):\Dd_q \nabla \delta \uu\,\mathrm{d}x\\
	&\quad + \sum_{q=-1}^\infty
	\sum_{|j-q|\leq 5}
	2^{-q}
	\int_{\mathbb{T}^2}
	(\sdm{j}-\sdm{q})(\mathrm{Id}-S_N)(\ddd_1\otimes \ddd_1)
	\Dd_q\Dd_j(\ddd_1 \cdot (A_2 \delta \ddd)):\Dd_q \nabla \delta \uu \,\mathrm{d}x.
\end{align*}
For $|j-q|\leq 5$, it holds 
\begin{align*}
&\|(\sdm{j}-\sdm{q})(\mathrm{Id}-S_N)(\ddd_1\otimes \ddd_1))
	\Dd_q\Dd_j(\ddd_1 \cdot (A_2 \delta \ddd))\|_{L^2}\\
&\quad \lesssim  \|(\sdm{j}-\sdm{q})(\mathrm{Id}-S_N)(\ddd_1\otimes \ddd_1)\|_{L^\infty}
	\|\Dd_q\Dd_j(\ddd_1 \cdot (A_2 \delta \ddd))\|_{L^2}\\
	&\quad \lesssim 2^{-q}  \|(\sdm{j}-\sdm{q})(\mathrm{Id}-S_N)\nabla (\ddd_1\otimes \ddd_1)\|_{L^\infty}
	\|\Dd_q\Dd_j(\ddd_1 \cdot (A_2 \delta \ddd))\|_{L^2}
	\\
	&\quad  \lesssim  \|(\sdm{j}-\sdm{q})(\mathrm{Id}-S_N)\nabla (\ddd_1\otimes \ddd_1)\|_{L^2}
	\|\Dd_q\Dd_j(\ddd_1 \cdot (A_2 \delta \ddd))\|_{L^2},
\end{align*}
and
\begin{align*}
\| (\sdm{j}-\sdm{q})S_N(\ddd_1\otimes \ddd_1)\|_{L^\frac{2}{\ee}}
& \lesssim 
2^{-q}\|(\sdm{j}-\sdm{q})S_N\nabla (\ddd_1\otimes \ddd_1)\|_{L^\frac{2}{\ee}}.
\end{align*}
Hence, by applying a similar argument like for $\I$, we can deduce that 
\begin{equation}\label{II-d1A2deltad}
\begin{aligned}
\I\I &\leq C
\big(\| \ddd_1\|_{H^1}^4+\|\ddd_2\|_{H^1}^4+
\|\uu_2\|_{L^2}^4+\|\uu_2\|_{L^2}^4\big)
	\big(\|\nabla \ddd_1\|_{H^1}^2+ \| \nabla \uu_1\|_{L^2}^2+\|\nabla \uu_2  \|_{L^2}^2\big)
	2^{-\frac{N}{4}}\\
	 &\quad + C\|	\ddd_1 		\|_{H^1}^2
	\| \nabla \uu_2 	\|_{L^2}^2
	\| \delta \ddd \|_{H^\frac{1}{2}}^2N
	 + C\eta^{-1}\left(\frac{N}{\ee}\right)^\frac{\ee}{1-\ee}   
    \|\ddd_1\|_{H^1}^\frac{2(1+\ee)}{1-\ee}
    \|\ddd_1\|_{H^2}^{2}\|\delta \uu \|_{H^{-\frac12}}^2\left(\frac{N}{\ee}\right)  \\
    &\quad  + \eta \nu \|\nabla \delta \uu \|_{H^{-\frac12}}^{2}.
\end{aligned}
\end{equation}

It remains to estimate the last term $\I\mathcal{V}$ in \eqref{def-I...IV-d1A2deltadd1d1}. Once again, we proceed with localising the high and low frequencies related to the tensor $\ddd_1\otimes \ddd_1$ with respect to the operator $S_N$ such that
\begin{equation}\label{IV-d1A2deltadd1d1}
\begin{aligned}
\I\mathcal{V} &=
	\sum_{q=-1}^\infty
	\sum_{j=q-5}^\infty
	2^{-q}
	\int_{\mathbb{T}^2}
	\Dd_q(S_{j+2}(\ddd_1 \cdot (A_2 \delta \ddd))\Dd_j(S_N(\ddd_1\otimes \ddd_1))):\Dd_q \nabla \delta \uu\,\dd x\\
	&\quad +
	\sum_{q=-1}^\infty
	\sum_{j=q-5}^\infty
	2^{-q}
	\int_{\mathbb{T}^2}
	\Dd_q(S_{j+2}(\ddd_1 \cdot (A_2 \delta \ddd))\Dd_j((\mathrm{Id}-S_N)(\ddd_1\otimes \ddd_1))) :\Dd_q \nabla \delta \uu\,\dd x =:\I\mathcal{V}_1+\I\mathcal{V}_2.  
\end{aligned}
\end{equation}
We begin our analysis by dealing with $\I\mathcal{V}_2$, which treats the high frequencies of $(\Id-S_N)(\ddd_1\otimes \ddd_1)$. Invoking the Bernstein inequality in Lemma \ref{prop:Bernstein}, we see that $\| \Dd_q f \|_{L^2}\leq 2^q \| \Dd_q f \|_{L^1}$ for any function $f$ in $L^1(\TT^2)$. Besides, the operator $\Dd_q$ maps $L^1(\TT^2)$ into itself (with a constant of embedding, not depending on the index $q$). Then we deduce that 
\begin{align*}
    |\I\mathcal{V}_2| 
    &\leq 
    \sum_{q=-1}^\infty
	\sum_{j=q-5}^\infty
	2^{-q}
	\| 	\Dd_q(S_{j+2}(\ddd_1 \cdot (A_2 \delta \ddd))\Dd_j((\mathrm{Id}-S_N)(\ddd_1\otimes \ddd_1))) \|_{L^2}
	\| \Dd_q \nabla \delta \uu \|_{L^2}
	\\
	&\lesssim
    \sum_{q=-1}^\infty
	\sum_{j=q-5}^\infty
	2^{-q}
	2^{q}
	\| 	\Dd_q(S_{j+2}(\ddd_1 \cdot (A_2 \delta \ddd))\Dd_j((\mathrm{Id}-S_N)(\ddd_1\otimes \ddd_1))) \|_{L^1}
	\| \Dd_q \nabla  \delta \uu \|_{L^2}
	\\
	&\lesssim
    \sum_{q=-1}^\infty
	\sum_{j=q-5}^\infty
	\| 	S_{j+2}(\ddd_1 \cdot (A_2 \delta \ddd))\Dd_j((\mathrm{Id}-S_N)(\ddd_1\otimes \ddd_1)) \|_{L^1}
	\| \Dd_q \nabla \delta \uu \|_{L^2}
	\\
	&\lesssim
    \sum_{q=-1}^\infty
	\sum_{j=q-5}^\infty
	\| 	S_{j+2}(\ddd_1 \cdot (A_2 \delta \ddd)) \|_{L^2}
	\| \Dd_j((\mathrm{Id}-S_N)(\ddd_1\otimes \ddd_1)) \|_{L^2}
	\| \Dd_q \nabla \delta \uu \|_{L^2}.
\end{align*}
Since $(\mathrm{Id}-S_N)(\ddd_1\otimes \ddd_1)$ has null average (indeed $(\mathrm{Id}-S_N)$ erases the low frequencies of any function), then we see that  
$\| \Dd_j((\mathrm{Id}-S_N)(\ddd_1\otimes \ddd_1)) \|_{L^2} = 
2^{-j}2^j\| \Dd_j((\mathrm{Id}-S_N)(\ddd_1\otimes \ddd_1)) \|_{L^2}\leq 
C2^{-j}\| \Dd_j\nabla ((\mathrm{Id}-S_N)(\ddd_1\otimes \ddd_1)) \|_{L^2}$, thanks to the Bernstein inequality in Lemma \ref{prop:Bernstein}. Recalling that $\nabla$ and $(\Id-S_N)$ commute as operators, we gather that
\begin{align*}
    |\I\mathcal{V}_2| 
	&\lesssim
    \sum_{q=-1}^\infty
	\sum_{j=q-5}^\infty
	\| 	S_{j+2}(\ddd_1 \cdot (A_2 \delta \ddd)) \|_{L^2}2^{-j}
	\| \Dd_j((\mathrm{Id}-S_N)\nabla (\ddd_1\otimes \ddd_1)) \|_{L^2}
	\| \Dd_q \nabla \delta \uu \|_{L^2}
	\\
	&\lesssim
    \sum_{q=-\infty}^\infty
	\sum_{j=q-5}^\infty
	2^{\frac{q-j}{2}}
	2^{-\frac{j}{4}}
	\| 	S_{j+2}(\ddd_1 \cdot (A_2 \delta \ddd)) \|_{L^2}
	2^{-\frac{j}{4}}
	\| \Dd_j((\mathrm{Id}-S_N)(\nabla \ddd_1\otimes \ddd_1)) \|_{L^2}
	2^\frac{q}{2}
	\| \Dd_q \delta \uu \|_{L^2}
	\mathbf{1}_{[-1,\infty)}(q)
	\\
	&\lesssim
	\Big(
    \sum_{q=-\infty}^\infty
    \Big|
	\sum_{j=-\infty}^\infty
	2^{\frac{q-j}{2}}
	\mathbf{1}_{(-\infty,5]}(q-j)
	2^{-\frac{j}{4}}
	\| 	S_{j+2}(\ddd_1 \cdot (A_2 \delta \ddd)) \|_{L^2}
	2^{-\frac{j}{4}}
	\| \Dd_j((\mathrm{Id}-S_N)(\nabla \ddd_1\otimes \ddd_1)) \|_{L^2}
	\mathbf{1}_{[-1,\infty)}(j)
	\Big|^2
	\Big)^\frac{1}{2}
	\\ 
	&\qquad {\times }
	\Big(
	\sum_{q=-\infty}^\infty
	2^q
	\| \Dd_q \delta \uu \|_{L^2}^2\mathbf{1}_{[-1,\infty)}
	\Big)^\frac{1}{2}.
\end{align*}
Applying Young's inequality on convolution of sequences, we obtain 
\begin{align*}
    |\I\mathcal{V}_2| 
    &\lesssim 
    \Big(
        \sum_{q=-\infty}^\infty 
        2^{\frac{q}{2}}\mathbf{1}_{(-\infty,5]}(q)
    \Big)
    \Big(
        \sum_{j = -\infty}^\infty 
        \Big|
        2^{-\frac{j}{4}}
	\| 	S_{j+2}(\ddd_1 \cdot (A_2 \delta \ddd)) \|_{L^2}
	2^{-\frac{j}{4}}
	\| \Dd_j((\mathrm{Id}-S_N)(\nabla \ddd_1\otimes \ddd_1)) \|_{L^2}
	\mathbf{1}_{[-1,\infty)}(j)
	\Big|^2
	\Big)^\frac{1}{2}
	\\ &\qquad {\times }
	\Big(
	\sum_{q=-1}^\infty
	2^q
	\| \Dd_q \delta \uu \|_{L^2}^2
	\Big)^\frac{1}{2}.
\end{align*}
While the first term on the right-hand side of the last inequality is uniformly bounded, the remaining ones lead to the following estimate:
\begin{align*}
    |\I\mathcal{V}_2| 
    &\lesssim
    \| \ddd_1 \cdot (A_2\delta \ddd) \|_{H^{-\frac{1}{4}}}
    \| (\Id -S_N) (\nabla \ddd_1\otimes \ddd_1) \|_{B^{-\frac{1}{4}}_{2,\infty}}
    \| \delta \uu \|_{H^\frac{1}{2}}\\
    &\lesssim
    \| \ddd_1 \cdot (A_2\delta \ddd) \|_{H^{-\frac{1}{4}}}
    \| (\Id -S_N) (\nabla \ddd_1\otimes \ddd_1) \|_{H^{-\frac{1}{4}}}
    \| \delta \uu \|_{H^\frac{1}{2}}, 
\end{align*}
where we have invoked the following embedding between Besov spaces $H^{-1/4}= B_{2,2}^{-1/4}  \hookrightarrow B_{2,\infty}^{-1/4}$. Recalling Lemma \ref{lemma:product} that states the continuity of the product from $H^{3/4}\times L^2$ to $H^{-1/4}$ and from $H^{7/8}\times H^{7/8}$ to $H^{3/4}$, then by Ladyzhenskaya's inequality $\| f \|_{L^4}\lesssim \| f \|_{L^2}^{1/2}\|  f \|_{H^1}^{1/2}$ in two dimensions, we get
\begin{align*}
    |\I\mathcal{V}_2| 
    &\lesssim 
    \| \ddd_1 \otimes \delta \ddd \|_{H^\frac{3}{4}}
    \| A_2 \|_{L^2}
    2^{-\frac{N}{4}}
    \| \nabla \ddd_1\otimes \ddd_1  \|_{L^2}
    \| \delta \uu                   \|_{L^2}^\frac{1}{2}
    \| \nabla \delta \uu            \|_{L^2}^\frac{1}{2}\\
    &\lesssim 
    \| \ddd_1  \|_{H^\frac{7}{8}}
    \| \delta \ddd \|_{H^\frac{7}{8}}
    \| \nabla \uu_2 \|_{L^2}
    \| \nabla \ddd_1 \|_{L^4}
    \| \ddd_1  \|_{L^4}
    \| \delta \uu                   \|_{L^2}^\frac{1}{2}
    \| \nabla \delta \uu            \|_{L^2}^\frac{1}{2}
    2^{-\frac{N}{4}}\\
    &\lesssim 
    \|\ddd_1\|_{H^1}^2(\|\ddd_1\|_{H^1} +\|\ddd_2 \|_{H^1})
    \| \nabla \uu_2 \|_{L^2}
    \| \nabla \ddd_1 \|_{L^2}^\frac{1}{2}
    \| \nabla \ddd_1 \|_{H^1}^\frac{1}{2}
    (\| \uu_1\|_{L^2} +\|\uu_2                \|_{L^2})^\frac{1}{2}
    \\
    &\quad \times (\| \nabla \uu_1\|_{L^2}+ \|\nabla \uu_2\|_{L^2})^\frac{1}{2}
    2^{-\frac{N}{4}}\\
    &\leq 
    C
    (\| \ddd_1\|_{H^1}^\frac72 +\|\ddd_2  \|_{H^1}^\frac{7}{2})
    (\| \uu_1\|_{L^2}^\frac{1}{2} +\|\uu_2                \|_{L^2}^\frac{1}{2})
    (\| \nabla \uu_1\|_{L^2}^\frac{3}{2} + \|\nabla \uu_2\|_{L^2}^\frac{3}{2})
    \| \nabla \ddd_1 \|_{H^1}^\frac{1}{2}
    2^{-\frac{N}{4}},
\end{align*}
where the constant $C>0$ does not depend on the parameter $N\in \mathbb{N}$.  We now turn our attention to the low frequencies part  $\I\mathcal{V}_1$. Using again the Bernstein inequality such that  $\|\Dd_q f \|_{L^2}\lesssim 2^q \| \Dd_q f \|_{L^1}$, and invoking the continuity of $\Dd_q$ from $L^1$ to itself, we infer that
\begin{align*}
    |\I \mathcal{V}_1|
    &\leq 
    \sum_{q=-1}^\infty
	\sum_{j=q-5}^\infty
	2^{-q}
	\| \Dd_q(S_{j+2}(\ddd_1 \cdot (A_2 \delta \ddd))\Dd_j(S_N(\ddd_1\otimes \ddd_1))) \|_{L^2}
	\| \Dd_q \nabla \delta \uu \|_{L^2}
	\\
	&\lesssim
	\sum_{q=-1}^\infty
	\sum_{j=q-5}^\infty
	\| \Dd_q(S_{j+2}(\ddd_1 \cdot (A_2 \delta \ddd))\Dd_j(S_N(\ddd_1\otimes \ddd_1))) \|_{L^1}
	\| \Dd_q \nabla \delta \uu \|_{L^2}
	\\
	&\lesssim
	\sum_{q=-1}^\infty
	\sum_{j=q-5}^\infty
	\| S_{j+2}(\ddd_1 \cdot (A_2 \delta \ddd)) \|_{L^2}
	\| \Dd_jS_N (\ddd_1\otimes \ddd_1) \|_{L^2}
	2^q
	\| \Dd_q \delta \uu \|_{L^2}
	\\
	&\lesssim
	\sum_{q=-1}^\infty
	\sum_{j=q-5}^\infty
	2^{\frac{q-j}{2}}
	2^{-\frac{j}{2}}
	\| S_{j+2}(\ddd_1 \cdot (A_2 \delta \ddd)) \|_{L^2}
	2^{j}
	\| \Dd_j S_N (\ddd_1\otimes \ddd_1) \|_{L^2}
	2^{-\frac{q}{2}}
	\| \Dd_q \nabla \delta \uu \|_{L^2}.
\end{align*}
    On the one hand, if $j \leq -1$, then $2^{j}
	\| \Dd_j S_N (\ddd_1\otimes \ddd_1) \|_{L^2}
	\leq 2^{-1}\| \Dd_j S_N (\ddd_1\otimes \ddd_1) \|_{L^2}$, 
	on the other hand, if $j\geq 0$, then the Bernstein inequality in Lemma \ref{prop:Bernstein}
	yields 
	$2^{j}
	\| \Dd_j S_N (\ddd_1\otimes \ddd_1) \|_{L^2}
	\leq C\| \Dd_j S_N \nabla (\ddd_1\otimes \ddd_1) \|_{L^2}$. As a consequence, we deduce that
\begin{align*}
	 |\I \mathcal{V}_1|
	 &\lesssim
	\sum_{q=-1}^\infty
	\sum_{j=q-5}^\infty
	2^{\frac{q-j}{2}}
	2^{-\frac{j}{2}}
	\| S_{j+2}(\ddd_1 \cdot (A_2 \delta \ddd)) \|_{L^2}
	\big(
	\| \Dd_j S_N (\nabla \ddd_1\otimes \ddd_1) \|_{L^2}+
	\| \Dd_j S_N (\ddd_1\otimes \ddd_1) \|_{L^2}
	\big)
	2^{-\frac{q}{2}}
	\| \Dd_q \nabla \delta \uu \|_{L^2}\\
	&\lesssim
	\I\mathcal{V}_1^{(1)}+ \I\mathcal{V}_1^{(2)}+
	\I\mathcal{V}_1^{(3)},
\end{align*}
where
\begin{align*}
    \I\mathcal{V}_1^{(1)}
    &:=
	\sum_{q=-1}^\infty
	\sum_{j=q-5}^\infty
	2^{\frac{q-j}{2}}
	2^{-\frac{j}{2}}
	\| S_{j+2}(\ddd_1 \cdot (A_2 \delta \ddd)) \|_{L^2}
	\| \Dd_j ([S_N,\, \nabla \ddd_1]\otimes \ddd_1) \|_{L^2}
	2^{-\frac{q}{2}}
	\| \Dd_q \nabla \delta \uu \|_{L^2},
	\\
	\I\mathcal{V}_1^{(2)}
    &:=
	\sum_{q=-1}^\infty
	\sum_{j=q-5}^\infty
	2^{\frac{q-j}{2}}
	2^{-\frac{j}{2}}
	\| S_{j+2}(\ddd_1 \cdot (A_2 \delta \ddd)) \|_{L^2}
	\| \Dd_j ([S_N,\, \ddd_1]\otimes \ddd_1) \|_{L^2}
	2^{-\frac{q}{2}}
	\| \Dd_q \nabla \delta \uu \|_{L^2},\\
	\I\mathcal{V}_1^{(3)}
	&:=
	\sum_{q=-1}^\infty
	\sum_{j=q-5}^\infty
	2^{\frac{q-j}{2}}
	2^{-\frac{j}{2}}
	\| S_{j+2}(\ddd_1 \cdot (A_2 \delta \ddd)) \|_{L^2}
	\big(
	\| \Dd_j(\nabla \ddd_1 \otimes S_N \ddd_1) \|_{L^2} +
	\| \Dd_j (\ddd_1 \otimes S_N \ddd_1) \|_{L^2}
	\big)
	2^{-\frac{q}{2}}
	\| \Dd_q \nabla \delta \uu \|_{L^2}.
\end{align*}
The term $\I\mathcal{V}_1^{(1)}$ that involves the commutator for $S_N$ can be controlled by
\begin{align*}
    \I\mathcal{V}_1^{(1)}
	&\lesssim 
	\sum_{q=-1}^\infty
	\sum_{j=q-5}^\infty
	2^{\frac{q-j}{2}}
	2^{-\frac{j}{2}}
	\| S_{j+2}(\ddd_1 \cdot (A_2 \delta \ddd)) \|_{L^2}
	2^{\frac{j}{2}}
	\| \Dd_j ([S_N,\, \nabla \ddd_1]\otimes \ddd_1) \|_{L^\frac{4}{3}}
	2^{-\frac{q}{2}}
	\| \Dd_q \nabla \delta \uu \|_{L^2}\\
	&\lesssim 
	\| \ddd_1 \|_{L^4}
	\| \nabla \ddd_1 \|_{L^2}^\frac{3}{4}
	\| \nabla \ddd_1 \|_{H^1}^\frac{1}{4}
	2^{-\frac{N}{4}}
	\sum_{q=-1}^\infty
	\sum_{j=q-5}^\infty
	2^{\frac{q-j}{2}}
	2^{-\frac{j}{2}}
	\| S_{j+2}(\ddd_1 \cdot (A_2 \delta \ddd)) \|_{L^2}
	2^{\frac{j}{2}}
	2^{-\frac{q}{2}}
	\| \Dd_q \nabla \delta \uu \|_{L^2}\\
	&\lesssim 
	\| \ddd_1 \|_{L^4}
	\| \nabla \ddd_1 \|_{L^2}^\frac{3}{4}
	\| \nabla \ddd_1 \|_{H^1}^\frac{1}{4}
	2^{-\frac{N}{4}}
	\sum_{q=-1}^\infty
	\sum_{j=-1}^\infty
	2^{\frac{q-j}{4}}\mathbf{1}_{(-\infty, 5]}(q-j)
	2^{-\frac{j}{4}}
	\| S_{j+2}(\ddd_1 \cdot (A_2 \delta \ddd)) \|_{L^2}
	2^{-\frac{q}{4}}
	\| \Dd_q \nabla \delta \uu \|_{L^2},
\end{align*}
where we have  used \eqref{interSN43}.
At this stage, invoking again Young's inequality for the convolution of sequences, we gather that
\begin{align*}
    \I\mathcal{V}_1^{(1)}
    &\lesssim 
    \| \ddd_1 \|_{L^4}
	\| \nabla \ddd_1 \|_{L^2}^\frac{3}{4}
	\| \nabla \ddd_1 \|_{H^1}^\frac{1}{4}
	\| \ddd_1 \cdot (A_2 \delta \ddd) \|_{H^{-\frac{1}{4}}}
	\| \nabla \delta \uu \|_{H^{-\frac{1}{4}}}
	2^{-\frac{N}{4}}
	\\
	&\lesssim
    \| \ddd_1 \|_{L^4}
	\| \nabla \ddd_1 \|_{L^2}^\frac{3}{4}
	\| \nabla \ddd_1 \|_{H^1}^\frac{1}{4}
	\| \ddd_1 \otimes \delta \ddd \|_{H^\frac{3}{4}}
	\| A_2 \|_{L^2}
	\| \delta \uu \|_{H^{\frac{3}{4}}}
	2^{-\frac{N}{4}}
	\\
	&\lesssim
    \| \ddd_1           \|_{L^4}
	\| \nabla \ddd_1    \|_{L^2}^\frac{3}{4}
	\| \nabla \ddd_1    \|_{H^1}^\frac{1}{4}
	\| \ddd_1           \|_{H^\frac{7}{8}}
	\| \delta \ddd      \|_{H^\frac{7}{8}}
	\| \nabla \uu_2     \|_{L^2}
	(\|\uu_1\|_{L^2} +\|\uu_2\|_{L^2})^\frac{1}{4}
	\\
	&\quad \times (\| \nabla \uu_1\|_{L^2}+\|\nabla \uu_2 \|_{L^2})^\frac{3}{4}
	2^{-\frac{N}{4}}
	\\
	&\lesssim
    \|\ddd_1\|_{H^1}^\frac{11}{4}(\| \ddd_1\|_{H^1}+ \|\ddd_2\|_{H^1})
    (\|\uu_1\|_{L^2} +\|\uu_2\|_{L^2})^\frac{1}{4}
	\| \nabla \ddd_1    \|_{H^1}^\frac{1}{4}
	\|\nabla \uu_2\|_{L^2} (\| \nabla \uu_1\|_{L^2}+\|\nabla \uu_2 \|_{L^2})^\frac{3}{4}
	2^{-\frac{N}{4}}
	\\
	&\lesssim
	\big(
    \| \ddd_1 \|_{H^1}^4+
    \| \ddd_2 \|_{H^1}^4+
    \| \uu_1 \|_{L^2}^4+
    \| \uu_2 \|_{L^2}^4
    \big)
    \big(
	\| \nabla \ddd_1    \|_{H^1}^2+
	\| \nabla \uu_1 \|_{L^2}^2+
	\| \nabla \uu_2 \|_{L^2}^2
	\big)
	2^{-\frac{N}{4}}.
\end{align*}
Next, we address the estimate for $\I\mathcal{V}_1^{(2)}$, which is indeed easier. Keeping in mind the argument for $\I\mathcal{V}_1^{(1)}$, we infer from the fact $\|[S_N,\ddd_1]\otimes \ddd_1\|_{L^\frac43}\leq C2^{-N}\|\nabla \ddd_1\|_{L^2}\|\ddd_1\|_{L^4}$ that 
\begin{align*}
    \I\mathcal{V}_1^{(2)}
	&\lesssim 
	\sum_{q=-1}^\infty
	\sum_{j=q-5}^\infty
	2^{\frac{q-j}{2}}
	2^{-\frac{j}{2}}
	\| S_{j+2}(\ddd_1 \cdot (A_2 \delta \ddd)) \|_{L^2}
	2^{\frac{j}{2}}
	\| \Dd_j ([S_N,\, \ddd_1]\otimes \ddd_1) \|_{L^\frac{4}{3}}
	2^{-\frac{q}{2}}
	\| \Dd_q \nabla \delta \uu \|_{L^2}\\
	&\lesssim 
	2^{-N}\|\nabla \ddd_1\|_{L^2}\|\ddd_1\|_{L^4}
	\sum_{q=-1}^\infty
	\sum_{j=q-5}^\infty
	2^{\frac{q-j}{2}}
	2^{-\frac{j}{2}}
	\| S_{j+2}(\ddd_1 \cdot (A_2 \delta \ddd)) \|_{L^2}
	2^{\frac{j}{2}}
	2^{-\frac{q}{2}}
	\| \Dd_q \nabla \delta \uu \|_{L^2}\\
	&\lesssim 
	2^{-N}\|\nabla \ddd_1\|_{L^2}\|\ddd_1\|_{L^4}
	\| \ddd_1 \cdot (A_2 \delta \ddd) \|_{H^{-\frac{1}{4}}}
	\| \nabla \delta \uu \|_{H^{-\frac{1}{4}}}\\
	&\lesssim \|\ddd_1\|_{H^1}^3(\| \ddd_1\|_{H^1}+ \|\ddd_2\|_{H^1})
    (\|\uu_1\|_{L^2} +\|\uu_2\|_{L^2})^\frac{1}{4}
	\|\nabla \uu_2\|_{L^2} (\| \nabla \uu_1\|_{L^2}+\|\nabla \uu_2 \|_{L^2})^\frac{3}{4}
	2^{-N}\\
	&\lesssim (\|\ddd_1\|_{H^1}^4+ \|\ddd_2\|_{H^1}^4)
    (\|\uu_1\|_{L^2}^\frac{1}{4} +\|\uu_2\|_{L^2}^\frac{1}{4}) 
	(\| \nabla \uu_1\|_{L^2}^\frac{7}{4} +\|\nabla \uu_2 \|_{L^2}^\frac{7}{4})
	2^{-\frac{N}{4}},
\end{align*}
where we have used the fact $N\geq 0$ (see the choice of $N$ below). 
Finally, for $\I\mathcal{V}_1^{(3)}$, we deduce from Lemma \ref{lemma:SN-infty} and \eqref{d1A2ddH12} that 
\begin{align*}
    \I\mathcal{V}_1^{(3)}
	&\lesssim
    \sum_{q=-1}^\infty
	\sum_{j=q-5}^\infty
	2^{\frac{q-j}{2}}
	2^{-\frac{j}{2}}
	\| S_{j+2}(\ddd_1 \cdot (A_2 \delta \ddd)) \|_{L^2}
	2^{-j}
    \big(
    2^j
	\| \Dd_j(\nabla \ddd_1 \otimes S_N \ddd_1) \|_{L^2}
	+
	2^j
	\| \Dd_j(\ddd_1 \otimes S_N \ddd_1) \|_{L^2}
	\big)\\
	&\qquad\qquad \times 
	2^{-\frac{q}{2}}
	(2^q
	\| \Dd_q \delta \uu \|_{L^2})
	\\
	&\lesssim
    \sum_{q=-1}^\infty
	\sum_{j=q-5}^\infty
	2^{\frac{3}{2}(q-j)}
	2^{-\frac{j}{2}}
	\| S_{j+2}(\ddd_1 \cdot (A_2 \delta \ddd)) \|_{L^2}
	\big(
	\| \Dd_j\nabla(\nabla  \ddd_1 \otimes S_N \ddd_1) \|_{L^2}
	+
	\| \Dd_j\nabla (\ddd_1 \otimes S_N \ddd_1) \|_{L^2}
	\big)
	2^{-\frac{q}{2}}
	\| \Dd_q \delta \uu \|_{L^2}
	\\
	&\lesssim
	\| \ddd_1 \cdot (A_2 \delta \ddd) \|_{H^{-\frac{1}{2}}}
	\big(
	\| \nabla  \ddd_1\|_{H^1}\|S_N \ddd_1 \|_{L^\infty} 
	+
	\| \ddd_1\|_{H^1}\|S_N\nabla \ddd_1 \|_{L^\infty}
	\big)
	\| \delta \uu \|_{H^{-\frac{1}{2}}}
	\\
	&\lesssim
	\| \ddd_1 \cdot (A_2 \delta \ddd) \|_{H^{-\frac{1}{2}}}^2
	+
	\big(
	\| \nabla \ddd_1 \|_{H^1}^2\|  S_N \ddd_1 \|_{L^\infty}^2
	+
	\|\ddd_1 \|_{H^1}^2 \| S_N \nabla \ddd_1 \|_{L^\infty}^2
	\big)
	\| \delta \uu \|_{H^{-\frac{1}{2}}}^2
	\\
	&\leq
	C
	\|	\ddd_1 		\|_{H^1}^2
	\| \nabla \uu_2 	\|_{L^2}^2
	\| \delta \ddd \|_{H^\frac{1}{2}}^2N+
	\big(\|\ddd_1\|_{H^1}^4+\|\ddd_2\|_{H^{1}}^4\big) 
	\| \nabla \uu_2 				\|_{L^2}^2
	2^{-\frac{N}{2}}+
	\| \ddd_1 \|_{H^2}^2\| \ddd_1 \|_{H^1}^2
	\| \delta \uu \|_{H^{-\frac{1}{2}}}^2N.
\end{align*}
Collecting the above estimates and applying Young's inequality, we obtain the following estimate for $\I\mathcal{V}$ such that
\begin{equation}\label{IV-d1A2deltad}
\begin{aligned}
	\I\mathcal{V}
	&\leq 
	C
	\big(
    \| \ddd_1 \|_{H^1}^4+
    \| \ddd_2 \|_{H^1}^4+
    \| \uu_1 \|_{L^2}^4+
    \| \uu_2 \|_{L^2}^4
    \big)
    \big(
	\| \nabla \ddd_1    \|_{H^1}^2+
	\|  \uu_1 \|_{H^1}^2+
	\|  \uu_2 \|_{H^1}^2
	\big)
	2^{-\frac{N}{4}}  \\
	&\quad +
	C
	\|	\ddd_1 		\|_{H^1}^2
	\| \nabla \uu_2 	\|_{L^2}^2
	\| \delta \ddd \|_{H^\frac{1}{2}}^2 N
	+
	\| \ddd_1 \|_{H^2}^2\| \ddd_1 \|_{H^1}^2
	\| \delta \uu \|_{H^{-\frac{1}{2}}}^2N,
\end{aligned}
\end{equation}
where $C>0$ is a constant independent of $N$.

Hence, from inequalities \eqref{III-d1A2deltad}, \eqref{I-d1A2deltad}, \eqref{II-d1A2deltad} and 
\eqref{IV-d1A2deltad}, we finally arrive at 
\begin{equation}
\label{est-d1d1d1A2deltad-with-eps}
\begin{aligned}
 &\big|\langle (\ddd_1\cdot (A_2\delta \ddd))\ddd_1\otimes\ddd_1,\, 
 \nabla \delta \uu \rangle_{H^{-\frac{1}{2}}}\big|\\
 &\quad \leq 
 	C(1+\eta^{-1})
    \|	\ddd_1 		\|_{H^1}^2
    \Big(
		\| \nabla \uu_2 	\|_{L^2}^2+
		\| \ddd_1			\|_{H^2}^2
	\Big)
	\Big(
		\| \delta \ddd \|_{H^\frac{1}{2}}^2+
		\| \delta \uu \|_{H^{-\frac{1}{2}}}^2
	\Big)
	N
    \\
    &\qquad + C(1+\eta^{-1})
\big(\| \ddd_1\|_{H^1}^4+\|\ddd_2\|_{H^1}^4+
\|\uu_2\|_{L^2}^4+\|\uu_2\|_{L^2}^4\big)
	\big(\| \ddd_1\|_{H^2}^2+ \|  \uu_1\|_{H^1}^2+\|\uu_2  \|_{H^1}^2\big)
	2^{-\frac{N}{4}}\\
	&\qquad 
	 + C\eta^{-1}\left(\frac{N}{\ee}\right)^\frac{\ee}{1-\ee}   
    \|\ddd_1\|_{H^1}^\frac{2(1+\ee)}{1-\ee}
    \|\ddd_1\|_{H^2}^{2}\|\delta \uu \|_{H^{-\frac12}}^2\left(\frac{N}{\ee}\right) + 2\eta\nu \|\nabla \delta \uu\|_{H^{-\frac12}}^2\\
    &\qquad + \eta 	\sum_{q=-1}^\infty \!\!2^{-q}\!\!\int_{\TT^2}|\sdm{q}(\ddd_1\otimes\ddd_1):\Dd_q\delta A|^2(t,x)\, \dd x.
\end{aligned}
\end{equation}
To complete the estimate, we need to explicitly set the values of both $N$ and $\ee$ in term of the function $\Phi(t)$. For $N$, we make the same choice as in \eqref{def-N-prop-deltasigma1-first} 
$$ N = N(t)= \Big\lfloor 4\log_2\Big(1+\frac{1}{\Phi(t)}\Big)\Big\rfloor +4,\quad \text{such that }\ 2^{-\frac{N(t)}{4}}\leq \Phi(t).$$
Then we take the parameter $\ee$ as
\begin{align*} 
\ee=\ee(t)= \frac{1}{2+2\ln(1+N(t))}\in \Big(0,\,\frac12\Big),\quad \text{so that }
	\left(\frac{N}{\ee}\right)^\frac{\ee}{1-\ee} \leq e^2,
\end{align*}
where the last inequality follows from \eqref{esNee1}. 
Since
 $(1+\ee)/(1-\ee)<3$, we can finally recast the estimate \eqref{est-d1d1d1A2deltad-with-eps} into the following form 
\begin{equation}\label{double-loc-est-deltad}
\begin{aligned}
	& \Big|
		\langle 
			(\ddd_1\cdot (A_2\delta \ddd))\ddd_1\otimes \ddd_1,\nabla \delta \uu
		\rangle_{H^{\rule[1pt]{3pt}{0.4pt}\frac{1}{2}}}(t)
	\Big|\\
	&\quad \leq 
	C
	(1+\eta^{-1})
	\Big[
	\big(\| \ddd_1 \|_{H^1}^2 + \| \ddd_1\|_{H^1}^6+\|\ddd_2\|_{H^1}^4+
\|\uu_2\|_{L^2}^4+\|\uu_2\|_{L^2}^4\big)
	\big(\| \ddd_1\|_{H^2}^2+ \| \uu_1\|_{H^1}^2+\|\uu_2  \|_{H^1}^2\big)
	\Big]\mu(\Phi(t))\\
	&\qquad + 2\eta\nu \|\nabla \delta \uu\|_{H^{-\frac12}}^2 + \eta 	\sum_{q=-1}^\infty \!\!2^{-q}\!\!\int_{\TT^2}|\sdm{q}(\ddd_1\otimes\ddd_1):\Dd_q\delta A|^2(t,x)\, \dd x,
\end{aligned}
\end{equation}
for any time $t\in (0,T)$, where $\mu$ is given by \eqref{muaa}. 

To conclude, we note that an analogous procedure as the one used to prove \eqref{double-loc-est-deltad} leads to the following estimate for the third term of $\delta \bm{\sigma}_1$ in \eqref{delta-sigma}
\begin{equation}\label{double-loc-est-deltad2}
\begin{aligned}
	& \Big|
		\langle 
			( \delta \ddd\cdot  (A_2\ddd_2))(\ddd_1\otimes \ddd_1),\nabla \delta \uu
		\rangle_{H^{\rule[1pt]{3pt}{0.4pt}\frac{1}{2}}}(t)
	\Big|\\
	&\quad \leq 
		C
	(1+\eta^{-1})
	\Big[
	\big(\|\ddd_1\|_{H^1}^2+ \| \ddd_2 \|_{H^1}^2 +  \| \ddd_1\|_{H^1}^6 + \| \ddd_2\|_{H^1}^4+
\|\uu_2\|_{L^2}^4+\|\uu_2\|_{L^2}^4\big)
	\big(\| \ddd_1\|_{H^2}^2+ \| \uu_1\|_{H^1}^2+\| \uu_2  \|_{H^1}^2\big)
	\Big]\\
	&\qquad\quad  \times 
	\mu(\Phi(t))  + 2\eta\nu \|\nabla \delta \uu\|_{H^{-\frac12}}^2 + \eta 	\sum_{q=-1}^\infty \!\!2^{-q}\!\!\int_{\TT^2}|\sdm{q}(\ddd_1\otimes\ddd_1):\Dd_q\delta A|^2(t,x)\, \dd x.
\end{aligned}
\end{equation}
In summary, combining the estimates  \eqref{d2d2nablau2deltadd1-final-estimate}, \eqref{double-loc-est-deltad}, \eqref{double-loc-est-deltad2} and using Young's inequality, we can choose the function $g_1(t)\in L^1(0,T)$ by means of
\begin{align*}
	g_1(t)&:= \| \ddd_2	\cdot (A_2 \ddd_2) \|_{L^2}^2
	\big( \| \ddd_1 \|_{H^1}^2 +\| \ddd_2 \|_{H^1}^2\big)+
	\big(\|\uu_1\|_{L^2}+ \|\uu_2\|_{L^2}\big)
	\big(\| \ddd_1 \|_{H^1}^2 + \| \ddd_2 \|_{H^1}^2 \big)
	\big(\| \nabla \uu_1\|_{L^2}+ \|\nabla \uu_2 \|_{L^2}\big)
	 \\
	&\quad\ \  +	\big(\|\ddd_1\|_{H^1}^2+ \| \ddd_2 \|_{H^1}^2 +  \| \ddd_1\|_{H^1}^6 + \| \ddd_2\|_{H^1}^6+
\|\uu_2\|_{L^2}^4+\|\uu_2\|_{L^2}^4\big)
	\big(\| \ddd_1\|_{H^2}^2+ \| \nabla \uu_1\|_{L^2}^2+\|\nabla \uu_2  \|_{L^2}^2\big),
\end{align*}
and conclude that
\begin{align*} 
& \Big|\langle 	\delta \bm{\sigma}_1(t)-[(\ddd_1\cdot (\delta A\ddd_1))\ddd_1\otimes\ddd_1](t),\,\nabla\delta\uu(t)\rangle_{H^{-\frac{1}{2}}}
	\Big|\\
	&\qquad 	\leq  C_4 g_1(t)\mu( \Phi(t) )+ 6\eta\nu \|\nabla \delta \uu\|_{H^{-\frac12}}^2 + 2\eta 	\sum_{q=-1}^\infty \!\!2^{-q}\!\!\int_{\TT^2}|\sdm{q}(\ddd_1\otimes\ddd_1):\Dd_q\delta A|^2(t,x)\, \dd x,
	\end{align*}
where the constant $C_4>0$ depends on $\eta^{-1}$. This concludes part $\mathrm{(a)}$ of Proposition \ref{prop:last-ineq-main-thm}.
\end{proof}

In what follows, we turn to the estimate for the inner product $(\ddd_1\cdot(\delta A \ddd_1))\ddd_1\otimes \ddd_1$ in  \eqref{delta-sigma}. It is worth mentioning that one main dissipative term in $\mathfrak{D}(t)$ (recall  \eqref{def-Phi-D}) that has played an important role in the previous estimates, appears as a consequence of a deep decomposition of this specific nonlinear tensor with symmetric structure. 
\begin{proof}[Proof of Proposition \ref{prop:last-ineq-main-thm}, Part $\mathrm{(b)}$]
Using the symmetry of $\ddd \otimes \ddd$ and applying Bony's decomposition \eqref{bony-decomp}, we obtain  
\begin{equation}\label{last-term-of-second-term}
	\begin{aligned}
		&\langle 	
			( 
			(\ddd_1\cdot(\delta A \ddd_1))\ddd_1 \otimes \ddd_1
			,\, 
			\nabla \delta \uu 
		\rangle_{H^{-\frac{1}{2}}}\\	
		&\quad =\,		
		\sum_{q=-1}^\infty 
		2^{-q}
		\int_{\TT^2}
		\Dd_q(\ddd_1\otimes\ddd_1(\ddd_1\otimes\ddd_1:\delta A)):\Dd_q \delta A \,\dd x 
		\\	
		&\quad =\,
		\sum_{q=-1}^\infty
		\sum_{|j-q|\leq 5}
		2^{-q}
		\int_{\TT^2}
		[\Dd_q,\,\sdm{j}(\ddd_1\otimes\ddd_1)]
		\Dd_j (\ddd_1 \otimes \ddd_1:\delta A):\Dd_q \delta A\,\dd x\\
		&\qquad +\sum_{q=-1}^\infty
		\sum_{|j-q|\leq 5}
		2^{-q}
		\int_{\TT^2}
		( \sdm{j}\,- \,\sdm{q})(\ddd_1\otimes\ddd_1)
		\Dd_q\Dd_j (\ddd_1 \otimes \ddd_1:\delta A):\Dd_q \delta A\,\dd x\\
		&\qquad +\sum_{q=-1}^\infty
		2^{-q}
		\int_{\TT^2}
		\Dd_q (\ddd_1 \otimes \ddd_1:\delta A)(\sdm{q}(\ddd_1\otimes\ddd_1):\Dd_q \delta A)\,\dd x\\
		&\qquad +\sum_{q=-1}^\infty
		\sum_{j= q - 5}^\infty 
		2^{-q}
		\int_{\TT^2}
		\Dd_q\big(\Dd_j(\ddd_1\otimes\ddd_1)S_{j+2} (\ddd_1 \otimes \ddd_1:\delta A)\big):\Dd_q \delta A\,\dd x\\
		&\quad =:\, \mathcal{K}_1\,+\,\mathcal{K}_2\,+\,\mathcal{K}_3\,+\,\mathcal{K}_4.
	\end{aligned}
\end{equation}
We continue our analysis by estimating each term $\mathcal{K}_1$, $\mathcal{K}_2$, $\mathcal{K}_3$ and $\mathcal{K}_4$. These estimates will again lead to an inequality with double logarithmic structure similar to  Proposition \ref{prop:last-ineq-main-thm}, part $\mathrm{(a)}$. 

First, we present a lemma that is useful in the subsequent proofs. It gives the control on $H^{-1/2}$ norm of the quantity $\ddd_1 \otimes \ddd_1:\delta A$, depending on two parameters $N$ and $\ee$. We postpone its proof to the Appendix (see Lemma \ref{lemma:juve16b}).
\begin{lemma}\label{lemma:juve16}
	The following inequality holds for any positive integer $N\in \NN$ and  any $\ee\in (0, 1/2)$:
	\begin{equation}\label{juve16}
	\begin{aligned}
		\|\ddd_1 \otimes \ddd_1:\delta A \|_{H^{-\frac{1}{2}}}
		&\lesssim 
			\Big(
			\sum_{q=-1}^\infty
			2^{-q}
			\int_{\TT^2}|\sdm{q} (\ddd_1 \otimes \ddd_1): \Dd_q \delta A |^2\,\dd x
			\Big)^\frac{1}{2}
			\\ 
			&\quad + \sqrt{\frac{N}{\ee}}
			\|\ddd_1				\|_{H^1}^{1+\ee}
			\|\nabla \ddd_1			\|_{H^1}^{1-\ee}
			\|\delta \uu 			\|_{H^{-\frac{1}{2}}}^{1-\ee}
			\|\nabla \delta \uu 	\|_{H^{-\frac{1}{2}}}^\ee
			+ 
			\|\ddd_1\|_{H^1}
			\|\ddd_1\|_{H^2}
			\|\delta \uu\|_{H^{-\frac{1}{2}}}
			\sqrt{N}
			\\
			&\quad 
			+ (\|\ddd_1\|_{H^1}^2+ \|\uu_1\|_{L^2}^2 + \|\uu_2\|_{L^2}^{2}) 
			(\|\ddd_1\|_{H^2}+ \|\nabla \uu_1\|_{L^2}+ \|\nabla \uu_2\|_{L^2})
			2^{-\frac{N}{2}}.
\end{aligned}
\end{equation}
\end{lemma}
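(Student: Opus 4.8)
\textbf{Proof strategy for Lemma \ref{lemma:juve16}.}
The plan is to estimate $\ddd_1\otimes\ddd_1:\delta A$ in $H^{-1/2}$ by applying the enhanced Bony decomposition \eqref{bony-decomp} with $f=\ddd_1\otimes\ddd_1$ and $g=\delta A$, dyadically block by block. This produces four contributions inside each block $\Dd_q$: the commutator term $\sum_{|j-q|\le 5}[\Dd_q,\sdm{j}(\ddd_1\otimes\ddd_1)]\Dd_j\delta A$, the ``difference of low-frequency cut-off'' term $\sum_{|j-q|\le5}(\sdm{j}-\sdm{q})(\ddd_1\otimes\ddd_1)\Dd_q\Dd_j\delta A$, the diagonal term $\sdm{q}(\ddd_1\otimes\ddd_1)\Dd_q\delta A$, and the high-high remainder term $\sum_{j\ge q-5}\Dd_q(\Dd_j(\ddd_1\otimes\ddd_1)S_{j+2}\delta A)$. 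After multiplying by $2^{-q/2}$, taking $L^2$ norms and summing the squares over $q$, the diagonal term contributes exactly the first quantity on the right-hand side of \eqref{juve16} — here one uses that $\Dd_q$ is bounded on $L^2$ uniformly in $q$, so $2^{-q}\|\sdm{q}(\ddd_1\otimes\ddd_1)\Dd_q\delta A\|_{L^2}^2\lesssim 2^{-q}\|\sdm{q}(\ddd_1\otimes\ddd_1):\Dd_q\delta A\|_{L^2}^2$ after absorbing the tensorial contraction into the $L^2$ norm (more care is needed here: the pointwise product $\sdm{q}(\ddd_1\otimes\ddd_1)\Dd_q\delta A$ is not literally the scalar $\sdm{q}(\ddd_1\otimes\ddd_1):\Dd_q\delta A$, so one should instead keep the diagonal term in its contracted form, which is exactly how it arises from the inner product structure; this is the point where the precise bookkeeping of indices matters).

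The remaining three terms must be shown to contribute the three ``error'' quantities in \eqref{juve16}. First I would split the factor $\ddd_1\otimes\ddd_1 = S_N(\ddd_1\otimes\ddd_1) + (\Id - S_N)(\ddd_1\otimes\ddd_1)$ inside the commutator and difference terms, exactly mirroring the decomposition of $\I\I\I_1$ into $\I\I\I_1^{(1)}$ and $\I\I\I_1^{(2)}$ in the proof of Proposition \ref{prop:T3}. For the low-frequency half, I apply the commutator estimate Lemma \ref{prop:comm-est} with $L^p$--$L^h$ exponents $p=2/\ee$, $h=2/(1-\ee)$, then bound $\|\nabla S_N(\ddd_1\otimes\ddd_1)\|_{L^{2/\ee}}$ via Lemma \ref{lemma:eps} (after expanding $\nabla(\ddd_1\otimes\ddd_1)$ by Leibniz and pulling one factor $S_N\ddd_1$ out in $L^\infty$ by Lemma \ref{lemma:SN-infty}, which produces the $\sqrt{N}$ and the $\sqrt{1/\ee}$). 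Gathering the dyadic sums with the usual Young-inequality-for-series argument and interpolating $\|\Dd_j\delta\uu\|_{L^2}^{1-\ee}\|\Dd_j\nabla\delta\uu\|_{L^2}^\ee$ gives the second line of \eqref{juve16}. For the high-frequency half of the commutator/difference terms, I use instead $\|(\Id-S_N)(\nabla\ddd_1\otimes\ddd_1)\|_{L^2}\lesssim 2^{-N/2}\|\ddd_1\|_{H^1}\|\ddd_1\|_{H^2}$ (Bernstein plus Cauchy--Schwarz over frequencies $\ge N$), which feeds the $\sqrt{N}$-weighted term $\|\ddd_1\|_{H^1}\|\ddd_1\|_{H^2}\|\delta\uu\|_{H^{-1/2}}\sqrt{N}$ and the $2^{-N/2}$ term. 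The high-high remainder $\mathcal{K}_4$-type term is handled as in the estimates of $\I\mathcal{V}$, $\I\I\I_4$ in Proposition \ref{prop:T3}: use $\|\Dd_q f\|_{L^2}\le 2^q\|\Dd_q f\|_{L^1}$, Hölder to split into $\|\Dd_j(\ddd_1\otimes\ddd_1)\|_{L^2}\|S_{j+2}\delta A\|_{L^2}$, then sum using $\|\ddd_1\|_{H^2}$ to close the $j$-sum and $\|\delta\uu\|_{H^{-1/2}}$ for the $q$-sum, giving a term of the type $\|\ddd_1\|_{H^1}\|\ddd_1\|_{H^2}\|\delta\uu\|_{H^{-1/2}}$, which is absorbed into the stated right-hand side (it is bounded by the $\sqrt N$-weighted term since $N\ge1$).

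I expect the main obstacle to be the careful treatment of the diagonal term $\sdm{q}(\ddd_1\otimes\ddd_1)\Dd_q\delta A$: one must resist the temptation to estimate it (which would cost an uncontrollable $\|\ddd_1\|_{L^\infty}$) and instead carry it through verbatim in the contracted form $\sdm{q}(\ddd_1\otimes\ddd_1):\Dd_q\delta A$, recognizing it as precisely the hidden dissipative quantity appearing in $\mathfrak{D}(t)$. A secondary difficulty is keeping the powers of $N$ and $1/\ee$ sharp enough that, after the later choice $N\sim\log_2(1+1/\Phi)$ and $\ee\sim 1/(1+\ln N)$, the bound \eqref{esNee1} applies and the whole expression collapses to the double-logarithmic modulus $\mu(\Phi)$; this requires that the low-frequency commutator term carry exactly the factor $(N/\ee)^{\ee/(1-\ee)}\cdot(N/\ee)$ and no worse, which forces the particular interpolation exponents chosen above. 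Once \eqref{juve16} is established, it will be inserted into the estimates of $\mathcal{K}_1,\dots,\mathcal{K}_4$ in \eqref{last-term-of-second-term} to complete Part (b) of Proposition \ref{prop:last-ineq-main-thm}.
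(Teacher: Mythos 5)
Your overall architecture matches the paper's: Bony's decomposition \eqref{bony-decomp} applied to $\ddd_1\otimes\ddd_1:\delta A$, the diagonal term kept in contracted form and identified with the dissipative quantity, and the commutator/difference terms handled by splitting $\ddd_1$ into $S_N\ddd_1+(\Id-S_N)\ddd_1$, the commutator estimate with exponents $2/\ee$, $2/(1-\ee)$, Lemma \ref{lemma:SN-infty} and Lemma \ref{lemma:eps}. (One small caveat there: you propose splitting $S_N(\ddd_1\otimes\ddd_1)$ and then ``pulling one factor $S_N\ddd_1$ out in $L^\infty$'' after Leibniz; since $S_N$ acts on the product, this step needs either a $[S_N,\nabla\ddd_1]$ commutator or, as the paper does, the splitting applied to a single factor, i.e.\ $\sdm{j}(\nabla\ddd_1\otimes S_N\ddd_1)$ versus $\sdm{j}(\nabla\ddd_1\otimes(\Id-S_N)\ddd_1)$. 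This is fixable.)

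The genuine gap is in your treatment of the high-high remainder $\mathcal{J}_4$. You claim it can be closed ``as in Proposition \ref{prop:T3}'' to give a term of the type $\|\ddd_1\|_{H^1}\|\ddd_1\|_{H^2}\|\delta\uu\|_{H^{-\frac12}}$, closing the $j$-sum with $\|\ddd_1\|_{H^2}$. But in Proposition \ref{prop:T3} the analogous remainder carries $\Dd_j\ddd_1$ (a single factor of the director), so $2^{2j}\|\Dd_j\ddd_1\|_{L^2}$ is summed against $\|\ddd_1\|_{H^2}$. Here the remainder carries $\Dd_j(\ddd_1\otimes\ddd_1)$, and after Bernstein two derivatives must land on the quadratic tensor: $2^{2j}\|\Dd_j(\ddd_1\otimes\ddd_1)\|_{L^2}$ produces, via Leibniz, terms of the form $\partial^\alpha\ddd_1\otimes\ddd_1$ with $|\alpha|=2$, whose $L^2$ norm cannot be estimated by $\|\ddd_1\|_{H^2}\|\ddd_1\|_{H^1}$ — it would require $\|\ddd_1\|_{L^\infty}$, exactly the quantity that is unavailable in this problem (equivalently, $\|\ddd_1\otimes\ddd_1\|_{B^{2}_{2,\infty}}$ is not controlled by $\|\ddd_1\|_{H^1}\|\ddd_1\|_{H^2}$). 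The paper resolves this by applying the $S_N$/$(\Id-S_N)$ splitting to the remainder term as well: the piece $\partial^\alpha\ddd_1\otimes S_N\ddd_1$ is bounded using $\|S_N\ddd_1\|_{L^\infty}\lesssim\sqrt N\,\|\ddd_1\|_{H^1}$ and yields precisely the third right-hand-side term $\|\ddd_1\|_{H^1}\|\ddd_1\|_{H^2}\|\delta\uu\|_{H^{-\frac12}}\sqrt N$ of \eqref{juve16}, while $\partial^\alpha\ddd_1\otimes(\Id-S_N)\ddd_1$ is put in $H^{-\frac12}$ by the product law and $\|(\Id-S_N)\ddd_1\|_{H^{\frac12}}\lesssim 2^{-\frac N2}\|\ddd_1\|_{H^1}$, paired with $\|\delta\uu\|_{L^2}\le\|\uu_1\|_{L^2}+\|\uu_2\|_{L^2}$, yielding the last term. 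In other words, the $\sqrt N$-weighted term in the lemma originates mainly from $\mathcal{J}_4$, not from the commutator pieces (whose high-frequency halves give only the $2^{-\frac N2}$ contribution); without the frequency splitting inside $\mathcal{J}_4$ your argument does not close, and with it you recover the paper's proof.
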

\noindent Concerning $\mathcal K_1$, we invoke the commutator estimate in Lemma \ref{prop:comm-est}, which states that for any $q\in \tilde \NN$, $|j-q|\leq 5$, it holds $$\|  [\Dd_q,\,\sdm{j} (\ddd_1\otimes\ddd_1)]\Dd_j (\ddd_1 \otimes \ddd_1:\delta A) \|_{L^{\frac{2}{1+\ee}}}\leq 2^{-q} \| \sdm{j} \nabla (\ddd_1\otimes\ddd_1) \|_{L^{\frac{2}{\ee}}} \| \Dd_j (\ddd_1 \otimes \ddd_1:\delta A)\|_{L^2}.$$ 
 Therefore, it follows that 
\begin{align*}
		|\mathcal K_1|
		\,&\leq \,\sum_{q=-1}^\infty
		\sum_{|j-q|\leq 5}
		2^{-q}\Big| 
		\int_{\TT^2}
		[\Dd_q,\,\sdm{j} (\ddd_1\otimes\ddd_1)]\Dd_j (\ddd_1 \otimes \ddd_1:\delta A):\Dd_q \delta A\,\dd x\Big|
		\\
		\,&\lesssim\,
		\sum_{q=-1}^\infty
		\sum_{|j-q|\leq 5}
		2^{-2q}
		\|\,\sdm{j} \nabla (\ddd_1\otimes\ddd_1)\,\|_{L^\frac{2}{\ee}}
		\|\,\Dd_j (\ddd_1 \otimes \ddd_1:\delta A)\,\|_{L^2}
		\|\,\Dd_q \delta A\,\|_{L^\frac{2}{1-\ee}}
		\\
		\,&\lesssim\, 
		\sum_{q=-1}^\infty
		\sum_{|j-q|\leq 5}
		2^{\frac{j-q}{2}}
		\|\,\sdm{j} (\nabla \ddd_1 \otimes\ddd_1)\,\|_{L^\frac{2}{\ee}}
		2^{-\frac{j}{2}}\|\,\Dd_j (\ddd_1 \otimes \ddd_1:\delta A)\,\|_{L^2}
		2^{-\frac{q}{2}}\|\,\Dd_q \delta \uu\,\|_{L^\frac{2}{1-\ee}}\\
		\,&\lesssim\,
		\sum_{q=-1}^\infty
		\sum_{|j-q|\leq 5}
		\|\,\sdm{j} (\nabla \ddd_1\otimes S_N \ddd_1)\,\|_{L^\frac{2}{\ee}}
		2^{-\frac{j}{2}}\|\,\Dd_j (\ddd_1 \otimes \ddd_1:\delta A )\,\|_{L^2}
		2^{-\frac{q}{2}}\|\,\Dd_q \delta \uu\,\|_{L^\frac{2}{1-\ee}}
		\\
		&\quad \ +\,
		\sum_{q=-1}^\infty
		\sum_{|j-q|\leq 5}
		\|\,\sdm{j}(\nabla \ddd_1\otimes (\Id-S_N) \ddd_1)\,\|_{L^\frac{2}{\ee}}
		2^{-\frac{j}{2}}\|\,\Dd_j (\ddd_1 \otimes \ddd_1: \delta A) \|_{L^2}
		2^{-\frac{q}{2}}\|\,\Dd_q \delta \uu\,\|_{L^\frac{2}{1-\ee}}\\
		\,&=:\,
		\mathcal{K}_1^{(1)}\,+\,\mathcal{K}_1^{(2)},
	\end{align*}
where we have separated the low frequencies of $S_N\ddd_1$ in $\mathcal{K}_1^{(1)}$ and the high frequencies 
$(\Id-S_N) \ddd_1$ in $\mathcal{K}_1^{(2)}$. For the estimate of $\mathcal{K}_1^{(1)}$, we recall the inequalities  $\| S_N \ddd_1 \|_{L^\infty} \leq C\sqrt{N}\| \ddd_1 \|_{H^1}$ in Lemma \ref{lemma:SN-infty} and $\| \nabla \ddd_1 \|_{L^{2/\ee}} \leq  C\| \nabla \ddd_1 \|_{H^{1-\ee}}/\sqrt{\ee}$ of Lemma \ref{lemma:eps}. Hence, it follows that 
\begin{equation}\label{est-K11-final-section}
\begin{aligned}
	|\mathcal{K}_1^{(1)}|
	\,&\lesssim\,
	\sum_{q=-1}^\infty
	\sum_{|j-q|\leq 5}
	\|\nabla \ddd_1	\|_{L^\frac{2}{\ee}}
	\|S_N \ddd_1	\|_{L^\infty}
	2^{-\frac{j}{2}}
	\|\Dd_j ( \ddd_1 \otimes \ddd_1:\delta A )\|_{L^2}
	2^{-\frac{q}{2}(1-\ee)}\|\Dd_q \delta \uu\|_{L^2}^{1-\ee}
	2^{-\frac{q}{2}\ee}\|\Dd_q\nabla \delta \uu\|_{L^2}^{\ee}\\
	&\lesssim\,
	\sqrt{\frac{N}{\ee}}
	\|\nabla \ddd_1\|_{L^2}^{\ee}
	\|\nabla \ddd_1\|_{H^1}^{1-\ee}
	\|\ddd_1\|_{H^1}
	\|\delta \uu\|_{H^{-\frac{1}{2}}}^{1-\ee}
	\|\nabla \delta \uu\|_{H^{-\frac{1}{2}}}^{\ee}
	\|\ddd_1 \otimes \ddd_1:\delta A\|_{H^{-\frac{1}{2}}}.
\end{aligned}
\end{equation}
Applying Lemma \ref{lemma:juve16} and the Cauchy-Schwarz inequality, we infer from  \eqref{est-K11-final-section} that
\begin{align*}
	|\mathcal{K}_1^{(1)} |
	& \lesssim \, \sqrt{\frac{N}{\ee}}
	 \|\ddd_1\|_{H^1}^{1+\ee}
	\|\nabla \ddd_1\|_{H^1}^{1-\ee}
	\|\delta \uu\|_{H^{-\frac{1}{2}}}^{1-\ee}
	\|\nabla \delta \uu\|_{H^{-\frac{1}{2}}}^{\ee}
	\Big(
			\sum_{q=-1}^\infty
			2^{-q}
			\int_{\TT^2}|\sdm{q} (\ddd_1 \otimes \ddd_1): \Dd_q \delta A |^2\,\dd x
			\Big)^\frac{1}{2} \\
	&\quad \,+\, \left(\frac{N}{\ee}\right)
	\|\ddd_1\|_{H^1}^{2(1+\ee)}
	\|\ddd_1 \|_{H^2}^{2(1-\ee)}
	\|\delta \uu\|_{H^{-\frac{1}{2}}}^{2(1-\ee)}
	\|\nabla \delta \uu\|_{H^{-\frac{1}{2}}}^{2\ee}
	\,+\,
		\|\ddd_1\|_{H^1}^2 
		\|\ddd_1\|_{H^2}^2
		\|\delta \uu\|_{H^{-\frac{1}{2}}}^2
		N
	    \\ 
	 &\quad 
	  \,+\,
	(\|\ddd_1\|_{H^1}^4+ \|\uu_1\|_{L^2}^4 + \|\uu_2\|_{L^2}^4) 
	(\| \ddd_1\|_{H^2}^2 + \|\nabla \uu_1\|_{L^2}^2 + \|\nabla \uu_2\|_{L^2}^2)
	2^{-N}.
\end{align*}
From Young's inequality, we further get 
\begin{equation}\label{juve18}
\begin{aligned}
	|\mathcal{K}_1^{(1)}|
	\,\leq\,
	&C \big( \eta^{-\frac{\ee}{1-\ee}} +
	\eta^{-\frac{1+\ee}{1-\ee}} \big)
	\left(\frac{N}{\ee}\right)^{\frac{1}{1-\ee}}
	\|\ddd_1\|_{H^1}^{2\frac{1+\ee}{1-\ee}}
	\|\ddd_1\|_{H^2}^{2}
	\|\delta \uu\|_{H^{-\frac{1}{2}}}^2
	\\
	& \,+\,
	\eta \nu 
	\|\nabla \delta \uu\|_{H^{-\frac{1}{2}}}^2
	\,+\,
	\eta \sum_{q=-1}^\infty
			2^{-q}
			\int_{\TT^2}|\sdm{q} (\ddd_1 \otimes \ddd_1): \Dd_q \delta A |^2\,\dd x +\,
	C\|\ddd_1\|_{H^1}^2
	\|\ddd_1\|_{H^2}^2
	\|\delta \uu\|_{H^{-\frac{1}{2}}}^2
	N\\
	&\,+\, C
	(\|\ddd_1\|_{H^1}^4+ \|\uu_1\|_{L^2}^4 + \|\uu_2\|_{L^2}^4) 
	(\| \ddd_1\|_{H^2}^2 + \|\nabla \uu_1\|_{L^2}^2 + \|\nabla \uu_2\|_{L^2}^2)
	2^{-N}.
\end{aligned}
\end{equation}
Next, we treat $\mathcal K_1^{(2)}$. Recalling the estimates  
$$\|\sdm{j} (\nabla \ddd_1\otimes (\Id-S_N) \ddd_1)\|_{L^\frac{2}{\ee}}\leq 
C2^{(1-\ee)j}\| \sdm{j} (\nabla \ddd_1\otimes (\Id-S_N) \ddd_1)\|_{L^2},\quad \| \Dd_q \delta \uu\|_{L^{\frac{2}{1-\ee}}}\leq C2^{q\ee}\| \Dd_q \delta \uu\|_{L^{2}},$$ 
thanks to the Bernstein inequalities in Lemma \ref{prop:Bernstein}, we deduce that  
	\begin{align*}
		|\mathcal K_1^{(2)}|
		\,&=\,
		\sum_{q=-1}^\infty
		\sum_{|j-q|\leq 5}
		\|\,\sdm{j} (\nabla \ddd_1\otimes (\Id-S_N) \ddd_1)\,\|_{L^\frac{2}{\ee}}
		2^{-\frac{j}{2}}\|\,\Dd_j (\ddd_1 \otimes \ddd_1:\delta A )\,\|_{L^2}
		2^{-\frac{q}{2}}\|\,\Dd_q \delta \uu\,\|_{L^\frac{2}{1-\ee}}
		\\
		&\lesssim\,
		\sum_{q=-1}^\infty
		\sum_{|j-q|\leq 5}
		2^{(1-\ee)j}
		\|\sdm{j} (\nabla \ddd_1\otimes (\Id-S_N) \ddd_1)\|_{L^2}
		2^{-\frac{j}{2}}\|\Dd_j (\ddd_1 \otimes \ddd_1:\delta A)\|_{L^2}
		2^{-\frac{q}{2}}2^{q\ee}
		\|\Dd_q \delta \uu\|_{L^2}
		\\
		&\lesssim\,
		\sum_{q=-1}^\infty
		\sum_{|j-q|\leq 5}
		2^{\frac{3}{2}(j-q)}2^{(q-j)\ee}
		2^{-\frac{j}{2}}
		\|\,\sdm{j}  (\nabla \ddd_1\otimes (\Id-S_N) \ddd_1)\,\|_{L^2}
		2^{-\frac{j}{2}}\|\,\Dd_j (\ddd_1 \otimes \ddd_1:\delta A)\,\|_{L^2}
		\|\,\Dd_q \nabla  \delta \uu\,\|_{L^2}
		\\
		&\lesssim\,
		\|\nabla \ddd_1\otimes (\Id-S_N) \ddd_1\|_{H^{-\frac{1}{2}}}
		\|\ddd_1 \otimes \ddd_1:\delta A\|_{H^{-\frac{1}{2}}}
		(\|\nabla \uu_1\|_{L^2}+ \|\nabla \uu_2\|_{L^2})
		\\
		&\lesssim\,
		\|\nabla \ddd_1\|_{L^2}
		\|(\Id-S_N) \ddd_1\|_{H^{\frac{1}{2}}}
		\|\,\ddd_1 \otimes \ddd_1:\delta A\,\|_{H^{-\frac{1}{2}}}
		(\|\nabla \uu_1\|_{L^2}+ \|\nabla \uu_2\|_{L^2})
		\\
		&\lesssim\,
		\|\ddd_1\|_{H^1}^2
		\|\ddd_1 \otimes \ddd_1:\delta A\|_{H^{-\frac{1}{2}}}
		(\|\nabla \uu_1\|_{L^2}+ \|\nabla \uu_2\|_{L^2})
		2^{-\frac{N}{2}}.
	\end{align*}
Combining the above inequality together with Lemma \ref{lemma:juve16} and using Young's inequality, we finally gather that
\begin{align*}
	|\mathcal{K}_1^{(2)}|
	&\leq\,
			C\eta^{-\frac{\ee}{1-\ee}} \left(\frac{N}{\ee}\right)^\frac{1}{1-\ee}
			\|\ddd_1\|_{H^1}^{2\frac{1+\ee}{1-\ee}}
			\|\ddd_1\|_{H^2}^2
			\|\delta \uu \|_{H^{-\frac{1}{2}}}^2
			\,+\,
	\eta \sum_{q=-1}^\infty
			2^{-q}
			\int_{\TT^2}|\sdm{q} (\ddd_1 \otimes \ddd_1): \Dd_q \delta A |^2\,\dd x
			\\ 
			&\quad \,+\,
			\eta \nu 
			\|\nabla \delta \uu \|_{H^{-\frac{1}{2}}}^2
			\,+\,
			C\|\ddd_1\|_{H^1}^2
			\|\ddd_1\|_{H^2}^2
			\|\delta \uu\|_{H^{-\frac{1}{2}}}^2
			N
			\,+\,  C\eta^{-1}\|\ddd_1\|_{H^1}^4
		(\|\nabla \uu_1\|_{L^2}^2 + \|\nabla \uu_2\|_{L^2}^2)
		2^{-N}
			\\
			&\quad +\,
			C
	(\|\ddd_1\|_{H^1}^4+ \|\uu_1\|_{L^2}^4 + \|\uu_2\|_{L^2}^4) 
	(\| \ddd_1\|_{H^2}^2 + \|\nabla \uu_1\|_{L^2}^2 + \|\nabla \uu_2\|_{L^2}^2)
	2^{-N}.
\end{align*}
This last relation together with \eqref{juve18} lead to the following estimate of $\mathcal{K}_1$ in \eqref{last-term-of-second-term}:
\begin{equation}\label{juve28b}
\begin{aligned}
	|\mathcal{K}_1|&\leq\,
			C \big( \eta^{-\frac{\ee}{1-\ee}} +
	\eta^{-\frac{1+\ee}{1-\ee}} \big)
			\left(\frac{N}{\ee}\right)^\frac{\ee}{1-\ee}
			\|\ddd_1\|_{H^1}^{2\frac{1+\ee}{1-\ee}}
			\| \ddd_1\|_{H^2}^2
			\|\delta \uu \|_{H^{-\frac{1}{2}}}^2\left(\frac{N}{\ee}\right)\\
			&\quad +\,
			2\eta\nu 
			\|\,\nabla \delta \uu \|_{H^{-\frac{1}{2}}}^2
			\,+\,
			2\eta \sum_{q=-1}^\infty
			2^{-q}
			\int_{\TT^2}|\sdm{q} (\ddd_1 \otimes \ddd_1): \Dd_q \delta A |^2\,\dd x
			+\,
			C 
			\|\ddd_1\|_{H^1}^2
			\|\ddd_1\|_{H^2}^2
			\|\delta \uu\|_{H^{-\frac{1}{2}}}^2
			N\\ 
			&\quad 
			+\,
				C(1+\eta^{-1})
	(\|\ddd_1\|_{H^1}^4+ \|\uu_1\|_{L^2}^4 + \|\uu_2\|_{L^2}^4) 
	(\|\ddd_1\|_{H^2}^2 + \|\nabla \uu_1\|_{L^2}^2 + \|\nabla \uu_2\|_{L^2}^2)
	2^{-N},
\end{aligned}
\end{equation}
where we have used the fact $N\geq 0$. Indeed, we choose the parameters $N\in \mathbb{N}$ and $\ee \in (0,1/2)$ in terms of $\Phi(t)$ such that 
\begin{align} 
N(t) = \Big\lfloor \log_2\Big(1+\frac{1}{\Phi(t)}\Big)\Big\rfloor + 1\quad\text{and}\quad  \ee(t):=\frac{1}{2+2\ln(1+ N(t))}.
\label{NNeea}
\end{align} 
Therefore, it follows that   
\begin{equation}\label{juve28}
\begin{aligned}
	|\mathcal{K}_1|& \leq\,
			C\eta^{-3}	(\|\ddd_1\|_{H^1}^2+\|\ddd_1\|_{H^1}^6)
			\|\ddd_1\|_{H^2}^2
			\Phi(t)
			\Big(1+\ln\Big(1+\frac{1}{\Phi(t)}\Big)\Big)
			\Big(1+\ln\Big(1+\ln\Big(1+\frac{1}{\Phi(t)}\Big)\Big)\Big)\\
			&\quad 
			+\,
			2\eta\nu 
			\|\,\nabla \delta \uu \|_{H^{-\frac{1}{2}}}^2
			 +\,
			2\eta \sum_{q=-1}^\infty
			2^{-q}
			\int_{\TT^2}|\sdm{q} (\ddd_1 \otimes \ddd_1): \Dd_q \delta A |^2\,\dd x\\ 
			&\quad +\,
			C\|\ddd_1\|_{H^2}^2
			\|\ddd_1\|_{H^1}^2
			\Phi(t)
			\Big(1+\ln\Big(1+\frac{1}{\Phi(t)}\Big)\Big)\\
			&\quad +\,
			C(1+\eta^{-1})
	(\|\ddd_1\|_{H^1}^4+ \|\uu_1\|_{L^2}^4 + \|\uu_2\|_{L^2}^4) 
	(\| \ddd_1\|_{H^2}^2 + \|\nabla \uu_1\|_{L^2}^2 + \|\nabla \uu_2\|_{L^2}^2)
			\Phi(t)\\
	&\leq C(1+\eta^{-3}) (\|\ddd_1\|_{H^1}^2+\|\ddd_1\|_{H^1}^6+ \|\uu_1\|_{L^2}^4 + \|\uu_2\|_{L^2}^4) 
	(\|\ddd_1\|_{H^2}^2 + \|\nabla \uu_1\|_{L^2}^2 + \|\nabla \uu_2\|_{L^2}^2) \mu(\Phi(t))\\
	&\quad +\,
			2\eta\nu 
			\|\,\nabla \delta \uu \|_{H^{-\frac{1}{2}}}^2
			\, +\,
			2\eta \sum_{q=-1}^\infty
			2^{-q}
			\int_{\TT^2}|\sdm{q} (\ddd_1 \otimes \ddd_1): \Dd_q \delta A |^2\,\dd x,
\end{aligned}
\end{equation}
where we have used the fact $\eta\in (0,1)$ and $\mu$ is given by \eqref{muaa}. 

Coming back to our decomposition \eqref{last-term-of-second-term}, with an analogous procedure as for proving \eqref{juve28}, we also infer that the term  $\mathcal{K}_2$ satisfies
\begin{equation}\label{juve27}
\begin{aligned}
	|\mathcal{K}_2|&\leq C(1+\eta^{-3}) (\|\ddd_1\|_{H^1}^2+\|\ddd_1\|_{H^1}^6+ \|\uu_1\|_{L^2}^4 + \|\uu_2\|_{L^2}^4) 
	(\| \ddd_1\|_{H^2}^2 + \|\nabla \uu_1\|_{L^2}^2 + \|\nabla \uu_2\|_{L^2}^2)\mu(\Phi(t))\\
	&\quad +\,
			2\eta\nu 
			\|\,\nabla \delta \uu \|_{H^{-\frac{1}{2}}}^2
			\, +\,
			2\eta \sum_{q=-1}^\infty
			2^{-q}
			\int_{\TT^2}|\sdm{q} (\ddd_1 \otimes \ddd_1): \Dd_q \delta A |^2\,\dd x.
\end{aligned}
\end{equation}

Now we turn our attention to the last two terms $\mathcal{K}_3$ and $\mathcal{K}_4$ in \eqref{last-term-of-second-term}. We begin with $\mathcal K_4$, since its estimate is somehow similar to the ones we have encountered: 
	\begin{align*}
		|\mathcal{K}_4|
		\,&=\,
		\Big|
			\sum_{q=-1}^\infty
		\sum_{j= q - 5}^\infty 
		2^{-q}
		\int_{\TT^2}
		\Dd_q\big(\Dd_j(\ddd_1\otimes\ddd_1)\Sd_{j+2} (\ddd_1 \otimes \ddd_1:\delta A) \big)
		:\Dd_q \delta A \,\dd x
		\Big|\\
		&\lesssim\,	
		\sum_{q=-1}^\infty
		\sum_{j= q - 5}^\infty 
		2^{-\frac{q}{2}}
		\|\,
			\Dd_q\big(\Dd_j(\ddd_1\otimes\ddd_1)
			\Sd_{j+2} (\ddd_1 \otimes \ddd_1:\delta A )\big) 
		\|_{L^\frac{2}{1+\ee}}
		2^{-\frac{q}{2}}
		\|\,\Dd_q \delta A\,\|_{L^\frac{2}{1-\ee}}\\
		&\lesssim\,	
		\sum_{q=-1}^\infty
		\sum_{j=q - 5}^\infty 
		2^{-\frac{q}{2}}
		\|\,
			\Dd_j(\ddd_1\otimes\ddd_1)
			\Sd_{j+2} (\ddd_1 \otimes \ddd_1:\delta A ) 
		\|_{L^\frac{2}{1+\ee}}
		2^{-\frac{q}{2}} 
		\|\,\Dd_q  \nabla \delta \uu\,\|_{L^\frac{2}{1-\ee}}\\
		&\lesssim\,	
		\sum_{q=-1}^\infty
		\sum_{j= q - 5}^\infty 
		2^{-\frac{q}{2}}\|\,\Dd_j(\ddd_1\otimes\ddd_1)\,\|_{L^\frac 2\ee }
		\|\Sd_{j+2} (\ddd_1 \otimes \ddd_1:\delta A ) \|_{L^2}
		2^{-\frac{q}{2}} 2^q\|\,\Dd_q  \delta \uu\,\|_{L^\frac{2}{1-\ee}}
		\\
		&\lesssim\,	
		\sum_{q=-1}^\infty
		\sum_{j= q - 5}^\infty 
		2^{\frac{q-j}{2}}\|\,\Dd_j\nabla (\ddd_1\otimes\ddd_1)\,\|_{L^\frac 2\ee }
		2^{-\frac{q}{2}} \|\,\Dd_q  \delta \uu\,\|_{L^\frac{2}{1-\ee}}
		2^{-\frac{j}{2}}\|\, \Sd_{j+2} (\ddd_1 \otimes \ddd_1:\delta A ) \|_{L^2}.
\end{align*}
Hence, it follows that  
\begin{equation}\label{juve15}
\begin{aligned}
		|\mathcal K_4|
		&\lesssim\,	
	    \|\ddd_1 \otimes \ddd_1:\delta A \|_{H^{-\frac{1}{2}}}
		\Big(
		\sum_{q=-1}^\infty
		\sum_{j= q - 5}^\infty 
		2^{q-j}\|\Dd_j\nabla (\ddd_1\otimes\ddd_1)\|_{L^\frac{2}{\ee} }^2
		2^{-q} \|\Dd_q  \delta \uu\|_{L^\frac{2}{1-\ee}}^2
		\Big)^\frac{1}{2}
		\\
		&\lesssim\,	
		\| \ddd_1 \otimes \ddd_1:\delta A \|_{H^{-\frac{1}{2}}}
		\big( \mathcal{K}_4^{(1)} +\mathcal{K}_4^{(2)}\big),
	\end{aligned}
\end{equation}
where
\begin{align*}
   \mathcal{K}_4^{(1)} 
   &:= \Big(
		\sum_{q=-1}^\infty
		\sum_{j= q - 5}^\infty 
		2^{q-j}\|\,\Dd_j(\nabla \ddd_1\otimes S_N \ddd_1)\,\|_{L^\frac{2}{\ee}}^2
		2^{-q(1-\ee)} \|\,\Dd_q  \delta \uu\,\|_{L^2}^{2(1-\ee)}
		2^{-q\ee}\|\,\Dd_q \nabla \delta \uu\,\|_{L^2}^{2\ee}
		\Big)^\frac{1}{2},\\
  \mathcal{K}_4^{(2)} 
   &:= \Big(
			\sum_{q=-1}^\infty
			\sum_{j= q - 5}^\infty 
			2^{q-j}\|\,\Dd_j(\nabla \ddd_1\otimes (\Id\,-\,S_N )\ddd_1)\,\|_{L^\frac{2}{\ee}}^2
			2^{-q} \|\,\Dd_q  \delta \uu\,\|_{L^\frac{2}{1-\ee}}^2
			\Big)^\frac{1}{2}.
\end{align*}
Thanks to Young's inequality, the first term $\mathcal K_4^{(1)}$ can be estimated as follows
	\begin{align*}
		|\mathcal K_4^{(1)}|
		&\lesssim\,
		\|\nabla \ddd_1\otimes S_N \ddd_1\|_{L^\frac{2}{\ee}}
		\|\delta \uu\|_{H^{-\frac{1}{2}}}^{1-\ee}
		\|\nabla \delta \uu\|_{H^{-\frac{1}{2}}}^{\ee} \\
		&\lesssim\,
		\|\nabla \ddd_1\|_{L^\frac{2}{\ee}} \| S_N \ddd_1\|_{L^\infty}
		\|\delta \uu\|_{H^{-\frac{1}{2}}}^{1-\ee}
		\|\nabla \delta \uu\|_{H^{-\frac{1}{2}}}^{\ee}\\
		&\lesssim\,
		\sqrt{	\frac{N}{\ee}}
		\| \ddd_1\|_{H^1}^{1+\ee}
		\| \nabla \ddd_1\|_{H^1}^{1-\ee}
		\| \delta \uu \|_{H^{-\frac{1}{2}}}^{1-\ee}
		\| \nabla \delta \uu \|_{H^{-\frac{1}{2}}}^{\ee}.
	\end{align*}
On the other hand, for $\mathcal K_4^{(2)}$, we observe that
\begin{align*}
	|\mathcal K_4^{(2)}| 
	\,&
	\lesssim\,
	\Big(
	\sum_{q=-1}^\infty
	\sum_{j= q - 5}^\infty 
	2^{q-j} 2^{2j(1-\ee)} \|\Dd_j(\nabla \ddd_1\otimes (\Id\,-\,S_N )\ddd_1)\|_{L^2 }^2
	2^{-q}2^{2q\ee}\|\Dd_q  \delta \uu\|_{L^2}^2
	\Big)^\frac{1}{2}\\
	&\lesssim\,
	\Big(
	\sum_{q=-1}^\infty
	\sum_{j= q - 5}^\infty 
	2^{2(q-j)\ee} 
	2^{j} \|\Dd_j(\nabla \ddd_1\otimes (\Id\,-\,S_N )\ddd_1)\|_{L^2 }^2
	\|\Dd_q   \delta \uu\|_{L^2}^2
	\Big)^\frac{1}{2}\\
	&\lesssim\,
	\|\nabla \ddd_1\otimes (\Id\,-\,S_N )\ddd_1\|_{H^{\frac{1}{2}}}
	\|\,  \delta \uu\,\|_{L^2}\\
	&\lesssim\,
	\| \nabla \ddd_1\|_{H^\frac34} 
	\| (\Id\,-\,S_N )\ddd_1\|_{H^{\frac{3}{4}}}
	\|\, \delta \uu\,\|_{L^2}\\
	&\lesssim\,
	\| \nabla \ddd_1\|_{H^1} 
	\|\ddd_1\|_{H^1} 
	2^{-\frac{N}{4}} 
	(\|\uu_1\|_{L^2}+\|\uu_2\|_{L^2})\\
	&\lesssim\,
	(\|\ddd_1\|_{H^1}^2+ \|\uu_1\|_{L^2}^2 +\|\uu_2\|_{L^2}^2)
	\| \nabla \ddd_1\|_{H^1}
	2^{-\frac{N}{4}}.
\end{align*}
From the above estimates,  we obtain
	\begin{align*}
		|\mathcal{K}_4^{(1)}| \,+\,|\mathcal{K}_4^{(2)}|
		\,\lesssim\,
		\sqrt{	\frac{N}{\ee}}
		\|\, \ddd_1\,			\|_{H^1}^{1+\ee}
		\|\,\nabla \ddd_1		\|_{H^1}^{1-\ee}
		\|\,\delta \uu\,		\|_{H^{-\frac{1}{2}}}^{1-\ee}
		\|\,\nabla \delta \uu\,	\|_{H^{-\frac{1}{2}}}^{\ee}
		\,+\,
		(\|\ddd_1\|_{H^1}^2+ \|\uu_1\|_{L^2}^2 +\|\uu_2\|_{L^2}^2)
	\| \nabla \ddd_1\|_{H^1}
	2^{-\frac{N}{4}}.
	\end{align*}
Combining this inequality together with Lemma \ref{lemma:juve16} and \eqref{juve15}, applying Young's inequality, we finally achieve that
	\begin{align*}
		|\mathcal K_4 |
		\,&\leq\,
		C(\eta^{-\frac{\ee}{1-\ee}}+\eta^{-\frac{1+\ee}{1-\ee}})
		\left(\frac{N}{\ee}
		\right)^\frac{1}{1-\ee}
		\|\ddd_1\|_{H^1}^{2\frac{1+\ee}{1-\ee}}\|\ddd_1\|_{H^2}^{2}
		\|\delta \uu 	\|_{H^{-\frac{1}{2}}}^{2}
		\\
		&\quad +\,
			2\eta\nu 
			\|\,\nabla \delta \uu \|_{H^{-\frac{1}{2}}}^2
			\, +\,
			2\eta \sum_{q=-1}^\infty
			2^{-q}
			\int_{\TT^2}|\sdm{q} (\ddd_1 \otimes \ddd_1): \Dd_q \delta A |^2\,\dd x \,
		+\, C
		\| \ddd_1\|_{H^1}^2
		\|\ddd_1\|_{H^2}^2
		\|\delta \uu\|_{H^{-\frac{1}{2}}}^2
		N
		\\ 
		&\quad +\,
		C(1+\eta^{-1}) (\|\ddd_1\|_{H^1}^4+ \|\uu_1\|_{L^2}^4 + \|\uu_2\|_{L^2}^{4}) 
			(\|\ddd_1\|_{H^2}^2 + \|\nabla \uu_1\|_{L^2}^2 + \|\nabla \uu_2\|_{L^2}^2)
			2^{-\frac{N}{2}}.
	\end{align*}
Comparing with \eqref{juve28b}, we see that $\mathcal{K}_4$ can actually be estimated in the same way as for $\mathcal{K}_1$ (recall \eqref{juve28}), by taking \begin{align*} 
N(t) = 2\Big\lfloor \log_2\Big(1+\frac{1}{\Phi(t)}\Big)\Big\rfloor + 2\quad\text{and}\quad  \ee(t):=\frac{1}{2+2\ln(1+ N(t))}.
\end{align*} 
Thus, we obtain  
\begin{equation}\label{juve29}
	\begin{aligned}
		|\mathcal K_4 |
		&\leq C(1+\eta^{-3}) (\|\ddd_1\|_{H^1}^2+\|\ddd_1\|_{H^1}^6+ \|\uu_1\|_{L^2}^4 + \|\uu_2\|_{L^2}^4) 
	(\|\ddd_1\|_{H^2}^2 + \|\nabla \uu_1\|_{L^2}^2 + \|\nabla \uu_2\|_{L^2}^2) \mu(\Phi(t))\\
	&\qquad +\,
			2\eta\nu 
			\|\,\nabla \delta \uu \|_{H^{-\frac{1}{2}}}^2
			\, +\,
			2\eta \sum_{q=-1}^\infty
			2^{-q}
			\int_{\TT^2}|\sdm{q} (\ddd_1 \otimes \ddd_1): \Dd_q \delta A |^2\,\dd x.
	\end{aligned}
\end{equation}

At last, for $\mathcal K_3$, we make use of the Bony's decomposition \eqref{bony-decomp} and decompose it into
\begin{equation}\label{juve22}
	\begin{aligned}
		\mathcal K_3= 
		\sum_{q=-1}^\infty
		2^{-q}
		\int_{\TT^2}
		\Dd_q(\ddd_1 \otimes \ddd_1:\delta A) 
		(\sdm{q}(\ddd_1\otimes\ddd_1):\Dd_q \delta A)\,\dd x
		\,=\,
		\mathcal K_3^{(1)}
		\,+\,
		\mathcal K_3^{(2)}
		\,+\,
		\mathcal K_3^{(3)}
		\,+\,
		\mathcal K_3^{(4)},
	\end{aligned}
\end{equation}
where
\begin{align*}
		\mathcal K_3^{(1)}&:=
		\sum_{q=-1}^\infty
		\sum_{|j-q|\leq 5}
		2^{-q}
		\int_{\TT^2}
		([\Dd_q,\,\sdm{j}(\ddd_1 \otimes \ddd_1)]:\Dd_j\delta A) (\sdm{q}(\ddd_1\otimes\ddd_1):
		\Dd_q \delta A)\,\dd x,\\
		\mathcal K_3^{(2)}
		&:=
		\sum_{q=-1}^\infty
		\sum_{|j-q|\leq 5}
		2^{-q}
		\int_{\TT^2}
		((\sdm{j}-\sdm{q})(\ddd_1 \otimes \ddd_1):\Dd_j\delta A) 
		(\sdm{q}(\ddd_1\otimes\ddd_1):\Dd_q \delta A)\,\dd x,\\
		\mathcal K_3^{(3)}
		&:=
		\sum_{q=-1}^\infty
		2^{-q}
		\int_{\TT^2}
		\big|\,\sdm{q}(\ddd_1 \otimes \ddd_1):\Dd_q\delta A\,\big|^2\,\dd x,\\
		\mathcal K_3^{(4)}
		&:=
		\sum_{q=-1}^\infty
		\sum_{j=q-5}^\infty 
		2^{-q}
		\int_{\TT^2}
		\Dd_q(\Dd_j(\ddd_1 \otimes \ddd_1):S_{j+2}\delta A) 
		(\sdm{q}(\ddd_1\otimes\ddd_1):\Dd_q \delta A)\,\dd x.
\end{align*}
Recalling the minus sign in $\mathcal{T}_4$, we point out that $\mathcal{K}_3^{(3)}$ is exactly one of the dissipative terms appearing in $\mathfrak{D}(t)$ and cancels with the corresponding one in Proposition \ref{prop:last-ineq-main-thm}, part $\mathrm{(b)}$. Therefore, We shall just  analyse $\mathcal{K}_3^{(1)}$, $\mathcal{K}_3^{(2)}$ and $\mathcal{K}_3^{(4)}$, by introducing one more time two parameters $N\in \mathbb{N}$ and $\varepsilon \in (0,1/2)$ that will be defined later on, in relation to the functional $\Phi(t)$.

We first apply the commutator inequality in Lemma  \ref{prop:comm-est}, inferring that 
$$\|[\Dd_q,\,\sdm{j}(\ddd_1 \otimes \ddd_1)]:\Dd_j\delta A\|_{L^2} 
\leq 2^{-q}\|\sdm{j}\nabla (\ddd_1 \otimes \ddd_1)\|_{L^{\frac{2}{\ee}}}\|\Dd_j\nabla \delta \uu \|_{L^{\frac{2}{1-\ee}}},
$$ to gather 
\begin{align*}		
	|\mathcal K_3^{(1)}|
	&\lesssim\,
	\Big(
	\sum_{q=-1}^\infty
	\sum_{|j-q|\leq 5}
	2^{-q}
	\|[\Dd_q,\,\sdm{j}(\ddd_1 \otimes \ddd_1)]:\Dd_j\delta A\|_{L^2}^2
	\Big)^\frac{1}{2}
	\Big(
	\sum_{q=-1}^\infty
	2^{-q}
	\int_{\TT^2}|\sdm{q}(\ddd_1 \otimes \ddd_1):\Dd_q\delta A|^2\,\dd x
	\Big)^\frac{1}{2}\\
	\,&\lesssim\,
	\bigg(
	\sum_{q=-1}^\infty
	\sum_{|j-q|\leq 5}
	2^{2(j-q)}2^{-2j}
	\|\sdm{j}\nabla (\ddd_1 \otimes \ddd_1)\|_{L^\frac{2}{\ee}}^2
	\|\Dd_j\nabla \delta \uu				\|_{L^\frac{2}{1-\ee}}^2
	\bigg)^\frac{1}{2}
\Big(
	\sum_{q=-1}^\infty
	2^{-q}
	\int_{\TT^2}|\sdm{q}(\ddd_1 \otimes \ddd_1):\Dd_q\delta A|^2\,\dd x
	\Big)^\frac{1}{2}\\
	\,&\lesssim\,
	\bigg(
	\sum_{j=-1}^\infty
	2^{-2j}
	\|\sdm{j}((\nabla \ddd_1) \otimes \ddd_1)\|_{L^\frac{2}{\ee}}^2
	\|\Dd_j\nabla \delta \uu				\|_{L^\frac{2}{1-\ee}}^2
	\bigg)^\frac{1}{2}
	\Big(
	\sum_{q=-1}^\infty
	2^{-q}
	\int_{\TT^2}|\sdm{q}(\ddd_1 \otimes \ddd_1):\Dd_q\delta A|^2\,\dd x
	\Big)^\frac{1}{2}.
\end{align*}
Like before, we separately control the low frequencies and high frequencies of $\ddd_1 = S_N \ddd_1 + (\Id -S_N)\ddd_1$, through
\begin{equation*}
    |\mathcal K_3^{(1)}|\leq \mathcal{R}_1+\mathcal{R}_2,
\end{equation*}
where
\begin{align*}		
	\mathcal{R}_1&:=
	\Big(
	\sum_{j=-1}^\infty
	2^{-j}
	\|\sdm{j}
		( \nabla \ddd_1 \otimes S_N \ddd_1)\|_{L^\frac{2}{\ee}}^2
	\|\Dd_j\delta \uu\|_{L^\frac{2}{1-\ee}}^2
	\Big)^\frac{1}{2}
	\Big(
	\sum_{q=-1}^\infty
	2^{-q}
	\int_{\TT^2}|\sdm{q}(\ddd_1 \otimes \ddd_1):\Dd_q\delta A|^2\,\dd x
	\Big)^\frac{1}{2},\\
	\mathcal{R}_2&:=
	\Big(
	\sum_{j=-1}^\infty
	2^{-j}
	\|\sdm{j}( \nabla \ddd_1 \otimes (\Id-S_N) \ddd_1)\|_{L^\frac{2}{\ee}}^2
	\|\Dd_j\delta \uu\|_{L^\frac{2}{1-\ee}}^2
	\Big)^\frac{1}{2}	
	\Big(
	\sum_{q=-1}^\infty
	2^{-q}
	\int_{\TT^2}|\sdm{q}(\ddd_1 \otimes \ddd_1):\Dd_q\delta A|^2\,\dd x
	\Big)^\frac{1}{2}.
\end{align*}
The low frequencies of $S_N \ddd_1$ are dealt with the embedding $\| S_N \ddd_1 \|_{L^\infty} \leq C\sqrt{N}\| \ddd_1 \|_{H^1}$, thanks to Lemma \ref{lemma:SN-infty}. Through a standard interpolation of Lebesgue norms and the embedding $\| \nabla \ddd_1 \|_{L^{2/\ee}}\leq C\| \nabla \ddd_1 \|_{H^{1-\ee}}/\sqrt{\ee}$ of Lemma \ref{lemma:eps}, we gather 
\begin{align*}
	\mathcal{R}_1
	\,&\leq\,
	\Big(
	\sum_{j=-1}^\infty
	2^{-j}
	\|\nabla \ddd_1		\|_{L^\frac{2}{\ee}}^2 
	\|S_N \ddd_1 		\|_{L^\infty}^2 
	\|\Dd_j\delta \uu	\|_{L^\frac{2}{1-\ee}}^2
	\Big)^\frac{1}{2}
	\Big(
	\sum_{q=-1}^\infty
	2^{-q}
	\int_{\TT^2}|\sdm{q}(\ddd_1 \otimes \ddd_1):\Dd_q\delta A|^2\,\dd x
	\Big)^\frac{1}{2}\\
	&\lesssim\,
	\sqrt{\frac{N}{\ee}}
	\bigg(
	\sum_{j=-1}^\infty
	\|\nabla \ddd_1	\|_{L^2}^{2\ee}
	\|\nabla \ddd_1	\|_{H^1}^{2(1-\ee)} 
	\|\ddd_1 		\|_{H^1}^2 
	2^{-j(1-\ee)}
	\|\,\Dd_j\delta \uu\|_{L^2}^{2(1-\ee)}
	2^{-j\ee}
	\|\,\Dd_j\nabla \delta \uu\|_{L^2}^{2\ee}
	\bigg)^\frac{1}{2}\\
	&\qquad \times 	\Big(
	\sum_{q=-1}^\infty
	2^{-q}
	\int_{\TT^2}|\sdm{q}(\ddd_1 \otimes \ddd_1):\Dd_q\delta A|^2\,\dd x
	\Big)^\frac{1}{2}
	\\
	&\lesssim\,
	\sqrt{\frac{N}{\ee}}
	\|		 \ddd_1 		\|_{H^1}^{1+\ee} 
	\|	\nabla \ddd_1 		\|_{H^1}^{1-\ee} 
	\|\,\delta \uu			\|_{H^{-\frac{1}{2}}}^{1-\ee}
	\|\,\nabla \delta \uu	\|_{H^{-\frac{1}{2}}}^{\ee}
	\Big(
	\sum_{q=-1}^\infty
	2^{-q}
	\int_{\TT^2}|\sdm{q}(\ddd_1 \otimes \ddd_1):\Dd_q\delta A|^2\,\dd x
	\Big)^\frac{1}{2},
\end{align*}
which together with Young's inequality eventually implies that
\begin{equation}\label{juve20}
\begin{aligned} 
	\mathcal{R}_1
	&\leq
	C\eta^{-\frac{1+\ee}{1-\ee}}\left(
	\frac{N}{\ee}
	\right)^\frac{1}{1-\ee}
	\|\ddd_1\|_{H^1}
	^{2\frac{1+\ee}{1-\ee}}
	\| \nabla \ddd_1 	\|_{H^1}^2
	\|	\delta \uu		\|_{H^{-\frac{1}{2}}}^2
	\,+\,
	\eta\nu
	\|\nabla \delta \uu\|_{H^{-\frac{1}{2}}}^2
	\\
	& \qquad 
	+ \eta \sum_{q=-1}^\infty
	2^{-q}
	\int_{\TT^2}|\sdm{q}(\ddd_1 \otimes \ddd_1):\Dd_q\delta A|^2\,\dd x\\
	& \leq
	C(1+\eta^{-3})
	\left(
	\frac{N}{\ee}
	\right)^\frac{\ee }{1-\ee}
	\big(
	\|\ddd_1\|_{H^1}^2
	+ \|\ddd_1\|_{H^1}^6
	\big)
	\| \nabla \ddd_1 	\|_{H^1}^2
	\|	\delta \uu		\|_{H^{-\frac{1}{2}}}^2
	\left(\frac{N}{\ee}\right)
	\,+\,
	\eta\nu
	\|\nabla \delta \uu\|_{H^{-\frac{1}{2}}}^2\\
	&\qquad \,+\,
	\frac{\eta}{2}  \sum_{q=-1}^\infty
	2^{-q}
	\int_{\TT^2}|\sdm{q}(\ddd_1 \otimes \ddd_1):\Dd_q\delta A|^2\,\dd x,
	\end{aligned} 
\end{equation}
since $(1+\ee)/(1-\ee)\in(1,3)$ under later choice $\ee\in(0,1/2)$. Next, we treat $\mathcal{R}_2$. 
Thanks to the Bernstein inequality in Lemma \ref{prop:Bernstein}, it holds
\begin{equation*}
    \| \Dd_ j \delta \uu \|_{L^\frac{2}{1-\ee}}\leq C2^{j\ee} \| \Dd_j \delta \uu \|_{L^2}\quad \text{and}
    \quad 
    \|\sdm{j}( \nabla \ddd_1 \otimes (\Id-S_N) \ddd_1)\|_{L^\frac{2}{\ee}}
    \leq 
    C2^{j(1-\ee)}
    \|\sdm{j}( \nabla \ddd_1 \otimes (\Id-S_N) \ddd_1)\|_{L^2},
\end{equation*}
for certain constant $C>0$ that does not depend on $N$ and $\varepsilon$. Then we estimate $\mathcal{R}_2$ through
	\begin{align*}
		\mathcal R_2
		&\lesssim\,
		\Big(
		\sum_{j=-1}^\infty
			2^{-j}
			2^{2j(1-\ee)}
			\|\sdm{j}( \nabla \ddd_1 \otimes (\Id-S_N) \ddd_1)	\|_{L^2}^22^{2j\ee}
			\|\Dd_j\delta \uu									\|_{L^2}^2
		\Big)^\frac{1}{2}
		\Big(
	\sum_{q=-1}^\infty
	2^{-q}
	\int_{\TT^2}|\sdm{q}(\ddd_1 \otimes \ddd_1):\Dd_q\delta A|^2\,\dd x
	\Big)^\frac{1}{2}
		\\
		&\lesssim\,
		\Big(
		\sum_{j=-1}^\infty
		\|\nabla \ddd_1 			\|_{L^2}^2 
		\|(\Id-S_N) \ddd_1			\|_{L^\infty}^2
		2^{j}
		\|\Dd_j  \delta \uu	\|_{L^2}^2
		\Big)^\frac{1}{2}
		\Big(
	\sum_{q=-1}^\infty
	2^{-q}
	\int_{\TT^2}|\sdm{q}(\ddd_1 \otimes \ddd_1):\Dd_q\delta A|^2\,\dd x
	\Big)^\frac{1}{2}
		\\
		&\lesssim\,
		\|\nabla \ddd_1 			\|_{L^2}
		\| (\Id-S_N) \ddd_1			\|_{L^\infty}
		\|\delta \uu				\|_{H^\frac{1}{2}}
		\Big(
	\sum_{q=-1}^\infty
	2^{-q}
	\int_{\TT^2}|\sdm{q}(\ddd_1 \otimes \ddd_1):\Dd_q\delta A|^2\,\dd x
	\Big)^\frac{1}{2}\\
		&\lesssim 
		\| \ddd_1 			\|_{H^1}^\frac{3}{2}
		\| \nabla \ddd_1			\|_{H^1}^\frac{1}{2}
		(\|\uu_1\|_{L^2}+ \|\uu_2\|_{L^2})			^\frac{1}{2}
		(\|\nabla \uu_1\|_{L^2} + \|\nabla \uu_2	\|_{L^2})^\frac{1}{2}
		2^{-\frac{N}{2}}
		\Big(
	\sum_{q=-1}^\infty
	2^{-q}
	\int_{\TT^2}|\sdm{q}(\ddd_1 \otimes \ddd_1):\Dd_q\delta A|^2\,\dd x
	\Big)^\frac{1}{2},
\end{align*}
where we have used the estimate  $\|(\Id-S_N)\ddd_1 \|_{L^\infty} \leq C2^{-N/2} \| \nabla \ddd_1 \|_{H^{1/2}}$. Therefore, applying the Cauchy-Schwartz inequality, we gather that
\begin{equation}\label{juve21}
	\begin{aligned}
		\mathcal{R}_2
		& \leq\,
		C\eta^{-1}
		\| \ddd_1\|_{H^1}^3
		(\|\uu_1\|_{L^2}+ \|\uu_2\|_{L^2})	
		(\| \nabla \ddd_1			\|_{H^1}^2+ \|\nabla \uu_1\|_{L^2}^2 + \|\nabla \uu_2\|_{L^2}^2)
		2^{-N}\\
		&\quad +\, \frac{\eta}{2}  
	\sum_{q=-1}^\infty
	2^{-q}
	\int_{\TT^2}|\sdm{q}(\ddd_1 \otimes \ddd_1):\Dd_q\delta A|^2\,\dd x.
	\end{aligned}
\end{equation}
Combining \eqref{juve20} and \eqref{juve21}, we arrive at
	\begin{align*}
		|\mathcal K_3^{(1)}|
		& \leq
			C(1+\eta^{-3})
	\left(
	\frac{N}{\ee}
	\right)^\frac{\ee }{1-\ee}
	\big(
	\|\ddd_1\|_{H^1}^2
	+ \|\ddd_1\|_{H^1}^6
	\big)
	\| \nabla \ddd_1 	\|_{H^1}^2
	\|	\delta \uu		\|_{H^{-\frac{1}{2}}}^2
	\left(\frac{N}{\ee}\right)\\
		&\quad +
		C\eta^{-1}
		\| \ddd_1\|_{H^1}^3
		(\|\uu_1\|_{L^2}+ \|\uu_2\|_{L^2})	
		(\|  \ddd_1	\|_{H^2}^2+ \|\nabla \uu_1\|_{L^2}^2 + \|\nabla \uu_2\|_{L^2}^2)
		2^{-N} \\
		&\quad +\,\eta\nu
	\|\nabla \delta \uu\|_{H^{-\frac{1}{2}}}^2  \,+\,
	 \eta  \sum_{q=-1}^\infty
	2^{-q}
	\int_{\TT^2}|\sdm{q}(\ddd_1 \otimes \ddd_1):\Dd_q\delta A|^2\,\dd x.
	\end{align*}
Taking again $N=N(t)=\lfloor \log_2(1+1/\Phi(t))\rfloor +1$ and $\ee =\ee(t):= 1/(2+2\ln N)\in (0, 1/2)$, we  obtain that
\begin{equation}\label{juve23}
	\begin{aligned}
		|\mathcal K_3^{(1)}|
		&\leq
		C(1+\eta^{-3})
	\big(	\|\ddd_1\|_{H^1}^2
	+ \|\ddd_1\|_{H^1}^6 \big)
	\|\ddd_1 \|_{H^2}^2
		\mu(\Phi(t)) 
		\\
		&\quad +C\eta^{-1}
		\| \ddd_1\|_{H^1}^3
		(\|\uu_1\|_{L^2}+ \|\uu_2\|_{L^2})	
		(\| \ddd_1\|_{H^2}^2+ \|\nabla \uu_1\|_{L^2}^2 + \|\nabla \uu_2\|_{L^2}^2)
		\Phi(t)\\
		&\quad +\,\eta\nu
	\|\nabla \delta \uu\|_{H^{-\frac{1}{2}}}^2  \,+\,
	 \eta  \sum_{q=-1}^\infty
	2^{-q}
	\int_{\TT^2}|\sdm{q}(\ddd_1 \otimes \ddd_1):\Dd_q\delta A|^2\,\dd x.	\end{aligned}
\end{equation}
An analogous procedure leads to a similar estimate for $\mathcal K_3^{(2)}$ such that 
\begin{equation}\label{juve24}
	\begin{aligned}
		|\mathcal K_3^{(2)}|
	&\leq
		C(1+\eta^{-3})
	\big(	\|\ddd_1\|_{H^1}^2
	+ \|\ddd_1\|_{H^1}^6 \big)
	\| \ddd_1 \|_{H^2}^2
		\mu(\Phi(t)) \\
		&\quad +C\eta^{-1}
		\| \ddd_1\|_{H^1}^3
		(\|\uu_1\|_{L^2}+ \|\uu_2\|_{L^2})	
		(\| \ddd_1\|_{H^2}^2+ \|\nabla \uu_1\|_{L^2}^2 + \|\nabla \uu_2\|_{L^2}^2)
		\Phi(t)
		\\ 
		&\quad +\,\eta\nu
	\|\nabla \delta \uu\|_{H^{-\frac{1}{2}}}^2  \,+\,
	 \eta  \sum_{q=-1}^\infty
	2^{-q}
	\int_{\TT^2}|\sdm{q}(\ddd_1 \otimes \ddd_1):\Dd_q\delta A|^2\,\dd x.
	\end{aligned}
\end{equation}
It remains to control the $\mathcal K_3^{(4)}$. We see that 
	\begin{align*}
		|\mathcal K_3^{(4)}|
	    &=
		\Big|\sum_{q=-1}^\infty
		\sum_{j=q-5}^\infty 
		2^{-q}
		\int_{\TT^2}
		\Dd_q(\Dd_j(\ddd_1 \otimes \ddd_1):\Sd_{j+2}\delta A) (\sdm{q}(\ddd_1\otimes\ddd_1):
		\Dd_q \delta A)\,\dd x
		\Big|\\
		&\leq\,
		\sum_{q=-1}^\infty
		\sum_{j=q-5}^\infty 
		2^{-\frac{q}{2}}
		\|\Dd_q(\Dd_j(\ddd_1 \otimes \ddd_1):S_{j+2}\delta A)\|_{L^2}
		2^{-\frac{q}{2}}
		\|\sdm{q}(\ddd_1\otimes\ddd_1):\Dd_q \delta A\|_{L^2}\\
		&\lesssim\,
		\sum_{q=-1}^\infty
		\sum_{j= q-5}^\infty 
		2^{\frac{q}{2}}
		\|\Dd_q(\Dd_j(\ddd_1 \otimes \ddd_1):\Sd_{j+2}\delta A)\|_{L^1}
		2^{-\frac{q}{2}}
		\|\sdm{q}(\ddd_1\otimes\ddd_1):\Dd_q \delta A\|_{L^2}\\
		&\lesssim\,
		\Big(
		\sum_{q=-1}^\infty
		\sum_{j=q-5}^\infty
		2^{q-j}
		2^{4j}
		\|\Dd_j (\ddd_1 \otimes \ddd_1)\|_{L^2}^2
		2^{-j}
		\|S_{j+2}\delta \uu\|_{L^2}^2
		\Big)^\frac{1}{2}
		\Big(
	\sum_{q=-1}^\infty
	2^{-q}
	\int_{\TT^2}|\sdm{q}(\ddd_1 \otimes \ddd_1):\Dd_q\delta A|^2\,\dd x
	\Big)^\frac{1}{2}\\
		&\lesssim\,
		\Big(
		\sum_{q=-1}^\infty
		\sum_{j=q-5}^\infty
		\sum_{|\alpha|=2} 
		2^{q-j}
		\big(
			\|\,\Dd_j(\partial^{\alpha} \ddd_1 \otimes S_N \ddd_1)\|_{L^2}^2
			+
			\|\,\Dd_j(\partial^{\alpha} \ddd_1 \otimes (\Id - S_N) \ddd_1)\|_{L^2}^2
			\\
			&\qquad +
			\|\,\Dd_j(\nabla \ddd_1 \otimes \nabla \ddd_1)\|_{L^2}^2
		\big)
		2^{-j}
		\|\,\Sd_{j+2}\delta \uu \,\|_{L^2}^2
		\Big)^\frac{1}{2}
		\Big(
	\sum_{q=-1}^\infty
	2^{-q}
	\int_{\TT^2}|\sdm{q}(\ddd_1 \otimes \ddd_1):\Dd_q\delta A|^2\,\dd x
	\Big)^\frac{1}{2}.
	\end{align*}
Here, we denote the multi-index  $\alpha=(\alpha_1,\alpha_2)$, with $\alpha_1, \alpha_2\in \mathbb{N}$ and $|\alpha|=\alpha_1+\alpha_2=2$ such that
$\partial^\alpha=\partial^{\alpha_1}_{x_1}\partial^{\alpha_2}_{x_2}$. 
Thanks to Young's inequality, we then deduce that 
	\begin{align*}
		\Big(
		\sum_{q=-1}^\infty
		\sum_{j= q-5}^\infty
		2^{q-j}
		\|\Dd_j(\nabla \ddd_1 \otimes \nabla \ddd_1)\|_{L^2}^2
		2^{-j}
		\|S_{j+2}\delta \uu \|_{L^2}^2
		\Big)^\frac{1}{2}
		&\lesssim
		\|\nabla \ddd_1\|_{L^4}^2
		\|\delta \uu\|_{H^{-\frac{1}{2}}}\\
		&\lesssim
		\|\nabla \ddd_1	\|_{L^2}
		\|\nabla \ddd_1	\|_{H^1}
		\|\delta \uu	\|_{H^{-\frac{1}{2}}},
	\end{align*}
while
	\begin{align*}
		\Big(
		\sum_{q=-1}^\infty
		\sum_{j= q-5}^\infty 
		\sum_{|\alpha|=2}
		2^{q-j}
		\|\Dd_j(\partial^\alpha \ddd_1 \otimes S_N \ddd_1)\|_{L^2}^2
		2^{-j}
		\|S_{j+2}\delta \uu \|_{L^2}^2
		\Big)^\frac{1}{2}
		&\leq 
		C
		\|S_N \ddd_1	\|_{L^\infty}
		\|\ddd_1	\|_{H^2}
		\|\delta \uu 	\|_{H^{-\frac{1}{2}}}\\
		&\leq
		C\sqrt{N}
		\|\ddd_1\|_{H^1}
		\|\ddd_1\|_{H^2}
		\|\delta \uu\|_{H^{-\frac{1}{2}}}.
	\end{align*}
Besides, we have
	\begin{align*}
		& \Big(
		\sum_{q=-1}^\infty
		\sum_{j= q-5}^\infty
		\sum_{|\alpha|=2} 
		2^{q-j}
		\big(\| \Delta_j (\partial^\alpha \ddd_1 \otimes (\Id - S_N) \ddd_1)\|_{L^2}^2
		2^{-j}\big)
		\|S_{j+2}\delta \uu \,\|_{L^2}^2
		\Big)^\frac{1}{2}\\
		&\qquad \lesssim
		\sum_{|\alpha|=2} \|\partial^\alpha \ddd_1\otimes (\Id-S_N)\ddd_1\|_{H^{-\frac{1}{2}}}
		\|\delta \uu \|_{L^2}
		\\
		&\qquad \lesssim\,
		\| \ddd_1\|_{H^2} 
		\|(\Id-S_N)\ddd_1		\|_{H^{\frac{1}{2}}}
		\|\delta \uu \|_{L^2}\\
		&\qquad \lesssim
		\| \ddd_1\|_{H^2}
		\| \ddd_1\|_{H^1}2^{-\frac{N}{2}}
		\|	\delta \uu 					\|_{L^2}
		\\
		&\qquad \lesssim
		\|\ddd_1\|_{H^1}
		\|\ddd_1\|_{H^2}
		(\|\uu_1\|_{L^2}+ \|\uu_2\|_{L^2})
		2^{-\frac{N}{2}}.
	\end{align*}
Summarizing the above estimates, we deduce from Young's inequality that
	\begin{align*}
		|K^{(4)}_{3}|
		& \leq\,
		C \eta^{-1} 
		\| \ddd_1\|_{H^1}^2
		\| \ddd_1\|_{H^2}^2
		\|\delta \uu 			\|_{H^{-\frac{1}{2}}}^2
		(1+N)+
		C \eta^{-1} 
		\|\ddd_1\|_{H^1}^2
		\|\ddd_1\|_{H^2}^2
		(\|\uu_1\|_{L^2}^2+ \|\uu_2\|_{L^2}^2)
		2^{-N}\\
		&\quad 
		\,+\,
		2 \eta  \sum_{q=-1}^\infty
	2^{-q}
	\int_{\TT^2}|\sdm{q}(\ddd_1 \otimes \ddd_1):\Dd_q\delta A|^2\,\dd x.
	\end{align*}
Taking $N=\lfloor \log_2(1+1/\Phi(t))\rfloor +1$, we then deduce that
\begin{equation}\label{juve25}
	\begin{aligned}
		|\mathcal{K}^{(4)}_3|
		& \leq\,
		C \eta^{-1} 
		\| \ddd_1\|_{H^1}^2
		(1+\|\uu_1\|_{L^2}^2+ \|\uu_2\|_{L^2}^2)
		\| \ddd_1\|_{H^2}^2
		\Phi(t)
		\Big(1+\ln\Big(1+\frac{1}{\Phi(t)}\Big)
		\Big)\\
		&\quad 
		+
	2	\eta  \sum_{q=-1}^\infty
	2^{-q}
	\int_{\TT^2}|\sdm{q}(\ddd_1 \otimes \ddd_1):\Dd_q\delta A|^2\,\dd x.
	\end{aligned}
\end{equation}
Therefore, combining the estimates \eqref{juve23}, \eqref{juve24} and  \eqref{juve25}, applying Young's inequality, we finally deduce from the identity \eqref{juve22} that
\begin{equation}\label{juve26}
	\begin{aligned}
		|\mathcal{K}_3-\mathcal{K}_3^{(3)}|
		& \leq\,
		C(1+\eta^{-3})
	\big(\|\ddd_1\|_{H^1}^2
	+ \|\ddd_1\|_{H^1}^6 + 
	\|\uu_1\|_{L^2}^4 + \|\uu_2\|_{L^2}^4\big)
	(\|\ddd_1\|_{H^2}^2+ \|\nabla \uu_1\|_{L^2}^2 + \|\nabla \uu_2\|_{L^2}^2)\mu(\Phi(t)) \\
	&\qquad +\,2\eta\nu
	\|\nabla \delta \uu\|_{H^{-\frac{1}{2}}}^2  \,+\,
	4\eta  \sum_{q=-1}^\infty
	2^{-q}
	\int_{\TT^2}|\sdm{q}(\ddd_1 \otimes \ddd_1):\Dd_q\delta A|^2\,\dd x.
	\end{aligned}
\end{equation}
Recalling the identity \eqref{last-term-of-second-term} and the estimates \eqref{juve28}, \eqref{juve27}, \eqref{juve29}, \eqref{juve26}, we can conclude that
\begin{align*}
	& \Big|
	\langle 	(\ddd_1 \cdot(\delta A \ddd_1)
		\ddd_1 \otimes \ddd_1,\, \nabla  \delta \uu 
	\rangle_{H^{-\frac{1}{2}}}(t)
	-\sum_{q=-1}^\infty
		2^{-q}
		\int_{\TT^2}
		\big|\,\sdm{q}(\ddd_1 \otimes \ddd_1):\Dd_q\delta A\,\big|^2(t,x)\,\dd x
	\Big|
	\\
	&\quad \leq  
	C(1+\eta^{-3}) g_2(t)
	\mu(\Phi(t))
		+\, 8\eta\nu
	\|\nabla \delta \uu\|_{H^{-\frac{1}{2}}}^2  \,+\,
	10 \eta  \sum_{q=-1}^\infty
	2^{-q}
	\int_{\TT^2}|\sdm{q}(\ddd_1 \otimes \ddd_1):\Dd_q\delta A|^2\,\dd x,
\end{align*}
where the $C>0$ is independent of $\eta$ and the function $g_2(t)\in L^1(0,T)$ is defined by
\begin{equation*}
\begin{aligned}	
	g_2(t)=\big(\|\ddd_1(t)\|_{H^1}^2
	+ \|\ddd_1(t)\|_{H^1}^6 + 
	\|\uu_1(t)\|_{L^2}^4 + \|\uu_2(t)\|_{L^2}^4\big)
	(\|\ddd_1(t)\|_{H^2}^2+ \|\nabla \uu_1(t)\|_{L^2}^2 + \|\nabla \uu_2(t)\|_{L^2}^2)+1.
\end{aligned}
\end{equation*}
 This completes the proof for part (b) of Proposition \ref{prop:last-ineq-main-thm}.
\end{proof}

\appendix

\section{Proof of Lemma \ref{lemma:juve16}}
\setcounter{equation}{0}
In this appendix, we provide a proof for the technical Lemma \ref{lemma:juve16}.  
\begin{lemma}\label{lemma:juve16b}
	The following inequality holds for any positive integer $N\in\mathbb{N}$ and any $\ee\in (0, 1/2)$:
	\begin{equation}\label{ineq-d1d1deltaA-in-H12}
	\begin{aligned}
		\|\ddd_1 \otimes \ddd_1:\delta A \|_{H^{-\frac{1}{2}}}
		& \lesssim \Big(
			\sum_{q=-1}^\infty
			2^{-q}
			\int_{\TT^2}|\sdm{q} (\ddd_1 \otimes \ddd_1): \Dd_q \delta A |^2\,\dd x
			\Big)^\frac{1}{2}
			\\ 
			&\quad + \sqrt{\frac{N}{\ee}}
			\|\ddd_1				\|_{H^1}^{1+\ee}
			\|\nabla \ddd_1			\|_{H^1}^{1-\ee}
			\|\delta \uu 			\|_{H^{-\frac{1}{2}}}^{1-\ee}
			\|\nabla \delta \uu 	\|_{H^{-\frac{1}{2}}}^\ee
			+ 
			\|\ddd_1\|_{H^1}
			\|\ddd_1\|_{H^2}
			\|\delta \uu\|_{H^{-\frac{1}{2}}}
			\sqrt{N}
			\\
			&\quad 
			+ (\|\ddd_1\|_{H^1}^2+ \|\uu_1\|_{L^2}^2 + \|\uu_2\|_{L^2}^{2}) 
			(\|\ddd_1\|_{H^2}+ \|\nabla \uu_1\|_{L^2}+ \|\nabla \uu_2\|_{L^2})
			2^{-\frac{N}{2}}.
\end{aligned}
\end{equation}
\end{lemma}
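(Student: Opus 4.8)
The plan is to use the Bony decomposition \eqref{bony-decomp} applied to the product $\ddd_1\otimes\ddd_1:\delta A$, writing $f=\ddd_1\otimes\ddd_1$ and $g=\delta A$, and then estimate the resulting four families of terms when tested against $\Dd_q$ and weighted by $2^{-q}$ (that is, in the $H^{-1/2}$ norm via the equivalent Besov norm from Remark \ref{equinorm}). More precisely, for each $q\in\tilde\NN$,
\begin{equation*}
	\Dd_q(\ddd_1\otimes\ddd_1:\delta A)
	= \Big(\sum_{|j-q|\leq 5}[\Dd_q,\sdm{j}(\ddd_1\otimes\ddd_1)]:\Dd_j\delta A\Big)
	+ \Big(\sum_{|j-q|\leq 5}(\sdm{j}-\sdm{q})(\ddd_1\otimes\ddd_1):\Dd_q\Dd_j\delta A\Big)
	+ \sdm{q}(\ddd_1\otimes\ddd_1):\Dd_q\delta A
	+ \Big(\sum_{j\geq q-5}\Dd_q(\Dd_j(\ddd_1\otimes\ddd_1)\,S_{j+2}\delta A)\Big).
\end{equation*}
The third term is exactly the first quantity on the right-hand side of \eqref{ineq-d1d1deltaA-in-H12} once we take $\ell^2$ in $2^{-q/2}$; so it remains to bound the commutator term, the ``gradient of low frequencies'' term, and the high-high remainder term by the other three contributions.

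The key steps, in order: (1) For the commutator term, apply Lemma \ref{prop:comm-est} to gain one derivative, $\|[\Dd_q,\sdm{j}(\ddd_1\otimes\ddd_1)]:\Dd_j\delta A\|_{L^{2/(1+\ee)}}\lesssim 2^{-q}\|\sdm{j}\nabla(\ddd_1\otimes\ddd_1)\|_{L^{2/\ee}}\|\Dd_j\nabla\delta\uu\|_{L^{2/(1-\ee)}}$; then split $\ddd_1\otimes\ddd_1 = S_N(\ddd_1\otimes\ddd_1)+(\Id-S_N)(\ddd_1\otimes\ddd_1)$, handle the low-frequency piece using Lemma \ref{lemma:SN-infty} ($\|S_N\ddd_1\|_{L^\infty}\lesssim\sqrt N\|\ddd_1\|_{H^1}$) together with Lemma \ref{lemma:eps} ($\|\nabla\ddd_1\|_{L^{2/\ee}}\lesssim \ee^{-1/2}\|\nabla\ddd_1\|_{H^{1-\ee}}$) and Bernstein, producing the $\sqrt{N/\ee}$-term with the interpolated powers of $\delta\uu$; handle the high-frequency piece via $\|(\Id-S_N)(\nabla\ddd_1\otimes\ddd_1)\|_{H^{-1/2}}\lesssim 2^{-N/2}\|\ddd_1\|_{H^1}\|\ddd_1\|_{H^2}$ (or a Ladyzhenskaya-type splitting), which feeds into the $2^{-N/2}$-term. (2) For the ``$(\sdm{j}-\sdm{q})$'' term, observe that for $|j-q|\le 5$ the symbol of $(\sdm{j}-\sdm{q})(\ddd_1\otimes\ddd_1)$ is supported in an annulus of size $2^q$, hence $\|(\sdm{j}-\sdm{q})(\ddd_1\otimes\ddd_1)\|_{L^{2/\ee}}\lesssim 2^{-q}\|(\sdm{j}-\sdm{q})\nabla(\ddd_1\otimes\ddd_1)\|_{L^{2/\ee}}$, which brings it to exactly the same shape as the commutator term, so the same splitting applies. (3) For the high-high remainder term, use Bernstein $\|\Dd_q u\|_{L^2}\lesssim 2^q\|\Dd_q u\|_{L^1}$ and $\ell^2$-duality in $q$ to reduce to $\|\ddd_1\otimes\ddd_1\|$ against $\|\delta\uu\|$ in appropriate Sobolev exponents; splitting $\ddd_1\otimes\ddd_1$ again into $S_N$ and $\Id-S_N$ parts yields the $\sqrt N$-term and the $2^{-N/2}$-term respectively, after using Lemma \ref{lemma:product} (product continuity $H^{7/8}\times H^{7/8}\to H^{3/4}$, $H^{3/4}\times L^2\to H^{-1/4}$) and the embedding $H^{-1/4}=B_{2,2}^{-1/4}\hookrightarrow B_{2,\infty}^{-1/4}$.

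I expect the main obstacle to be the bookkeeping in step (1) (and its twin, step (2)): choosing the Lebesgue exponents $2/\ee$, $2/(1-\ee)$ consistently so that all of Bernstein, the commutator estimate, and Lemmas \ref{lemma:SN-infty}--\ref{lemma:eps} can be chained, while keeping the $\ee$-dependence of the constants explicit and isolating precisely the interpolated factor $\|\delta\uu\|_{H^{-1/2}}^{1-\ee}\|\nabla\delta\uu\|_{H^{-1/2}}^\ee$ and the prefactor $\sqrt{N/\ee}\,\|\ddd_1\|_{H^1}^{1+\ee}\|\nabla\ddd_1\|_{H^1}^{1-\ee}$ with no loss. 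The geometric series in $2^{\alpha(q-j)}\mathbf 1_{(-\infty,5]}(q-j)$ and Young's inequality for series (as used repeatedly in Sections \ref{esti-T_2}--\ref{sec:double-log-est}) will close the $\ell^2$ sums; the only genuinely delicate point is to ensure that the $(\Id-S_N)$-pieces always cost a clean $2^{-N/2}$ (not $2^{-N/4}$), which is why I would route them through an $H^{-1/2}$-level estimate of $(\Id-S_N)(\nabla\ddd_1\otimes\ddd_1)$ rather than through $L^\infty$ bounds on $(\Id-S_N)\ddd_1$.
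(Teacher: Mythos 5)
Your proposal follows essentially the same route as the paper's own proof in the appendix: Bony's decomposition \eqref{bony-decomp} of $\ddd_1\otimes\ddd_1:\delta A$, identification of the $\sdm{q}(\ddd_1\otimes\ddd_1):\Dd_q\delta A$ piece as the first right-hand side term, the commutator estimate of Lemma \ref{prop:comm-est} combined with the $S_N$/$(\Id-S_N)$ frequency splitting, Lemmas \ref{lemma:SN-infty} and \ref{lemma:eps} for the $\sqrt{N/\ee}$ and $\sqrt{N}$ contributions, and Bernstein plus an $H^{-1/2}$-level bound on the high-frequency piece for the $2^{-N/2}$ contribution. The only cosmetic difference is that you place the cut-off on the product $\ddd_1\otimes\ddd_1$ while the paper splits $\ddd_1$ itself inside $\nabla\ddd_1\otimes\ddd_1$, which changes nothing essential.
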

\begin{proof}
Applying Bony's decomposition \eqref{bony-decomp} to the term under consideration, we get 
\begin{align*}
&	\|  \ddd_1 \otimes \ddd_1:\delta A \|_{H^{-\frac{1}{2}}}
	 =
	\bigg\|
	\Big(
	2^{-\frac{q}{2}}
	\| \Dd_q(\ddd_1 \otimes \ddd_1:\delta A) \|_{L^2}
	\Big)_{q\in\tilde \NN}
	\bigg\|_{\ell^2(\tilde \NN)}
		\,\leq \,
	\mathcal{J}_1
	\,+\,
	\mathcal{J}_2
	\,+\,
	\mathcal{J}_3
	\,+\,
	\mathcal{J}_4,
\end{align*}
where $\tilde \NN :=\NN\cup \{-1\}$, and 
\begin{align*}
\mathcal{J}_1
	&:=
	\bigg\|
	\Big(
	2^{-\frac{q}{2}}
	\sum_{|j-q|\leq 5}
	\| [\Dd_q,\, \sdm{j} (\ddd_1 \otimes \ddd_1)]:\Dd_q \delta A \|_{L^2}
	\Big)_{q\in\tilde \NN}
	\bigg\|_{\ell^2(\tilde \NN)},\\
\mathcal{J}_2	
    &:=
	\bigg\|
	\Big(
	2^{-\frac{q}{2}}
	\sum_{|j-q|\leq 5}
	\| ( \sdm{j}-\sdm{q}) (\ddd_1 \otimes \ddd_1):\Dd_q \Dd_j\delta A \|_{L^2}
	\Big)_{q\in\tilde \NN}
	\bigg\|_{\ell^2(\tilde \NN)},
	\\
	\mathcal{J}_3
	& :=
	\bigg\|
	\Big(
	2^{-\frac{q}{2}}
	\|  \sdm{q} (\ddd_1 \otimes \ddd_1): \Dd_q \delta A \|_{L^2}
	\Big)_{q\in\tilde \NN}
	\bigg\|_{\ell^2(\tilde \NN)},
	\\
	\mathcal{J}_4
	& :=
	\bigg\|
	\Big(
	2^{-\frac{q}{2}}
	\sum_{j= q- 5}^\infty 
	\| \Dd_q(\Dd_j (\ddd_1 \otimes \ddd_1):S_{j+2}\delta A) \|_{L^2}
	\Big)_{q\in\tilde \NN}
	\bigg\|_{\ell^2(\tilde \NN)}.
\end{align*}
 We first note that $\mathcal{J}_3$ corresponds to the first term on the right-hand side of the  inequality \eqref{ineq-d1d1deltaA-in-H12}. Next, we deal with $\mathcal{J}_1$ by using the commutator estimate in Lemma \ref{prop:comm-est}:
	\begin{align*}
		\mathcal{J}_1
		& \lesssim\,
		\Big\|2^{-\frac{3}{2}q}
		\Big(\sum_{|j-q|\leq 5}
		\|\,\sdm{j}\nabla  (\ddd_1 \otimes \ddd_1)\|_{L^\frac{2}{\ee}}
		\|\Dd_j  \delta \nabla  \uu \|_{L^\frac{2}{1-\ee}}
		\Big)_{q\in\tilde \NN}
		\Big\|_{\ell^2(\tilde \NN)}\\
		&\lesssim\,
		\Big\|
		\Big(
		\|\,\sdm{j} (\nabla  \ddd_1 \otimes \ddd_1)\|_{L^\frac{2}{\ee}}
		2^{-\frac j 2}
		\|\Dd_j \delta \uu \|_{L^\frac{2}{1-\ee}}
		\Big)_{j\in\tilde \NN}
		\Big\|_{\ell^2(\tilde \NN)}\\
		& \lesssim\,
		\Big\|
		\Big(
		\|\sdm{j} (\nabla  \ddd_1 \otimes S_N \ddd_1)\|_{L^\frac{2}{\ee}}
		2^{-\frac j 2}
		\|\Dd_j \delta \uu \|_{L^\frac{2}{1-\ee}}
		\Big)_{j\in\tilde \NN}
		\Big\|_{\ell^2(\tilde \NN)}
		\\ 
		&\qquad +
		\Big\|
		\Big(
		\|\sdm{j} (\nabla  \ddd_1 \otimes (\Id -S_N)\ddd_1)\|_{L^\frac{2}{\ee}}
		2^{-\frac j 2}
		\|\Dd_j \delta \uu \|_{L^\frac{2}{1-\ee}}
		\Big)_{j\in\tilde \NN}
		\Big\|_{\ell^2(\tilde \NN)}.
	\end{align*}
As a consequence, it holds 
	\begin{align*}
		\mathcal{J}_1
		&\lesssim\,
		\Big\|
		\Big(
		\|\nabla  \ddd_1\|_{L^\frac{2}{\ee}} \|S_N \ddd_1\|_{L^\infty}
		2^{-\frac j 2}
		\|\Dd_j   \delta \uu \|_{L^2}	^{1-\ee}
		\|\Dd_j \nabla  \delta \uu \|_{L^2}^\ee
		\Big)_{j\in\tilde \NN}
		\Big\|_{\ell^2(\tilde \NN)}
		\\
		&\quad 
		+\,
		\Big\|
		\Big(
		2^{j(1-\ee)}\|\sdm{j}  (\nabla  \ddd_1 \otimes (\Id -S_N)\ddd_1)\|_{L^2}
		2^{-\frac j 2}
		2^{j \ee}
		\|\Dd_j \delta \uu \|_{L^2}
		\Big)_{j\in\tilde \NN}
		\Big\|_{\ell^2(\tilde \NN)},
	\end{align*}
which implies
	\begin{align*}
		\mathcal{J}_1
		\,&\,\lesssim\,
		\sqrt{\frac{N}{\ee}}
		\|\ddd_1				\|_{H^1}^{1+\ee}
		\|\nabla \ddd_1			\|_{H^1}^{1-\ee}
		\|\delta \uu 			\|_{H^{-\frac{1}{2}}}^{1-\ee}
		\|\nabla \delta \uu 	\|_{H^{-\frac{1}{2}}}^\ee
		\,+\,
		\|\nabla  \ddd_1\|_{L^2} 
		\| (\Id -S_N)\ddd_1			\|_{H^{\frac{1}{2}}}
		\|\nabla \delta \uu\|_{L^2}
		\\
		\,&\,\lesssim\,
		\sqrt{\frac{N}{\ee}}
		\|\ddd_1				\|_{H^1}^{1+\ee}
		\|\nabla \ddd_1			\|_{H^1}^{1-\ee}
		\|\delta \uu 			\|_{H^{-\frac{1}{2}}}^{1-\ee}
		\|\nabla \delta \uu 	\|_{H^{-\frac{1}{2}}}^\ee
		\,+\,
		\|\ddd_1\|_{H^1}^{2} 
		(\|\nabla \uu_1\|_{L^2} +\|\nabla \uu_2\|_{L^2})
		2^{-\frac{N}{2}}.	
	\end{align*}
The second term $\mathcal{J}_2$ can be estimated with a similar procedure such that 
	\begin{align*}
		\mathcal{J}_2
		&\lesssim \,
		\Big\|
		\Big(
		2^{-\frac{q}{2}}
		\sum_{|j-q|\leq 5}
		\| ( \sdm{j}-\sdm{q}) (\ddd_1 \otimes \ddd_1)\|_{L^\frac{2}{\ee}} 
		\|\Dd_j\Dd_q \delta A \|_{L^\frac{2}{1-\ee}} 
		\,
		\Big)_{q\in\tilde \NN}
		\Big\|_{\ell^2(\tilde \NN)}
		\\
		&\lesssim \,
		\Big\|
		\Big(
		2^{-\frac{q}{2}}
		\sum_{|j-q|\leq 5}
		\| ( \sdm{j}-\sdm{q}) \nabla (\ddd_1 \otimes \ddd_1)\|_{L^\frac{2}{\ee}} 
		\|\Dd_j\Dd_q \delta \uu  \|_{L^\frac{2}{1-\ee}} 
		\,
		\Big)_{q\in\tilde \NN}
		\Big\|_{\ell^2(\tilde \NN)}
		\\
		&\lesssim\,
		\Big\|
		\Big(
		\sum_{|j-q|\leq 5}
		\|(\sdm{j}-\sdm{q})(\nabla \ddd_1 \otimes S_N  \ddd_1)\|_{L^\frac{2}{\ee}}
		2^{-\frac{j}{2}}
		\|\Dd_j \Dd_q \delta \uu \|_{L^\frac{2}{1-\ee}}
		\Big)_{q\in\tilde \NN}
		\Big\|_{\ell^2(\tilde \NN)}\\
		&\quad +
		\Big\|
		\Big(
		\sum_{|j-q|\leq 5}
		\|(\sdm{j}-\sdm{q})(\nabla  \ddd_1 \otimes (\mathrm{Id}-S_N)\ddd_1)\|_{L^\frac{2}{\ee}}
		2^{-\frac j 2}
		\|\Dd_j \Dd_q \delta \uu \|_{L^\frac{2}{1-\ee}}
		\Big)_{q\in\tilde \NN}
		\Big\|_{\ell^2(\tilde \NN)}\\
		&\lesssim\,
		\sqrt{\frac{N}{\ee}}
		\|\ddd_1\|_{H^1}^{1+\ee}
		\| \nabla \ddd_1\|_{H^1}^{1-\ee}
		\|\delta \uu \|_{H^{-\frac{1}{2}}}^{1-\ee}
		\|\nabla \delta \uu \|_{H^{-\frac{1}{2}}}^\ee
		+
		\|\ddd_1\|_{H^1}^{2} 
		(\|\nabla \uu_1\|_{L^2} +\|\nabla \uu_2\|_{L^2})	
		2^{-\frac{N}{2}}.
	\end{align*}	
Finally, the last part $\mathcal{J}_4$ can be  bounded by
	\begin{align*}
		\mathcal{J}_4
		\,&\lesssim\,
		\Big\|
		\Big(
		2^\frac{q}{2}
		\sum_{j= q- 5}^\infty 
		\| \Dd_q(\Dd_j (\ddd_1 \otimes \ddd_1):\Sd_{j+2}\delta A)\|_{L^1}
		\Big)_{q\in\tilde \NN}
		\Big\|_{\ell^2(\tilde \NN)}\\
		\,&\lesssim\,
		\Big\|
		\Big(
		\sum_{j= q- 5}^\infty 
		2^\frac{q-j}{2}
		2^j
		\| \Dd_j (\ddd_1 \otimes \ddd_1)\,\|_{L^2}
		2^{-\frac{j}{2}}\|\,\Sd_{j+2}\nabla \delta \uu\|_{L^2}
		\Big)_{q\in\tilde \NN}
		\Big\|_{\ell^2(\tilde \NN)}\\
		\,&\lesssim\,
		\Big\|
		\Big(
		\sum_{j= q- 5}^\infty 
		2^\frac{q-j}{2}
		2^{2j}
		\| \Dd_j (\ddd_1 \otimes \ddd_1)\,\|_{L^2}
		2^{-\frac{j}{2}}\|\,\Sd_{j+2} \delta \uu\|_{L^2}
		\Big)_{q\in\tilde \NN}
		\Big\|_{\ell^2(\tilde \NN)},
	\end{align*}
thus
	\begin{align*}		
		\mathcal{J}_4
		\,&\lesssim\,
		\Big\|
		\Big(
		\sum_{j= q- 5}^\infty 
		2^\frac{q-j}{2}
		\Big(\sum_{|\alpha|=2}
			\| \Dd_j (\partial^\alpha  \ddd_1 \otimes \ddd_1)\,\|_{L^2}\,+\,
			\| \Dd_j (\nabla \ddd_1 \otimes \nabla \ddd_1)\,\|_{L^2}
		\Big)	
		2^{-\frac{j}{2}}\|\,\Sd_{j+2} \delta \uu\|_{L^2}
		\Big)_{q\in\tilde \NN}
		\Big\|_{\ell^2(\tilde \NN)}\\
		\,&\lesssim\,
		\Big\|
		\Big(
		\sum_{j= q- 5}^\infty 
		2^\frac{q-j}{2}
		\Big( \sum_{|\alpha|=2}
			\| \Dd_j (\partial^\alpha  \ddd_1 \otimes S_N \ddd_1)\,\|_{L^2}\,+\, \sum_{|\alpha|=2}
			\| \Dd_j (\partial^\alpha \ddd_1 \otimes (\Id - S_N )\ddd_1)\,\|_{L^2}\\
			&\hspace{3cm}
			+\,
			\| \Dd_j (\nabla \ddd_1 \otimes \nabla \ddd_1)\|_{L^2}
		\Big)	
		2^{-\frac{j}{2}}\|\,\Sd_{j+1} \delta \uu\|_{L^2}
		\Big)_{q\in\tilde \NN}
		\Big\|_{\ell^2(\tilde \NN)}
		\\
		&\lesssim\,
		\|\ddd_1\|_{H^2}
		\|S_N \ddd_1\|_{L^\infty}
		\|\delta \uu\|_{H^{-\frac{1}{2}}}
		\,+\,
		\|\ddd_1\|_{H^2}
		\|(\Id-S_N)\ddd_1\|_{H^{\frac{1}{2}}}
		\|\delta \uu\|_{L^2}
		\,+\,
		\|\ddd_1\|_{W^{1,4}}^2
		\|\delta \uu\|_{H^{-\frac{1}{2}}}
		\\
		&\lesssim\,
		\| \ddd_1\|_{H^1}
		\|\ddd_1\|_{H^2}
		\|\delta \uu\|_{H^{-\frac{1}{2}}}
		\big(1+ \sqrt{N} \big)
		\,+\,
		\|\ddd_1\|_{H^2}
		\|\ddd_1\|_{H^1}
		(\|\uu_1\|_{L^2}+ \|\uu_2\|_{L^2}) 
		2^{-\frac{N}{2}}.
	\end{align*}
Here, we again use the notation of multi-index  $\alpha=(\alpha_1,\alpha_2)$, with $\alpha_1, \alpha_2\in \mathbb{N}$ and $|\alpha|=\alpha_1+\alpha_2=2$ such that
$\partial^\alpha=\partial^{\alpha_1}_{x_1}\partial^{\alpha_2}_{x_2}$. 

Collecting the above estimates and noticing that $N\geq 1$, we arrive at the conclusion of this lemma.
\end{proof}

\section{Derivation of the integral identity \eqref{energy-identity}}
\label{sec:integral-identity}
\setcounter{equation}{0}

In what follows, we present a derivation of the integral identity \eqref{energy-identity} for the difference of two given weak solutions. To this end, we first note that from the definition of weak solution (i.e., Definition \ref{def:weak-sol-energy-space}), the difference $\delta \uu = \uu_1-\uu_2$ belongs to $L^\infty(0,T; L^2(\TT^2))$. Thus, any Fourier coefficient $\delta \uu_n$ belongs to $L^\infty(0,T)$ and therefore also to $L^2(0,T)$. Hence, we invoke the equation for $\delta \uu$ given by  \eqref{main_system2}, namely,
\begin{equation*}
    \partial_t\delta \uu + \delta \uu\cdot \nabla \uu_1+\uu_2\cdot \nabla \delta \uu -\nu\Delta\delta \uu+\nabla \delta  \pre   =
			-\Div\, 
			\big(\nabla \ddd_1\odot\nabla \delta  \ddd\big)
			-\,\Div\, 
			\big(\nabla \delta \ddd\odot\nabla \ddd_2\big)
			+
			\Div\,\delta \bm{\sigma},
\end{equation*}
which is well-defined in a distributional sense in $\mathcal{D}'((0,T)\times \mathbb{T}^2)$. Applying a general test function of the form $\beta(t) e^{\mathrm{i}n\cdot x}$ to the equation, with $\beta(t) \in \mathcal{D}(0,T)$, we see that any Fourier coefficient $\delta \uu_n(t) :=\frac{1}{(2\pi)^2}\int_{\mathbb{T}^2} \delta \uu (t,x)e^{-\mathrm{i}n\cdot x}\dd x$, for a.e. $t\in (0,T)$, is a distributional solution of the following ordinary differential equation:
\begin{align}\label{appdx:eq-deltau}
	\partial_t \delta \uu_n
	=
	-\nu |n|^2\delta \uu_n
	+
	\mathrm{i} n\cdot 
	\Big[
	-  (\delta \uu\otimes\uu_1)_n 
	-  (\uu_2\otimes \delta \uu)_n 
	- (\delta \pre)_n \mathbb{I}
	-  (\nabla \delta \ddd \odot \nabla \ddd_1)_n 
	-  (\nabla \ddd_2 \odot \nabla \delta \ddd)_n
	+  \delta \bm{\sigma}_n
	\Big] 
\end{align}
in $\mathcal{D}'(0,T)$. A further analysis shows that the right-hand side of the above identity belongs to $L^2(0,T)$.
Indeed, recalling the Sobolev embedding $L^{\frac{2}{1+\zeta}}(\TT^2)\hookrightarrow H^{-\zeta}(\mathbb{T}^2)$ with $\zeta\in (0,1)$, we remark that $\delta \bm{\sigma}$ is in $L^2(0,T; H^{-\zeta}(\TT^2))$. Therefore, it follows that $(1+|n|)^{-\zeta} \delta \bm{\sigma}_n$ belongs to $L^2(0,T)$, which implies $\mathrm{ i} n\cdot \delta \bm{\sigma}_n \in L^2(0,T)$ for any $n\in\mathbb{N}^2$. 
In a similar manner, since $\uu_1$, $\uu_2$, $\nabla \ddd_1$ and $\nabla \ddd_2$ belong to $L^4((0,T)\times \mathbb{T}^2)$, then $\delta \uu \otimes \uu_1$, $\uu_2 \otimes \delta \uu$, $\nabla \ddd_1\odot \nabla \delta \ddd$ and $\nabla \delta \ddd\odot \nabla \delta \ddd$ are in $L^2((0,T)\times \mathbb{T}^2)$. This yields that their Fourier coefficients belong to $L^2(0,T)$.

The above observation implies that any coefficient $\delta \uu_n$ belongs to $W^{1,2}(0,T)$ and the identity \eqref{appdx:eq-deltau} should be understood in the corresponding weak sense. Additionally, it holds that 
$|\delta \uu_n(t)|^2 \in W^{1,1}(0,T)\hookrightarrow \mathcal{C}([0,T])$ and after integrating with respect to time on $(0,t)$, we have the following identity: 
\begin{align*}
	\frac{1}{2}|\delta \uu_n(t)|^2 + \nu \int_0^t|n|^2|\delta \uu_n(s)|^2\, \dd s 
	&=  
	\int_0^t \Big( - \mathrm{i} n\cdot (\delta \uu\otimes\uu_1)_n 
	- \mathrm{i} n\cdot (\uu_2\otimes \delta \uu)_n
	- \mathrm{i} n\cdot (\delta \pre)_n \mathbb{I}
	-\mathrm{i} n\cdot (\nabla \delta \ddd \odot \nabla \ddd_1)_n \\ 
	&\qquad\quad\ \  -\mathrm{i} n\cdot (\nabla \ddd_2 \odot \nabla \delta \ddd)_n 
	+\mathrm{i} n\cdot \delta \bm{\sigma}_n\Big)(s)\cdot \delta \uu_n(s)\,\dd s.
\end{align*}
Recalling that $\delta \uu \in L^\infty(0,T; L^2(\TT^2))\cap H^{1}(0,T; W^{-1,\frac{2}{1+\zeta}}(\TT^2))$, we can multiply the identity by $2^{-q}\varphi_q(n)$, and take the sum over $n\in \ZZ^2$ with $|n|\in [3\cdot 2^{q-2}, 2^{q+3}/3]$, to gather that
\begin{align*}
&	\frac{1}{2} \big(2^{-q}\| \Dd_q \delta \uu (t) \|_{L^2}^2\big) 
	+ \nu \int_0^t 2^{-q}
	\| \nabla \Dd_q \delta \uu (s) \|_{L^2}^2\,\dd s \\
	& \quad  =
	\int_0^t 2^{-q} \int_{\TT^2} \Dd_q
		\Big( - \Div\,(\delta \uu\otimes\uu_1) 
		- \Div\,(\uu_2\otimes \delta \uu) 
		 - \nabla \delta \pre 
		- \Div\, (\nabla \delta \ddd \odot \nabla \ddd_1) \\
	&\qquad\qquad\qquad \quad    -\Div(\nabla \ddd_2 \odot \nabla \delta \ddd)
	+\Div\, \delta \bm{\sigma}\Big)(s,x)\cdot \Dd_q\delta \uu(s,x)\,\dd x\,\dd s\\
	& \quad  =
	\int_0^t 2^{-q} \int_{\TT^2} \Dd_q
		\Big( - \Div\,(\delta \uu\otimes\uu_1) 
		- \Div\,(\uu_2\otimes \delta \uu) 
		- \Div\, (\nabla \delta \ddd \odot \nabla \ddd_1) \\
	&\qquad\qquad\qquad \quad    -\Div(\nabla \ddd_2 \odot \nabla \delta \ddd)
	+\Div\, \delta \bm{\sigma}\Big)(s,x)\cdot \Dd_q\delta \uu(s,x)\,\dd x\,\dd s,
\end{align*}
where we have used the fact $\Div \delta \uu =0$. 
Taking the sum over  $q\in\tilde{\mathbb{N}}:=\mathbb{N}\cup \{-1\}$, we obtain 
\begin{equation}\label{appx:deltau-ser}
\begin{aligned}
	& \frac{1}{2} \| \delta \uu (t) \|_{H^{-\frac{1}{2}}}^2 + \nu 
	\sum_{q=-1}^\infty 	\int_0^t 
	2^{-q} 	\| \nabla \Dd_q \delta \uu (s) \|_{L^2}^2\,\dd s  \\
   &\quad  =
	\sum_{q=-1}^\infty
	\int_0^t  2^{-q}
	    \int_{\TT^2} \Dd_q
		\Big( - \Div\,(\delta \uu\otimes\uu_1) -\Div\,(\uu_2\otimes \delta \uu)  -\Div\, (\nabla \delta \ddd \odot \nabla \ddd_1) \\
	&\qquad\qquad\qquad \qquad   -\Div(\nabla \ddd_2 \odot \nabla \delta \ddd)
	+\Div\, \delta  \bm{\sigma}\Big)(s,x)\cdot \Dd_q\delta \uu(s,x)\,\dd x\,\dd s.
\end{aligned}
\end{equation}
Next, we observe that 
\begin{align*}
		\sum_{q=-1}^{\infty}
		&
		\int_0^t
		\Big|
		2^{-q}
		\int_{\TT^2}\Dd_q
		\Big(  \Div\,(\delta \uu\otimes\uu_1) +
		\Div\,(\uu_2\otimes \delta \uu)\Big)(s,x)\cdot \Dd_q\delta \uu(s,x)\,\dd x
		\Big|\,\dd s\\
		&\lesssim
		\sum_{q=-1}^{\infty} 
		\int_0^t
		2^{-q}
		\Big(2^q \|	\Dd_q(	\delta \uu(s)\otimes\uu_1(s)) \|_{L^2} 
		+ 2^q \| \Dd_q(	\uu_2(s)\otimes \delta \uu (s)) \|_{L^2} \Big)  
		\| \Dd_q\delta \uu(s)  \|_{L^2}
		\,\dd s\\
		&\lesssim 
		\big(
		\| \delta \uu \otimes \uu_1  \|_{L^2((0,t)\times \mathbb{T}^2)} 					+ 
		\| \uu_2 \otimes \delta \uu  \|_{L^2((0,t)\times \mathbb{T}^2)}	
		\big) 
		\| \delta \uu  \|_{L^2((0,t)\times \mathbb{T}^2)}\\
		&\lesssim 
		\big(\|\uu_1 \|_{L^4((0,t)\times \mathbb{T}^2)}^2
		+\|\uu_2 \|_{L^4((0,t)\times \mathbb{T}^2)}^2 \big) 
		\big(\| \uu_1 \|_{L^2((0,t)\times \mathbb{T}^2)}
		+ \| \uu_2 \|_{L^2((0,t)\times \mathbb{T}^2)})
		\\
		&\lesssim 
		\big(\| \uu_1 \|_{L^\infty(0,t; L^2(\mathbb{T}^2))} \| \nabla \uu_1 \|_{L^2(0,t; L^2(\mathbb{T}^2))}
		+
		\| \uu_2 \|_{L^\infty(0,t; L^2(\mathbb{T}^2))} \| \nabla \uu_2 \|_{L^2(0,t; L^2(\mathbb{T}^2))}\big)\\
		&\qquad \times
			\big(\| \uu_1 \|_{L^2((0,t)\times \mathbb{T}^2)}
		+ \| \uu_2 \|_{L^2((0,t)\times \mathbb{T}^2)})\\
		&\lesssim 
		t^\frac12 \big(\| \uu_1 \|_{L^\infty(0,t; L^2(\mathbb{T}^2))}^2 +\| \uu_2 \|_{L^\infty(0,t; L^2(\mathbb{T}^2))}^2\big)
		\big(\| \nabla \uu_1 \|_{L^2(0,t; L^2(\mathbb{T}^2))} +
		\| \nabla \uu_2 \|_{L^2(0,t; L^2(\mathbb{T}^2))}\big) <\infty.
\end{align*}
In a similar manner, it holds
\begin{align*}
	\sum_{q=-1}^{\infty} &
	\int_0^t
	\Big|
	    2^{-q}
		 \int_{\TT^2}\Dd_q
		\Big(  \Div\,(\nabla \delta \ddd \odot \nabla \ddd_1) +
		\Div\,(\nabla \ddd_2 \odot \nabla \delta \ddd)\Big)(s,x)\cdot \Dd_q\delta \uu(s,x)\,\dd x
	\Big|\,
	\dd s
	 \\
	&\lesssim
	t^\frac12
	\big(\| \nabla \ddd_1\|_{L^\infty(0,t;L^2(\mathbb{T}^2))} +\|\nabla \ddd_2\|_{L^\infty(0,t;L^2(\mathbb{T}^2))})
	\big(\| \uu_1 \|_{L^\infty(0,t; L^2(\mathbb{T}^2))} +\| \uu_2 \|_{L^\infty(0,t; L^2(\mathbb{T}^2))}\big)\\
	&\qquad \times 
	\big(\| \nabla \ddd_1\|_{L^2(0,t; H^1(\mathbb{T}^2))} + \|\nabla \ddd_2\|_{L^2(0,t; H^1(\mathbb{T}^2))} \big)
	<\infty.
\end{align*}
Finally, since $ \delta \bm{\sigma} \in L^2(0,T; H^{-\zeta}(\TT^2))$ with $\zeta\in (0,1)$, we infer that 
\begin{align*}
	\sum_{q=-1}^\infty 
	\int_0^t 2^{-q}
	\Big|
	\int_{\TT^2}\Dd_q
	\delta \bm{\sigma}(s,x) :\Dd_q\nabla \delta \uu(s,x)\,\dd x
	\Big|\,	\dd s
	&\leq 
	\sum_{q=-1}^\infty 
	2^{-q}
	\|	\Dd_q\delta  \bm{\sigma}		\|_{L^2((0,t)\times \mathbb{T}^2)}
	\| 	\Dd_q\nabla \delta \uu		\|_{L^2((0,t)\times \mathbb{T}^2)}\\
	&\leq 
	\| \delta  \bm{\sigma}(s) 				\|_{L^2(0,t;H^{-1}(\mathbb{T}^2))}
	\| \nabla \delta \uu(s) 				\|_{L^2((0,t)\times \mathbb{T}^2)}
	<\infty.
\end{align*}
Hence, we are able to apply the dominated convergence theorem for series to the identity \eqref{appx:deltau-ser} and bring the sum inside the sign of the integral such that 
\begin{align*}
	\frac12 \| \delta \uu (t) \|_{H^{-\frac{1}{2}}}^2  + \nu  
	\int_0^t \| \nabla \delta \uu (s) \|_{H^{-\frac{1}{2}}}^2\dd s
	&=  \int_0^t \Big( \langle \delta \uu \otimes \uu_1 + \uu_2\otimes \delta \uu,\,\nabla \delta 
	\uu \rangle_{H^{-\frac{1}{2}}} \\
	&\qquad \quad 
	+ \langle \nabla \delta \ddd \odot \nabla \ddd_1 
	+ \nabla \ddd_2 \odot \nabla \delta \ddd,\,\nabla \delta \uu 
	\rangle_{H^{-\frac{1}{2}}}-
	\langle \delta \bm{\sigma},\,\nabla \delta \uu\rangle_{H^{-\frac{1}{2}}}
	\Big)(s)\,\dd s.
\end{align*}
With an analogous procedure, we infer that  the following identities for $\delta \ddd$ holds: 
\begin{align*}
 	\frac12 \| \delta \ddd(t) \|_{L^2}^2 
 	+ 
 	\int_0^t \| \nabla \ddd(s) \|_{L^2}^2 \dd s
 	& = 
	\int_0^t 
	\Big( 
		- \int_{\mathbb{T}^2} (\uu_1\cdot \nabla \delta \ddd)\cdot \delta\ddd  \,\dd x
		- \int_{\mathbb{T}^2} 
		(\delta \uu \cdot \nabla \ddd_2)\cdot \delta\ddd \,\dd x 
		+ \frac32  \int_{\TT^2}
			((\nabla \delta \uu)\ddd_1)\cdot \delta\ddd  \,\dd x\\
		&\qquad \quad\ \  
		+ \frac32 \int_{\TT^2}( (\nabla  \uu_2)\delta \ddd) \cdot \delta \ddd\,\dd x 
		+\frac12  \int_{\TT^2}
		((\nabla^{\mathrm{tr}} \delta \uu)\ddd_1)\cdot \delta\ddd  \,\dd x\\
		&\qquad \quad\ \  
		+ \frac12 \int_{\TT^2}( (\nabla^{\mathrm{tr}}  \uu_2)\delta \ddd) \cdot \delta \ddd\,\dd x
		-\int_{\mathbb{T}^2} \delta \nabla_\ddd W(\ddd)\cdot \delta\ddd \,\dd x	\Big)(s)\,\dd s,
\end{align*}	
\begin{align*}
 	\frac12 \|\nabla  \delta \ddd(t) \|_{H^{-\frac{1}{2}}}^2 
 	+ 
 	\int_0^t \| \Delta \ddd(s) \|_{H^{-\frac{1}{2}}}^2 \dd s
 	& = 
	\int_0^t 
	\Big( 
	    \langle  \uu_1 \cdot \nabla \delta \ddd, \Delta \delta \ddd \rangle_{H^{-\frac{1}{2}}}
	    + \langle \delta \uu \cdot \nabla \ddd_2, \Delta \delta \ddd \rangle_{H^{-\frac{1}{2}}}
		- \frac32 \langle (\nabla \delta \uu)\ddd_1,\, \Delta \delta \ddd\rangle_{H^{-\frac{1}{2}}} \\
		&\qquad \quad\ \  
		- \frac32 \langle  (\nabla  \uu_2)\delta \ddd,\, \Delta \delta \ddd\rangle_{H^{-\frac{1}{2}}} 
		- \frac12 \langle (\nabla^{\mathrm{tr}} \delta \uu)\ddd_1,\, \Delta \delta \ddd\rangle_{H^{-\frac{1}{2}}}  \\
		&\qquad \quad\ \  
		- \frac12 \langle  (\nabla^{\mathrm{tr}}  \uu_2)\delta \ddd,\, \Delta \delta \ddd\rangle_{H^{-\frac{1}{2}}}
		+\langle  \delta \nabla_\ddd W(\ddd), \Delta \delta \ddd \rangle_{H^{-\frac{1}{2}}}
	\Big)(s)\,\dd s.
\end{align*}	
Adding the above three identities together, we arrive at the conclusion \eqref{energy-identity}.

\section{A Sobolev embedding theorem}\label{Appx:proof-lemma-eps}
\setcounter{equation}{0}
In this part, we aim to prove Lemma \ref{lemma:eps}, which addresses suitable Sobolev embedding results with the corresponding constants. For the sake of convenience for the readers, we recall the statement here:
\begin{lemma}\label{lemma:esp-in-the-appx}
    Let $s\in [0,d/2)$. There exists a constant $C_d>0$, depending only on $d$, such that
	\begin{equation*}
		\| f \|_{L^p}\leq C_d \sqrt{p} \| f \|_{H^s},\quad \text{with }p = \frac{2d}{d-2s},\quad \forall\, f\in H^s(\TT^d).
	\end{equation*}
\end{lemma}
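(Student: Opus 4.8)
\textbf{Proof plan for Lemma \ref{lemma:esp-in-the-appx}.}
The plan is to follow the classical Littlewood--Paley argument for sharp Sobolev embeddings (as in the proof of \cite[(1.30)]{BCD}), keeping careful track of the dependence of the constant on the exponent $p$. First I would write $f = \sum_{q=-1}^\infty \Dd_q f$ in $\mathcal{D}'(\TT^d)$ and use the triangle inequality in $L^p$ to get $\|f\|_{L^p} \leq \sum_{q=-1}^\infty \|\Dd_q f\|_{L^p}$. For each dyadic block I would invoke the Bernstein inequality (Lemma \ref{prop:Bernstein}(i) with $k=0$), which gives $\|\Dd_q f\|_{L^p} \leq C\, 2^{qd(1/2 - 1/p)} \|\Dd_q f\|_{L^2}$; note that with $p = 2d/(d-2s)$ one has $d(1/2 - 1/p) = s$, so this reads $\|\Dd_q f\|_{L^p} \leq C\, 2^{qs}\|\Dd_q f\|_{L^2}$. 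The constant $C$ here is uniform in $q$ and $p$ (indeed in $p$ it is bounded by a dimensional constant since the relevant kernel $L^1$-norms are uniformly controlled, as used throughout Section \ref{sec:Littlewood-Paley}).

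Next I would split the sum over $q$ at a threshold $Q \in \NN$ to be chosen. For the low-frequency part I would apply Cauchy--Schwarz in $q$:
\begin{equation*}
\sum_{q=-1}^{Q-1} 2^{qs}\|\Dd_q f\|_{L^2}
\leq \Big(\sum_{q=-1}^{Q-1} 1\Big)^{1/2}
\Big(\sum_{q=-1}^{Q-1} 2^{2qs}\|\Dd_q f\|_{L^2}^2\Big)^{1/2}
\leq \sqrt{Q+1}\,\|f\|_{H^s}.
\end{equation*}
For the high-frequency part I would instead use that $2s - d(1/2-1/p)\cdot 2 = 0$ is the borderline, so I keep a small room: write $2^{qs}\|\Dd_q f\|_{L^2} = 2^{-q\delta}\cdot 2^{q(s+\delta)}\|\Dd_q f\|_{L^2}$ for a parameter $\delta>0$ and apply Cauchy--Schwarz again to obtain a bound $\lesssim (\sum_{q\geq Q} 2^{-2q\delta})^{1/2}\|f\|_{H^{s+\delta}}$. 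This approach forces $f \in H^{s+\delta}$, which is not assumed, so I would instead adopt the cleaner route: bound the high-frequency tail directly by $\sum_{q\geq Q} 2^{qs}\|\Dd_q f\|_{L^2}$ and observe this is $\leq (\sum_{q\geq Q} 2^{-2q\cdot 0}\dots)$ — which diverges. The correct fix, and the one I would actually carry out, is the standard one: optimize the split using the $L^2$ information only, via $\|\Dd_q f\|_{L^2}\le 2^{-qs}c_q$ with $(c_q)\in\ell^2$, so that $\sum_{q\ge Q}2^{qs}\|\Dd_q f\|_{L^2}$ is not summable in general — hence one genuinely needs the Besov--interpolation trick: bound $\|\Dd_q f\|_{L^p}\le \|\Dd_q f\|_{L^2}^{2/p}\|\Dd_q f\|_{L^\infty}^{1-2/p}$ and then Bernstein on the $L^\infty$ factor, which lands one in $B^0_{2,1}$-type estimates; tracking constants there yields the $\sqrt p$.

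Concretely, I would use $\|\Dd_q f\|_{L^p} \le \|\Dd_q f\|_{L^2}^{2/p}\,\|\Dd_q f\|_{L^\infty}^{1-2/p} \le C\,\|\Dd_q f\|_{L^2}^{2/p}\,(2^{qd/2}\|\Dd_q f\|_{L^2})^{1-2/p} = C\,2^{qs}\|\Dd_q f\|_{L^2}$, and then instead of summing crudely, write $\|f\|_{L^p}^p$ (not $\|f\|_{L^p}$): by an elementary argument $\|f\|_{L^p}\lesssim \big(\sum_q \|\Dd_q f\|_{L^p}^2\big)^{1/2}\cdot(\text{combinatorial factor})$ — more precisely I would follow \cite{BCD} verbatim, obtaining $\|f\|_{L^p} \le C\, p^{1/2}\,\|f\|_{\dot B^s_{2,2}}$ by choosing the split index $Q$ proportional to a quantity that makes the two pieces balance and counting that the geometric-series constant behaves like $1/(1 - 2^{-c/p}) \sim p/c$, whose square root gives $\sqrt p$. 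Finally, for the periodic torus $\TT^d$ one has $\|f\|_{\dot B^s_{2,2}} \lesssim \|f\|_{H^s}$ since the homogeneous and inhomogeneous norms are comparable on functions on a bounded domain (the low frequency is a single block), so the claim follows with $C_d$ dimensional. The second, $d=2$ interpolation statement is then immediate: take $s = 1-\varepsilon$, so $p = 2/\varepsilon$, apply the first part to get $\|f\|_{L^{2/\varepsilon}} \le C_2\sqrt{2/\varepsilon}\,\|f\|_{H^{1-\varepsilon}}$, and then interpolate $\|f\|_{H^{1-\varepsilon}} \le \|f\|_{L^2}^{\varepsilon}\|f\|_{H^1}^{1-\varepsilon}$.

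The main obstacle I anticipate is precisely the bookkeeping of the constant in the second summation step: one must show that the geometric-series / combinatorial factor arising from optimizing the frequency split grows exactly like $\sqrt p$ (and no worse, e.g.\ not like $p$), which requires choosing the threshold $Q$ as a function of the ``profile'' $(2^{qs}\|\Dd_q f\|_{L^2})_q$ rather than a fixed number, and estimating $\sum_{q\ge Q} 2^{qs}\|\Dd_q f\|_{L^2}$ against $\|f\|_{H^s}$ times a factor controlled by $2^{-Q\cdot(\text{something})/p}$. Handling the borderline nature of the embedding ($s = d/2 - d/p$ is exactly critical for $L^p$, not subcritical) is where all the care goes; everything else is routine application of Lemma \ref{prop:Bernstein} and Definition \ref{def:Besov-spaces}.
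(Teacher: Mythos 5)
Your outline never actually closes the critical step, and the concrete mechanisms you describe for producing the constant $C_d\sqrt{p}$ do not work as stated. The triangle-inequality route with a fixed threshold $Q$ fails exactly where you say it does: $\sum_{q\ge Q}2^{qs}\|\Dd_q f\|_{L^2}$ is simply not controlled by $\|f\|_{H^s}$ for any $Q$ (take $\|\Dd_q f\|_{L^2}=2^{-qs}/q$), so your anticipated resolution ``estimate the tail against $\|f\|_{H^s}$ times $2^{-Q(\cdot)/p}$'' is based on an inequality that cannot hold. The single-block interpolation $\|\Dd_q f\|_{L^p}\le\|\Dd_q f\|_{L^2}^{2/p}\|\Dd_q f\|_{L^\infty}^{1-2/p}$ is circular (it just reproduces Bernstein), and the square-function route $\|f\|_{L^p}\lesssim(\sum_q\|\Dd_q f\|_{L^p}^2)^{1/2}$ carries a Littlewood--Paley constant which, by duality with the reverse square-function bound near $L^1$, grows like $p$ rather than $\sqrt{p}$ --- but the whole content of the lemma is the $\sqrt{p}$, so deferring that bookkeeping to ``follow \cite{BCD} verbatim'' outsources precisely the part you were asked to prove. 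Finally, the heuristic ``geometric series $1/(1-2^{-c/p})\sim p$, whose square root gives $\sqrt p$'' only makes sense inside an $L^\infty$ bound for a low-frequency piece obtained by Cauchy--Schwarz, and that bound is useful only when the frequency cutoff is chosen as a function of the level $\lambda$ inside a layer-cake representation of $\|f\|_{L^p}^p$ --- a structure your proposal never sets up.

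That level-set argument is what the paper's Appendix C does, and it is the missing idea. One first proves $\|f\|_{B^{s-d/2}}\le C(d/2-s)^{-1/2}\|f\|_{H^s}$, where $B^{s-d/2}$ is the $B^{s-d/2}_{\infty,\infty}$-type space measured through low-frequency cutoffs (Lemma \ref{lemma:B-Hs-embedding}); since $d/2-s=d/p$, this constant is $\sim\sqrt{p/d}$ and is the true source of the $\sqrt p$. One then shows the interpolation $\|f\|_{L^p}\le C(p-2)^{-1/p}\|f\|_{B^{s-d/2}}^{1-2/p}\|f\|_{H^s}^{2/p}$ (Lemma \ref{lemma:Lp-HsB-embedding}) by writing $\|f\|_{L^p}^p=p\int\lambda^{p-1}m(\{|f|>\lambda\})\,\dd\lambda$, splitting $f=f_{l,A(\lambda)}+f_{h,A(\lambda)}$ with the cutoff $A(\lambda)$ chosen so that $\|f_{l,A(\lambda)}\|_{L^\infty}\le\lambda/2$ (so the low-frequency level set is empty), bounding the high-frequency level set by Chebyshev in $L^2$, and exchanging the order of integration; the factor $p/(p-2)$ that appears stays bounded after taking the $p$-th root. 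Combining the two lemmas gives $\|f\|_{L^p}\le C_d\sqrt{p}\,\|f\|_{H^s}$. If you want to keep a purely frequency-side argument you must still, in effect, reproduce this weak-type structure; as written, your proposal has a genuine gap at exactly the point where the sharp $p$-dependence is decided.
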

\noindent 
In order to prove the above lemma, we need  some preparations. First, we re-introduce the radial smooth function $\chi \in \mathfrak{D}(B(0,4/3))\subset \mathfrak{D}(\mathbb{R}^d)$ of Section \ref{sec:Littlewood-Paley}, which is identically equal to $1$ on the ball $B(0,3/4)$.
\begin{definition}
	Let $\sigma<0$ be a negative real number. We denote by $B^\sigma=B^\sigma(\mathbb{T}^d)$ the Banach space 
	which consists of all distributions $f\in \mathfrak{D}'(\mathbb{T}^d)$ for which the following norm
	is finite:
	\begin{equation*}
		\| f \|_{B^{\sigma}}:= 
		\sup_{\lambda\geq 1} 
		\Big\{
		\lambda^{\sigma}
		\Big\| \sum_{n\in \mathbb{Z}^d} f_n 
		\chi (\lambda^{-1}n )e^{\mathrm{i} n\cdot x} 
		\Big\|_{L^\infty}
		\Big\}, 
		\qquad \text{where}\quad  f =  \sum_{n\in\mathbb{Z}^d} f_n e^{\mathrm{i} n\cdot x}.
	\end{equation*}
\end{definition}
\begin{remark}
	For each $s<0$, we remark that $B^{s}$ precisely corresponds with the Besov space $B_{\infty,\infty}^s$ in Definition \ref{def:Besov-spaces}, equipped with the norm 
	\begin{equation*}
		\| f \|_{B_{\infty, \infty}^s} = 
		\sup_{q\in \mathbb{N}\cup \{-1\}} 2^{qs}\| S_q f \|_{L^\infty}
	\end{equation*}
    (cf. also Lemma \ref{lemma:replacing-Dq-with-Sq}). In this section, we stick with the definition 
	of $B^s$ (and not $B_{\infty,\infty}^s$), since its norm will be somehow meaningful for our goal. 
\end{remark}

Next, we show the continuous embedding of $H^s(\mathbb{T}^d)$ in $B^{s-d/2}$  for $s\in [0,d/2)$.
\begin{lemma}\label{lemma:B-Hs-embedding}
	For any $s\in [0,d/2)$, the Sobolev space $H^s(\mathbb{T}^d)$ is continuously embedded in $B^{s-d/2}$ and 
	there exists a positive constant $C>0$, depending only on $d$ and $\chi$, such that 
	\begin{equation*}
		\| f \|_{B^{s-\frac{d}{2}}}\leq  
		\frac{C}{\sqrt{d/2-s}}\| f \|_{H^s}.
	\end{equation*}
\end{lemma}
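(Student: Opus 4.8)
\textbf{Proof proposal for Lemma~\ref{lemma:B-Hs-embedding}.}

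The plan is to estimate directly the quantity $\lambda^{s-d/2}\big\|\sum_{n} f_n\chi(\lambda^{-1}n)e^{\mathrm{i}n\cdot x}\big\|_{L^\infty}$ for a fixed $\lambda\geq 1$, and then take the supremum. First I would bound the $L^\infty$ norm of a trigonometric series by the $\ell^1$ norm of its Fourier coefficients: for the truncated sum $g_\lambda(x):=\sum_{n\in\mathbb{Z}^d} f_n\chi(\lambda^{-1}n)e^{\mathrm{i}n\cdot x}$ we have
\begin{equation*}
	\|g_\lambda\|_{L^\infty}\leq \sum_{n\in\mathbb{Z}^d} |f_n|\,|\chi(\lambda^{-1}n)|.
\end{equation*}
Since $\chi$ is supported in $B(0,4/3)$, only the indices with $|n|\leq \tfrac{4}{3}\lambda$ contribute. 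Now I would apply the Cauchy--Schwarz inequality in the weighted form: writing $|f_n||\chi(\lambda^{-1}n)| = \big((1+|n|^2)^{s/2}|f_n|\big)\cdot\big((1+|n|^2)^{-s/2}|\chi(\lambda^{-1}n)|\big)$, we get
\begin{equation*}
	\|g_\lambda\|_{L^\infty}\leq \Big(\sum_{n\in\mathbb{Z}^d}(1+|n|^2)^{s}|f_n|^2\Big)^{1/2}\Big(\sum_{|n|\leq \frac{4}{3}\lambda}(1+|n|^2)^{-s}\Big)^{1/2} \leq C\,\|f\|_{H^s}\,\Big(\sum_{|n|\leq \frac{4}{3}\lambda}(1+|n|^2)^{-s}\Big)^{1/2},
\end{equation*}
using the equivalence of the $H^s(\mathbb{T}^d)$ norm with $\big(\sum_n(1+|n|^2)^s|f_n|^2\big)^{1/2}$ (up to the $(2\pi)^d$ normalization factor absorbed into $C$).

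The key step is then to estimate the lattice sum $\Sigma(\lambda):=\sum_{|n|\leq \frac{4}{3}\lambda,\ n\in\mathbb{Z}^d}(1+|n|^2)^{-s}$. I would compare it with the integral $\int_{|\xi|\leq 2\lambda}(1+|\xi|^2)^{-s}\,\mathrm{d}\xi$ (the comparison constant depending only on $d$, via covering each lattice point by a unit cube), and pass to polar coordinates:
\begin{equation*}
	\Sigma(\lambda)\leq C_d\int_0^{2\lambda}(1+r^2)^{-s}r^{d-1}\,\mathrm{d}r\leq C_d\int_0^{2\lambda}r^{d-1-2s}\,\mathrm{d}r = C_d\,\frac{(2\lambda)^{d-2s}}{d-2s},
\end{equation*}
where the inequality $(1+r^2)^{-s}\leq r^{-2s}$ for $r\geq 0$ holds precisely because $s\geq 0$. (For $r\leq 1$ one should instead bound $(1+r^2)^{-s}r^{d-1-2s}\leq r^{d-1}$ when $2s\le d-1$, or just use $(1+r^2)^{-s}\le 1$ on $[0,1]$; in any case that part contributes a bounded constant, so the dominant term is $\lambda^{d-2s}/(d-2s)$, valid since $\lambda\geq 1$.) Therefore $\Sigma(\lambda)^{1/2}\leq \dfrac{C_d\,\lambda^{d/2-s}}{\sqrt{d-2s}}$. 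Combining with the previous display yields
\begin{equation*}
	\lambda^{s-\frac{d}{2}}\|g_\lambda\|_{L^\infty}\leq \frac{C_d}{\sqrt{d-2s}}\,\|f\|_{H^s} = \frac{C}{\sqrt{d/2-s}}\,\|f\|_{H^s},
\end{equation*}
with $C$ depending only on $d$ (and $\chi$, through $\|\chi\|_{L^\infty}$ and its support radius). Taking the supremum over $\lambda\geq 1$ gives the claimed bound on $\|f\|_{B^{s-d/2}}$, and in particular shows $f\in B^{s-d/2}$, i.e. the embedding is continuous.

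The main obstacle — really the only non-routine point — is bookkeeping the dependence of the constant on $s$: one must make sure the $1/\sqrt{d-2s}$ blow-up as $s\uparrow d/2$ comes out correctly and is the \emph{only} source of $s$-dependence, which is why I keep the crude bound $(1+r^2)^{-s}\leq r^{-2s}$ rather than something sharper, and why I isolate the $[0,1]$ piece of the radial integral as a harmless $O_d(1)$ term. Everything else (the $\ell^1$--$L^\infty$ bound, Cauchy--Schwarz, integral comparison for lattice sums) is standard. One should also double-check that the case $s=0$ is included: there $p=2d/(d-2s)$ is finite, $B^{-d/2}$ makes sense, and the argument above goes through verbatim with $d-2s=d$.
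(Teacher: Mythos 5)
Your proposal is correct and follows essentially the same route as the paper: bound the $L^\infty$ norm of the $\chi(\lambda^{-1}\cdot)$-truncated series by the $\ell^1$ norm of its coefficients, apply Cauchy--Schwarz against the $H^s$ weight, and estimate the resulting lattice sum by comparison with the integral $\int r^{d-1-2s}\,\dd r$, which produces exactly the $\lambda^{d-2s}/(d-2s)$ factor and hence the $1/\sqrt{d/2-s}$ constant. The only nitpick is that the cubes $n+[0,1]^d$ with $|n|\leq \tfrac43\lambda$ need not lie in $\{|\xi|\leq 2\lambda\}$ for small $\lambda$ and large $d$; enlarging the radius to $(\tfrac43+\sqrt d)\lambda$ fixes this, and the extra factor $(\tfrac43+\sqrt d)^{d-2s}\leq(\tfrac43+\sqrt d)^{d}$ is absorbed into $C_d$, just as in the paper's own computation.
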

\begin{proof}
	For any $\lambda\geq 1$ and any distribution $f = \sum_{n\in\mathbb{Z}^d} f_n e^{\mathrm{i}n\cdot x}$, we remark that
	\begin{align*}
		\big \| \sum_{n\in \mathbb{Z}^d} f_n 
		\chi (\lambda^{-1}\xi)e^{\mathrm{i} n\cdot x} \big\|_{L^\infty}
		& \leq 
		 \sum_{n\in \mathbb{Z}^d}
		 | \chi(\lambda^{-1} n)||f_n|
		 \leq 
		 \Big(
			 \sum_{n\in \mathbb{Z}^d}
			 \frac{| \chi(\lambda^{-1} n)|^2}{(1+|n|)^{2s}}
		 \Big)^\frac{1}{2}
		 \Big(
			 \sum_{n\in \mathbb{Z}^d}
			 (1+|n|)^{2s}|f_n|^2
		 \Big)^\frac{1}{2}\\
		 & =
		 (2\pi)^d\Big(
			 \sum_{n\in \mathbb{Z}^d}
			 \frac{| \chi(\lambda^{-1} n)|^2}{(1+|n|)^{2s}}
		 \Big)^\frac{1}{2}
		 \|  f \|_{H^s}.
	\end{align*}
	Next, we recast the remaining sum into integral form as follows. We first observe that
	for any $n\in \mathbb{Z}^d\setminus \{0\}$ and any $x\in n+[0,1]^d$, one has that
	$|x|\leq |n|+\sqrt{d}\leq (1+\sqrt{d})|n|$, which implies
	\begin{equation*}
	\begin{aligned}
		\frac{1}{(1+|n|)^{2s}}\leq \frac{1}{\;\;|n|^{2s}}\leq \frac{(1+\sqrt{d})^{2s}}{|x|^{2s}}
		\leq 
		\frac{(1+\sqrt{d})^{d}}{|x|^{2s}}.
	\end{aligned}
	\end{equation*}	 
	We are now in the position to handle the following series
	\begin{align*}
		\sum_{n\in \mathbb{Z}^d}
			 \frac{| \chi(\lambda^{-1} n)|^2}{(1+|n|)^{2s}}
		&=
		1+
		\sum_{n\in \mathbb{Z}^d\setminus\{0\}}
		\int_{n+[0,1]^d}  	 
		\frac{| \chi(\lambda^{-1} n)|^2}{(1+|n|)^{2s}}\,\dd x\\
		&\leq 
		1+(1+\sqrt{d})^{d}
		\sum_{n\in \mathbb{Z}^d\setminus\{0\}}
		\int_{n+[0,1]^d}  	 
		\frac{\mathbf{1}_{B(0,4/3)}(\lambda^{-1} n)}{|x|^{2s}}\,\dd x
		\\
		&\leq 
		1+(1+\sqrt{d})^{d}
		\sum_{n\in \mathbb{Z}^d}
		\int_{n+[0,1]^d}  	 
		\frac{\mathbf{1}_{B(0,4/3)}(\lambda^{-1} x/(1+\sqrt{d}))}{|x|^{2s}}\,\dd x
		\\
		&\leq 
		1+(1+\sqrt{d})^{d}
		\int_{\mathbb{R}^d}  	 
		\frac{\mathbf{1}_{B(0,4/3)}(\lambda^{-1} x/(1+\sqrt{d}))}{|x|^{2s}}\,\dd x
		\\
		&\leq 
		1+(1+\sqrt{d})^{d}
		\lambda^{d-2s}
		\int_{\mathbb{R}^d}  	 
		\frac{\mathbf{1}_{B(0,4/3)}(y/(1+\sqrt{d}))}{|y|^{2s}}\,\dd y\\
		&\leq 
		1+C_d
		\lambda^{d-2s}
		\int_0^{(1+\sqrt{d})8/3}
		\rho^{d-1-2s}\,\dd \rho\\
		&\leq 
		1+\frac{C_d}{d-2s}\lambda^{d-2s}\Big(\frac{8(1+\sqrt{d})}{3}\Big)^{d-2s}
		\\
		&\leq 
		1+\frac{C_d}{d-2s}\lambda^{d-2s}\Big(\frac{8(1+\sqrt{d})}{3}\Big)^{d}.
	\end{align*}
	The last inequality, together with the condition $\lambda\geq 1$ yields the conclusion of this lemma.
\end{proof}

Then we state an interpolation inequality for the $L^p$-norm of functions in terms of norms in $B^{s-\frac{d}{2}}$ and $H^s$. More precisely, 
\begin{lemma}\label{lemma:Lp-HsB-embedding}
	Let $s\in (0,d/2)$ and denote $p = 2d/(d-2s)>2$. Then the Sobolev space $H^s(\mathbb{T}^d)= H^s(\mathbb{T}^d)\cap B^{s-d/2}$ is embedded in $L^p(\mathbb{T}^d)$ and there exists a constant $C>0$ depending only on $d$ and $\chi$ such that
	\begin{equation*}
		\| f \|_{L^p} \leq 
		\frac{C}{(p-2)^\frac{1}{p}}
		\| f \|_{B^{s-\frac{d}{2}}}^{1-\frac{2}{p}}
		\| f \|_{H^s}^\frac{2}{p}.
	\end{equation*}
\end{lemma}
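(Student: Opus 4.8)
\textbf{Proof proposal for Lemma \ref{lemma:Lp-HsB-embedding}.}

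The plan is to run the classical Littlewood--Paley proof of the Sobolev embedding, but tracking the constant carefully and replacing the high-frequency part by the $B^{s-d/2}$-norm, which is exactly what makes the constant behave like $(p-2)^{-1/p}$ as $p\to 2^+$. First I would fix a threshold $\lambda\geq 1$ (to be optimized at the end depending on $f$) and split $f = S_\lambda f + (\Id - S_\lambda) f$, where $S_\lambda f := \sum_{n\in\mathbb{Z}^d} f_n\chi(\lambda^{-1}n)e^{\mathrm{i} n\cdot x}$ is the smooth frequency cut-off at radius $\lambda$ (the same cut-off appearing in the definition of $\|\cdot\|_{B^{s-d/2}}$). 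The two pieces are handled by different tools: the low-frequency piece $S_\lambda f$ is bounded in $L^\infty$ directly by $\lambda^{d/2-s}\|f\|_{B^{s-d/2}}$ from the very definition of $B^{s-d/2}$, and the high-frequency piece is controlled in $L^2$ by Plancherel, noting that on the support of $1-\chi(\lambda^{-1}\cdot)$ one has $(1+|n|)^{-2s}\lesssim \lambda^{-2s}$ (up to a dimensional constant, since $|n|\gtrsim\lambda$ there, using $\chi\equiv 1$ on $B(0,3/4)$), hence $\|(\Id-S_\lambda)f\|_{L^2}\lesssim \lambda^{-s}\|f\|_{H^s}$.

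Next I would combine these two estimates via the layer-cake / distribution-function formula
\[
	\|f\|_{L^p}^p = p\int_0^\infty t^{p-1} \big|\{\,|f|>t\,\}\big|\,\dd t.
\]
For each $t>0$, choose $\lambda = \lambda(t)\geq 1$ so that $\|S_{\lambda(t)}f\|_{L^\infty}\leq t/2$; concretely take $\lambda(t)$ comparable to $(t/\|f\|_{B^{s-d/2}})^{1/(d/2-s)}$ when this is $\geq 1$, and otherwise note $|f|\leq t/2$ pointwise so that layer of the integral is empty. Then on $\{|f|>t\}$ we must have $|(\Id-S_{\lambda(t)})f|>t/2$, so by Chebyshev
\[
	\big|\{\,|f|>t\,\}\big| \leq \frac{4}{t^2}\,\|(\Id-S_{\lambda(t)})f\|_{L^2}^2 \lesssim \frac{1}{t^2}\,\lambda(t)^{-2s}\,\|f\|_{H^s}^2.
\]
Plugging in $\lambda(t)\sim (t/\|f\|_{B^{s-d/2}})^{1/(d/2-s)}$ gives an integrand proportional to $t^{p-1}\cdot t^{-2}\cdot t^{-2s/(d/2-s)}\|f\|_{B^{s-d/2}}^{2s/(d/2-s)}\|f\|_{H^s}^2$, and the choice $p=2d/(d-2s)$ is precisely the one making the power of $t$ equal to $-1$, so the $t$-integral over the relevant range $t\gtrsim \|f\|_{B^{s-d/2}}$ (where $\lambda(t)\geq 1$) converges at infinity; at the lower endpoint the integral is finite because the exponent $p-1>0$ kills it there. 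Evaluating the elementary integral $\int_{c}^\infty t^{-1-\delta}\,\dd t = c^{-\delta}/\delta$ with $\delta = p-2 = 4s/(d-2s)>0$ produces exactly the factor $1/(p-2)$, and taking the $p$-th root yields the claimed $(p-2)^{-1/p}$ together with the homogeneity $\|f\|_{B^{s-d/2}}^{1-2/p}\|f\|_{H^s}^{2/p}$, after collecting the dimensional constants (which stay bounded in $p$ on any compact $s$-range and can be absorbed into $C$).

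The main obstacle I expect is bookkeeping the constant so that the blow-up as $p\to 2^+$ is captured only by the explicit $(p-2)^{-1/p}$ factor and nothing else degenerates: one must check that the dimensional constants coming from Lemma~\ref{lemma:B-Hs-embedding}, from the comparison $(1+|n|)^{-2s}\lesssim\lambda^{-2s}$ on the high-frequency support, and from the prefactor $p$ in the layer-cake formula all remain uniformly bounded (note $p$ itself is bounded as long as $s$ stays away from $d/2$, which is our hypothesis), and that the lower-endpoint contribution of the $t$-integral is genuinely harmless. A secondary subtlety is the borderline behavior: for $t$ small enough that the natural choice of $\lambda(t)$ would be $<1$, we cannot use the $B^{s-d/2}$-bound to beat $t/2$, so we instead argue that $S_1 f$ already has small $L^\infty$-norm relative to such $t$ (or simply that those $t$ contribute a bounded, $p$-uniform amount), which requires a short separate remark but no real difficulty. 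Once these points are pinned down, the inequality follows by assembling the pieces; this is also what will ultimately yield Lemma~\ref{lemma:eps} after specializing $p=2d/(d-2s)$ and absorbing $\|f\|_{B^{s-d/2}}\lesssim \|f\|_{H^s}$ from Lemma~\ref{lemma:B-Hs-embedding}, with $\sqrt{p}\sim (p-2)^{-1/2}$ dominating $(p-2)^{-1/p}$ for the relevant range.
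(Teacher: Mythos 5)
Your skeleton is the same as the paper's (split $f$ at a $t$-dependent frequency threshold, bound the low piece in $L^\infty$ by the $B^{s-d/2}$-norm, apply Chebyshev to the high piece, and integrate the distribution function), but the final integration step contains a genuine gap. With your choice $\lambda(t)\sim\big(t/\|f\|_{B^{s-\frac d2}}\big)^{1/(d/2-s)}$, the integrand in the layer-cake formula is proportional to
\begin{equation*}
p\,t^{p-1}\cdot t^{-2}\cdot \lambda(t)^{-2s}\,\|f\|_{H^s}^2
\;\sim\; p\,\|f\|_{B^{s-\frac d2}}^{\,p-2}\,\|f\|_{H^s}^2\; t^{-1},
\end{equation*}
since $\tfrac{2s}{d/2-s}=p-2$ -- exactly the exponent $-1$ you compute. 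But $\int_c^\infty t^{-1}\,\dd t$ diverges, so the tail does \emph{not} converge; and your subsequent evaluation of $\int_c^\infty t^{-1-\delta}\,\dd t$ with $\delta=p-2$ contradicts your own exponent computation (it would require $\lambda(t)^{-2s}\sim t^{-2(p-2)}$, which your choice of $\lambda(t)$ does not produce). In other words, collapsing $\|(\Id-S_{\lambda(t)})f\|_{L^2}^2$ to $\lambda(t)^{-2s}\|f\|_{H^s}^2$ \emph{before} integrating in $t$ destroys exactly the cancellation that generates the $(p-2)^{-1}$ factor; the endpoint nature of the constant is not captured by this pointwise-in-$t$ bound.

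The missing idea, which is how the paper closes the argument, is to keep the high-frequency $L^2$-norm in Plancherel form, $\|f_{h,A(\lambda)}\|_{L^2}^2\lesssim\sum_{|n|\ge \frac34 A(\lambda)}|f_n|^2$, and exchange the frequency sum with the $\lambda$-integral (Tonelli). For each fixed $n$ the constraint $|n|\ge \tfrac34 A(\lambda)$ restricts $\lambda$ to a bounded interval $\lambda\le 2(\tfrac43|n|)^{d/p}$, so the $\lambda$-integral becomes $\int_0^{2(\frac43|n|)^{d/p}}p\,\lambda^{p-3}\,\dd\lambda=\tfrac{p}{p-2}\,2^{p-2}(\tfrac43|n|)^{d(p-2)/p}$, which is finite (here $p>2$ enters) and produces both the weight $|n|^{2s}$, since $d(p-2)/p=2s$, and the factor $\tfrac{p}{p-2}$; summing against $|f_n|^2$ then yields $\tfrac{p}{p-2}\|f\|_{H^s}^2$ and, after normalizing $\|f\|_{B^{s-d/2}}=1$, adding the harmless low region $\{|f|<2\}$ via $|f|^p\le 2^{p-2}|f|^2$, and taking $p$-th roots, the stated $(p-2)^{-1/p}$ constant. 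You should either insert this Fubini step or choose a strictly suboptimal $\lambda(t)$ and re-derive the constant; as written, the estimate does not follow.
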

\begin{proof}
	Without loss of generality, we can assume that $\| f \|_{B^{s-d/2}}=1$. For any $A\geq 1$, we introduce 
	the following frequencies decomposition of a general distribution $f$: 
	\begin{equation*}
		f = f_{l,A} + f_{h,A},\qquad f_{l,A} := \sum_{n\in\mathbb{Z}^d}\chi(A^{-1}n)f_ ne^{\mathrm{i} n\cdot x},
	\end{equation*}
	where $f_{l,A}$ treats the low frequencies of $f$, while $f_{h,A}$ addresses the high frequencies of $f$.
	The triangular inequality implies that
	\begin{equation*}
		\{ x\in \mathbb{T}^d\,|\, |f(x)|>\lambda\} \subset 
		\Big\{x \in \mathbb{T}^d\,|\, |f_{l,A}(x)|>\frac{\lambda}{2}\Big\}
		\cup 
		\Big\{x \in \mathbb{T}^d\,|\, |f_{h,A}(x)|>\frac{\lambda}{2}\Big\}.
	\end{equation*}
	By the definition of $B^{s-d/2}$ we have that $\| f_{l,A} \|_{L^\infty}\leq A^{d/2-s}\| f \|_{B^{s-d/2}}
	= A^{d/2-s}$. Then for any $A\geq 1$, we denote by $\lambda =\lambda(A) := 2 A^{d/2-s}$, with inverse 
	$A = A(\lambda) =(\lambda/2)^{p/d}$. From this we deduce that 
	\begin{equation*}		 
		\| f_{l,A(\lambda)} \|_{L^\infty}\leq  A(\lambda)^{\frac{d}{2}-s} = \frac{\lambda}{2} \quad 
		\Longrightarrow\quad  m\big(\{x\in \mathbb{T}^d\,|\, |f_{l,A(\lambda)}(x)|>\lambda/2\}\big)=0.
	\end{equation*}
	Furthermore, we remark that, for all $f\in L^p(\mathbb{T}^d)$, it holds
	\begin{align*}
		 \| f \|_{L^p}^p 
		 &= \int_{\mathbb{T}^d} |f(x)|^p \,\dd x = 
		 \int_{\big\{x\in\mathbb{T}^d\,|\,|f(x)|<2\big\}}|f(x)|^2|f(x)|^{p-2} dx+
		 \int_{\big\{x\in\mathbb{T}^d\,|\,|f(x)|\geq 2\big\}}|f(x)|^p\, \dd x
		 \\		
		 &= 
		 (2\pi)^d2^{p-2}\| f \|_{L^2}^2+
		 \int_{\mathbb{T}^d} \int_{2}^{|f(x)|}p\lambda^{p-1} \,\dd\lambda \,\dd x\\ 
		 &=
		 (2\pi)^d2^{p-2}\| f \|_{H^s}^2+
		 \int_2^\infty p\lambda^{p-1}m(\{x\in\mathbb{T}^d\,|\,|f(x)|>\lambda\})\,\dd\lambda\\
		 &\leq 
		  (2\pi)^d2^{p-2}\| f \|_{H^s}^2+
		 \int_2^\infty p\lambda^{p-1}m\Big(\Big\{x \in \mathbb{T}^d\,|\, |f_{l,A(\lambda)}(x)|
		 >\frac{\lambda}{2}\Big\}
		\bigcup 
		\Big\{x \in \mathbb{T}^d\,|\, |f_{h,A(\lambda)}(x)|>\frac{\lambda}{2}\Big\}\}\Big)\,\dd \lambda\\
		&\leq 
		(2\pi)^d2^{p-2}\| f \|_{H^s}^2+
		\int_2^\infty p\lambda^{p-1}
		\underbrace{\bigg(
		m\Big(\Big\{x \in \mathbb{T}^d\,|\, |f_{l,A(\lambda)}(x)|>\frac{\lambda}{2}\Big\}\Big)
		}_{=0}+
			m\Big(
		\Big\{x \in \mathbb{T}^d\,|\, |f_{h,A(\lambda)}(x)|>\frac{\lambda}{2}\Big\}\Big)
		\bigg)\,\dd\lambda\\
		&\leq 
		(2\pi)^d2^{p-2}\| f \|_{H^s}^2+
		\int_2^\infty p\lambda^{p-1}
		m\Big(\Big\{x \in \mathbb{T}^d\,|\, |f_{h,A(\lambda)}(x)|>\frac{\lambda}{2}\Big\}\Big)\,\dd\lambda.
	\end{align*}
	Using the following fact 
	\begin{align*}
		m\Big(\Big\{ x\in\mathbb{T}^d\,|\, |f_{h,A(\lambda)}(x)|>\frac{\lambda}{2}\Big\}\Big)
		&= \int\limits_{\big\{ x\in\mathbb{T}^d\,|\, |f_{h,A(\lambda)}(x)|>\lambda/2\big\}}
		\frac{\lambda^2}{4}\frac{4}{\lambda^2}\,\dd x \\
		&\leq 
		\int\limits_{\big\{ x\in\mathbb{T}^d\,|\, |f_{h,A(\lambda)}(x)|>\lambda/2\big\}}
		|f_{h,A(\lambda)}(x)|^2 \frac{4}{\lambda^2}\,\dd x\\
		&\leq \frac{4}{\lambda^2}\| f_{h,A(\lambda)} \|_{L^2}^2,
	\end{align*}
	we get 
	\begin{align*}
		 \| f \|_{L^p}^p 
		 &\leq 
		  (2\pi)^d2^{p-2}\| f \|_{H^s}^2+
		  \int_2^\infty p\lambda^{p-1}\frac{4}{\lambda^2}\| f_{h,A(\lambda)} \|_{L^2}^2
		 \,\dd \lambda\\
		 &\leq 
		 (2\pi)^d2^{p-2}\| f \|_{H^s}^2+
		 \int_2^\infty 4p\lambda^{p-3}
		 \frac{1}{(2\pi)^d}\sum_{n\in\mathbb{Z}^d} (1-\chi(A(\lambda)^{-1}n))^2|f_n|^2\, \dd \lambda\\
		 &\leq 
		 (2\pi)^d2^{p-2}\| f \|_{H^s}^2+
		 \int_2^\infty 4p\lambda^{p-3}
		 \frac{1}{(2\pi)^d}\sum_{n\in\mathbb{Z}^d,\,|n|\geq \frac{3}{4}A(\lambda)}|f_n|^2\,\dd \lambda\\
		 &\leq 
		 (2\pi)^d2^{p-2}\| f \|_{H^s}^2+
		 \frac{4}{(2\pi)^d}
		 \sum_{n\in\mathbb{Z}^d}
		 |f_n|^2
		 \int_0^{2\left(\frac{4}{3}|n|\right)^\frac{d}{p}} p\lambda^{p-3}
		 \,\dd \lambda\\
		 &\leq 
		 (2\pi)^d2^{p-2}\| f \|_{H^s}^2+
		 \frac{4}{(2\pi)^d}\frac{p}{p-2}
		 \sum_{n\in\mathbb{Z}^d}2^{p-2}\left( \frac{4}{3}|n| \right)^{\frac{d(p-2)}{p}}		 
		 |f_n|^2\\
		 &\leq 
		 (2\pi)^d2^{p-2}\| f \|_{H^s}^2+
		 \frac{2^{p}}{(2\pi)^d}\frac{p}{p-2}
		 \left(\frac{4}{3}\right)^{2s}
		 \sum_{n\in\mathbb{Z}^d}|n|^{2s}		 
		 |f_n|^2\\
		 &\leq  
		 (2\pi)^d2^{p-2}\| f \|_{H^s}^2+
		 \frac{2^{p}}{(2\pi)^d}\frac{p}{p-2}
		 \left(\frac{4}{3}\right)^{2s}\| f \|_{H^s}^2.
	\end{align*}
	Since $2s/p= d/p-2d/p^2<d$, we finally gather that
	\begin{equation*}
		\| f \|_{L^p}\leq  
		\left(2(2\pi)^d
		+\frac{4^{d+1}}{3^{d}(p-2)^\frac{1}{p}}\right)
		\| f \|_{H^s}^{\frac{2}{p}},
	\end{equation*}
	which concludes the proof of the lemma, since $2(2\pi)^d\leq 4(2\pi)^d/(p-2)^{1/p}$ for $p>2$.
\end{proof}

Now we are in a position to prove Lemma \ref{lemma:esp-in-the-appx}. First, invoking Lemma \ref{lemma:B-Hs-embedding} and Lemma \ref{lemma:Lp-HsB-embedding}, we infer that there exists a positive constant $\tilde C$  depending only on $d$ and $\chi$, such that
	\begin{equation*}
		\| f \|_{L^p} \leq \tilde C \frac{1}{(p-2)^\frac{1}{p}}
		\left(\frac{1}{\sqrt{d/2-s}}\right)^{1-\frac{2}{p}}
		\| f \|_{H^s}
		= \tilde C 
		\frac{\sqrt{p}}{\sqrt{d}(p-2)^\frac{1}{p}p^\frac{1}{p}}
		\| f \|_{H^s}.
	\end{equation*}
	When treating high value of $p$, for instance, for $p\geq 4$ (i.e., $s\geq d/4$), we remark that 
	the above inequality reduces to
	\begin{equation*}
		\| f \|_{L^p} \leq C\sqrt{p} \| f \|_{H^s}.
	\end{equation*}
	On the other hand, for $s=0$, then $p=2$ and one simply has $\| f \|_{L^p} = \| f \|_{H^s}$. Hence, a standard interpolation easily leads to the conclusion 
	$ \| f \|_{L^p}\leq C\sqrt{p}\| f \|_{H^s}$, also for any $s\in (0,d/4)$, i.e., $p\in(2,4]$. The proof of Lemma \ref{lemma:esp-in-the-appx} is complete. 
	\hfill $\square$

\section*{Acknowledgments}
 This work was initiated during the first author's visit to School of Mathematical Sciences at Fudan University in 2017. The hospitality of the mathematical school is acknowledged. The second author was partially supported by NNSFC 12071084 and the Shanghai Center for Mathematical Sciences. The authors would like to express their appreciation to Prof. M. Paicu, for many useful inputs and valuable comments that have significantly contributed to the quality of this article.


\end{document}